\numberwithin{equation}{section}
\newcommand{\B}{\mathbb{B}}
\newcommand{\C}{\mathbb{C}}
\newcommand{\disk}{\mathbb{D}}
\newcommand{\Half}{\mathbb{H}}
\newcommand{\N}{\mathbb{N}}
\newcommand{\R}{\mathbb{R}}
\renewcommand{\S}{\mathbb{S}}
\newcommand{\Y}{\mathbb{Y}}
\newcommand{\A}{\mathcal{A}}
\newcommand{\D}{\mathcal{D}}
\newcommand{\X}{\mathcal{X}}
\newcommand{\Hil}{\mathcal{H}}
\newcommand{\eps}{\varepsilon}
\renewcommand{\hat}{\widehat}
\newcommand{\pd}{\partial}
\renewcommand{\Tilde}{\widetilde}
\newcommand{\Vector}[2]{\begin{pmatrix}#1\\#2\end{pmatrix}}
\DeclareMathOperator{\dist}{dist}
\DeclareMathOperator{\Real}{Re}
\DeclareMathOperator{\rg}{rg}
\declaretheorem[
    name=Theorem,
	numberwithin=section,
	]{thm}
\declaretheorem[
    name=Proposition,
	numberwithin=section,
	]{prop}
\declaretheorem[
    name=Definition,
	style=definition,
	numberwithin=section,
	]{defin}
\declaretheorem[
	name=Lemma,
	numberwithin=section
	]{lem}
\declaretheorem[
	name=Corollary,
	numberwithin=section
	]{cor}
\title{Blowup stability of wave maps without symmetry}
\author{Roland Donninger}
\email{roland.donninger@univie.ac.at}
\address{Universit\"at Wien, Fakult\"at f\"ur Mathematik,
  Oskar-Morgenstern-Platz 1, 1090 Vienna, Austria}
\thanks{This research was funded in whole or in part by the
Austrian Science Fund (FWF) 10.55776/P34560,
10.55776/PIN2161424, and 10.55776/PAT9429324. For open access purposes, the authors have
applied a CC BY public copyright license to any author-accepted manuscript version arising from this submission.}
\author{Frederick Moscatelli}
\email{frederick.moscatelli@univie.ac.at}
\address{Universit\"at Wien, Fakult\"at f\"ur Mathematik,
  Oskar-Morgenstern-Platz 1, 1090 Vienna, Austria}
\begin{document}

\begin{abstract}
We study wave maps from $(1+d)$-dimensional Minkowski space into the $d$-sphere without any symmetry assumptions. There exists an explicit self-similar blowup solution and we prove that this solution is asymptotically stable under small perturbations of the initial data. The proof is fully rigorous and requires no numerical input whatsoever.
\end{abstract}

\maketitle
\tableofcontents
\section{Introduction}
\noindent Understanding large-data solutions of supercritical dispersive
equations remains one of the great challenges in contemporary
analysis. While the small-data theory is typically reasonably well understood, the phenomena one
encounters in large-data evolutions are often multifaceted and
unmanageably complex. Despite this, there are particular aspects that
one might hope to understand theoretically. A famous instance of this
concerns the formation of singularities for the wave maps equation with
spherical target in dimensions three and higher. It has been known for a long time that wave maps, starting from perfectly smooth initial data,
may develop singularities (or \emph{blow up}) in finite time \cite{Sha88}. The exciting point, however, is the conjectured \emph{universality} of the blowup profile. 
Indeed, numerical simulations suggest that the ``generic''
blowup is governed by a single explicit
self-similar solution $U_*$, at least under the assumption of corotational symmetry \cite{BizChmTab00}.
This indicates that $U_*$ is a
fundamental object and it is
worthwhile to devote a substantial amount of theoretical effort to studying $U_*$ and solutions nearby. This is the topic of
the present paper, where we prove the full nonlinear
asymptotic stability of $U_*$ without any symmetry assumptions. To our
knowledge, this is the first blowup stability result for a
supercritical geometric wave equation in full generality. The proof requires the entire power of the
functional-analytic machinery we
have been developing over the last 15 years or so and a number of
novel ideas on top. Before we go into more details about that, we give a precise formulation of our result.

We consider wave maps from $(1+d)$-dimensional Minkowski space  $\R^{1,d} = \R\times \R^d$ to the $d$-dimensional sphere $\S^d \subseteq \R^{d+1}$. The (extrinsic) wave maps equation for a function $U:\R^{1,d} \to \S^d$ is given by
\begin{align}\label{ex wm eq}
    \pd^\mu \pd_\mu U + (\pd^\mu U \cdot \pd_\mu U)U = 0,
\end{align}
where Einstein's summation convention is in effect\footnote{The derivative with respect to the $\mu$-th slot is denoted by $\pd_\mu$ with the conventions $\pd^0 := - \pd_0$ and $\pd^j := \pd_j$ for $j \in \{1,\ldots,d\}$. Greek indices like $\mu$ run from $0$ to $d$ and latin letters like $i,j,k$ run from $1 $ to $d$.}.
Eq.~\eqref{ex wm eq} has the explicit self-similar solution
\begin{align*}
    U_\ast(t,x) &:= V_\ast\left( \frac{x}{1 - t} \right),
\end{align*}
where
\begin{align*}
    V_\ast(\xi) := \frac{1}{d-2 + |\xi|^2}\Vector{2\sqrt{d-2}\, \xi}{d- 2- |\xi|^2}.
\end{align*}
This solution was first shown to exist by Shatah \cite{Sha88} for $d=3$ and its explicit form was found by Turok and Spergel \cite{SpeTur90}. For $d\geq 4$, the explicit expression is due to Bizo\'n and Biernat \cite{BizBie15}.
Even though the initial data $(U_\ast(0,\cdot),\pd_0 U_\ast(0,\cdot))$ are smooth, $U_\ast$ suffers a gradient blowup at the origin as $t \nearrow 1$ and is thus an explicit example for singularity formation in finite time from smooth initial data. Eq.~\eqref{ex wm eq} is invariant under spacetime translations, rotations, Lorentz boosts, and rotations on the target, see \autoref{symmetries}. By applying the various continuous symmetries of Eq.~\eqref{ex wm eq} to $U_*$, we obtain a family of blowup solutions $\left\{U_{\Theta}^{T,X}: T > 0, X \in \R^d, \Theta \in \R^{p(d)}\right\}$, where $p(d):=\frac{d(d+3)}{2}$ and $U_\Theta^{T,X}$ has a gradient blowup at $(t,x) = (T,X)$. For $T,R>0$ and $X\in \mathbb R^d$, we introduce the balls $\B^d_R(X):=\{x\in \R^d: |x-X|<R\}$, $\B^d_R:=\B^d_R(0)$, and the backward lightcone
\begin{align*}
    \mathcal{C}_{T,X} := \{ (t,x) \in [0,T)\times \R^d: |x- X| \leq T - t  \}.
\end{align*}
With this notation at hand, we can now formulate the main result of this paper.
\begin{thm}\label{main result}
For every $d \geq 3$ there exists $k_0 = k_0(d)$ such that the following holds for every $k \geq k_0$. There exist constants $0 < c_\ast,\delta_\ast \leq 1$ and $\eps_\ast > 0$ such that for all $(F,G) \in C^\infty(\overline{\B_2^d},\R^{d+1})^2$ with 
\begin{itemize}
    \item $F(x ) \in \S^{d}$ and $F(x) \cdot G(x) = 0$ for all $x \in \overline{\B_2^d}$
    \item $\|F - U_\ast(0,\cdot)\|_{H^k(\B_2^d,\R^{d+1})} + \|G - \pd_0 U_\ast(0,\cdot)\|_{H^{k-1}(\B_2^d,\R^{d+1})} \leq c_\ast \delta_\ast$ 
\end{itemize}
there exist $(T_\ast,X_\ast,\Theta_\ast) \in [1-\delta_\ast,1+\delta_\ast] \times \overline{\B_{\delta_\ast}^d} \times \overline{\B_{\delta_\ast}^{p(d)}}$ and a unique classical solution $U$ of 
\begin{align*}
    \begin{cases}
        \pd^\mu \pd_\mu U + (\pd^\mu U \cdot \pd_\mu U) U = 0 \quad \mbox{in} \quad
        \mathcal C_{T_\ast,X_\ast}\\
        U(0,\cdot) = F,\quad \pd_0 U(0,\cdot) = G \quad \mbox{on} \quad \overline{\B_{T_\ast}^d(X_\ast)}
    \end{cases}
\end{align*}
such that 
\begin{align}\label{U close to blowup}
    (T_\ast -t )^{s - \frac{d}{2}}\left\|U(t,\cdot) - U_{\Theta_\ast}^{T_\ast,X_\ast}(t,\cdot)\right\|_{H^s(\B_{T_\ast - t}^d(X_\ast),\R^{d+1})}    \lesssim\left( T_\ast - t\right)^{\eps_\ast}
\end{align}
for $0 \leq s \leq k$ and
\begin{align}\label{pd0U close to blowup}
    (T_\ast - t)^{s - \frac{d}{2}} \left\|\pd_t U(t,\cdot) - \pd_t  U_{\Theta_\ast}^{T_\ast,X_\ast}(t,\cdot)\right\|_{H^{s-1}(\B_{T_\ast - t}^d(X_\ast),\R^{d+1})}\lesssim \left( T_\ast - t\right)^{\eps_\ast}
\end{align}
for $1 \leq s \leq k$ hold for all $t \in [0,T_\ast)$.
\end{thm}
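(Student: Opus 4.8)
\noindent The plan is to pass to self-similar coordinates adapted to a (provisionally fixed) blowup point, where the family $\{U_\Theta^{T,X}\}$ becomes a manifold of stationary solutions, and to show that this manifold attracts all nearby data modulo the symmetry group. Concretely, for $(T,X)$ near $(1,0)$ introduce $\tau=\log\frac{T}{T-t}$ and $\xi=\frac{x-X}{T-t}$, which map $\mathcal C_{T,X}$ onto the cylinder $[0,\infty)\times\overline{\B_1^d}$; for technical reasons---finite speed of propagation and, crucially, leaving room to modulate $T$ and $X$, which is why the data in \autoref{main result} are prescribed on $\B_2^d$ rather than on a lightcone base---one works on a slightly larger ball $\overline{\B_{1+\delta}^d}$. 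Collecting $(U,\pd_\tau U)$ into one unknown and writing $U$ as a perturbation of the static profile $V_*$, Eq.~\eqref{ex wm eq} becomes an abstract semilinear equation $\pd_\tau\Phi=\mathbf L\Phi+\mathbf N(\Phi)$ on $\Hil:=H^k(\B_{1+\delta}^d,\R^{d+1})\times H^{k-1}(\B_{1+\delta}^d,\R^{d+1})$, with $\mathbf L$ the linearization about $V_*$ and $\mathbf N$ at least quadratic; along the way one verifies that the pointwise constraints $U\in\S^d$ and $U\cdot\pd_tU=0$ are propagated by the flow, which is forced by the geometric structure of the equation.

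The heart of the matter is the spectral analysis of $\mathbf L$. Writing $\mathbf L=\mathbf L_0+\mathbf L'$, where $\mathbf L_0$ is the free wave part in similarity variables and $\mathbf L'$ a relatively compact potential coming from $V_*$ and the target curvature, a Lumer--Phillips argument shows (for $k$ above a dimension-dependent threshold) that $\mathbf L_0$ generates a $C_0$-semigroup with $\sigma(\mathbf L_0)\subseteq\{\Real\lambda\le-\kappa\}$, $\kappa\to\infty$ as $k\to\infty$; hence $\mathbf L$ generates a semigroup $(S(\tau))_{\tau\ge0}$ and $\sigma(\mathbf L)\cap\{\Real\lambda>-\kappa\}$ is a finite set of eigenvalues of finite multiplicity. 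The symmetries of Eq.~\eqref{ex wm eq} put eigenvalues into this region: differentiating $U_0^{T,0}$ and $U_0^{1,X}$ gives a $(1+d)$-dimensional eigenspace at $\lambda=1$, spanned by $(\xi\cdot\nabla)V_*$ and the $\pd_jV_*$, while differentiating $U_\Theta^{1,0}$ in $\Theta$ gives a $p(d)$-dimensional eigenspace at $\lambda=0$ (target rotations, and Lorentz boosts modulo compensating spacetime translations). The decisive claim---\emph{mode stability}---is that these are \emph{all} the spectral points with $\Real\lambda\ge0$, that the corresponding eigenspaces are semisimple, and that $\sigma(\mathbf L)$ avoids a strip $-\omega\le\Real\lambda<0$. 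Since no symmetry is assumed this is not an ODE problem; the approach I would take is to decompose the $\xi$-dependence into spherical harmonics on $\S^{d-1}$, reducing the eigenvalue equation to a family of radial ODE systems indexed by the harmonic degree $\ell$, to locate the symmetry eigenfunctions among the low degrees by direct inspection, and to exclude further unstable eigenvalues and boundary resonances for all remaining $\ell$---the large degrees by the centrifugal term, the finitely many intermediate ones by the explicit (hypergeometric) structure of $V_*$ together with energy and Gr\"onwall estimates; a Birman--Schwinger reformulation of $\mathbf L'$ relative to the resolvent of $\mathbf L_0$ can be used to handle large $|\lambda|$ and angular sectors uniformly in $\ell$.

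Granting the spectral picture, let $\mathbf P$ be the Riesz projection onto the $(1+d+p(d))$-dimensional symmetry subspace; the resolvent bounds above and the Gearhart--Pr\"uss theorem yield $\|S(\tau)(1-\mathbf P)\|_{\Hil\to\Hil}\lesssim e^{-\omega\tau}$. For the nonlinear problem I would let the symmetry parameters depend on $\tau$: write the solution as $U_{\Theta(\tau)}^{T(\tau),X(\tau)}$ plus a remainder $\psi(\tau)\in\rg(1-\mathbf P)$, which produces a coupled system of $1+d+p(d)$ modulation ODEs for $(T,X,\Theta)$ together with the Duhamel equation for $\psi$, and solve this by a Lyapunov--Perron fixed point in the Banach space of exponentially decaying remainders on $[0,\infty)$. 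The genuinely unstable $\lambda=1$ directions are killed by a codimension-$(1+d)$ condition which, by a topological degree (or implicit function) argument, is met for a suitable choice of the limiting blowup data $(T_*,X_*)$ near $(1,0)$; the neutral $\lambda=0$ directions are absorbed into the convergence $\Theta(\tau)\to\Theta_*$. The nonlinear estimates needed to close the iteration---boundedness and Lipschitz bounds for $\mathbf N$ on small balls---are Moser/Kato--Ponce estimates in $H^k$, and it is exactly their validity (the algebra property of $H^{k-1}(\B^d)$ and tame composition with the smooth target nonlinearities) that forces $k\ge k_0(d)$. Unwinding the change of variables then converts $\|\psi(\tau)\|_{\Hil}\lesssim e^{-\omega\tau}$ into the weighted Sobolev bounds \eqref{U close to blowup}--\eqref{pd0U close to blowup} on the slices $\B_{T_*-t}^d(X_*)$ with any $\eps_*<\omega$, and interpolation between the endpoints $s=0$ and $s=k$ supplies the full range of $s$.

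The step I expect to be the main obstacle is the spectral analysis of $\mathbf L$: establishing mode stability together with a quantitative spectral gap and the accompanying resolvent estimates, in a genuinely $d$-dimensional setting with no symmetry reduction available. The linear decay, the modulation system and the fixed-point argument are, by comparison, elaborations of techniques that are by now well developed; it is the removal of the symmetry assumption---which places $\sigma(\mathbf L)$ beyond the reach of ODE methods---that is both the point of the paper and where the new ideas enter.
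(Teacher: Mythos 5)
Your high-level architecture matches the paper (similarity coordinates, first-order formulation, semigroup framework, mode stability via spherical harmonics, Gearhart--Pr\"uss, modulation/Lyapunov--Perron with a topological argument to fix $(T_*,X_*,\Theta_*)$), but there is one premise that is false and that, if it were true, would make the whole linear analysis essentially as easy as the corotational case. You describe $\mathbf L'$ (the linearization of the nonlinearity at $V_*$) as a ``relatively compact potential'' and then conclude that, since the free generator $\mathbf L_0$ has its spectrum far to the left for large $k$, the unstable spectrum of $\mathbf L=\mathbf L_0+\mathbf L'$ is automatically a finite set of eigenvalues. Neither step survives scrutiny here. Because the wave-maps nonlinearity is derivative-bearing, the linearization acts as
\[
\mathbf L'\begin{pmatrix}f_1\\ f_2\end{pmatrix}=\begin{pmatrix}0\\ V f_1+V_i\,\pd^i f_1+W f_2\end{pmatrix},
\]
i.e.\ it depends on \emph{first} derivatives of $f_1$ and on $f_2$. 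As a map $\Hil^k\to\Hil^k$ (equivalently $H^k\times H^{k-1}\to H^k\times H^{k-1}$) the second component sends $H^{k-1}$-level data to $H^{k-1}$-level data, so $\mathbf L'$ is only relatively bounded, not relatively compact. (In the corotational case the potential is zeroth order, so compactness does hold; here the gradient dependence destroys it. The paper flags this explicitly and even notes that a null-form transformation in the spirit of Chen--McNally--Sch\"orkhuber removes part of the problematic term but leaves the coupling piece $K f_1$, so compactness still fails.) Without relative compactness you cannot conclude from ``$\sigma(\mathbf L_0)$ lies in a left half-plane'' that $\sigma(\mathbf L)\cap\{\Real\lambda\ge 0\}$ is a finite set of eigenvalues, and your Birman--Schwinger device, which also rests on compactness of $\mathbf L'\mathbf R_{\mathbf L_0}(\lambda)$, does not apply either.

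Your auxiliary claim that $\sigma(\mathbf L_0)\subseteq\{\Real\lambda\le-\kappa\}$ with $\kappa\to\infty$ as $k\to\infty$ is also not what actually holds: the inhomogeneous dissipative estimate for the free generator gives growth bound $\tfrac d2-1$ on $\Hil^k$, independent of $k$, because the low-order part of the norm does not improve with $k$. What does improve with $k$ is the \emph{homogeneous} seminorm estimate, $\Real(\mathbf L_0\mathbf f|\mathbf f)_{\dot\Hil^k}\le(\tfrac d2-k)\|\mathbf f\|_{\dot\Hil^k}^2$, and this is the lever the paper actually pulls. The replacement for your compactness step is: (i) show that commuting $\mathbf L'$ through $k-1$ derivatives costs only a $k$-independent constant $C$ at top order, so
\[
\Real\bigl((\mathbf L_0+\mathbf L')\mathbf f\big|\mathbf f\bigr)_{\dot\Hil^k}\le\Bigl(\tfrac d2+C-k\Bigr)\|\mathbf f\|_{\dot\Hil^k}^2+c_k\|\mathbf f\|_{\Hil^{k-1}}^2;
\]
(ii) trade the $\Hil^{k-1}$-term for a small multiple of $\|\cdot\|_{\dot\Hil^k}^2$ plus a \emph{finite-rank} projection (rather than a full $L^2$-term, which would not close), an adaptation of the Merle--Rapha\"el--Rodnianski--Szeftel subcoercivity lemma; (iii) pass from the homogeneous to the inhomogeneous $\Hil^k$-norm by projecting away low-degree polynomials. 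The upshot is that $\mathbf L-C_k\mathbf P$ is dissipative with a strictly negative constant for a finite-rank $\mathbf P$, and it is the compactness of $C_k\mathbf P$ (not of $\mathbf L'$) that then yields finiteness of the unstable spectrum and relatively-bounded resolvent control for large $|\lambda|$. This machinery is precisely what removing the symmetry assumption demands, and it is missing from your sketch.

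Two smaller remarks. First, in this paper the similarity domain is exactly $\overline{\B_1^d}$, not $\overline{\B_{1+\delta}^d}$: the $\Hil^1$-inner product is built from $\dot H^1(\B^d)$ plus a boundary $L^2(\S^{d-1})$-term, which gives the dissipative estimate directly on the unit ball and obviates the $\delta$-padding. Second, the paper works throughout in a stereographic chart, which hard-codes the sphere constraint into the unknown; your ambient formulation with ``constraint propagation'' is a different but workable route.
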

Some remarks are in order.
\begin{itemize}
    \item By classical solution $U$ we mean that $U$ is jointly $C^2$ in $\mathcal{C}_{T_\ast,X_\ast}$ and solves Eq.~\eqref{ex wm eq} pointwise there.
    \item Initially, we perturb the wave map $U_*$. Since a perturbation will in general trigger the symmetries of the equation, the evolution does not converge back to $U_*$ itself but to a slightly different member of the family of blowup solutions generated by applying the symmetries of the equation to $U_*$.
    \item The weights in \eqref{U close to blowup} and \eqref{pd0U close to blowup} arise since 
    \begin{align*}
        \left\|U_{\Theta_\ast}^{T_\ast,X_\ast}(t,\cdot)\right\|_{H^s(\B_{T_\ast - t}^d(X_\ast),\R^{d+1})} &\simeq (T_\ast - t)^{\frac{d}{2} - s}\\
        \left\|\pd_t U_{\Theta_\ast}^{T_\ast,X_\ast}(t,\cdot)\right\|_{H^s(\B_{T_\ast - t}^d(X_\ast),\R^{d+1})} &\simeq (T_\ast - t)^{\frac{d}{2} - s-1}
    \end{align*}
    as $t \nearrow T_\ast$ for $s \geq 0$. Hence $U(t,\cdot)$ converges to $U_{\Theta_\ast}^{T_\ast,X_\ast}(t,\cdot)$ as $t \nearrow T_\ast$ relative to $\left\|U_{\Theta_\ast}^{T_\ast,X_\ast}(t,\cdot)\right\|_{H^s(\B_{T_\ast - t}^d(X_\ast),\R^{d+1})}$.
    \item \autoref{main result} should be seen as a ``high regularity'' result in the sense that the minimal required regularity, $k_0(d)$, grows like $d^\frac{3}{2}$, see \eqref{k0}, and is in particular far away from the critical regularity $s_c = \frac{d}{2}$.
\end{itemize}
\subsection{Related results}
The wave maps equation attracted a lot of interest in the modern literature on partial differential equations. This is because it is a very elegant geometric model with a rich phenomenology but still simple enough to be amenable to rigorous analysis. Consequently, by thoroughly studying the wave maps equation one hopes to discover fundamental mechanisms that may be at work also in much more complicated problems like Einstein's equation of general relativity. Singularity formation for the model at hand was studied numerically in the influential paper \cite{BizChmTab00}. In the corotational case, the blowup stability problem was solved completely in the series of works \cite{Don11, DonSchAic12, CosDonXia16, CosDonGlo17, ChaDonGlo17, Glo25}, in dimensions $d=3,4$ even in the optimal topology \cite{DonWal23, DonWal25}. In the course of this, a canonical spectral-theoretic framework emerged that was further refined and successfully applied to many other self-similar blowup problems, e.g.~\cite{DonSch12, DonSch14, Don14, DonSch16, Don17, DonRao20, MerRapRodSze22, Ost24, Glo24, GhoLiuMas25, Gou25, LiuRae25, GloHofLuo25, GloHilWal25, GuoHadJanSch25}. In fact, we benefit a lot from novel insight in \cite{MerRapRodSze22} and \cite{GhoLiuMas25}.

A literature review on singularity formation for wave maps, even when restricted to self-similar blowup, would be grossly ignorant without at least mentioning some of the groundbreaking contributions in $d=2$. The blowup mechanism in $d=2$, which is the energy-critical case, is of an entirely different nature and not self-similar \cite{BizChmTab01, KriSchTat08, RodSte10, RapRod12, KriShu20, JenKri25}. In view of the topic of the present paper, the recent \cite{KriShuSch24} is interesting as it develops a stability theory of blowup outside of symmetry. Furthermore, due to the global control provided by the energy, remarkably general, nonperturbative results on large-data dynamics are available, see e.g.~\cite{SteTat10a, SteTat10b, KriSch12, CotKenLawSch15a, CotKenLawSch15b, JenLaw18, JenLaw25}.
\subsection{Symmetries}
\label{symmetries}
Eq.~\eqref{ex wm eq} has various symmetries that have to be incorporated into the analysis. Let $U$ be a solution of Eq.~\eqref{ex wm eq}. First, we can translate in space-time, i.e., $U^{T,X}(t,x) := U(t - T,x- X)$ is a solution for $T > 0$ and $X \in \R^d$. Next, scaling would also be a symmetry, however, since we are ultimately interested in the self-similar solution $U_\ast$, this corresponds to a space-time translation and can hence be neglected.

Next, we have rotations on the domain. For a matrix $R \in SO(d)$ we get a solution $(t,x) \mapsto U(t,R x)$. Similarly, for a Lorentz boost $L$ we get the solution $ (t,x) \mapsto U(L(t,x))$. It will be convenient to combine these symmetries with the space-time translations to guarantee that the point $(T,X)$ is fixed. Hence for fixed $(T,X)$ we define the new solutions
\begin{align*}
    U_R(t,x) &:= U(t, R(x - X) + X)\\
    U_L(t,x) &:= U(L(t-T,x-X) + (T,X))
\end{align*}
for a rotation $R$ and a Lorentz boost $L$.

Finally, we have rotations on the target $\S^d$. For $\mathcal{R} \in SO(d+1)$ we obtain a new solution $U_\mathcal{R}(t,x) := \mathcal{R} U(t,x)$. For $U_\ast$ this yields many redundancies. Namely, if $\mathcal{R}$ leaves the last component invariant, i.e., 
\begin{align*}
    \mathcal{R} \Vector{\Tilde{y}}{y^{d+1}} = \Vector{R \Tilde{y}}{y^{d+1}}
\end{align*}
for some $R \in SO(d)$, then one computes $\mathcal{R} U_\ast(t,x) = U_\ast(t,R x)$, which hence corresponds to a rotation on the domain.

To parametrize the symmetries, we define the infinitesimal generators $\{A_{i j}\}_{1 \leq i < j \leq d}$, $\{B_i\}_{1 \leq i \leq d}$, $\{C_i\}_{1 \leq i \leq d}$ as
\begin{align*}
    (A_{i j})_{a b} &= \begin{cases}
        1, &a=i, b=j\\
        -1, &a=j, b=i\\
        0,&\text{else}
    \end{cases}\\
    (B_i)_{\mu \nu} &= \begin{cases}
        1, &\mu=0, \nu=i\\
        1, &\mu=i, \nu=0\\
        0, &\text{else}
    \end{cases}\\
    (C_i)_{a b} &= \begin{cases}
        1, &a=i, b=d+1\\
        -1, &a=d+1, b=i\\
        0,&\text{else}
    \end{cases}.
\end{align*}
Then $\exp(\eps A_{i j})$ is a rotation on $\R^d$ in the $(x^i,x^j)$-plane with angle $\eps$, $\exp(\eps B_i)$ is a Lorentz boost in direction $e_i$ of rapidity $\eps$ and $\exp(\eps C_i)$ is a rotation on $\R^{d+1}$ in the $(y^i,y^{d+1})$-plane with angle $\eps$.

For $\alpha = (\alpha_{i j})_{1 \leq i < j \leq d} \in \R^{\frac{d(d-1)}{2}}$, $\beta = (\beta_i)_{1 \leq i  \leq d} \in \R^d$, $\gamma = (\gamma_i)_{1 \leq i \leq d} \in \R^d$ we define 
\begin{align*}
    R_\alpha &:= \exp(\alpha_{12} A_{12})\ldots \exp(\alpha_{1d}A_{1d}) \exp(\alpha_{23} A_{23})\ldots \exp(\alpha_{2d} A_{2d}) \ldots \exp(\alpha_{d-1,d} A_{d-1,d}) \\
    L_\beta &:= \exp(\beta_1 B_1)\ldots \exp(\beta_d B_d) \\
    \mathcal{R}_\gamma &:= \exp(\gamma_1 C_1) \ldots \exp(\gamma_d C_d).
\end{align*}
We set $\Theta = (\alpha,\beta,\gamma) \in \R^{\frac{d(d-1)}{2}} \times \R^d \times \R^d = \R^{p(d)}$, where
\begin{align*}
    p(d) := \frac{d(d-1)}{2} + d + d = \frac{d(d + 3)}{2}.
\end{align*}
With this we define 
\begin{align*}
    U_\Theta^{T,X} := (((U_\ast^{T-1,X})_{\mathcal{R}_\gamma})_{L_\beta})_{R_\alpha},
\end{align*}
which is explicitly given by 
\begin{align*}
    U_{\Theta}^{T,X}(t,x) = \mathcal{R}_\gamma U_\ast(L_\beta(t - T, R_\alpha(x - X)) + (1,0) ).
\end{align*}
$U_\Theta^{T,X}$ solves Eq.~\eqref{ex wm eq} and the blowup point is shifted from $(1,0)$ to $(T,X)$. Note in this notation $U_\ast = U_0^{1,0}$.
\subsection{Intrinsic formulation}
We will consider the wave maps equation in a chart. For a chart $\Psi:\S^{d} \supseteq W \to V \subseteq \R^d$, Eq.~\eqref{ex wm eq} transforms into
\begin{align}\label{int wm eq}
    \pd^\mu \pd_\mu (\Psi \circ U)^n + \Gamma_{i j}^n(\Psi \circ U) \pd^\mu (\Psi \circ U)^i \pd_\mu (\Psi\circ U)^j = 0,\quad n = 1,\ldots,d,
\end{align}
where $\Gamma_{i j}^n$ are the associated Christoffel symbols. We choose $\Psi$ to be the stereographic projection with respect to the south pole $S = (0,\ldots,0,-1)$. Concretely for $y = (\Tilde{y},y^{d+1})$
\begin{align*}
    \Psi(\Tilde{y},y^{d+1}) &= \frac{1}{1 + y^{d+1}} \Tilde{y}\\
    \Psi^{-1}(z) &= \frac{1}{1 + |z|^2}\Vector{2 z}{ 1 - |z|^2}
\end{align*}
which maps $\S^d \setminus \{S\}$ bijectively to $\R^d$ and one has 
\begin{align*}
    \Gamma_{i j}^n(z) = -\frac{2}{1 + |z|^2}(z_i \delta_j^n + z_j \delta_i^n - z^n \delta_{i j}).
\end{align*}
This choice of $\Psi$ is very convenient for us since $V_\ast$ is, up to a scaling factor of $\sqrt{d-2}$, the same as $\Psi^{-1}$.
\subsection{Similarity coordinates}
Since $U_{\Theta}^{T,X}$ is self-similar, it is standard to make the change of variables 
\begin{align*}
    (\tau,\xi) = \chi^{T,X}(t,x) = \left( -\log(T-t) + \log T, \frac{x-X}{T-t} \right)
\end{align*}
with inverse transformation
\begin{align*}
    (\chi^{T,X})^{-1}(\tau,\xi) = \left( T - T e^{-\tau}, X + T e^{-\tau} \xi  \right)
\end{align*}
for $T > 0$ and $X \in \R^d$.
This transformation maps the truncated backward light cone with vertex $(T,X)$ to an infinite cylinder with the blowup point shifted to infinity, i.e., 
\begin{align*}
    \chi^{T,X} : \mathcal{C}_{T,X} \to [0,\infty) \times \overline{\B^d}.
\end{align*}
Then, setting 
\begin{align*}
    v(\tau,\xi) := (\Psi \circ U)(T - T e^{-\tau},X + T e^{-\tau} \xi)
\end{align*}
transforms Eq.~\eqref{int wm eq} into
\begin{align}\label{int wm eq selfsim}
    0 &= \left(\pd_\tau^2 + \pd_\tau + 2 \xi^j \pd_{\xi^j}\pd_\tau - (\delta^{i j} - \xi^i \xi^j)\pd_{\xi^i} \pd_{\xi^j} + 2 \xi^j \pd_{\xi^j} \right)v^n(\tau,\xi)\\
    &\notag+ \Gamma_{i j}^n(v(\tau,\xi)) \left[(\pd_\tau + \xi^m \pd_{\xi^m})v^i(\tau,\xi) (\pd_\tau + \xi^m \pd_{\xi^m})v^j(\tau,\xi) - \pd^m v^i(\tau,\xi) \pd_m v^j(\tau,\xi) \right], \quad n=1,\ldots,d.
\end{align}
\subsection{Notation}
We denote by $|\cdot|$ the Euclidean norm on $\R^d$ and by $\cdot$ the Euclidean inner product on $\R^d$. We denote the open, resp.~closed, ball in $\R^d$ of radius $R > 0$ centered around $x \in \R^d$ by $\B_R^d(x), \overline{\B_R^d(x)}$, i.e., 
\begin{align*}
    \B_R^d(x) &= \{ y \in \R^d: |y - x| < R\}\\
    \overline{\B_R^d(x)} &= \{ y \in \R^d: |y - x| \leq R\}
\end{align*}
and we set $\B_R^d := \B_R^d(0)$ and $\B^d := \B_1^d$. For disks in $\C$ of radius $R$ around $z \in \C$ we use the notation $\disk_R(z),\overline{\disk_R(z)}$. We denote for $w \in \R$ the half-spaces
\begin{align*}
    \Half_w &= \{\lambda \in \C: \Real \lambda > w\}\\
    \overline{\Half_w} &= \{\lambda \in \C: \Real \lambda   \geq  w\},
\end{align*}
with $\Half := \Half_0$. We denote the $(d-1)$-dimensional unit sphere in $\R^d$ by $\S^{d-1}$ and its (not normalized) surface measure by $\sigma$.

For $T>0$ and $X \in \R^d$ we denote the truncated backward lightcone with vertex $(T,X)$ by $\mathcal{C}_{T,X}$, i.e.,
\begin{align*}
    \mathcal{C}_{T,X} = \{ (t,x) \in [0,T)\times \R^d: |x- X| \leq T - t  \}.
\end{align*}

The space of smooth, complex-valued functions on a ball $\B_R^d(x)$ are denoted by $C^\infty(\B_R^d)$. The space of smooth functions up to the boundary are denoted by $C^\infty(\overline{\B_R^d(x)})$, which can equivalently be defined as 
\begin{align*}
    C^\infty(\overline{\B_R^d(x)}) = \{f \in C^\infty(\B_R^d(x)): \pd^\alpha f \in L^\infty(\B_R^d) \quad \forall \alpha \in \N_0^d\}.
\end{align*}
For an integer $k \geq 0$, the $L^2$-based Sobolev space on $\B_R^d(x)$ measuring up to $k$ derivatives is denoted by $H^k(\B_R^d(x))$. The space $L^2(\S^{d-1})$ refers to the square-integrable functions with respect to the measure $\sigma$.

If we consider function spaces with values in $\C^m$ instead, we denote them as $C^\infty(\overline{\B_R^d(x)},\C^m)$ etc. All function spaces are by default for complex-valued functions, if we want to stress that something is real-valued, we denote this by $C^\infty(\overline{\B_R^d(x)},\R^m)$ etc.

For a function $f$ its Jacobi matrix is denoted by $D f $. If $f$ has values in $\C$, then we denote its gradient by $\nabla f$.

Lower-case bold face letters such as $\mathbf{f},\mathbf{g}$ denote 2-tuples of functions (that themselves may be vector-valued). Upper-case bold face letters such as $\mathbf{L},\mathbf{P}$ denote operators that act on 2-tuples of functions. For a closed operator $\mathbf{L}$ on a Hilbert space $\Hil$ with domain $\mathcal D(\mathbf L)$, its spectrum is denoted by $\sigma(\mathbf{L})$ and its resolvent set by $\rho(\mathbf{L})$. For $z \in \rho(\mathbf{L})$ the resolvent is denoted by $\mathbf{R}_{\mathbf{L}}(z) := (z \mathbf{I} - \mathbf{L})^{-1}$. Furthermore, for a (direct vector space) decomposition $\Hil = X \Dot{+} Y$, where the part of $\mathbf{L}$ in $X$ is well-defined (see \cite[p.~172-173]{Kat95}), we denote it by $\mathbf{L}_X$. On the other hand, if $X \subseteq \Hil$ is any subspace such that $X \subseteq \D(\mathbf{L})$, then the restriction $\mathbf{L}|_{X}:X \to \Hil$ is defined as $\D(\mathbf{L}|_X) = X$ and $\mathbf{L}|_X \mathbf{f} := \mathbf{L} \mathbf{f}$ for $\mathbf{f} \in X$.
For a bounded operator $\mathbf{L}:\Hil_1 \to \Hil_2$ between Hilbert spaces $\Hil_1,\Hil_2$ the operator norm is denoted by $\|\mathbf{L}\|_{\Hil_1 \to \Hil_2}$. The identity operator (on some implicitly understood space) is denoted by $\mathbf{I}$.

If we have two quantities $A$ and $B$, then $A \lesssim B$ means there exists some constant $C > 0$ such that $A \leq C B$ holds. The constant $C$ is only allowed to depend on parameters that do not matter for the estimate and/or were fixed in advance (for example the dimension $d$). If we want to stress that $C$ \emph{does not} depend on some particular parameter, we quantify it after the inequality. For example, if $A(d,k),B(d,k)$ depend on the parameters $d$ and $k$ and there exists a constant $C(d)$ that only depends on $d$, but not on $k$, such that $A(d,k) \leq C(d) B(d,k)$, then this statement would be: For all $d$ the inequality $A(d,k) \lesssim B(d,k)$ holds for all $k$. We have the usual conventions $A \gtrsim B :\iff B \lesssim A$ and $A \simeq B:\iff A \lesssim B \wedge A \gtrsim B$.
\subsection{Outline of the proof}
We build upon a functional-theoretic framework that was already used to prove stability results for supercritical wave maps in corotational symmetry. However, leaving the class of corotational functions introduces new obstacles that necessitate on the one hand a more intricate spectral analysis and on the other hand a more robust linear framework.
We give a rough outline of the proof.
\begin{itemize}
    \item After switching to the intrinsic formulation of the wave maps equation, we reformulate the equation in the similarity coordinates
    \begin{align*}
        \tau = - \log(T - t) + \log T,\quad \xi = \frac{x-X}{T-t},
    \end{align*}
    which transform the solutions $U_\Theta^{T,X}$ into solutions $v_\Theta$ that are independent of the new time variable $\tau$. In a first order formulation this gives rise to stationary solutions $\mathbf{v}_\Theta$ of 
    \begin{align}\label{outline first order unperturbed}
        \pd_\tau \mathbf{v}(\tau) = \mathbf{L} \mathbf{v}(\tau) + \mathbf{F}(\mathbf{v}(\tau)),
    \end{align}
    where $\mathbf{v}$ is a curve on $[0,\infty)$ into the Hilbert space $\Hil^k = H^k\times H^{k-1}(\B^d,\C^d)$, $\mathbf{L}$ is a spatial differential operator corresponding to the free wave operator (henceforth called \emph{free generator}) and $\mathbf{F}$ is the nonlinearity. We then make the perturbation ansatz $\mathbf{v}(\tau) = \mathbf{v}_\Theta + \mathbf{u}(\tau)$. Inserting this into Eq.~\eqref{outline first order unperturbed} yields
    \begin{align}\label{outline first order perturbed}
        \pd_\tau \mathbf{u}(\tau) = \mathbf{L}\mathbf{u}(\tau) + \mathbf{L}_\Theta \mathbf{u}(\tau) + \mathbf{N}_\Theta(\mathbf{u}(\tau)),
    \end{align}
    where $\mathbf{L}_\Theta$ is a linear operator and $\mathbf{N}_\Theta$ is a nonlinearity.
    \item We first analyze the behavior of the linear equation $\pd_\tau \mathbf{u}(\tau) = (\mathbf{L} + \mathbf{L}_0')\mathbf{u}(\tau)$.
    The standard approach would suggest to consider $\mathbf{L} + \mathbf{L}_0'$ as a perturbation of $\mathbf{L}$, in a way that essentially allows to carry over decay estimates of the semigroup generated by $\mathbf{L}$ to the one generated by $\mathbf{L} + \mathbf{L}_0'$. However, this fails due to two reasons.
    \begin{enumerate}
        \item $\mathbf{L}_0'$ is not compact. This would be needed to obtain information on the spectrum $\sigma(\mathbf{L} + \mathbf{L}_0')$.
        \item $\mathbf{L}_0' \Vector{f_1}{f_2}$ depends on first order derivatives of $f_1$ and on the second component $f_2$. This precludes the establishment of resolvent estimates in a standard way.
    \end{enumerate}
    Both of these issues arise since the nonlinearity of Eq.~\eqref{ex wm eq} depends on first order derivatives of the function, in contrast to the corotational case.
    To overcome these issues, we borrow from \cite{MerRapRodSze22} and \cite{GhoLiuMas25}.
    We rely on \emph{dissipative estimates} of the free generator, specifically the ``homogeneous'' one
    \begin{align}
        \Real (\mathbf{L} \mathbf{f} | \mathbf{f})_{\Dot{\Hil}^k} &\leq \left( \frac{d}{2} - k \right)\|\mathbf{f}\|_{\Dot{\Hil}^k}^2\label{outline free homog diss}.
    \end{align}
    Now the simple but crucial observation is that commuting $\mathbf{L}_0'$ with $k$ derivatives does not induce growth with respect to $k$ in the highest derivative. This yields an estimate of the form
    \begin{align*}
        \Real ((\mathbf{L} + \mathbf{L}_0')\mathbf{f} | \mathbf{f})_{\Dot{\Hil}^k} \leq \left(\frac{d}{2} + C - k\right) \|\mathbf{f}\|_{\Dot{\Hil}^k}^2 + c_k \|\mathbf{f}\|_{\Hil^{k-1}}^2,
    \end{align*}
    where $C$ does not depend on $k$. Hence, taking sufficiently many derivatives yields a negative constant for the highest-order derivative term. One can now trade one derivative for smallness, where the delicate part is to show that it suffices to pay with a finite-rank projection and not a full $L^2$-term. This allows one to upgrade the above bound to a dissipative estimate of the form
    \begin{align*}
        \Real ((\mathbf{L} + \mathbf{L}_0' - C_k \mathbf{P}) \mathbf{f} | \mathbf{f})_{\Hil^k} \leq - w_k \|\mathbf{f}\|_{\Hil^k}^2,
    \end{align*}
    where $C_k,w_k > 0$ and $\mathbf{P}$ is a bounded, finite-rank projection. Using the Lumer-Philips Theorem this shows that $\mathbf{L} + \mathbf{L}_0' - C_k \mathbf{P}$ generates a semigroup with growth bound at most $-w_k$. 
    \item The next step concerns the analysis of the point spectrum of  $\mathbf{L} + \mathbf{L}_0'$. Since eigenvalues of $\mathbf{L} + \mathbf{L}_0'$ that lie in the right-half plane $\overline{\Half}$ correspond to unstable directions of the linear evolution, one has to first exclude these. The points $0$, $1$ are in fact eigenvalues, but these simply reflect the presence of the continuous symmetries of the equation and can be handled. One thus has to prove that no other eigenvalues exist in $\overline{\Half}$ and that all eigenfunctions arise via the symmetries. This is exactly the assertion of \emph{mode stability}. For $d =3$ this was already established in \cite{WeiKocDon25} and in our companion paper \cite{DonMos26} we prove mode stability for the remaining dimensions $d \geq 4$. The eigenvalue equation $(\mathbf{L} +\mathbf{L}_0')\mathbf{f} = \lambda \mathbf{f}$ corresponds to a system of coupled PDEs, which is a major complication compared to the usual situation of non-coupled spectral ODEs. Decoupling these equations has a surprising connection to Lie algebra theory which we systematically exploit for all $d \geq 3$ in \cite{DonMos26}. This step includes a spherical harmonics decomposition, allowing one to reduce the problem to a system of coupled spectral ODEs. This system is then decoupled by revealing a connection to a Casimir operator of a certain irreducible representation of the Lie algebra $\mathfrak{so}(d)$. Even after these simplifications this is still a hard spectral problem due to the nonself-adjoint nature of the operator which precludes the application of standard Sturm-Liouville theory. In \cite{WeiKocDon25,DonMos26}, the mode stability problem is finally solved by employing the \emph{quasi-solution method} pioneered in \cite{CosDonGloHua16, CosDonGlo17}, see also \cite{Don24}.
    \item Next, by exploiting the compactness of $C_k \mathbf{P}$, we deduce that $\sigma(\mathbf{L} + \mathbf{L}_0') \cap \overline{\Half_{-\frac{w_k}{2}}}$ differs from $\sigma(\mathbf{L} + \mathbf{L}_0' - C_k \mathbf{P}) \cap \overline{\Half_{-\frac{w_k}{2}}}$ by at most finitely many eigenvalues and mode stability implies that $\sigma(\mathbf{L} + \mathbf{L}_0')\cap \overline{\Half} = \{0,1\}$. These spectral points can be projected away via the spectral projection $\mathbf{I} - \mathbf{P}_0$. At this point it is also necessary to prove that the spectral projection has the ``correct'' rank, which does not follow automatically due to the lack of self-adjointness. As in \cite{ChaDonGlo17, Glo25}, we use ODE logic to reduce this issue to mode stability. Furthermore, by exploiting \eqref{outline free homog diss} once again, we prove resolvent estimates for large $\lambda \in \overline{\Half}$ as in \cite{GhoLiuMas25}. On the stable subspace $\rg(\mathbf{I} - \mathbf{P}_0)$ we can handle compact regions of $\overline{\Half}$ due to the analyticity of the reduced resolvent on that set. Then the Gearhart-Prüss-Greiner Theorem shows that the semigroup generated by $\mathbf{L} + \mathbf{L}_0'$ restricted to its stable subspace decays exponentially. Finally, by a spectral perturbation argument as in \cite{DonSch16}, we extend these results to the case of small $|\Theta|$ which concludes the linear stability theory.
    \item Obtaining the full nonlinear stability is then standard, using fixed point arguments together with a Lyapunov-Perron argument in order to deal with the symmetries along the lines of \cite{Don11, DonSch16, DonOst24}.
\end{itemize}

\section{Linear stability}
\subsection{Definitions}
Since we are considering at times vector-valued functions, we introduce some notation. Also, we assume always $d \geq 3$. We set for integers $m \geq 1$ and $k \geq 0$
\begin{align*}
    (f|g)_{L^2(\B^d,\C^m)} &:= \int_{\B^d} f^j(\xi) \overline{g_j(\xi)} d\xi\\
    (f|g)_{H^k(\B^d,\C^m)} &:= \sum_{\substack{\alpha \in \N_0^d\\|\alpha|\leq k}}(\pd^\alpha  f | \pd^\alpha g )_{L^2(\B^d,\C^m)}\\
    (f|g)_{\Dot{H}^k(\B^d,\C^m)} &:= \sum_{\substack{\alpha \in \N_0^d\\|\alpha|= k}}(\pd^\alpha  f | \pd^\alpha g )_{L^2(\B^d,\C^m)}
\end{align*}
and similarly 
\begin{align*}
    (f|g)_{L^2(\S^{d-1},\C^m)} &:= \int_{\S^{d-1}} f^j(\omega) \overline{g_j(\omega)} d \sigma(\omega)\\
    (f|g)_{H^k(\S^{d-1},\C^m)} &:= \sum_{\substack{\alpha \in \N_0^d\\|\alpha|\leq k}}(\pd^\alpha  f | \pd^\alpha g )_{L^2(\S^{d-1},\C^m)}\\
    (f|g)_{\Dot{H}^k(\S^{d-1},\C^m)} &:= \sum_{\substack{\alpha \in \N_0^d\\|\alpha|= k}}(\pd^\alpha  f | \pd^\alpha g )_{L^2(\S^{d-1},\C^m)}
\end{align*}
for functions $f,g \in C^\infty(\overline{\B^d},\C^m)$, where $\sigma $ is the standard surface measure on $\S^{d-1}$. The induced (semi-)norms are denoted by $$\|f\|_{H^k(\B^d,\C^m)} = \sqrt{(f | f)_{H^k(\B^d,\C^m)}}$$ and similarly for the other inner products. If $m =1$, we just write $H^k(\B^d)$ instead of $H^k(\B^d,\C^m)$.

We also need inner products for tuple of functions. We define for $\mathbf{f} = \Vector{f_1}{f_2}$ and $\mathbf{g} = \Vector{g_1}{g_2}$
\begin{align*}
    ( \mathbf{f}|\mathbf{g})_{\Hil^1} := (f_1 | g_1)_{\Dot{H^1}(\B^d,\C^m)} + (f_2 | g_2)_{L^2(\B^d,\C^m)} + (f_1 | g_1)_{L^2(\S^{d-1},\C^m)}
\end{align*}
for $f_1,f_2,g_1,g_2 \in C^\infty(\overline{\B^d},\C^m)$. To incorporate higher order derivatives, we define for $i \in \{1,\ldots,d\}$
\begin{align*}
    \mathbf{D}_i \Vector{f_1}{f_2} :=\Vector{\pd_i f_1}{\pd_i f_2},
\end{align*}
where $\pd_i$ acts component-wise. Then we define for $k \geq 2$
\begin{align*}
    (\mathbf{f}|\mathbf{g})_{\Dot{\Hil}^k} &:= (\mathbf{D}^{i_1}\ldots \mathbf{D}^{i_{k-1}} \mathbf{f}|\mathbf{D}_{i_1}\ldots \mathbf{D}_{i_{k-1}}\mathbf{g})_{\Hil^1}\\
    (\mathbf{f}|\mathbf{g})_{\Hil^k} &:= (\mathbf{f}|\mathbf{g})_{\Dot{\Hil}^k} + (\mathbf{f}|\mathbf{g})_{\Hil^1}
\end{align*}
for $\mathbf{f},\mathbf{g} \in C^\infty(\overline{\B^d},\C^m)^2$ with respective induced (semi-)norms. These inner products are (somewhat crude) adaptations from \cite{Ost24,DonOst24}.

Based on the equivalence
\begin{align*}
    \|f\|_{H^1(\B^d)} \simeq \|\nabla f\|_{L^2(\B^d)} + \|f\|_{L^2(\S^{d-1})} \quad \forall f \in C^\infty(\overline{\B^d}),
\end{align*}
see for example \cite[Lemma 2.1]{Ost24}, which readily generalizes to the vector valued case, together with
\begin{align*}
    \|f\|_{H^k(\B^d,\C^m)} \simeq \|f\|_{\Dot{H}^k(\B^d,\C^m)} + \|f\|_{L^2(\B^d,\C^m)} \quad \forall f \in C^\infty(\overline{\B^d},\C^m),
\end{align*}
(see \autoref{standard results prop}) one has for all $k \geq 2$ the following equivalences
\begin{align*}
    \left\| \Vector{f_1}{f_2} \right\|_{\Hil^1} & \simeq \|f_1\|_{H^1(\B^d,\C^m)} + \|f_2\|_{L^{2}(\B^d,\C^m)}\\
    \left\| \Vector{f_1}{f_2} \right\|_{\Hil^k} & \simeq \|f_1\|_{H^k(\B^d,\C^m)} + \|f_2\|_{H^{k-1}(\B^d,\C^m)} \\
    \left\| \Vector{f_1}{f_2} \right\|_{\Dot{\Hil}^k} & \simeq  \|f_1\|_{\Dot{H}^k(\B^d,\C^m)} + \|f_1\|_{\Dot{H}^{k-1}(\B^d,\C^m)} +   \|f_2\|_{\Dot{H}^{k-1}(\B^d,\C^m)}
\end{align*}
for all $f_1,f_2 \in C^\infty(\overline{\B^d},\C^m)$.

Motivated by these equivalences, we define for $k \geq 1$ the space $\Hil^k$ as the completion of $C^\infty(\overline{\B^d},\C^m)^2$ under the norm $\|\cdot\|_{\Hil^k}$, which coincides with the space $H^k\times H^{k-1}(\B^d,\C^m)$.

\subsection{First order formulation}
By setting 
\begin{align*}
    \mathbf{v}(\tau)(\xi) := \Vector{v(\tau,\xi)}{(\pd_\tau + \xi^j \pd_{\xi^j})v(\tau,\xi)},
\end{align*}
we can rewrite Eq.~\eqref{int wm eq selfsim} as the first order system
\begin{align}\label{unperturbed first order sys}
    \pd_\tau \mathbf{v}(\tau) = \Tilde{\mathbf{L}} \mathbf{v}(\tau) + \mathbf{F}(\mathbf{v}(\tau)),
\end{align}
where
\begin{align*}
    \Tilde{\mathbf{L}}\Vector{f_1}{f_2} := \Vector{-\Lambda f_1 + f_2}{\Delta f_1  -\Lambda f_2 - f_2}
\end{align*}
is a (formal) differential operator and
\begin{align*}
    \mathbf{F}\left(  \Vector{f_1}{f_2} \right) := \Vector{0}{-F(f_1,D f_1,f_2)}
\end{align*}
where $(\Lambda f)(\xi) := \xi^j \pd_{\xi^j} f(\xi)$ and
\begin{align*}
    F^n(f_1,D f_1, f_2) := \Gamma_{i j}^n(f_1)(f_2^i f_2^j - \pd^m f_1^i \pd_m f_1^j)
\end{align*}
for $f_1,f_2 \in C^\infty(\overline{\B^d},\C^d)$.

Since the equation 
\begin{align*}
    \pd_\tau \mathbf{v}(\tau) = \Tilde{\mathbf{L}}\mathbf{v}(\tau)
\end{align*}
is formally equivalent to $v_1$ solving the free wave equation in similarity coordinates, we call $\Tilde{\mathbf{L}}$ the \emph{free generator}.
\subsection{Properties of the free generator}
We analyze some crucial properties of $\Tilde{\mathbf{L}}$. Since this requires not a lot of additional effort, we allow here for $\C^m$-valued functions for any integer $m \geq 1$.
\begin{lem}
Let $m \geq 1$ be an integer. We have
\begin{align}\label{free inhomog diss}
    \Real(\Tilde{\mathbf{L}}\mathbf{f}| \mathbf{f}  )_{\Hil^k} \leq \left( \frac{d}{2} - 1  \right)\|\mathbf{f}\|_{\Hil^k}^2, \quad \mathbf{f} \in C^\infty(\overline{\B^d},\C^m)^2
\end{align}
for $k \geq 1$ and
\begin{align}\label{free homog diss}
    \Real(\Tilde{\mathbf{L}}\mathbf{f}| \mathbf{f}  )_{\Dot{\Hil}^k} \leq \left( \frac{d}{2} - k  \right)\|\mathbf{f}\|_{\Dot{\Hil}^k}^2, \quad \mathbf{f} \in C^\infty(\overline{\B^d},\C^m)^2
\end{align}
for $k \geq 2$.
\end{lem}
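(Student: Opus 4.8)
The plan is to work with a single function $\mathbf{f} = \Vector{f_1}{f_2} \in C^\infty(\overline{\B^d},\C^m)^2$ and compute the real part of $(\Tilde{\mathbf{L}}\mathbf{f}|\mathbf{f})$ in the relevant inner product directly, integrating by parts to convert the differential operators into boundary terms plus favorable bulk terms. The two estimates have essentially the same structure, so I would first treat the base case $k=1$ carefully and then handle $k \geq 2$ by commuting derivatives.

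For \eqref{free inhomog diss} with $k=1$, I would expand
$$(\Tilde{\mathbf{L}}\mathbf{f}|\mathbf{f})_{\Hil^1} = (-\Lambda f_1 + f_2 | f_1)_{\Dot H^1} + (\Delta f_1 - \Lambda f_2 - f_2 | f_2)_{L^2} + (-\Lambda f_1 + f_2 | f_1)_{L^2(\S^{d-1})}.$$
The key computations are: (i) the cross terms $(f_2|f_1)_{\Dot H^1}$ and $(\Delta f_1|f_2)_{L^2}$ cancel after an integration by parts of the Laplacian (Green's identity produces $-( \nabla f_1|\nabla f_2)_{L^2} + \int_{\S^{d-1}} (\pd_\nu f_1)\overline{f_2}$, and the normal derivative on $\B^d$ is exactly $\Lambda$, which will interact with the boundary term from $(f_2|f_1)_{L^2(\S^{d-1})}$); (ii) the quadratic terms $\Real(-\Lambda f_1|f_1)_{\Dot H^1}$, $\Real(-\Lambda f_2 - f_2|f_2)_{L^2}$, and $\Real(-\Lambda f_1|f_1)_{L^2(\S^{d-1})}$ each get evaluated via the identity $\Real(\Lambda g|g)_{L^2(\B^d)} = -\tfrac{d}{2}\|g\|_{L^2(\B^d)}^2 + \tfrac12\|g\|_{L^2(\S^{d-1})}^2$ (from $\Lambda = \xi\cdot\nabla$ and the divergence theorem) and its analogue on $\Dot H^1$, which picks up an extra $-\|g\|_{\Dot H^1}^2$ from commuting $\Lambda$ with $\pd_i$. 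Collecting everything, the boundary contributions from $\Dot H^1$ and from $L^2(\S^{d-1})$ should combine so that the net coefficient on $\|\mathbf{f}\|_{\Hil^1}^2$ is $\tfrac{d}{2}-1$; one should check that all leftover boundary terms are either nonpositive or exactly cancel.

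For \eqref{free homog diss} with $k \geq 2$, I would use the definition $(\mathbf{f}|\mathbf{g})_{\Dot\Hil^k} = (\mathbf{D}^{i_1}\cdots\mathbf{D}^{i_{k-1}}\mathbf{f}|\mathbf{D}_{i_1}\cdots\mathbf{D}_{i_{k-1}}\mathbf{g})_{\Hil^1}$ and commute the $k-1$ derivatives $\pd_{i_1}\cdots\pd_{i_{k-1}}$ through $\Tilde{\mathbf{L}}$. The crucial structural fact is $[\pd_i,\Lambda] = \pd_i$, so each derivative hitting $\Lambda f_1$ or $\Lambda f_2$ produces $\Lambda(\pd_i f_1)$ plus one extra copy of $\pd_i f_1$; after $k-1$ commutations one obtains $\Tilde{\mathbf{L}}$ acting on $\mathbf{D}^{i_1}\cdots\mathbf{D}^{i_{k-1}}\mathbf{f}$ plus exactly $(k-1)$ lower-order correction terms of the form $-\Vector{\pd_{i_1}\cdots\pd_{i_{k-1}}f_1}{\pd_{i_1}\cdots\pd_{i_{k-1}}f_2 \text{ (first component cancels)}}$; more precisely the correction is $-(k-1)$ times the "identity-like" piece which contributes $-(k-1)\|\cdot\|$ to the real part. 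Since $\Delta$ commutes with $\pd_i$ exactly, no new terms arise there. Applying the $k=1$ estimate \eqref{free inhomog diss} — or rather the homogeneous version of the same computation, avoiding the $L^2$ lower-order terms — to the differentiated tuple $\mathbf{D}^{i_1}\cdots\mathbf{D}^{i_{k-1}}\mathbf{f}$ and summing over $i_1,\dots,i_{k-1}$, the coefficient becomes $\tfrac{d}{2}-1-(k-1) = \tfrac{d}{2}-k$, which is the claim.

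The main obstacle I expect is the bookkeeping of boundary terms on $\S^{d-1}$: getting the signs right so that the $L^2(\S^{d-1})$ piece of the $\Hil^1$ inner product exactly absorbs the boundary contributions from integrating $\Delta$ by parts and from the $\Lambda$-integration-by-parts identities, without leaving a stray positive multiple of $\|f_1\|_{L^2(\S^{d-1})}^2$ or $\|f_2\|_{L^2(\S^{d-1})}^2$ that would spoil the clean constant $\tfrac{d}{2}-1$. A secondary subtlety is making sure that when commuting derivatives for $k\geq 2$ one genuinely lands back on an expression to which the $k=1$-type computation applies verbatim (in particular that the mixed derivative tuples $\mathbf{D}^{i_1}\cdots\mathbf{D}^{i_{k-1}}\mathbf{f}$ are themselves in $C^\infty(\overline{\B^d},\C^m)^2$ and that the commutator count of $k-1$ is exact, with no combinatorial factors), so that the degradation in the constant is precisely one unit per derivative.
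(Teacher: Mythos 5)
Your overall strategy---expand the $\Hil^1$ inner product, integrate $\Lambda$ and $\Delta$ by parts using the divergence theorem, exploit $[\pd_i,\Lambda]=\pd_i$ to get the commutation identity $\mathbf{D}^\alpha\Tilde{\mathbf{L}}\mathbf{f}=\Tilde{\mathbf{L}}\mathbf{D}^\alpha\mathbf{f}-|\alpha|\mathbf{D}^\alpha\mathbf{f}$, and then reduce $k\geq 2$ to the $\Hil^1$ case---is precisely the paper's approach, and the $k\geq 2$ step as you describe it is correct (you apply the full $\Hil^1$ estimate, not a ``homogeneous'' variant of it; the $\Dot\Hil^k$ pairing already lands on the $\Hil^1$ pairing of the differentiated tuples).

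The gap is in the boundary bookkeeping for $k=1$, which you flag as the obstacle but misdiagnose. After cancelling the bulk cross terms and collecting the quadratic $\Lambda$-terms, one is left with the bulk term $\bigl(\tfrac{d}{2}-1\bigr)\bigl(\|f_1\|_{\Dot H^1(\B^d)}^2+\|f_2\|_{L^2(\B^d)}^2\bigr)$, the negative boundary quadratics $-\tfrac12\|f_1\|_{\Dot H^1(\S^{d-1})}^2-\tfrac12\|f_2\|_{L^2(\S^{d-1})}^2$, and the cross boundary term $\Real\int_{\S^{d-1}}(\Lambda f_1^j\overline{f_{2j}}-\Lambda f_1^j\overline{f_{1j}}+f_2^j\overline{f_{1j}})\,d\sigma$. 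The latter neither cancels nor is nonpositive. The paper handles it via the elementary inequality $\Real(a\bar b-a\bar c+b\bar c)\leq\tfrac12(|a|^2+|b|^2+|c|^2)$ (from $|a-b+c|^2\geq 0$) and the pointwise Cauchy--Schwarz bound $|\Lambda f_1(\omega)|\leq|\nabla f_1(\omega)|$ on $\S^{d-1}$. This absorbs the negative quadratics exactly and \emph{does} leave behind a stray positive term $+\tfrac12\|f_1\|_{L^2(\S^{d-1})}^2$. Contrary to your worry, this does \emph{not} spoil the constant: $\|f_1\|_{L^2(\S^{d-1})}^2$ is itself a summand of $\|\mathbf{f}\|_{\Hil^1}^2$, and the extra $\tfrac12$ is dominated by the target coefficient $\tfrac{d}{2}-1$ precisely because $d\geq 3$. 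If you insist, as you propose, that the leftovers ``are either nonpositive or exactly cancel,'' you will conclude that the estimate fails; the correct move is to accept the positive leftover and absorb it using the standing assumption on $d$.
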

\begin{proof}
For $\mathbf{f} = \Vector{f_1}{f_2} \in C^\infty(\overline{\B^d},\C^m)^2$ one computes with the divergence theorem
\begin{align*}
    \Real (\Tilde{\mathbf{L}} \mathbf{f} | \mathbf{f})_{\Hil^1} &= \left( \frac{d}{2} -1 \right)\left(\|f_1\|_{\Dot{H}^1(\B^d,\C^m)}^2 + \|f_2\|_{L^2(\B^d,\C^m)}^2 \right)\\
    &-\frac{1}{2} \|f_1\|_{\Dot{H}^1(\S^{d-1},\C^m)}^2 - \frac{1}{2}\|f_2\|_{L^2(\S^{d-1},\C^m)}^2\\
    &+ \Real \int_{\S^{d-1}} (\Lambda f_1^j \overline{f_{2 j}} - \Lambda f_1^j \overline{f_{1 j}} + f_2^j \overline{f_{1 j}}) d \sigma.
\end{align*}
We apply the inequality
\begin{align*}
    \Real (a \overline{b} - a \overline{c} + b \overline{c}) \leq \frac{1}{2} (|a|^2 + |b|^2 + |c|^2)
\end{align*}
to obtain
\begin{align*}
    \Real \int_{\S^{d-1}} (\Lambda f_1^j \overline{f_{2 j}} - \Lambda f_1^j \overline{f_{1 j}} + f_2^j \overline{f_{1 j}}) d \sigma \leq \frac{1}{2} \int_{\S^{d-1}} (|\Lambda f_1|^2 + |f_2|^2 + |f_1|^2) d \sigma.
\end{align*}
For $j \in \{1,\ldots,m\}$ we have 
\begin{align*}
    |\Lambda f_1^j(\omega)|^2 &= |\omega ^i \pd_i f_1^j(\omega)|^2 \leq \underbrace{|\omega|^2}_{=1} |\nabla f_1^j(\omega)|^2 = |\nabla f_1^j(\omega)|^2
\end{align*}
for all $\omega \in \S^{d-1}$. Summing over $j$ and integrating hence yields
\begin{align*}
    \frac{1}{2}\int_{\S^{d-1}} |\Lambda f_1|^2 d \sigma \leq \frac{1}{2} \int_{\S^{d-1}} \pd^i f_1^j \overline{\pd_i f_{1 j}} d \sigma = \frac{1}{2} \|f_1\|_{\Dot{H}^1(\S^{d-1},\C^m)}^2.
\end{align*}
Inserting these estimates gives in total
\begin{align*}
    \Real (\Tilde{\mathbf{L}} \mathbf{f} | \mathbf{f})_{\Hil^1} &\leq \left( \frac{d}{2} -1 \right)\left(\|f_1\|_{\Dot{H}^1(\B^d,\C^m)}^2 + \|f_2\|_{L^2(\B^d,\C^m)}^2 \right) + \frac{1}{2} \|f_1\|_{L^2(\S^{d-1},\C^m)}^2\\
    &\leq \left( \frac{d}{2} - 1 \right)\|\mathbf{f}\|_{\Hil^1}^2,
\end{align*}
since $d \geq 3$.

For the higher derivatives, we note the identity
\begin{align}\label{free comm rel}
    \mathbf{D}^\alpha \Tilde{\mathbf{L}} \mathbf{f} = \Tilde{\mathbf{L}}\mathbf{D}^\alpha  \mathbf{f} - |\alpha|\mathbf{D}^\alpha  \mathbf{f}
\end{align}
for all $\mathbf{f} \in C^\infty(\overline{\B^d},\C^m)^2$ and multi-indices $\alpha \in \N_0^d$, where
\begin{align*}
    \mathbf{D}^\alpha := (\mathbf{D}_1)^{\alpha_1} \ldots (\mathbf{D}_d)^{\alpha_d}.
\end{align*}
We compute
\begin{align*}
    \Real (\Tilde{\mathbf{L}} \mathbf{f}  | \mathbf{f})_{\Dot{\Hil}^k} &= \Real(\mathbf{D}^{i_1} \ldots \mathbf{D}^{i_{k-1}}\Tilde{\mathbf{L}}  \mathbf{f} | \mathbf{D}_{i_1} \ldots \mathbf{D}_{i_{k-1}} \mathbf{f}  )_{\Hil^1}\\
    &\overset{\eqref{free comm rel}}{=} \underbrace{\Real(\Tilde{\mathbf{L}}\mathbf{D}^{i_1} \ldots \mathbf{D}^{i_{k-1}}  \mathbf{f} | \mathbf{D}_{i_1} \ldots \mathbf{D}_{i_{k-1}} \mathbf{f}  )_{\Hil^1}}_{\leq (\frac{d}{2} - 1)(\mathbf{D}^{i_1} \ldots \mathbf{D}^{i_{k-1}} \mathbf{f} | \mathbf{D}_{i_1} \ldots \mathbf{D}_{i_{k-1}} \mathbf{f}  )_{\Hil^1}} \\
    &-(k-1) \Real(\mathbf{D}^{i_1} \ldots \mathbf{D}^{i_{k-1}} \mathbf{f} | \mathbf{D}_{i_1} \ldots \mathbf{D}_{i_{k-1}} \mathbf{f}  )_{\Hil^1}\\
    &\leq\left( \frac{d}{2} - k \right) (\mathbf{D}^{i_1} \ldots \mathbf{D}^{i_{k-1}} \mathbf{f} | \mathbf{D}_{i_1} \ldots \mathbf{D}_{i_{k-1}} \mathbf{f}  )_{\Hil^1}\\
    &= \left( \frac{d}{2} - k \right) \|\mathbf{f}\|_{\Dot{\Hil}^k}^2
\end{align*}
for $k \geq 2$, where we used the dissipative estimate for $\Hil^1$. Finally we get
\begin{align*}
    \Real ( \Tilde{\mathbf{L}}\mathbf{f} | \mathbf{f})_{\Hil^k} &= \Real ( \Tilde{\mathbf{L}}\mathbf{f} | \mathbf{f})_{\Dot{\Hil}^k} + \Real ( \Tilde{\mathbf{L}}\mathbf{f} | \mathbf{f})_{\Hil^1}\\
    &\leq \left(\frac{d}{2 } - k\right)\|\mathbf{f}\|_{\Dot{\Hil}^k}^2 +  \left(\frac{d}{2 } - 1\right) \|\mathbf{f}\|_{\Hil^1}^2\\
    &\leq \left( \frac{d}{2} - 1 \right)\|\mathbf{f}\|_{\Hil^k}^2
\end{align*}
for all $k \geq 2$.
\end{proof}
We can now conclude that the closure of $\Tilde{\mathbf{L}}$ generates a semigroup.
\begin{prop}\label{free semigroup}
Let $m,k \geq 1$ be integers. The operator $\Tilde{\mathbf{L}} : \D(\Tilde{\mathbf{L}}) \subseteq \Hil^k \to \Hil^k$ with domain 
\begin{align*}
    \D(\Tilde{\mathbf{L}}):= C^\infty(\overline{\B^d},\C^m)^2 
\end{align*}
is closable and its closure $\mathbf{L}$ generates a strongly-continuous semigroup $(\mathbf{S}(\tau))_{\tau \geq 0}$, which satisfies
\begin{align*}
    \|\mathbf{S}(\tau) \mathbf{f} \|_{\Hil^k} \leq e^{(\frac{d}{2} - 1)\tau} \|\mathbf{f}\|_{\Hil^k}
\end{align*}
for all $\mathbf{f} \in \Hil^k$.
\end{prop}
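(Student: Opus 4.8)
The plan is to invoke the Lumer--Phillips generation theorem in the form: a densely defined dissipative operator $A$ on a Hilbert space for which $\rg(\lambda_0\mathbf I - A)$ is dense for some $\lambda_0>0$ is automatically closable, and its closure generates a contraction semigroup. I would apply this to the shifted operator $\Tilde{\mathbf L}-\left(\tfrac d2-1\right)\mathbf I$ on $\Hil^k$. Density of the domain $C^\infty(\overline{\B^d},\C^m)^2$ in $\Hil^k$ is immediate since $\Hil^k$ is \emph{defined} as the corresponding completion, and dissipativity of the shifted operator is exactly estimate \eqref{free inhomog diss}. So the whole proof reduces to verifying the range condition: for some real $\lambda_0>\tfrac d2-1$ the range $\rg(\lambda_0\mathbf I-\Tilde{\mathbf L})$ is dense in $\Hil^k$.

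For the range condition I would argue algebraically. Fix such a $\lambda_0$; given $\mathbf g=\Vector{g_1}{g_2}$ we seek $\mathbf f=\Vector{f_1}{f_2}\in C^\infty(\overline{\B^d},\C^m)^2$ with $(\lambda_0\mathbf I-\Tilde{\mathbf L})\mathbf f=\mathbf g$, i.e.
\begin{align*}
    (\lambda_0+\Lambda)f_1-f_2=g_1,\qquad (\lambda_0+1+\Lambda)f_2-\Delta f_1=g_2.
\end{align*}
Eliminating $f_2=(\lambda_0+\Lambda)f_1-g_1$ leaves the single second-order equation
\begin{align*}
    \bigl[(\lambda_0+1+\Lambda)(\lambda_0+\Lambda)-\Delta\bigr]f_1=g_2+(\lambda_0+1+\Lambda)g_1=:h.
\end{align*}
It suffices to solve this for $\mathbf g$ ranging over the dense subspace of $\Hil^k$ consisting of pairs of (componentwise) polynomials --- dense because polynomials are dense in every $H^s(\B^d)$. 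For polynomial $h$ the equation has a polynomial solution, found by a finite downward recursion on homogeneity degree: $\Lambda$ acts on a homogeneous polynomial of degree $n$ by multiplication by $n$, and $\Delta$ lowers the degree by $2$, so writing $f_1,h$ as sums of homogeneous pieces the degree-$n$ component of the left-hand side is $(\lambda_0+1+n)(\lambda_0+n)f_1^{(n)}-\Delta f_1^{(n+2)}$. Since $\lambda_0>0$ the prefactor never vanishes, so starting from the top degree $N=\deg h$ one determines $f_1^{(N)},f_1^{(N-1)},\dots,f_1^{(0)}$ in turn. Then $f_2=(\lambda_0+\Lambda)f_1-g_1$ is again a polynomial, hence $\mathbf f\in C^\infty(\overline{\B^d},\C^m)^2=\D(\Tilde{\mathbf L})$ solves $(\lambda_0\mathbf I-\Tilde{\mathbf L})\mathbf f=\mathbf g$. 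Thus $\rg(\lambda_0\mathbf I-\Tilde{\mathbf L})$ contains all polynomial pairs and is dense.

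Having the three hypotheses, the generation theorem applied to $\Tilde{\mathbf L}-\left(\tfrac d2-1\right)\mathbf I$ (note $(\lambda_0-\tfrac d2+1)\mathbf I-\bigl(\Tilde{\mathbf L}-\tfrac d2+1\bigr)=\lambda_0\mathbf I-\Tilde{\mathbf L}$ and $\lambda_0-\tfrac d2+1>0$) gives that $\Tilde{\mathbf L}-\left(\tfrac d2-1\right)\mathbf I$, and therefore $\Tilde{\mathbf L}$ itself, is closable, and that its closure $\mathbf L-\left(\tfrac d2-1\right)\mathbf I$ generates a contraction semigroup. Rescaling, $\mathbf L$ generates the semigroup $(\mathbf S(\tau))_{\tau\ge0}$ with $\|\mathbf S(\tau)\mathbf f\|_{\Hil^k}\le e^{(\frac d2-1)\tau}\|\mathbf f\|_{\Hil^k}$, as claimed.

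The substantive point is the range condition, and in particular the observation that $\Tilde{\mathbf L}$ maps the space of polynomial pairs into itself. I would flag two things to be careful about. First, the reduced operator $(\lambda_0+1+\Lambda)(\lambda_0+\Lambda)-\Delta$ is elliptic only on the open ball $\B^d$ and degenerates on $\partial\B^d$; this characteristic-boundary feature is precisely what the backward-lightcone geometry dictates, and is why no boundary condition at $|\xi|=1$ is imposed. Working with polynomial data avoids having to confront this degenerate elliptic boundary value problem directly. Second, the recursion must dodge the resonances $(\lambda_0+n)(\lambda_0+n+1)=0$; since $d\ge3$ we have $\tfrac d2-1\ge\tfrac12>0$, so for every admissible $\lambda_0$ all these scalar factors are strictly positive and no resonance occurs. (An alternative, more hands-on route would be to note that $\pd_\tau\mathbf v=\Tilde{\mathbf L}\mathbf v$ is just the free wave equation in similarity coordinates, construct $(\mathbf S(\tau))$ from the classical solution formula, and identify its generator; but the algebraic argument is shorter.)
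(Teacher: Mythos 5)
Your proof is correct and follows the same Lumer--Phillips framework as the paper: densely defined domain by construction of $\Hil^k$, dissipativity of the shifted operator from \eqref{free inhomog diss}, and then the range condition. The only substantive difference is how the range density is established. The paper outsources it, applying a cited result componentwise to conclude that $\tfrac d2\mathbf I-\Tilde{\mathbf L}$ has dense range, whereas you prove it directly: you observe that the polynomial pairs are dense in $\Hil^k$, that $\Tilde{\mathbf L}$ preserves the space of polynomial pairs, and that after eliminating $f_2$ the remaining scalar operator $(\lambda_0+1+\Lambda)(\lambda_0+\Lambda)-\Delta$ acts on polynomials in a graded, upper-triangular way (with respect to homogeneity degree) with nonvanishing diagonal factors $(\lambda_0+n)(\lambda_0+n+1)$, so the equation is solved by a finite downward recursion. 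This is a self-contained and quite transparent replacement for the cited ingredient; it also makes explicit the structural reason no boundary condition at $|\xi|=1$ is needed, namely that one can work entirely inside the invariant dense subspace of polynomials and never confront the degenerate boundary. Both routes are valid; the paper's is shorter given the reference, yours makes the proposition self-contained. One point worth keeping in mind when presenting the recursion: you should say explicitly that $f_1^{(n)}:=0$ for $n>N$, which is what makes the top step of the recursion well-posed; you do implicitly assume this by ``starting from the top degree,'' and it is correct, but spelling it out closes the argument cleanly.
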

\begin{proof}
The operator $\Tilde{\mathbf{L}}$ is densely defined on $\Hil^k$. Also, by applying \cite[Proposition 2.2]{Ost24} to each component, we conclude that $\frac{d}{2} \mathbf{I} - \Tilde{\mathbf{L}}$ has dense range. Thus \eqref{free inhomog diss} together with the Lumer-Philips Theorem \cite[p.~83, Theorem 3.15]{EngNag00} then yields the claim.
\end{proof}

\subsection{Full linear problem}
We apply the change of variables to the solution $U_\Theta^{T,X}$. This yields 
\begin{align*}
    v_\Theta(\xi) := v_\Theta(\tau,\xi) := (\Psi \circ U_\Theta^{T,X})(T - T e^{-\tau},X + Te^{-\tau} \xi),
\end{align*}
which does not depend on $\tau$, since $U_\Theta^{T,X}$ is self-similar, and no longer depends on $(T,X)$, since the transformation itself does. Now we observe that since the range of $U_0^{T,X}$ on $\mathcal{C}_{T,X}$, which is the same as the range of $V_\ast$ on $\overline{\B^d}$, is contained in the upper hemisphere and that $U_0^{T,X}$ is continuous, we find some $M > 0$ sufficiently small such that the corresponding range of $U_\Theta^{T,X}$ is contained in
\begin{align*}
    \left\{(\Tilde{y},y^{d+1}) \in \S^d : y^{d+1} >- \frac{1}{2} \right\}.
\end{align*}
for all $\Theta \in \overline{\B_M^{p(d)}}$. We will restrict ourselves to such $\Theta$ from now on. Since the stereographic projection with respect to the south pole is a diffeomorphism on this part of the sphere, together with the fact that $ v_0$ depends smoothly on its argument and the symmetries depend smoothly on $\Theta$, we conclude that the map 
\begin{align*}
    (\xi,\Theta) \mapsto v_\Theta(\xi)
\end{align*}
is smooth on $\overline{\B^d} \times \overline{\B_M^{p(d)}}$.

Since $v_\Theta$ is a time-independent solution of Eq.~\eqref{int wm eq selfsim}, this gives rise to
\begin{align*}
    \mathbf{v}_\Theta := \Vector{v_\Theta}{\Lambda v_\Theta},
\end{align*}
which is a static solution of Eq.~\eqref{unperturbed first order sys}. We want to perturb this, so we set 
\begin{align*}
    \mathbf{v} = \mathbf{v}_\Theta + \mathbf{u}.
\end{align*}
Then $\mathbf{v}$ formally solves Eq.~\eqref{unperturbed first order sys} if and only if $\mathbf{u}$ solves
\begin{align}\label{perturb first order sys}
    \pd_\tau \mathbf{u}(\tau) = \Tilde{\mathbf{L}} \mathbf{u}(\tau) + \mathbf{L}_{\Theta}'\mathbf{u}(\tau) + \mathbf{N}_\Theta(\mathbf{u}(\tau)),
\end{align}
where
\begin{align*}
    \mathbf{L}_\Theta' \mathbf{f} := \mathbf{F}'(\mathbf{v}_\Theta)\mathbf{f}
\end{align*}
and
\begin{align*}
    \mathbf{N}_{\Theta}(\mathbf{f}) := \mathbf{F}(\mathbf{v}_\Theta  + \mathbf{f}) - \mathbf{F}(\mathbf{v}_\Theta) - \mathbf{F}'(\mathbf{v}_\Theta)\mathbf{f}.
\end{align*}
One computes that
\begin{align*}
    \mathbf{L}'_{\Theta} \Vector{f_1}{f_2} = \Vector{0}{V_\Theta f_1 + V_{\Theta i}\pd^i f_1 + W_\Theta f_2},
\end{align*}
where for $f \in C^\infty(\overline{\B^d},\C^d)$ and $j \in \{1,\ldots,d\}$
\begin{align*}
    (V_{\Theta} f)_j &:= V_{\Theta j n}f^n\\
    (V_{\Theta i} f)_j &:= V_{\Theta i j n} f^n\\
    (W_{\Theta} f)_j &:= W_{\Theta j n}f^n
\end{align*}
with smooth functions $V_{\Theta j n},V_{\Theta i j n},W_{\Theta j n} \in C^\infty(\overline{\B^d})$. In fact, since $v_\Theta(\xi)$ is jointly smooth in $\xi$ and $\Theta$, we conclude that also $V_{\Theta j n}(\xi),V_{\Theta i j n}(\xi),W_{\Theta j n}(\xi)$ are jointly smooth in $\xi$ and $\Theta$ on $\overline{\B^d}\times \overline{\B_M^{p(d)}}$. Note that $\mathbf{L}_\Theta'$ extends to a bounded operator on $\Hil^k$. We immediately get the following.
\begin{prop}
Let $k \geq 1$ and $\Theta \in \overline{\B_M^{p(d)}}$. The operator $\mathbf{L}_\Theta:\D(\mathbf{L}_\Theta) \subseteq \Hil^k \to \Hil^k$ defined by
\begin{align*}
    \mathbf{L}_\Theta = \mathbf{L} + \mathbf{L}_\Theta',\quad \D(\mathbf{L}_\Theta) = \D(\mathbf{L})
\end{align*}
generates a strongly continuous semigroup $(\mathbf{S}_\Theta(\tau))_{\tau \geq 0}$ that satisfies
\begin{align*}
    \|\mathbf{S}_\Theta(\tau) \mathbf{f}\|_{\Hil^k} \leq e^{\left( \frac{d}{2} - 1 + \|\mathbf{L}_\Theta'\|_{\Hil^k \to \Hil^k} \right)\tau} \|\mathbf{f}\|_{\Hil^k} , \quad \forall \mathbf{f} \in \Hil^k.
\end{align*}
\end{prop}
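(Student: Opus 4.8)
\subsection*{Proof proposal}

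The plan is to recognize the statement as an instance of the classical bounded perturbation theorem for generators of strongly continuous semigroups, which in the present dissipative setting is most transparently established via the Lumer--Phillips theorem, exactly as in the proof of \autoref{free semigroup}. Abbreviate $\omega_\Theta := \tfrac{d}{2} - 1 + \|\mathbf{L}_\Theta'\|_{\Hil^k \to \Hil^k}$.

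First I would record the dissipative estimate $\Real(\mathbf{L}_\Theta \mathbf{f} | \mathbf{f})_{\Hil^k} \leq \omega_\Theta \|\mathbf{f}\|_{\Hil^k}^2$ on the core $C^\infty(\overline{\B^d},\C^d)^2$: this is just the sum of \eqref{free inhomog diss} and the Cauchy--Schwarz bound $\Real(\mathbf{L}_\Theta'\mathbf{f} | \mathbf{f})_{\Hil^k} \leq \|\mathbf{L}_\Theta'\|_{\Hil^k\to\Hil^k}\|\mathbf{f}\|_{\Hil^k}^2$. Since $\mathbf{L}$ is closed and $\mathbf{L}_\Theta'$ is bounded, $\mathbf{L}_\Theta$ with $\D(\mathbf{L}_\Theta) = \D(\mathbf{L})$ is closed and densely defined, and the graph norms of $\mathbf{L}$ and $\mathbf{L}_\Theta$ are equivalent on $\D(\mathbf{L})$; hence $C^\infty(\overline{\B^d},\C^d)^2$ is a core for $\mathbf{L}_\Theta$ as well, and the dissipativity estimate propagates from the core to all of $\D(\mathbf{L})$ by approximation in the graph norm. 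Thus $\mathbf{L}_\Theta - \omega_\Theta \mathbf{I}$ is dissipative.

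Next I would verify the range condition. For $\Real \lambda > \tfrac{d}{2} - 1$ one has $\lambda \in \rho(\mathbf{L})$ with $\|\mathbf{R}_{\mathbf{L}}(\lambda)\|_{\Hil^k\to\Hil^k} \leq (\Real\lambda - \tfrac{d}{2}+1)^{-1}$ by \autoref{free semigroup}, so factoring $\lambda \mathbf{I} - \mathbf{L}_\Theta = (\mathbf{I} - \mathbf{L}_\Theta' \mathbf{R}_{\mathbf{L}}(\lambda))(\lambda \mathbf{I} - \mathbf{L})$ and noting that the first factor is boundedly invertible via a Neumann series as soon as $\Real\lambda > \omega_\Theta$, we obtain that $\lambda \mathbf{I} - \mathbf{L}_\Theta$ is bijective from $\D(\mathbf{L})$ onto $\Hil^k$ for all such $\lambda$; in particular its range equals $\Hil^k$. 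The Lumer--Phillips theorem \cite[p.~83, Theorem 3.15]{EngNag00} then shows that $\mathbf{L}_\Theta - \omega_\Theta\mathbf{I}$ generates a contraction semigroup $(\mathbf{T}(\tau))_{\tau\geq 0}$, whence $\mathbf{L}_\Theta$ generates $(\mathbf{S}_\Theta(\tau))_{\tau\geq 0} := (e^{\omega_\Theta\tau}\mathbf{T}(\tau))_{\tau\geq 0}$ with the asserted bound $\|\mathbf{S}_\Theta(\tau)\mathbf{f}\|_{\Hil^k} \leq e^{\omega_\Theta\tau}\|\mathbf{f}\|_{\Hil^k}$.

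I do not expect any genuine obstacle here: the only point deserving a word of care is the passage of the dissipativity estimate from the core to the full domain, which is routine precisely because $\mathbf{L}_\Theta'$ is bounded. Equivalently, one could simply invoke the bounded perturbation theorem for $C_0$-semigroups together with the Dyson--Phillips expansion to read off the growth bound, but since the dissipative machinery is already in place the direct Lumer--Phillips argument is cleaner and keeps the section self-contained.
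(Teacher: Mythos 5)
Your proof is correct, but it takes a genuinely different route from the paper. The paper's proof is a one-liner: since $\mathbf{L}$ generates a strongly continuous semigroup with $\|\mathbf{S}(\tau)\| \le e^{(\frac{d}{2}-1)\tau}$ and $\mathbf{L}_\Theta'$ is bounded, the bounded perturbation theorem \cite[p.~158, Theorem 1.3]{EngNag00} gives directly that $\mathbf{L}_\Theta = \mathbf{L} + \mathbf{L}_\Theta'$ generates a semigroup with growth bound $\frac{d}{2}-1+\|\mathbf{L}_\Theta'\|_{\Hil^k\to\Hil^k}$, exactly because the constant $M$ in front of the exponential equals $1$. You instead rerun the Lumer--Phillips argument for $\mathbf{L}_\Theta$ itself: establish dissipativity of $\mathbf{L}_\Theta - \omega_\Theta\mathbf{I}$ on the core and extend by graph-norm density (which works because boundedness of $\mathbf{L}_\Theta'$ makes the graph norms of $\mathbf{L}$ and $\mathbf{L}_\Theta$ equivalent), then verify the range condition through the factorization $\lambda\mathbf{I}-\mathbf{L}_\Theta = (\mathbf{I}-\mathbf{L}_\Theta'\mathbf{R}_{\mathbf{L}}(\lambda))(\lambda\mathbf{I}-\mathbf{L})$ and a Neumann series for $\Real\lambda > \omega_\Theta$. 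This is in effect a re-proof of the bounded perturbation theorem specialized to the quasi-contractive case $M=1$ (in general, for $M>1$, Lumer--Phillips does not apply directly and one needs the Dyson--Phillips expansion, as you correctly note). What the paper's route buys is brevity; what yours buys is that it stays entirely within the dissipative toolkit already deployed in \autoref{free semigroup}, keeping the argument self-contained and making transparent why the growth-bound shift is exactly $\|\mathbf{L}_\Theta'\|_{\Hil^k\to\Hil^k}$ rather than $M\|\mathbf{L}_\Theta'\|$. Both are sound; the paper simply economizes by citing the theorem your argument reconstructs.
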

\begin{proof}
Since $\mathbf{L}$ generates a strongly continuous semigroup by \autoref{free semigroup} and $\mathbf{L}_\Theta'$ is bounded on $\Hil^k$, the statement follows from the bounded perturbation theorem \cite[p.~158]{EngNag00}.
\end{proof}
\subsection{Properties of the perturbation}
Here we will analyze some properties of $\mathbf{L}_\Theta'$. One can compute explicitly
\begin{align*}
    \mathbf{L}_0'\Vector{f_1}{f_2}(\xi) = \Vector{0}{\frac{2(d-2 - |\xi|^2) }{d- 2 + |\xi|^2}f_1(\xi) + \frac{4}{d- 2+ |\xi|^2}(|\xi|^2 f_2(\xi) - \Lambda f_1(\xi) +  K f_1(\xi) )  },
\end{align*}
where
\begin{align*}
    (K f)^n(\xi) := \xi^n \pd_j f^j(\xi)  - \xi_j \pd^n f^j(\xi).
\end{align*}
Observe that $\mathbf{L}_0'$ is not compact (only the first term would correspond to a compact operator). However, we would like to mention that it is possible to use a transformation that eliminates the term $\frac{4}{d-2 + |\xi|^2}(|\xi|^2 f_2(\xi) - \Lambda f_1(\xi))$. This idea comes from \cite{CheMcNSch23} and is possible due to the null structure of Eq.~\eqref{ex wm eq}. Unfortunately in our case the term $\frac{4}{d-2 + |\xi|^2} K f_1(\xi)$ would remain under this transformation, which prevents the resulting operator from being compact.

The following proposition is the basis of the heuristic idea that one should be able to deal with $\mathbf{L}_0'$ perturbatively assuming that the highest order derivatives do not induce growth in the regularity $k$.
\begin{prop}
Let $d \geq 3$. There exists a constant $C = C(d) > 0$ such that for all $k \geq 2$ there exists some $c_k > 0$ such that 
\begin{align}\label{Lprime highest derivative}
    |(\mathbf{L}_0' \mathbf{f}|\mathbf{f})_{\Dot{\Hil}^k}| \leq C \|\mathbf{f}\|_{\Dot{\Hil}^k}^2 + c_k \|\mathbf{f}\|_{\Hil^{k-1}}^2, \quad \forall \mathbf{f} \in \Hil^k
\end{align}
holds.
\end{prop}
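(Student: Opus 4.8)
The plan is to work with the explicit formula for $\mathbf{L}_0'$ and estimate the inner product $(\mathbf{L}_0'\mathbf{f}|\mathbf{f})_{\Dot{\Hil}^k}$ term by term, exploiting the commutator identity \eqref{free comm rel} — or rather its analogue for the multiplication operators appearing in $\mathbf{L}_0'$ — to track exactly where factors of $k$ can enter. Write $\mathbf{f} = \Vector{f_1}{f_2}$, so that $\mathbf{L}_0'\mathbf{f} = \Vector{0}{g}$ with
\begin{align*}
    g = a\, f_1 + b\big(|\xi|^2 f_2 - \Lambda f_1 + K f_1\big),
\end{align*}
where $a(\xi) = \tfrac{2(d-2-|\xi|^2)}{d-2+|\xi|^2}$ and $b(\xi) = \tfrac{4}{d-2+|\xi|^2}$ are smooth on $\overline{\B^d}$ together with all their derivatives. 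Since the first component of $\mathbf{L}_0'\mathbf{f}$ vanishes, the $\Dot{\Hil}^k$ inner product reduces, by the norm equivalence for $\Dot{\Hil}^k$ recorded in the excerpt, to terms of the form $(\pd^\alpha g|\pd^\alpha f_2)_{L^2(\B^d)}$ with $|\alpha| = k-1$, plus a boundary term $(\pd^\alpha g|\pd^\alpha f_2)_{L^2(\S^{d-1})}$ coming from the $\Hil^1$-piece inside $\Dot{\Hil}^k$ with $|\alpha| = k-1$; there is no $\Dot{H}^k$-contribution from the second slot because $\mathbf{D}^\alpha(\mathbf{L}_0'\mathbf f)$ has vanishing first component, so only $\Dot H^{k-1}$ of the second component is seen. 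Thus it suffices to bound $\|\pd^\alpha g\|_{L^2(\B^d)}$ and $\|\pd^\alpha g\|_{L^2(\S^{d-1})}$ for $|\alpha| = k-1$.

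First I would apply the Leibniz rule to each of the four pieces of $g$. The terms $\pd^\alpha(a f_1)$, $\pd^\alpha(b|\xi|^2 f_2)$ and $\pd^\alpha(b\,Kf_1)$ are the harmless ones: in each, when all $k-1$ derivatives land on $f_1$ or $f_2$ we get a contribution bounded by a \emph{$k$-independent} constant times $\|f_1\|_{\Dot H^{k-1}} + \|f_2\|_{\Dot H^{k-1}} + \|f_1\|_{\Dot H^{k}}$ (the last coming from $Kf_1$, which contains one derivative of $f_1$, so $\pd^\alpha Kf_1$ involves $k$ derivatives of $f_1$), and hence is $\lesssim \|\mathbf f\|_{\Dot{\Hil}^k}$ with constant depending only on $d$; when at least one derivative lands on the smooth coefficient $a$, $b$, $b|\xi|^2$ or $b$ times the polynomial coefficients of $K$, the remaining factor carries at most $k-1$ derivatives of $f_1$ or $f_2$ but with a constant involving $\sup$ of derivatives of the coefficient, which depends on $k$ — this is exactly the $c_k\|\mathbf f\|_{\Hil^{k-1}}$ remainder. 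The genuinely delicate term is $\pd^\alpha(b\,\Lambda f_1)$, because $\Lambda f_1 = \xi^j\pd_j f_1$ again contains one derivative of $f_1$, so the top-order part $b\,\Lambda(\pd^\alpha f_1)$ involves $k$ derivatives of $f_1$; but differentiating $\Lambda$ produces a commutator of the schematic form $[\pd^\alpha,\Lambda]f_1 = |\alpha|\,\pd^\alpha f_1 + (\text{lower order})$ — this is precisely relation \eqref{free comm rel} restricted to the scalar operator $\Lambda$ — which contributes a term $b\cdot(k-1)\pd^\alpha f_1$. The crucial point is that this $(k-1)$-factor multiplies $\pd^\alpha f_1$ with $|\alpha| = k-1$, i.e. only $k-1$ derivatives of $f_1$, so it is absorbed into the $c_k\|\mathbf f\|_{\Hil^{k-1}}^2$ term and \emph{not} into the $C\|\mathbf f\|_{\Dot{\Hil}^k}^2$ term, which keeps $C$ independent of $k$. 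Finally, $\|b\,\Lambda(\pd^\alpha f_1)\|_{L^2(\B^d)} \leq \|b\|_{L^\infty}\sum_j\|\xi^j\pd_j\pd^\alpha f_1\|_{L^2} \lesssim \|f_1\|_{\Dot H^k}$ with a $d$-only constant.

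Assembling: Cauchy–Schwarz on $(\pd^\alpha g|\pd^\alpha f_2)_{L^2(\B^d)}$ together with $\|\pd^\alpha f_2\|_{L^2} \leq \|f_2\|_{\Dot H^{k-1}} \lesssim \|\mathbf f\|_{\Dot{\Hil}^k}$ converts the bound $\|\pd^\alpha g\|_{L^2} \leq C'\|\mathbf f\|_{\Dot{\Hil}^k} + c_k'\|\mathbf f\|_{\Hil^{k-1}}$ into $|(\pd^\alpha g|\pd^\alpha f_2)_{L^2}| \leq C\|\mathbf f\|_{\Dot{\Hil}^k}^2 + c_k\|\mathbf f\|_{\Hil^{k-1}}^2$ after an elementary Young's inequality $xy \le \tfrac12(x^2+y^2)$. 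The boundary term on $\S^{d-1}$ is handled the same way, using that the $\Hil^1$-norm (and hence all the $L^2(\S^{d-1})$ pieces entering $\Dot{\Hil}^k$) is controlled, via the trace-type equivalence $\|h\|_{H^1(\B^d)} \simeq \|\nabla h\|_{L^2(\B^d)} + \|h\|_{L^2(\S^{d-1})}$ quoted in the excerpt, by interior norms up to one more derivative, so $\|\pd^\alpha g\|_{L^2(\S^{d-1})}$ and $\|\pd^\alpha f_2\|_{L^2(\S^{d-1})}$ are absorbed into $\|\mathbf f\|_{\Dot{\Hil}^k}$ and $\|\mathbf f\|_{\Hil^{k-1}}$ by the same split into top-order ($d$-only constant) and coefficient-derivative ($k$-dependent) parts. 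Summing over the finitely many $\alpha$ with $|\alpha| = k-1$ changes $C$ and $c_k$ by $k$-dependent combinatorial factors only in front of $c_k$ — $C$ can be kept $d$-dependent only because the number of top-order terms and the coefficient sup-norms at order $0$ are $k$-independent — and yields \eqref{Lprime highest derivative}. The main obstacle, and the one point the argument really turns on, is the bookkeeping showing that every occurrence of a factor $k$ (from Leibniz redistribution onto coefficients and from the $\Lambda$-commutator) is attached to a factor carrying at most $k-1$ derivatives of $\mathbf f$, so that it lands in the $\|\mathbf f\|_{\Hil^{k-1}}^2$ term and the constant $C$ in front of $\|\mathbf f\|_{\Dot{\Hil}^k}^2$ stays bounded uniformly in $k$.
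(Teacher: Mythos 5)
The conceptual skeleton of your argument is the right one and matches the paper: the potentials in $\mathbf{L}_0'$ are smooth, the top-order Leibniz terms carry $k$-independent coefficients, and every factor of $k$ (whether from the Leibniz rule hitting the coefficients or from the exact commutator $[\pd^\alpha,\Lambda]=|\alpha|\pd^\alpha$) is attached to at most $k-1$ derivatives of $\mathbf{f}$. But there is a genuine gap in the final bookkeeping, plus a secondary slip.

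The gap is at the step where you sum over the tuples of length $k-1$ that define $\Dot{\Hil}^k$. You establish the per-$\alpha$ bound
\[
|(\pd^\alpha g|\pd^\alpha f_2)_{L^2(\B^d)}| \leq C\|\mathbf{f}\|_{\Dot{\Hil}^k}^2 + c_k\|\mathbf{f}\|_{\Hil^{k-1}}^2
\]
and then assert that summing over $\alpha$ only affects $c_k$. It does not: the number of tuples $(i_1,\dots,i_{k-1})\in\{1,\dots,d\}^{k-1}$ is $d^{k-1}$, so naively adding these per-$\alpha$ bounds multiplies the coefficient of $\|\mathbf{f}\|_{\Dot{\Hil}^k}^2$ by a $k$-dependent combinatorial factor, which destroys exactly the feature the proposition is supposed to deliver. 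Your remark that ``the number of top-order terms is $k$-independent'' addresses the $O(1)$ Leibniz pieces per fixed $\alpha$, not the number of $\alpha$'s, and it is the latter that grows with $k$. The root cause is that you replace $\|\pd^\alpha f_2\|_{L^2}$ and $\|\pd^\alpha\nabla f_1\|_{L^2}$ by $\|\mathbf f\|_{\Dot{\Hil}^k}$ term by term, \emph{before} summing. To close the gap one must keep the top-order bound $\alpha$-dependent, namely
\[
\|\mathbf{D}^\alpha\mathbf{L}_0'\mathbf{f}\|_{\Hil^1} \leq C'\|\mathbf{D}^\alpha\mathbf{f}\|_{\Hil^1} + c_k'\|\mathbf{f}\|_{\Hil^{k-1}},
\]
and only then pair with $\|\mathbf{D}^\alpha\mathbf{f}\|_{\Hil^1}$ and sum. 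The top-order piece becomes the exact sum of squares $\sum_\alpha\|\mathbf{D}^\alpha\mathbf{f}\|_{\Hil^1}^2 = \|\mathbf{f}\|_{\Dot{\Hil}^k}^2$, with no combinatorial loss at all; the lower-order piece picks up the factor $d^{k-1}$, but that sits in front of $\|\mathbf{f}\|_{\Hil^{k-1}}\|\mathbf{f}\|_{\Dot{\Hil}^k}$ and is absorbed by Young's inequality into $c_k\|\mathbf{f}\|_{\Hil^{k-1}}^2$, where $k$-dependence is allowed. This is exactly how the paper's proof preserves a $k$-independent $C$.

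The secondary slip: you posit a boundary term $(\pd^\alpha g|\pd^\alpha f_2)_{L^2(\S^{d-1})}$. There is none. The $L^2(\S^{d-1})$ piece of $(\cdot|\cdot)_{\Hil^1}$ pairs only the \emph{first} components, which vanish for $\mathbf{L}_0'\mathbf{f}$; the $\Dot{\Hil}^k$ pairing therefore reduces to $\sum_\alpha(\pd^\alpha g|\pd^\alpha f_2)_{L^2(\B^d)}$ alone (which you in fact observe a few lines earlier). The overreach is harmless as stated, but had such a term existed your plan of controlling $\|\pd^\alpha f_2\|_{L^2(\S^{d-1})}$ by the trace estimate would not have worked: it requires $\|\pd^\alpha f_2\|_{H^1(\B^d)}$, hence $k$ derivatives of $f_2$, which is more than $\mathbf{f}\in\Hil^k$ provides.
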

\begin{proof}
We compute for $\mathbf{f} \in C^\infty(\overline{\B^d},\C^d)^2$, $\alpha \in \N_0^d$ and $j \in \{1,\ldots,d\}$ 
\begin{align*}
    ( \mathbf{D}^\alpha \mathbf{L}_0' \mathbf{f})_{2 j} &= \pd^\alpha(V_{0 j n} f_1^n + V_{0 i j n} \pd^i f_1^n + W_{0 j n}f_2^n )\\
    &= \sum_{\beta \leq \alpha} \binom{\alpha}{ \beta} \left(  \pd^\beta V_{0 j n} \pd^{\alpha - \beta} f_1^n +  \pd^\beta V_{0 i j n} \pd^{\alpha - \beta} \pd^i f_1^n + \pd^\beta W_{0 j n} \pd^{\alpha - \beta} f_2^n  \right)\\
    &=      V_{0 i j n} \pd^{\alpha } \pd^i f_1^n +  W_{0 j n} \pd^{\alpha } f_2^n\\
    &+V_{0 j n} \pd^{\alpha } f_1^n+\sum_{0 \not= \beta \leq \alpha} \binom{\alpha}{ \beta} \left(  \pd^\beta V_{0 j n} \pd^{\alpha - \beta} f_1^n +  \pd^\beta V_{0 i j n} \pd^{\alpha - \beta} \pd^i f_1^n + \pd^\beta W_{0 j n} \pd^{\alpha - \beta} f_2^n  \right).
\end{align*}
For $|\alpha| = k-1$ the last line only contains derivatives of order up to $k-1$ and $k-2$ of $f_1$ and $f_2$, respectively, hence we conclude from the smoothness of the potentials the existence of some constant $C_{k} $ such that
\begin{align*}
    \left\|V_{0 j n} \pd^{\alpha } f_1^n+ \sum_{0 \not= \beta \leq \alpha} \binom{\alpha}{ \beta} \left(  \pd^\beta V_{0 j n} \pd^{\alpha - \beta} f_1^n +  \pd^\beta V_{0 i j n} \pd^{\alpha - \beta} \pd^i f_1^n + \pd^\beta W_{0 j n} \pd^{\alpha - \beta} f_2^n  \right)\right\|_{L^2(\B^d)} \leq C_k \|\mathbf{f}\|_{\Hil^{k-1}}.
\end{align*}
Next we have
\begin{align*}
    \|V_{0 i j n} \pd^\alpha \pd^i f_1^n\|_{L^2(\B^d)} &\leq \sup_{i,j,n} \|V_{0 i j n}\|_{L^\infty(\B^d)} \sum_{i=1}^d \sum_{n=1}^d \|\pd^\alpha \pd^i f_1^n\|_{L^2(\B^d)}\\
    &\leq d \sup_{i,j,n} \|V_{0 i j n}\|_{L^\infty(\B^d)}  \left(\sum_{i=1}^d\sum_{n=1}^d \|\pd^\alpha \pd^i f_1^n\|_{L^2(\B^d)}^2\right)^\frac{1}{2}\\
    &= d \sup_{i,j,n} \|V_{0 i j n}\|_{L^\infty(\B^d)} \|\pd^\alpha f_1\|_{\Dot{H}^1(\B^d,\C^d)}\\
    \|W_{0 j n} \pd^\alpha f_2^n\|_{L^2(\B^d)} &\leq \sup_{j,n}\|W_{0 j n}\|_{L^\infty(\B^d)} \sum_{n=1}^d \|\pd^\alpha f_2^n\|_{L^2(\B^d)}\\
    &\leq d^\frac{1}{2} \sup_{j,n}\|W_{0 j n}\|_{L^\infty(\B^d)} \left( \sum_{n=1}^d \|\pd^\alpha f_2^n\|_{L^2(\B^d)}^2      \right)^\frac{1}{2}\\
    &= d^\frac{1}{2} \sup_{j,n}\|W_{0 j n}\|_{L^\infty(\B^d)} \|\pd^\alpha f_2\|_{L^2(\B^d,\C^d)}.
\end{align*}
Combining this with the previous yields
\begin{align*}
    \|(\mathbf{D}^\alpha \mathbf{L}_0' \mathbf{f})_{2 j}\|_{L^2(\B^d)} &\leq C'(\|\pd^\alpha f_1\|_{\Dot{H}^1(\B^d,\C^d)} + \|\pd^\alpha f_2\|_{L^2(\B^d,\C^d)}) + C_k \|\mathbf{f}\|_{\Hil^{k-1}}\\
    &\leq \sqrt{2} C' \underbrace{(\|\pd^\alpha f_1\|_{\Dot{H}^1(\B^d,\C^d)}^2 + \|\pd^\alpha f_2\|_{L^2(\B^d,\C^d)}^2)^\frac{1}{2}}_{\leq \| \mathbf{D}^\alpha\mathbf{f}\|_{\Hil^1}}+ C_k \|\mathbf{f}\|_{\Hil^{k-1}}\\
    &\leq \sqrt{2} C' \| \mathbf{D}^\alpha\mathbf{f}\|_{\Hil^1}+ C_k \|\mathbf{f}\|_{\Hil^{k-1}},
\end{align*}
where
\begin{align*}
    C' = d^\frac{1}{2} \max\left\{ d^\frac{1}{2}\sup_{i,j,n} \|V_{0 i j n}\|_{L^\infty(\B^d)} , \sup_{j,n}\|W_{0 j n}\|_{L^\infty(\B^d)} \right\}.
\end{align*}
Finally from this we get
\begin{align*}
    \|\mathbf{D}^\alpha \mathbf{L}_0' \mathbf{f}\|_{\Hil^1} &= \|(\mathbf{D}^\alpha \mathbf{L}_0' \mathbf{f})_2\|_{L^2(\B^d,\C^d)} = \left(\sum_{j=1}^d  \|(\mathbf{D}^\alpha \mathbf{L}_0' \mathbf{f})_{2 j}\|_{L^2(\B^d)}^2 \right)^\frac{1}{2} \leq \sum_{j=1}^d  \|(\mathbf{D}^\alpha \mathbf{L}_0' \mathbf{f})_{2 j}\|_{L^2(\B^d)}\\
    &\leq \sqrt{2} d C'\| \mathbf{D}^\alpha\mathbf{f}\|_{\Hil^1}+ d C_k \|\mathbf{f}\|_{\Hil^{k-1}}.
\end{align*}
Since this holds for any $\alpha \in \N_0^d$ with $|\alpha| = k-1$, we have
\begin{align*}
    |(\mathbf{L}_0' \mathbf{f} |\mathbf{f})_{\Dot{\Hil}^k}| &= \left|\sum_{i_1,\ldots,i_{k-1} = 1}^d (\mathbf{D}_{i_1} \ldots \mathbf{D}_{i_{k-1}} \mathbf{L}_0' \mathbf{f} | \mathbf{D}_{i_1} \ldots \mathbf{D}_{i_{k-1}} \mathbf{f} )_{\Hil^1}   \right|\\
    &\leq \sum_{i_1,\ldots,i_{k-1} = 1}^d \|\mathbf{D}_{i_1} \ldots \mathbf{D}_{i_{k-1}} \mathbf{L}_0' \mathbf{f}\|_{\Hil^1} \|\mathbf{D}_{i_1} \ldots \mathbf{D}_{i_{k-1}} \mathbf{f}\|_{\Hil^1}\\
    &\leq \sqrt{2} d C'\underbrace{\sum_{i_1,\ldots,i_{k-1} = 1}^d \|\mathbf{D}_{i_1} \ldots \mathbf{D}_{i_{k-1}}  \mathbf{f}\|_{\Hil^1} \|\mathbf{D}_{i_1} \ldots \mathbf{D}_{i_{k-1}} \mathbf{f}\|_{\Hil^1}}_{= \|\mathbf{f}\|_{\Dot{\Hil}^k}^2}\\
    &+ d C_k \sum_{i_1,\ldots,i_{k-1} = 1}^d \|\mathbf{f}\|_{\Hil^{k-1}} \underbrace{\|\mathbf{D}_{i_1} \ldots \mathbf{D}_{i_{k-1}} \mathbf{f}\|_{\Hil^1}}_{\leq \|\mathbf{f}\|_{\Dot{\Hil}^k}}\\
    &\leq \sqrt{ 2 } d C' \|\mathbf{f}\|_{\Dot{\Hil}^k}^2 + d^k C_k \|\mathbf{f}\|_{\Hil^{k-1}} \|\mathbf{f}\|_{\Dot{\Hil}^k}\\
    &\leq \sqrt{ 2 } d C' \|\mathbf{f}\|_{\Dot{\Hil}^k}^2 + \frac{1}{2} \|\mathbf{f}\|_{\Dot{\Hil}^k}^2 + \frac{d^{2 k} C_k^2}{2} \|\mathbf{f}\|_{\Hil^{k-1}}^2\\
    &= C \|\mathbf{f}\|_{\Dot{\Hil}^k}^2 + c_k \|\mathbf{f}\|_{\Hil^{k-1}}^2
\end{align*}
with 
\begin{align*}
    C &= 2 d C' + \frac{1}{2}\\
    c_k &= \frac{d^{2k}C_k^2}{2},
\end{align*}
for all $\mathbf{f} \in C^\infty(\overline{\B^d},\C^d)^2$ and $k  \geq 2$.
\end{proof}
Since the potentials $V_{0 i j n}, W_{0 j n}$ are explicit, one computes that 
\begin{align}\label{commutation constant}
    C = C(d) \simeq d^\frac{3}{2}.
\end{align}
With this preparation we can prove the next result which allows us to view $\mathbf{L}_0$ as a compact perturbation of a dissipative operator, at least for sufficiently large $k$.
\begin{thm}\label{full gen is diss up to per}
Let $d \geq 3$. Define 
\begin{align*}
    k_0 := k_0(d)  := \left \lceil \frac{d}{2} + 1  + C \right \rceil,
\end{align*}
where $C $ is as in \eqref{Lprime highest derivative}.
Then for each $k \geq k_0$ there exist $C_k,w_k > 0$ and a, with respect to $(\cdot|\cdot)_{\Hil^k}$ orthogonal, finite-rank projection $\mathbf{P}$ on $\Hil^k$ such that
\begin{align*}
    \Real( (\mathbf{L}_0 - C_k \mathbf{P})\mathbf{f} |\mathbf{f}  )_{\Hil^k} \leq -w_k \|\mathbf{f}\|_{\Hil^k}^2
\end{align*}
for all $ \mathbf{f} \in C^\infty(\overline{\B^d},\C^d)^2$.
\end{thm}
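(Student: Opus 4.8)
The plan is to combine the homogeneous dissipative estimate \eqref{free homog diss} for the free generator with the highest-derivative bound \eqref{Lprime highest derivative} for $\mathbf{L}_0'$, and then convert the leftover lower-order term into a finite-rank correction by a compactness argument. First I would estimate the homogeneous part: for $\mathbf{f}\in C^\infty(\overline{\B^d},\C^d)^2$ and $k\geq k_0$,
\begin{align*}
    \Real(\mathbf{L}_0\mathbf{f}|\mathbf{f})_{\Dot{\Hil}^k}
    &=\Real(\Tilde{\mathbf{L}}\mathbf{f}|\mathbf{f})_{\Dot{\Hil}^k}+\Real(\mathbf{L}_0'\mathbf{f}|\mathbf{f})_{\Dot{\Hil}^k}
    \leq\Bigl(\tfrac{d}{2}-k\Bigr)\|\mathbf{f}\|_{\Dot{\Hil}^k}^2+C\|\mathbf{f}\|_{\Dot{\Hil}^k}^2+c_k\|\mathbf{f}\|_{\Hil^{k-1}}^2\\
    &=\Bigl(\tfrac{d}{2}-k+C\Bigr)\|\mathbf{f}\|_{\Dot{\Hil}^k}^2+c_k\|\mathbf{f}\|_{\Hil^{k-1}}^2.
\end{align*}
By the choice $k_0=\lceil\tfrac{d}{2}+1+C\rceil$, the coefficient $\tfrac{d}{2}-k+C$ is $\leq-1$ for all $k\geq k_0$, so we get $\Real(\mathbf{L}_0\mathbf{f}|\mathbf{f})_{\Dot{\Hil}^k}\leq-\|\mathbf{f}\|_{\Dot{\Hil}^k}^2+c_k\|\mathbf{f}\|_{\Hil^{k-1}}^2$. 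Adding the inhomogeneous estimate, where $\mathbf{L}_0'$ is a bounded operator on $\Hil^k$ (hence on $\Hil^1$) so that $\Real(\mathbf{L}_0\mathbf{f}|\mathbf{f})_{\Hil^1}\leq(\tfrac{d}{2}-1+\|\mathbf{L}_0'\|)\|\mathbf{f}\|_{\Hil^1}^2\lesssim\|\mathbf{f}\|_{\Hil^{k-1}}^2$, gives an estimate of the form
\begin{align*}
    \Real(\mathbf{L}_0\mathbf{f}|\mathbf{f})_{\Hil^k}\leq-\|\mathbf{f}\|_{\Dot{\Hil}^k}^2+C_k'\|\mathbf{f}\|_{\Hil^{k-1}}^2
\end{align*}
for some $C_k'>0$. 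Since $\|\mathbf{f}\|_{\Hil^k}^2\simeq\|\mathbf{f}\|_{\Dot{\Hil}^k}^2+\|\mathbf{f}\|_{\Hil^{k-1}}^2$, this reads $\Real(\mathbf{L}_0\mathbf{f}|\mathbf{f})_{\Hil^k}\leq-c\|\mathbf{f}\|_{\Hil^k}^2+C_k''\|\mathbf{f}\|_{\Hil^{k-1}}^2$ for suitable $c>0$.

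The remaining and genuinely delicate step is to absorb the term $C_k''\|\mathbf{f}\|_{\Hil^{k-1}}^2$ into a finite-rank projection rather than into the full $\Hil^{k-1}$-norm. The key is that the embedding $\Hil^k\hookrightarrow\Hil^{k-1}$ is compact (Rellich), so the identity $\mathbf{I}:\Hil^k\to\Hil^{k-1}$ is a limit in operator norm of finite-rank operators. Concretely, I would argue as follows: suppose no such finite-rank $\mathbf{P}$ existed; then for every $\Hil^k$-orthogonal finite-rank projection $\mathbf{P}$ and every $\eta>0$ there is $\mathbf{f}$ with $\|\mathbf{f}\|_{\Hil^k}=1$, $\mathbf{P}\mathbf{f}=0$, and $\|\mathbf{f}\|_{\Hil^{k-1}}^2\geq\eta$. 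Taking an exhausting sequence of projections $\mathbf{P}_N$ onto the span of the first $N$ elements of an orthonormal basis of $\Hil^k$ (chosen inside $C^\infty(\overline{\B^d},\C^d)^2$, which is dense), and a fixed threshold $\eta$, produces $\mathbf{f}_N$ with $\|\mathbf{f}_N\|_{\Hil^k}=1$, $\mathbf{f}_N\in\rg(\mathbf{I}-\mathbf{P}_N)$, $\|\mathbf{f}_N\|_{\Hil^{k-1}}\geq\sqrt{\eta}$. But $\mathbf{f}_N\rightharpoonup 0$ weakly in $\Hil^k$ (since it is eventually orthogonal to every basis vector), hence $\mathbf{f}_N\to 0$ strongly in $\Hil^{k-1}$ by compactness of the embedding, contradicting $\|\mathbf{f}_N\|_{\Hil^{k-1}}\geq\sqrt{\eta}$. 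Therefore, for every $\eta>0$ there exists a finite-rank orthogonal projection $\mathbf{P}$ on $\Hil^k$ with $\|\mathbf{f}\|_{\Hil^{k-1}}^2\leq\eta\|\mathbf{f}\|_{\Hil^k}^2$ for all $\mathbf{f}\in\rg(\mathbf{I}-\mathbf{P})$.

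Finally I would assemble the pieces. Fix $\eta$ so small that $C_k''\eta<\tfrac{c}{2}$ and take the corresponding $\mathbf{P}$. For $\mathbf{f}\in C^\infty(\overline{\B^d},\C^d)^2$ write $\mathbf{f}=\mathbf{P}\mathbf{f}+(\mathbf{I}-\mathbf{P})\mathbf{f}$; using that $\|\mathbf{P}\mathbf{f}\|_{\Hil^{k-1}}\leq\|\mathbf{P}\mathbf{f}\|_{\Hil^k}\lesssim\|\mathbf{f}\|_{\Hil^k}$, I would bound $\|\mathbf{f}\|_{\Hil^{k-1}}^2\leq 2\|(\mathbf{I}-\mathbf{P})\mathbf{f}\|_{\Hil^{k-1}}^2+2\|\mathbf{P}\mathbf{f}\|_{\Hil^{k-1}}^2\leq 2\eta\|\mathbf{f}\|_{\Hil^k}^2+C\|\mathbf{P}\mathbf{f}\|_{\Hil^k}^2$. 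Inserting this into $\Real(\mathbf{L}_0\mathbf{f}|\mathbf{f})_{\Hil^k}\leq-c\|\mathbf{f}\|_{\Hil^k}^2+C_k''\|\mathbf{f}\|_{\Hil^{k-1}}^2$ yields $\Real(\mathbf{L}_0\mathbf{f}|\mathbf{f})_{\Hil^k}\leq-\tfrac{c}{2}\|\mathbf{f}\|_{\Hil^k}^2+C_k C\|\mathbf{P}\mathbf{f}\|_{\Hil^k}^2$. Since $\mathbf{P}$ is an orthogonal projection, $(\mathbf{P}\mathbf{f}|\mathbf{f})_{\Hil^k}=\|\mathbf{P}\mathbf{f}\|_{\Hil^k}^2$, so choosing $C_k:=C_kC$ (relabeled) and $w_k:=\tfrac{c}{2}$ gives
\begin{align*}
    \Real\bigl((\mathbf{L}_0-C_k\mathbf{P})\mathbf{f}|\mathbf{f}\bigr)_{\Hil^k}
    =\Real(\mathbf{L}_0\mathbf{f}|\mathbf{f})_{\Hil^k}-C_k\|\mathbf{P}\mathbf{f}\|_{\Hil^k}^2
    \leq-w_k\|\mathbf{f}\|_{\Hil^k}^2,
\end{align*}
which is the claim. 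The main obstacle is the compactness/finite-rank step: one must be careful that the projection can be taken orthogonal with respect to $(\cdot|\cdot)_{\Hil^k}$ and with range meeting the smooth core appropriately, and that the weak-to-strong convergence argument is set up on the correct pair of spaces; everything else is bookkeeping with the two dissipative estimates and the norm equivalences already established.
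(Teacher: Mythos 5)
Your proof is correct, and it reaches the conclusion via a route that is genuinely different from the paper's in the key step. The paper keeps the estimate at the level of the homogeneous seminorm $\|\cdot\|_{\Dot{\Hil}^k}$, which forces it to deal with the fact that this seminorm vanishes on low-degree polynomials: it invokes the explicit subcoercivity lemma (\autoref{subcoer prop}, adapted from \cite{MerRapRodSze22}, proved via the Riesz map, the spectral decomposition of the positive compact operator $\iota\circ\ell$, and a Lagrange multiplier argument) to trade $\|\mathbf{f}\|_{\Hil^{k-1}}^2$ for $\eps\|\mathbf{f}\|_{\Dot{\Hil}^k}^2$ plus a finite-rank term, and then separately needs the polynomial-subtraction lemma (\autoref{prop control inhomog by homog}) together with the auxiliary projection $\mathbf{Q}$ onto the space $Y_k$ of low-degree polynomials in order to upgrade $\|\cdot\|_{\Dot{\Hil}^k}$ to $\|\cdot\|_{\Hil^k}$; the final $\mathbf{P}$ is the orthogonal projection onto $\mathrm{span}(Y_k\cup\{\mathbf{f}_n\})$. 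You instead pass to the full $\Hil^k$ norm immediately, using the identity $\|\mathbf{f}\|_{\Hil^k}^2=\|\mathbf{f}\|_{\Dot{\Hil}^k}^2+\|\mathbf{f}\|_{\Hil^1}^2\le\|\mathbf{f}\|_{\Dot{\Hil}^k}^2+\|\mathbf{f}\|_{\Hil^{k-1}}^2$ to absorb the negative homogeneous term into the negative full norm, at the harmless cost of an extra $\|\mathbf{f}\|_{\Hil^{k-1}}^2$. After that only the \emph{inhomogeneous} version of the subcoercivity is needed, and your weak-compactness contradiction argument (orthonormal basis, $\mathbf{P}_N$ the projection onto the first $N$ basis vectors, $\mathbf{f}_N\rightharpoonup 0$ in $\Hil^k$, hence $\mathbf{f}_N\to 0$ in $\Hil^{k-1}$ by compactness of the embedding) delivers exactly that, with no spectral theorem, no Lagrange multipliers, and no polynomial-kernel issue. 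What your reorganization buys is a shorter and more elementary finite-rank absorption that makes both \autoref{subcoer prop} and \autoref{prop control inhomog by homog} unnecessary for this theorem; what the paper's version buys is the stronger statement with the homogeneous seminorm on the right-hand side (which is not needed here, since you dispose of the homogeneity issue upstream). Both are valid; yours is arguably cleaner for this particular application.
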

\begin{proof}
We have from \eqref{free homog diss} and \eqref{Lprime highest derivative} for all $\mathbf{f} \in C^\infty(\overline{\B^d},\C^d)^2$
\begin{align*}
    \Real((\mathbf{L} + \mathbf{L}_0')\mathbf{f} | \mathbf{f})_{\Dot{\Hil}^k} \leq \left( \frac{d}{2} + C - k   \right) \|\mathbf{f}\|_{\Dot{\Hil}^k}^2 + c_k \|\mathbf{f}\|_{\Hil^{k-1}}^2
\end{align*}
for some $C > 0$ independent of $k$ and $c_k > 0$. Since $k \geq k_0 \geq 3$, the operator $\mathbf{L} + \mathbf{L}_0'$ is bounded from $\Hil^{k-1}$ to $\Hil^1$. Combining this with the previous yields 
\begin{align*}
    \Real(\mathbf{L}_0 \mathbf{f }  |\mathbf{f})_{\Hil^k} \leq \left( \frac{d}{2} + C - k   \right) \|\mathbf{f}\|_{\Dot{\Hil}^k}^2 + c_k' \|\mathbf{f}\|_{\Hil^{k-1}}^2
\end{align*}
for some $c_k' > 0$. By applying \autoref{subcoer prop} to both components, we infer the existence of an orthonormal set $(\mathbf{f}_n)_{n=1}^N \subseteq \Hil^k$ and a constant $c_k'' > 0$ such that
\begin{align*}
    \|\mathbf{f}\|_{\Hil^{k-1}}^2 \leq \frac{1}{2 c_k'} \|\mathbf{f}\|_{\Dot{\Hil}^k}^2 + c_k'' \sum_{n=1}^N |(\mathbf{f} | \mathbf{f}_n)_{\Hil^k}|^2
\end{align*}
for all $\mathbf{f} \in \Hil^k$. Inserting this into the above yields
\begin{align*}
    \Real (\mathbf{L}_0 \mathbf{f} | \mathbf{f})_{\Hil^k} \leq \left( \frac{d}{2} + C + \frac{1}{2} - k  \right) \|\mathbf{f}\|_{\Dot{\Hil}^k}^2 + c_k' c_k'' \sum_{n=1}^N |(\mathbf{f} | \mathbf{f}_n)_{\Hil^k}|^2.
\end{align*}
Since $k \geq k_0$, we have in particular
\begin{align}\label{in proof bound full by homog}
    \Real (\mathbf{L}_0 \mathbf{f} | \mathbf{f})_{\Hil^k} \leq -\frac{1}{2} \|\mathbf{f}\|_{\Dot{\Hil}^k}^2 + c_k' c_k'' \sum_{n=1}^N |(\mathbf{f} | \mathbf{f}_n)_{\Hil^k}|^2.
\end{align}
Since we want a full $\Hil^k$-norm on the right-hand side of this inequality, we introduce the following space
\begin{align*}
    Y_k := \left\{ \Vector{f_1}{f_2} : f_1 \in \mathcal{P}_{k,d,d}, f_2 \in \mathcal{P}_{k-1,d,d}  \right\},
\end{align*}
where $\mathcal{P}_{k,d,d},\mathcal{P}_{k-1,d,d}$ are defined in \autoref{prop control inhomog by homog}.
Let $\mathbf{Q}$ denote the, with respect to $(\cdot|\cdot)_{\Hil^k}$ orthogonal, projection onto $Y_k$. Then using \autoref{prop control inhomog by homog} we infer the existence of some $c_k'''>0$ such that
\begin{align*}
    \|\mathbf{f}\|_{\Hil^k}^2 \leq c_k''' \|\mathbf{f}\|_{\Dot{\Hil}^k}^2
\end{align*}
for all $\mathbf{f} \in \ker(\mathbf{Q})$. For general $\mathbf{f} \in  \Hil^k$ we have
\begin{align*}
    \|\mathbf{f}\|_{\Hil^k}^2 &= \|\mathbf{Q} \mathbf{f}\|_{\Hil^k}^2 + \|\mathbf{f } - \mathbf{Q} \mathbf{f}\|_{\Hil^k}^2\leq \|\mathbf{Q} \mathbf{f}\|_{\Hil^k}^2 + c_k''' \|\mathbf{f } - \mathbf{Q} \mathbf{f}\|_{\Dot{\Hil}^k}^2 \\
    &\leq  \|\mathbf{Q} \mathbf{f}\|_{\Hil^k}^2 + 2 c_k'''\|\mathbf{f}\|_{\Dot{\Hil}^k}^2 +  2 c_k'''\|\mathbf{Q}\mathbf{f}\|_{\Dot{\Hil}^k}^2\\
    &\leq (1 + 2 c_k''')\|\mathbf{Q} \mathbf{f}\|_{\Hil^k}^2 + 2 c_k'''\|\mathbf{f}\|_{\Dot{\Hil}^k}^2,
\end{align*}
which we rewrite as
\begin{align*}
    - \|\mathbf{f}\|_{\Dot{\Hil}^k}^2 \leq - \frac{1 }{2c_k'''} \|\mathbf{f}\|_{\Hil^k} + \frac{1 + 2 c_k'''}{2c_k'''} \|\mathbf{Q} \mathbf{f}\|_{\Hil^k}^2.
\end{align*}
Inserting this into \eqref{in proof bound full by homog} yields
\begin{align*}
    \Real (\mathbf{L}_0 \mathbf{f} | \mathbf{f})_{\Hil^k} \leq -\frac{1}{4 c_k'''} \|\mathbf{f}\|_{\Hil^k}^2 + \frac{1 + 2 c_k'''}{4c_k'''} \|\mathbf{  Q} \mathbf{f}\|_{\Hil^k}^2 + c_k' c_k'' \sum_{n=1}^N |(\mathbf{f} | \mathbf{f}_n)_{\Hil^k}|^2.
\end{align*}
Now define $\mathbf{P}$ to be the, with respect to $(\cdot |\cdot)_{\Hil^k}$ orthogonal, projection onto
\begin{align*}
    \text{span}(Y_k \cup \{\mathbf{f}_n\}_{n=1}^N).
\end{align*}
By construction
\begin{align*}
    \sum_{n=1}^N |(\mathbf{f} | \mathbf{f}_n)_{\Hil^k}|^2 &\leq \|\mathbf{P} \mathbf{f}\|_{\Hil^k}^2\\
    \|\mathbf{Q} \mathbf{f}\|_{\Hil^k}^2&\leq \|\mathbf{P} \mathbf{f}\|_{\Hil^k}^2
\end{align*}
and hence
\begin{align*}
    \Real (\mathbf{L}_0 \mathbf{f} | \mathbf{f})_{\Hil^k} \leq -\frac{1}{4 c_k'''} \|\mathbf{f}\|_{\Hil^k}^2 + \left(\frac{1 + 2 c_k'''}{4c_k'''} + c_k' c_k''\right) \| \mathbf{P} \mathbf{f}\|_{\Hil^k}^2.
\end{align*}
By orthogonality $\Real (\mathbf{P} \mathbf{f} | \mathbf{f})_{\Hil^k} = \|\mathbf{P} \mathbf{f}\|_{\Hil^k}^2$ and we conclude
\begin{align*}
    \Real((\mathbf{L}_0 - C_k\mathbf{P}) \mathbf{f}| \mathbf{f})_{\Hil^k} \leq - w_k \|\mathbf{f}\|_{\Hil^k}^2
\end{align*}
for all $\mathbf{f} \in C^\infty(\overline{\B^d},\C^d)^2$, where
\begin{align*}
    w_k &= \frac{1}{4 c_k'''}\\
    C_k &= \frac{1 + 2 c_k'''}{4c_k'''} + c_k' c_k'',
\end{align*}
which is the claim.
\end{proof}
From \eqref{commutation constant} we conclude 
\begin{align}\label{k0}
    k_0 = k_0(d) \simeq d^\frac{3}{2}.
\end{align}

In order to treat $\mathbf{L}_\Theta'$ for $\Theta$ small perturbatively, we need Lipschitz-continuous dependence on $\Theta$.
\begin{lem}\label{perturb is Lipschitz}
For each $k \geq 1$ there exists some $L_k > 0$ such that
\begin{align*}
    \|\mathbf{L}_{\Theta_1}' - \mathbf{L}_{\Theta_2}'\|_{\Hil^k \to \Hil^k} \leq L_k |\Theta_1 - \Theta_2|
\end{align*}
for all $\Theta_1,\Theta_2 \in \overline{\B_M^{p(d)}}$.
\end{lem}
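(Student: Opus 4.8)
The plan is to read off everything from the explicit structure of $\mathbf{L}_\Theta'$ recorded above. Since
\begin{align*}
    \mathbf{L}'_{\Theta} \Vector{f_1}{f_2} = \Vector{0}{V_\Theta f_1 + V_{\Theta i}\pd^i f_1 + W_\Theta f_2},
\end{align*}
with potentials $V_{\Theta j n},V_{\Theta i j n},W_{\Theta j n}\in C^\infty(\overline{\B^d})$ that depend jointly smoothly on $(\xi,\Theta)\in\overline{\B^d}\times\overline{\B_M^{p(d)}}$, the difference $\mathbf{L}_{\Theta_1}'-\mathbf{L}_{\Theta_2}'$ has exactly the same form, with the potentials replaced by the differences $V_{\Theta_1 j n}-V_{\Theta_2 j n}$, $V_{\Theta_1 i j n}-V_{\Theta_2 i j n}$, $W_{\Theta_1 j n}-W_{\Theta_2 j n}$. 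I would therefore proceed in two steps: first bound the $\Hil^k\to\Hil^k$ operator norm of such a first-order differential operator by finitely many $L^\infty$-norms of $\xi$-derivatives of its coefficients, then show that those quantities are Lipschitz in $\Theta$, uniformly in $\xi$.

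For the first step I use the norm equivalence $\big\|\Vector{f_1}{f_2}\big\|_{\Hil^k}\simeq\|f_1\|_{H^k(\B^d,\C^d)}+\|f_2\|_{H^{k-1}(\B^d,\C^d)}$, which reduces the task to estimating $\|a f_1 + a_i\pd^i f_1 + b f_2\|_{H^{k-1}(\B^d,\C^d)}$ by $\|f_1\|_{H^k}+\|f_2\|_{H^{k-1}}$ for smooth coefficients $a,a_i,b$. By the Leibniz rule, for $|\gamma|\le k-1$ one has $\pd^\gamma(c\,g)=\sum_{\delta\le\gamma}\binom{\gamma}{\delta}\pd^\delta c\,\pd^{\gamma-\delta}g$, hence $\|c g\|_{H^{k-1}(\B^d)}\le C_k\max_{|\delta|\le k-1}\|\pd^\delta c\|_{L^\infty(\B^d)}\,\|g\|_{H^{k-1}(\B^d)}$ with $C_k$ depending only on $k$ and $d$. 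Applying this with $g\in\{f_1,\pd^i f_1,f_2\}$ and using $\|\pd^i f_1\|_{H^{k-1}}\le\|f_1\|_{H^k}$ — so that the single derivative falling on $f_1$ consumes precisely the extra unit of regularity available in the first slot — one obtains
\begin{align*}
    \|\mathbf{L}_{\Theta_1}'-\mathbf{L}_{\Theta_2}'\|_{\Hil^k\to\Hil^k}\le C_k\, M_k(\Theta_1,\Theta_2),
\end{align*}
where $M_k(\Theta_1,\Theta_2)$ is the maximum, over the finitely many potentials and over multi-indices $|\alpha|\le k-1$, of $\|\pd^\alpha(V_{\Theta_1 j n}-V_{\Theta_2 j n})\|_{L^\infty(\B^d)}$ and the analogous quantities for $V_{\cdot i j n}$ and $W_{\cdot j n}$.

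For the second step, fix $|\alpha|\le k-1$ and let $a_\Theta(\xi)$ denote any of $\pd_\xi^\alpha V_{\Theta j n}(\xi)$, $\pd_\xi^\alpha V_{\Theta i j n}(\xi)$, $\pd_\xi^\alpha W_{\Theta j n}(\xi)$. By joint smoothness, $(\xi,\Theta)\mapsto a_\Theta(\xi)$ is $C^1$ on the compact set $\overline{\B^d}\times\overline{\B_M^{p(d)}}$, so $|\nabla_\Theta a_\Theta(\xi)|\le \Lambda_k$ for some $\Lambda_k>0$ independent of $(\xi,\Theta)$. Since $\overline{\B_M^{p(d)}}$ is convex, integrating along the segment from $\Theta_2$ to $\Theta_1$ gives $|a_{\Theta_1}(\xi)-a_{\Theta_2}(\xi)|\le\Lambda_k|\Theta_1-\Theta_2|$ for all $\xi\in\overline{\B^d}$, i.e.\ $\|a_{\Theta_1}-a_{\Theta_2}\|_{L^\infty(\B^d)}\le\Lambda_k|\Theta_1-\Theta_2|$. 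Taking the maximum over the finitely many potentials and multi-indices yields $M_k(\Theta_1,\Theta_2)\le\Lambda_k'|\Theta_1-\Theta_2|$ for some $\Lambda_k'>0$, and combining with the first step gives the claim with $L_k:=C_k\Lambda_k'$.

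I do not expect a serious obstacle: the argument is essentially bookkeeping. The only two points that are not completely automatic are (i) that the $\Hil^k\to\Hil^k$ norm of a first-order differential operator with smooth coefficients is controlled by finitely many $L^\infty$-norms of $\xi$-derivatives of its coefficients — this is the norm equivalence together with Leibniz, with the mild subtlety of tracking where the $\pd^i$ derivative is absorbed — and (ii) the passage from ``jointly smooth on a compact set'' to ``uniformly Lipschitz in $\Theta$'', which uses compactness to bound $\nabla_\Theta$ and convexity of the parameter ball $\overline{\B_M^{p(d)}}$ to integrate along line segments.
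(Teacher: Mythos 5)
Your argument is correct and matches the paper's: both obtain uniform (in $\xi$) Lipschitz bounds on the potentials and their $\xi$-derivatives by integrating $\nabla_\Theta$ along the segment from $\Theta_2$ to $\Theta_1$ (using joint smoothness and compactness of $\overline{\B^d}\times\overline{\B_M^{p(d)}}$), then conclude the operator-norm Lipschitz bound. The only difference is that you spell out the Leibniz-rule bookkeeping that the paper dismisses as ``follows straightforwardly.''
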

\begin{proof}
Recall that all of $V_{\Theta j n}(\xi),V_{\Theta i j n}(\xi), W_{\Theta j n}(\xi)$ are jointly smooth for $(\xi,\Theta) \in \overline{\B^d} \times \overline{\B_M^{p(d)}}$. The fundamental theorem of calculus yields for $F \in C^\infty\left(\overline{\B^d} \times \overline{\B_M^{p(d)}}\right)$
\begin{align*}
    F(\xi,\Theta_1) - F(\xi,\Theta_2) &= \int_{0}^1 \frac{d}{dt} F(\xi,\Theta_2 + t(\Theta_1 - \Theta_2)) d t\\
    &= \int_0^1 (\Theta_1^j - \Theta_2^j ) \pd_{\Theta^j} F(\xi,\Theta)\Big|_{\Theta = \Theta_2 + t(\Theta_1 - \Theta_2)} d t
\end{align*}
and hence
\begin{align*}
    |F(\xi,\Theta_1) - F(\xi,\Theta_2)| \leq L_F |\Theta_1 - \Theta_2|
\end{align*}
for all $\xi \in \overline{\B^d}$ and $\Theta_1,\Theta_2 \in \overline{\B_M^{p(d)}}$. Since this also holds for any $\xi$-partial derivative, the claimed Lipschitz-continuity of $\mathbf{L}_\Theta'$ on $\Hil^k$ then follows straightforwardly.
\end{proof}

\subsection{Spectrum of the unperturbed generator}
\label{subsect spec of L0}
In the following we will call $\mathbf{L}_0$ the unperturbed generator and $\mathbf{L}_\Theta$ for $\Theta \in \overline{\B_M^{p(d)}}$ the perturbed generator.

We need information on the spectrum of $\mathbf{L}_0$. For this we first have to take into consideration the effect of symmetries. Namely if one applies one of the continuous symmetries of Eq.~\eqref{int wm eq selfsim} (which are induced by the symmetries of Eq.~\eqref{ex wm eq}, see \autoref{symmetries}) to $v_0$ parametrized by a real parameter $\eps$, then differentiates the resulting expression with respect to $\eps$ and evaluates this expression at $\eps =0$, one obtains a solution of the linearization of Eq.~\eqref{int wm eq selfsim}, which is given by 
\begin{align}\label{lin int wm eq selsim}
    0 &= [\pd_\tau^2 + \pd_\tau + 2 \xi^j \pd_{\xi^j} \pd_\tau -(\delta^{i j} - \xi^i \xi^j)\pd_{\xi^i}\pd_{\xi^j} + 2 \xi^j \pd_{\xi^j}]w^n(\tau,\xi)\\
    &\notag-\frac{4}{d-2 + |\xi|^2}( |\xi|^2(\pd_\tau + \xi^j \pd_{\xi^j})w^n(\tau,\xi) - \xi^j \pd_{\xi^j} w^n(\tau,\xi)+ \xi^n \pd_j w^j(\tau,\xi) - \xi_j \pd^n w^j(\tau,\xi)  )\\
    &\notag- \frac{2(d-2 - |\xi|^2)}{d-2 + |\xi|^2}w^n(\tau,\xi), \quad n=1,\ldots,d.
\end{align}
The symmetries then generate the following solutions of this equation.
\begin{align*}
    (d - |\xi|^2) e_j, &\quad 1 \leq j \leq d\\
    \xi_i e_j - \xi_j e_i, & \quad 1 \leq i < j \leq d\\
    |\xi|^2 e_j - d \xi_j \xi, & \quad 1 \leq j \leq d\\
    e^\tau e_j, & \quad 1 \leq j \leq d\\
    e^\tau \xi &.
\end{align*}
For the first order formulation, we define the following functions
\begin{align*}
    \mathbf{f}_{0 ,j}^0(\xi) &:= \Vector{(d-|\xi|^2) e_j}{  - 2 |\xi|^2 e_j}, \quad 1 \leq j \leq d\\
    \mathbf{f}_{1,1 ,i, j}^0(\xi) &:= \Vector{\xi_i e_j - \xi_j e_i}{\xi_i e_j - \xi_j e_i}, \quad 1 \leq i < j \leq d\\
    \mathbf{f}_{2, d, j}^0(\xi) &:= \Vector{|\xi|^2 e_j - d \xi_j \xi}{2(|\xi|^2 e_j - d \xi_j \xi)},\quad 1 \leq j \leq d\\
    \mathbf{f}_{0, j}^1(\xi) &:= \Vector{e_j}{e_j}, \quad 1 \leq j \leq d\\
    \mathbf{f}_{1 ,d-1}^1(\xi) &:= \Vector{\xi}{2 \xi}.
\end{align*}
The notation is for the following. Eq.~\eqref{lin int wm eq selsim} is coupled due to the term
\begin{align*}
    \xi^n \pd_j w^j(\tau,\xi) - \xi_j \pd^n w^j(\tau,\xi) = (K_\xi w(\tau,\xi))^n.
\end{align*}
The operator $K$ is angular in the sense that it maps any radial function to 0. Hence it still makes sense to attempt the standard approach of decomposing into spherical harmonics. But then one has to find the eigenvalues of $K$ considered as an operator acting on functions in $L^2(\S^{d-1},\C^d)$. In \cite{WeiKocDon25} and \cite{DonMos26} the orthogonal decomposition
\begin{align*}
    L^2(\S^{d-1},\C^d) = \Y_0^d \oplus \bigoplus_{ \ell \geq 1} \bigoplus_{m \in \{-\ell,1,\ell+d-2\}} W_{\ell,m} 
\end{align*}
is obtained, where
\begin{align*}
    K|_{\Y_0^d} &\equiv 0\\
    K|_{W_{\ell,m}} &\equiv m.
\end{align*}
Then one chooses $\{Y_{0,a}\}$ to be an orthonormal basis of $\Y_0^d$ and $\{Y_{\ell,m,a}\}$ to be an orthonormal basis of $W_{\ell,m}$ (where $a$ runs through some finite index set). In this notation we write (dropping renormalization constants)
\begin{align*}
    \mathbf{f}_{0 ,j}^0(\xi) &= \Vector{f_{0}^0(|\xi|) Y_{0,j}\left( \frac{\xi}{|\xi|} \right)  }{|\xi| (f_0^0)'(|\xi|) Y_{0,j}\left( \frac{\xi}{|\xi|} \right)}, \quad 1 \leq j \leq d\\
    \mathbf{f}_{1, 1 ,i, j}^0(\xi) &= \Vector{f_{1, 1}^0(|\xi|)Y_{1,1,i,j}\left( \frac{\xi}{|\xi|} \right)  }{|\xi|(f_{1 ,1}^0)'(|\xi|) Y_{1,1,i,j}\left( \frac{\xi}{|\xi|} \right)},\quad 1 \leq i < j \leq d\\
    \mathbf{f}_{2,d,j}^0(\xi) &= \Vector{f_{2,d}^0(|\xi|) Y_{2,d,j}\left( \frac{\xi}{|\xi|} \right) }{|\xi| (f_{2,d}^0)'(|\xi|)Y_{2,d,j}\left( \frac{\xi}{|\xi|} \right)}, \quad 1 \leq j \leq d\\
    \mathbf{f}_{0,j}^1(\xi) &= \Vector{f_0^1(|\xi|) Y_{0,j}\left( \frac{\xi}{|\xi|} \right) }{( f_0^1(|\xi|) + |\xi|(f_0^1)'(|\xi|)) Y_{0,j}\left( \frac{\xi}{|\xi|} \right)}, \quad 1 \leq j \leq d\\
    \mathbf{f}_{1,d-1}^1(\xi) &= \Vector{f_{1,d-1}^1(|\xi|) Y_{1,d-1}\left( \frac{\xi}{|\xi|} \right) }{(f_{1,d-1}^1(|\xi|)  + |\xi| (f_{1,d-1}^1)'(|\xi|)  )Y_{1,d-1}\left( \frac{\xi}{|\xi|} \right)},
\end{align*}
where
\begin{align*}
    f_0^0(\rho) &:= d-\rho^2\\
    f_{1,1}^0(\rho) &:= \rho\\
    f_{2,d}^0(\rho) &:= \rho^2\\
    f_{0}^1(\rho) &:= 1\\
    f_{1,d-1}^1(\rho) &:= \rho.
\end{align*}
We can extract all the spectral information needed from this.
\begin{prop}\label{eigenvalues of L0}
Let $d \geq 3$ and $k \geq k_0$. Let $C_k,w_k > 0$ as in \autoref{full gen is diss up to per}. Then $\overline{\Half_{-\frac{w_k}{2}}} \cap \sigma(\mathbf{L}_0)$ is finite and $\overline{\Half} \cap \sigma(\mathbf{L}_0) = \{0,1\}$. Both $0,1$ are eigenvalues and 
\begin{align*}
    \{\mathbf{f}_{0,j}^0\}_{1 \leq j \leq d},\quad \{\mathbf{f}^0_{1,1,i,j}\}_{1 \leq i < j \leq d},\quad \{  \mathbf{f}_{2,d,j}^0\}_{1 \leq j \leq d}
\end{align*}
span $\ker(\mathbf{L}_0)$ and
\begin{align*}
    \{\mathbf{f}_{0,j}^1\}_{1 \leq j \leq d},\quad \{\mathbf{f}_{1,d-1}^1\}
\end{align*}
span $\ker(\mathbf{I} - \mathbf{L}_0)$. In particular
\begin{align*}
    \dim(\ker(\mathbf{L}_0)) = d + \binom{d}{2} + d = p(d),\quad \dim(\ker(\mathbf{I} - \mathbf{L}_0)) = d+1.
\end{align*}
\end{prop}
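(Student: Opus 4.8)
The plan is to first combine the dissipative estimate of \autoref{full gen is diss up to per} with an analytic Fredholm argument to show that $\overline{\Half_{-\frac{w_k}{2}}}\cap\sigma(\mathbf{L}_0)$ consists of finitely many eigenvalues, and then to invoke the mode stability theorems of \cite{WeiKocDon25} and \cite{DonMos26} to pin those eigenvalues down to $\{0,1\}$ together with the explicit eigenfunctions. By \autoref{full gen is diss up to per} and the Lumer--Philips theorem, the operator $\mathbf{L}_0-C_k\mathbf{P}$ (obtained from the generator $\mathbf{L}_0$ by the bounded, finite-rank perturbation $-C_k\mathbf{P}$) generates a strongly continuous semigroup on $\Hil^k$ of growth bound at most $-w_k$, so $\Half_{-w_k}\subseteq\rho(\mathbf{L}_0-C_k\mathbf{P})$ with $\|\mathbf{R}_{\mathbf{L}_0-C_k\mathbf{P}}(\lambda)\|_{\Hil^k\to\Hil^k}\leq(\Real\lambda+w_k)^{-1}$ there. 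For $\Real\lambda>-w_k$ one factors
\begin{equation*}
\lambda\mathbf{I}-\mathbf{L}_0=\bigl(\lambda\mathbf{I}-(\mathbf{L}_0-C_k\mathbf{P})\bigr)\bigl(\mathbf{I}-C_k\mathbf{R}_{\mathbf{L}_0-C_k\mathbf{P}}(\lambda)\mathbf{P}\bigr),
\end{equation*}
so $\lambda\in\sigma(\mathbf{L}_0)$ exactly when $\mathbf{I}-C_k\mathbf{R}_{\mathbf{L}_0-C_k\mathbf{P}}(\lambda)\mathbf{P}$ fails to be boundedly invertible; the map $\lambda\mapsto C_k\mathbf{R}_{\mathbf{L}_0-C_k\mathbf{P}}(\lambda)\mathbf{P}$ is operator-norm analytic on $\Half_{-w_k}$ and of finite rank, hence compact.

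Next one would control this family for large $|\lambda|$. Writing the resolvent of $\mathbf{L}_0-C_k\mathbf{P}$ as the Laplace transform of its semigroup and using that the range of $\mathbf{P}$ is finite-dimensional, one gets $\|C_k\mathbf{R}_{\mathbf{L}_0-C_k\mathbf{P}}(\lambda)\mathbf{P}\|_{\Hil^k\to\Hil^k}\to0$ as $|\lambda|\to\infty$ within $\overline{\Half_{-\frac{w_k}{2}}}$ (dominated convergence for $\Real\lambda\to\infty$, and the vector-valued Riemann--Lebesgue lemma, uniformly over the finite-dimensional range of $\mathbf{P}$, for $|\Im\lambda|\to\infty$ at bounded real part). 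Hence $\mathbf{I}-C_k\mathbf{R}_{\mathbf{L}_0-C_k\mathbf{P}}(\lambda)\mathbf{P}$ is invertible for $|\lambda|$ large, and the analytic Fredholm theorem then gives that it is invertible on $\Half_{-w_k}$ off a set with no accumulation point there, with meromorphic inverse; unwinding the factorization, $\sigma(\mathbf{L}_0)\cap\Half_{-w_k}$ is a discrete set of eigenvalues of finite algebraic multiplicity. Since this set is in addition bounded on the closed half-plane $\overline{\Half_{-\frac{w_k}{2}}}$, it follows that $\overline{\Half_{-\frac{w_k}{2}}}\cap\sigma(\mathbf{L}_0)$ is finite, and in particular (as $w_k>0$) $\overline{\Half}\cap\sigma(\mathbf{L}_0)$ is a finite set of eigenvalues.

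It remains to identify this set and the kernels at $0$ and $1$. On the one hand, a direct computation with the explicit formulas for $\Tilde{\mathbf{L}}$ and $\mathbf{L}_0'$ — equivalently, differentiating the symmetry orbit of $v_0$, which produced the solutions of \eqref{lin int wm eq selsim} listed above — confirms that the polynomial tuples $\mathbf{f}_{0,j}^0$, $\mathbf{f}_{1,1,i,j}^0$, $\mathbf{f}_{2,d,j}^0$ lie in $C^\infty(\overline{\B^d},\C^d)^2\subseteq\D(\mathbf{L}_0)$ and are annihilated by $\mathbf{L}_0$, while $\mathbf{f}_{0,j}^1$ and $\mathbf{f}_{1,d-1}^1$ are annihilated by $\mathbf{I}-\mathbf{L}_0$; since these tuples lie in distinct spherical-harmonic sectors and carry distinct radial profiles, they are linearly independent, so $0,1\in\sigma(\mathbf{L}_0)$ with $\dim(\ker(\mathbf{L}_0))\geq p(d)$ and $\dim(\ker(\mathbf{I}-\mathbf{L}_0))\geq d+1$. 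On the other hand, for an eigenvalue $\lambda\in\overline{\Half}$ with eigenfunction $\mathbf{f}=(f_1,f_2)\in\Hil^k$, one eliminates $f_2=(\lambda+\Lambda)f_1$ to obtain a second-order equation for $f_1$ whose principal part $(\delta^{ij}-\xi^i\xi^j)\pd_i\pd_j$ is elliptic in $\B^d$; interior elliptic regularity together with a Frobenius analysis at the characteristic boundary $|\xi|=1$ upgrades $\mathbf{f}$ to an element of $C^\infty(\overline{\B^d},\C^d)^2$, and decomposing $f_1$ into spherical harmonics and using the $K$-eigenspace decomposition of $L^2(\S^{d-1},\C^d)$ recalled above reduces the eigenvalue problem to the coupled spectral ODE systems treated in \cite{WeiKocDon25} for $d=3$ and in \cite{DonMos26} for $d\geq4$. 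Their mode stability theorems state exactly that $0$ and $1$ are the only values in $\overline{\Half}$ admitting nontrivial solutions of the required regularity and that the corresponding solution spaces are spanned by the tuples above, so the inequalities become equalities, $\overline{\Half}\cap\sigma(\mathbf{L}_0)=\{0,1\}$, and the identities $d+\binom{d}{2}+d=\tfrac{d(d+3)}{2}=p(d)$ and $d+1$ give the asserted dimensions.

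The first two steps are essentially routine once \autoref{full gen is diss up to per} is available; the genuine difficulty — and the reason for the companion papers — is the last step, namely converting the abstract statement that $\lambda$ is an $\Hil^k$-eigenvalue of $\mathbf{L}_0$ into the precise mode stability problem. This requires the regularity bootstrap up to the degenerate boundary $|\xi|=1$, the spherical-harmonic reduction that decouples the system via the operator $K$, and then the mode stability argument itself, which — because $\mathbf{L}_0$ is non-self-adjoint — must bypass Sturm--Liouville theory and is carried out by the quasi-solution method in \cite{WeiKocDon25, DonMos26}; here it is used as a black box.
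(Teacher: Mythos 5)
Your argument is correct and follows the same overall skeleton as the paper: dissipativity from \autoref{full gen is diss up to per} gives a semigroup with growth bound $-w_k$ for $\mathbf{L}_0-C_k\mathbf{P}$; the compact perturbation $C_k\mathbf{P}$ can only add a discrete set of finite-multiplicity eigenvalues in $\overline{\Half_{-w_k/2}}$; elliptic regularity plus Frobenius analysis at the degenerate boundary $|\xi|=1$ upgrades eigenfunctions to $C^\infty$ up to the closure; the spherical-harmonic/$K$-eigenspace reduction puts the problem in the form of the mode stability theorems of \cite{WeiKocDon25,DonMos26}, which then force $\overline{\Half}\cap\sigma(\mathbf{L}_0)=\{0,1\}$ and identify the kernels.

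The one genuine difference is in how the finiteness of $\overline{\Half_{-w_k/2}}\cap\sigma(\mathbf{L}_0)$ is established. The paper cites a packaged spectral-perturbation result, \cite[Theorem~B.1]{Glo22}, applied to the pair $(\mathbf{L}_0-C_k\mathbf{P}, C_k\mathbf{P})$, whereas you unpack this by hand: the Birman--Schwinger-type factorization
$\lambda\mathbf{I}-\mathbf{L}_0=\bigl(\lambda\mathbf{I}-(\mathbf{L}_0-C_k\mathbf{P})\bigr)\bigl(\mathbf{I}-C_k\mathbf{R}_{\mathbf{L}_0-C_k\mathbf{P}}(\lambda)\mathbf{P}\bigr)$, an operator-norm analytic finite-rank family, decay of $C_k\mathbf{R}_{\mathbf{L}_0-C_k\mathbf{P}}(\lambda)\mathbf{P}$ as $|\lambda|\to\infty$ inside $\overline{\Half_{-w_k/2}}$, and the analytic Fredholm theorem. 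This buys you a self-contained argument at the cost of a little care with the uniform Riemann--Lebesgue claim for $|\Im\lambda|\to\infty$: one should observe that on the finitely many basis vectors $\mathbf{g}_n$ of $\rg(\mathbf{P})$ the family $\tau\mapsto e^{-a\tau}\mathbf{S}_{\mathbf{L}_0-C_k\mathbf{P}}(\tau)\mathbf{g}_n$, $a\in[-w_k/2,M]$, is dominated by $e^{-w_k\tau/2}\|\mathbf{g}_n\|$ and hence relatively compact in $L^1$, which yields the required uniform decay. (Alternatively, one can bypass $|\Im\lambda|\to\infty$ altogether by combining the Hille--Yosida bound $\|\mathbf{R}_{\mathbf{L}_0-C_k\mathbf{P}}(\lambda)\|\leq(\Real\lambda+w_k)^{-1}$ for $\Real\lambda$ large with analytic Fredholm applied on the open half-plane and then using the boundedness of the singular set implied by the semigroup bound, which is essentially what the cited black-box result does.) The remainder of your outline — the explicit verification that the symmetry tuples lie in the kernel, linear independence, the regularity bootstrap, and the reduction to the decoupled radial ODEs via the $K$-eigenspace decomposition $L^2(\S^{d-1},\C^d)=\Y_0^d\oplus\bigoplus W_{\ell,m}$ — matches the paper's proof, and your reliance on mode stability as a black box for both directions (nonexistence off $\{0,1\}$ and exhaustion of the kernels at $0,1$) is precisely how the paper closes the argument.
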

\begin{proof}
Let $\mathbf{P}$ be the projection from \autoref{full gen is diss up to per}. Since it is bounded, the bounded perturbation theorem yields that $\mathbf{L}_0 - C_k \mathbf{P}$ is the generator of a strongly continuous semigroup. Thus we can again apply the Lumer-Philips Theorem to conclude from \autoref{full gen is diss up to per} that $\mathbf{L}_0 - C_k \mathbf{P}$ generates a semigroup with growth bound at most $-w_k$. Since $C_k \mathbf{P}$ is bounded and has finite rank, it is compact. Hence applying \cite[Theorem B.1]{Glo22} to $\mathbf{L}_0 - C_k \mathbf{P}$ and $C_k \mathbf{P}$ implies that $ \overline{\Half_{-\frac{w_k}{2}}} \cap \sigma(\mathbf{L}_0) $ is finite and consists of eigenvalues with finite algebraic multiplicity.

Now let $\lambda \in \overline{\Half} \cap \sigma(\mathbf{L}_0)$ and $\mathbf{f} \in \D(\mathbf{L}_0)\setminus \{0\}$ with $(\lambda \mathbf{I} - \mathbf{L}_0)\mathbf{f} = 0$. One checks along the lines of \cite[Lemma 4.8]{Don24} that 
\begin{align*}
      \lambda f_1(\xi) + \Lambda f_1(\xi)  -f_2(\xi)=&0\\
      \Delta f_1(\xi) - \Lambda f_2(\xi) - f_2(\xi) + \frac{2(d-2 - |\xi|^2)}{d-2 + |\xi|^2}f_1(\xi) + \frac{4}{d- 2 + |\xi|^2}(|\xi|^2 f_2(\xi) - \Lambda f_1(\xi)  + K f_1(\xi)  )   =&0
\end{align*}
holds for $\xi  \in \B^d$ in the sense of distributions. Inserting the first equation into the second one yields
\begin{align*}
    0 &= (\delta^{i j} - \xi^i \xi^j )\pd_{\xi^i} \pd_{\xi^j} f_1^n(\xi) - 2(\lambda+ 1)\xi^j \pd_{\xi^j}f_1^n(\xi) - \lambda(\lambda+1) f_1^n(\xi) + \frac{2(d-2 - |\xi|^2)}{d-2 + |\xi|^2}f_1^n(\xi)\\
    &+ \frac{4}{d-2 + |\xi|^2}(\lambda |\xi|^2 f_1^n(\xi) - (1-|\xi|^2) \xi^j \pd_{\xi^j} f_1^n(\xi) + (\xi^n \pd_{\xi^j} - \xi_j \pd_{\xi_n}) f_1^j(\xi)   ) ,\quad n=1,\ldots,d
\end{align*}
in the distributional sense. Elliptic regularity then implies $f_1 \in C^\infty(\B^d,\C^d) \cap H^k(\B^d,\C^d)$.

Next, we make the transformation
\begin{align*}
    f(\xi) := \frac{1}{d - 2 + |\xi|^2} f_1(\xi).
\end{align*}
Then $f \in C^\infty(\B^d,\C^d) \cap H^k(\B^d,\C^d)$ solves
\begin{align}\label{full transformed spectral eq}
    0 &=\notag (\delta^{i j} - \xi^i \xi^j)\pd_{\xi^i} \pd_{\xi^j} f^n(\xi) - 2 (\lambda  + 1)\xi^j \pd_{\xi^j} f^n(\xi) + \left( -\lambda(\lambda + 1) + \frac{4(d-1)(d-2  - |\xi|^2)}{(d-2 +  |\xi|^2)^2}  \right)f^n(\xi)\\
    &+ \frac{4}{d-2 + |\xi|^2}(\xi^n \pd_{\xi^j} - \xi_j \pd_{\xi_n})f^j(\xi), \quad n=1,\ldots,d.
\end{align}
Next, we define $f_{0,a},f_{\ell,m,a},:[0,1) \to \C$ by
\begin{align*}
    f_{0,a}(\rho) &:= \int_{\S^{d-1}} f^j(\rho \omega) \overline{(Y_{0,a})_j(\omega)} d\sigma(\omega) = (f(\rho\cdot)|Y_{0,a})_{L^2(\S^{d-1},\C^d)}\\
    f_{\ell,m,a}(\rho) &:= \int_{\S^{d-1}} f^j(\rho \omega) \overline{(Y_{\ell,m,a})_j(\omega)} d\sigma(\omega) = (f(\rho\cdot)|Y_{\ell,m,a})_{L^2(\S^{d-1},\C^d)},
\end{align*}
which gives the decomposition
\begin{align}\label{sph harm exp}
    f(\rho\cdot ) = \sum_a f_{0,a}(\rho) Y_{0,a} + \sum_{\ell \geq 1} \sum_{m \in \{-\ell,1,\ell+d-2\}} \sum_a f_{\ell,m,a}(\rho) Y_{\ell,m,a},
\end{align}
which converges in $L^2(\S^{d-1},\C^d)$ for each $\rho \in [0,1)$. Using dominated convergence, one verifies that $f_{0,a},f_{\ell,m,a} \in C^\infty([0,1)) \cap H^k((0,1))$. Smoothness in the interior allows one to insert the expansion \eqref{sph harm exp} into Eq.~\eqref{full transformed spectral eq}, then test with $Y_{0,a},Y_{\ell,m,a}$, respectively, to obtain the equations
\begin{align}\label{mode ode zero}
    (1 - \rho^2)f_{0,a}''(\rho) + \left( \frac{d-1}{\rho} - 2 (\lambda + 1)\rho \right)f_{0,a}'(\rho) + \left(-\lambda(\lambda + 1)  + \frac{4(d-1)(d-2 - \rho^2)}{(d-2  + \rho^2)^2} \right)f_{0,a}(\rho)  &= 0
\end{align}
and
\begin{align}\label{mode ode l m}
    (1 - \rho^2)f_{\ell,m,a}''(\rho) + \left( \frac{d-1}{\rho}- 2 (\lambda + 1)\rho  \right)f_{\ell,m,a}'(\rho)&\\
    +\left( -\lambda(\lambda + 1) -\frac{\ell(\ell+d-2)}{\rho^2} + \frac{4(d-1)(d-2 - \rho^2)}{(d-2 + \rho^2)^2} + \frac{4m}{d-2 + \rho^2}     \right) f_{\ell,m,a}(\rho)& = 0.\notag
\end{align}
We claim that we even have $f_{0,a},f_{\ell,m,a} \in C^\infty([0,1])$. To see this, one computes that both equations have the Frobenius indices $\{0, \frac{d-1}{2} - \lambda\}$ at $\rho=1$.

Assume first that $\frac{d-1}{2} - \lambda \notin \N_0$. Then a function that behaves like $(1- \rho)^{\frac{d-1}{2} - \lambda}$ around $\rho=1$ does not belong to $H^{\lceil \frac{d}{2} \rceil } \left(( \frac{1}{2},1)  \right)  $ and since $k \geq k_0 \geq \lceil \frac{d}{2} \rceil$, it in particular does not belong to $H^{k} \left( (\frac{1}{2},1 ) \right)  $. Hence Frobenius analysis yields that the functions $f_{0,a},f_{\ell,m,a}$ must correspond to the index $0$ which implies, since $\frac{d-1}{2} - \lambda \notin \N_0$, that they must be analytic and in particular smooth around $\rho=1$.

In the case where $\frac{d-1}{2} - \lambda \in \N_0$ one has potentially two situations.
\begin{enumerate}
    \item The fundamental system of solutions of Eq.~\eqref{mode ode zero}, respectively Eq.~\eqref{mode ode l m}, consists of two functions that are both analytic around $\rho = 1$.
    \item Let $\phi_1(\rho) = (1 - \rho)^{\frac{d-1}{2} - \lambda} h_1(\rho)$ for a function $h_1$ analytic around $\rho=1$ with $h_1(1) \not=0$ be a solution of Eq.~\eqref{mode ode zero}, respectively Eq.~\eqref{mode ode l m}, then any solution $\phi_2$ that is not a multiple of $\phi_1$ can be written as
    \begin{align*}
        \phi_2(\rho) = h_2(\rho) + c(1 - \rho)^{\frac{d-1}{2} - \lambda} \log(1-  \rho) h_1(\rho),
    \end{align*}
    for some $c \not=0$ and where $h_2$ is analytic around $\rho=1$.
\end{enumerate}
In the first case we again have $f_{0,a},f_{\ell,m,a} \in C^\infty([0,1])$. In the second case one computes that $\phi_2 \notin H^{\lceil \frac{d+1}{2} \rceil}\left( (\frac{1}{2},1) \right)$ and since $k \geq k_0 \geq \lceil \frac{d+1}{2} \rceil$, we conclude that $f_{0,a}$, respectively $f_{\ell,m,a}$, must be a multiple of $\phi_1$, which is smooth around $\rho=1$.

By mode stability, see \cite{WeiKocDon25} and \cite{DonMos26}, Eqs.~\eqref{mode ode zero} and \eqref{mode ode l m} have no nontrivial solutions in $C^\infty([0,1])$ for $\lambda \in \overline{\Half} \setminus\{0,1\}$. For $\lambda \in \overline{\Half} \setminus \{0,1\}$ hence all $f_{0,a},f_{\ell,m,a}$ must be zero and hence $f = 0$. This means $\mathbf{f} = 0$, a contradiction to the assumption.

Hence we have $\overline{\Half} \cap  \sigma(\mathbf{L}_0) \subseteq \{0,1\} $. One computes that $\{\mathbf{f}_{0,j}^0\}_{1 \leq j \leq d}$, $\{\mathbf{f}_{1,1,i,j}^0\}_{1 \leq i < j \leq d}$ and $\{\mathbf{f}_{2,d,j}^0\}_{1 \leq j \leq d}$ belong to $\ker(\mathbf{L}_0)$ and $\{ \mathbf{f}_{0,j}^1\}_{1 \leq j \leq d}$ and $\mathbf{f}_{1,d-1}^1$ belong to $\ker(\mathbf{I} - \mathbf{L}_0)$ and hence $\overline{\Half} \cap  \sigma(\mathbf{L}_0) = \{0,1\}$. Clearly these functions form linearly independent sets, hence we have $\dim(\ker(\mathbf{L}_0)) \geq p(d)$ and $\dim(\ker(\mathbf{I} - \mathbf{L}_0)) \geq d+1$. The fact that these functions exhaust these spaces is also a consequence of mode stability, which finishes the proof.
\end{proof}
Since we are in a nonself-adjoint setting, we still have to show that the algebraic multiplicities equal the geometric multiplicities, which is the content of the next lemma.
\begin{lem}
Under the assumptions of \autoref{eigenvalues of L0} let 
\begin{align*}
    \mathbf{P}_{\lambda,0} := \frac{1}{2 \pi i} \int_{\pd \disk_r(\lambda)} \mathbf{R}_{\mathbf{L}_0}(z) dz
\end{align*}
for $\lambda \in \{0,1\}$, where $r = \frac{1}{2}\dist(0,\sigma(\mathbf{L}_0 ) \setminus \{0\})$, be the Riesz projection. Then $\mathbf{P}_{\lambda,0}$ has finite-dimensional range and
\begin{align*}
    \rg(\mathbf{P}_{\lambda,0}) = \ker(\lambda \mathbf{I} - \mathbf{L}_0),
\end{align*}
i.e., the algebraic multiplicity is finite and equals the geometric multiplicity.
\end{lem}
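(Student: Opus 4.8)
The plan is to follow the ``ODE logic'' of \cite{ChaDonGlo17} and \cite{Glo25}, reducing the statement to the mode-stability input already used in \autoref{eigenvalues of L0}. By that proposition $\lambda \in \{0,1\}$ is an isolated point of $\sigma(\mathbf{L}_0)$ of finite algebraic multiplicity, so $r > 0$, $\mathbf{P}_{\lambda,0}$ is a bounded projection with finite-dimensional range, $\rg(\mathbf{P}_{\lambda,0}) = \ker\big((\lambda \mathbf{I} - \mathbf{L}_0)^n\big)$ for the ascent $n \geq 1$ of $\lambda \mathbf{I} - \mathbf{L}_0$, and the inclusion $\ker(\lambda \mathbf{I} - \mathbf{L}_0) \subseteq \rg(\mathbf{P}_{\lambda,0})$ is automatic. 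Hence it suffices to prove that the ascent equals $1$, i.e.\ that no nonzero element of $\ker(\lambda \mathbf{I} - \mathbf{L}_0)$ lies in $\rg(\lambda \mathbf{I} - \mathbf{L}_0)$; equivalently, there is no $\mathbf{f} \in \D(\mathbf{L}_0)$ with $(\lambda \mathbf{I} - \mathbf{L}_0)\mathbf{f} = \mathbf{g}$ for some $\mathbf{g} \in \ker(\lambda \mathbf{I} - \mathbf{L}_0) \setminus \{0\}$.

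Assume such $\mathbf{f},\mathbf{g}$ exist. By \autoref{eigenvalues of L0}, $\mathbf{g}$ is a linear combination of the explicit eigenfunctions, each of which is supported in a single block of the orthogonal decomposition $L^2(\S^{d-1},\C^d) = \Y_0^d \oplus \bigoplus_{\ell \geq 1}\bigoplus_m W_{\ell,m}$ that diagonalizes $K$. Since this decomposition makes $\mathbf{L}_0$ block diagonal, $\rg(\lambda \mathbf{I} - \mathbf{L}_0)$ splits accordingly; as $\mathbf{g} \neq 0$ we may assume that $\mathbf{g}$ and $\mathbf{f}$ are supported in one such block, in which the first component of $\mathbf{g}$ is $\phi_\ast(\rho) Y$ with $\phi_\ast$ one of the explicit polynomial radial profiles of \autoref{eigenvalues of L0}. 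Eliminating $f_2$ through the first component $\lambda f_1 + \Lambda f_1 - f_2 = g_1$, the second component becomes a second-order elliptic equation for $f_1$ with right-hand side $S$ that is real-analytic on $\overline{\B^d}$ and built linearly from $\mathbf{g}$. Interior elliptic regularity gives $f_1 \in C^\infty(\B^d,\C^d)$, and $f_1 \in H^k(\B^d,\C^d)$ since $\mathbf{f} \in \Hil^k$; the substitution $f = (d-2+|\xi|^2)^{-1} f_1$ and projection onto the block then produce, exactly as in the derivation of \eqref{mode ode zero}--\eqref{mode ode l m}, a scalar inhomogeneous ODE
\begin{align*}
\mathcal{L}\,f_\bullet = s, \qquad s\ \text{real-analytic on}\ [0,1], \quad s \not\equiv 0,
\end{align*}
where $\mathcal{L}$ is the differential operator appearing on the left-hand side of \eqref{mode ode zero} (resp.\ \eqref{mode ode l m}) for that block. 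Since adding the analytic source leaves the indicial roots $\{0, \tfrac{d-1}{2} - \lambda\}$ of $\mathcal{L}$ at $\rho = 1$ unchanged, the Frobenius argument from the proof of \autoref{eigenvalues of L0} --- discarding, thanks to $k \geq k_0$, any branch with the non-$H^k$ behavior $(1-\rho)^{\frac{d-1}{2}-\lambda}$ (or with a logarithm) --- forces $f_\bullet \in C^\infty([0,1])$.

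It remains to rule out the existence of a solution $f_\bullet \in C^\infty([0,1])$ of $\mathcal{L} f_\bullet = s$, and this is the crux. The homogeneous equation $\mathcal{L}\psi = 0$ has the transformed profile $\psi$ (a nonzero multiple of $(d-2+\rho^2)^{-1}\phi_\ast$) as a solution in $C^\infty([0,1])$, and by mode stability in this block $\psi$ is, up to scalars, the only such solution; reduction of order yields a second, linearly independent solution carrying precisely the singular $\rho = 1$ behavior excluded above. One then studies, at the two regular singular points $\rho = 0$ and $\rho = 1$, the Frobenius expansion of a hypothetical $f_\bullet \in C^\infty([0,1])$: writing $f_\bullet$ as a combination of the homogeneous solutions plus a particular solution, smoothness at each endpoint imposes solvability (resonance) conditions on the expansion coefficients, and the explicit source $s$ --- obtained by applying a fixed first-order operator to the polynomial $\phi_\ast$ --- violates them, the unavoidable secular/logarithmic term obstructing membership in $C^\infty([0,1])$. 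This contradiction yields $n = 1$ and hence $\rg(\mathbf{P}_{\lambda,0}) = \ker(\lambda \mathbf{I} - \mathbf{L}_0)$.

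The main obstacle is the endpoint bookkeeping in the last step. The weight rendering $\mathcal{L}$ formally self-adjoint is singular at $\rho = 1$, and $\psi$ need not be square-integrable against it, so a clean Fredholm-alternative pairing of the form $\int_0^1 s\,\psi\,\mu\,d\rho = 0$ is not directly available; the solvability obstruction must instead be extracted by hand from the Frobenius data at $\rho = 0$ and $\rho = 1$ --- precisely the hands-on ODE analysis developed in \cite{ChaDonGlo17} and \cite{Glo25}. A secondary point is to confirm that the source $s$ produced by eliminating $f_2$ and by the substitution $f = (d-2+|\xi|^2)^{-1} f_1$ has the claimed explicit form; this is a direct computation on the finitely many explicit eigenfunctions.
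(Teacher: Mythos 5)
Your setup is correct and follows the paper's route: reduce to the statement that the ascent of $\lambda\mathbf{I}-\mathbf{L}_0$ is one, suppose a solution of $(\lambda\mathbf{I}-\mathbf{L}_0)\mathbf{f}=\mathbf{g}$ with $\mathbf{g}\in\ker(\lambda\mathbf{I}-\mathbf{L}_0)\setminus\{0\}$, project onto a spherical-harmonics block, and conclude $f_\bullet\in C^\infty([0,1])$ from the Frobenius indices and $k\geq k_0$. You also correctly note that the inhomogeneities $G_{\ell,m}$ vanish outside finitely many blocks.

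However, at the point you yourself call ``the crux'' there is a genuine gap. You write that the explicit source ``violates'' the solvability condition because of ``the unavoidable secular/logarithmic term,'' and you acknowledge that a Fredholm pairing $\int_0^1 s\,\psi\,\mu\,d\rho$ is not directly available and that the obstruction ``must instead be extracted by hand.'' That is precisely the part of the proof, and your proposal does not carry it out. The paper does not do a generic Frobenius bookkeeping argument here; it uses a concrete structural trick. After variation of constants and killing the singular-at-$0$ branch by boundedness, the only potentially non-smooth term is $h_1(\rho)\int_0^\rho h_0 W^{-1}\,g/(1-s^2)\,ds$, and the issue is whether the coefficient $c$ of its $(1-\rho^2)^{\frac{d-3}{2}}$ (or logarithmic) part vanishes. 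The paper does \emph{not} show $c\neq 0$ by direct computation (except $d=3$); instead it notices an identity of the form $g=-(3h_0-2\rho h_0')$, integrates by parts, and recognizes the resulting function $\tilde h_0$ as a solution of the \emph{supersymmetric partner} of the mode ODE, analytic at $\rho=0$. Mode stability of the SUSY equation then forbids $\tilde h_0$ from also being analytic at $\rho=1$, which is exactly $c\neq 0$. This SUSY reduction is the missing idea; without it the ``secular/logarithmic term'' assertion is unjustified.

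Two further points you do not address. First, in the $(\lambda,\ell)=(0,0)$ block the SUSY removal must be applied \emph{twice} (both the $\lambda=0$ and $\lambda=1$ modes live in $\ell=0$), via an extra intertwining $\hat h_0$; and in $(\lambda,\ell)=(1,0)$ for $d\geq4$ the removal must be performed in the reverse order, requiring an explicit double transformation not present in the literature cited, which the paper supplies. Second, the $d=3$, $(\lambda,\ell,m)=(1,1,d-1)$ case is handled separately by an elementary sign argument on $\int_0^1 h_0 W^{-1} g/(1-s^2)\,ds$, which is a different mechanism from the SUSY one. Your proposal would need to spell out all of these to be complete.
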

\begin{proof}
The results from spectral theory we use here can be found in \cite{Kat95}. First of all note that $r > 0$, since $ \overline{\Half_{- \frac{w_k}{2}}}  \cap \sigma(\mathbf{L}_0)$ is finite. Hence by construction we have $\pd \disk_r(0),\pd \disk_r(1) \subseteq \rho(\mathbf{L}_0)$, so $\mathbf{P}_{0,0},\mathbf{P}_{1,0}$ are well-defined bounded projections. We already observed in the proof of \autoref{eigenvalues of L0} that \cite[Theorem B.1]{Glo22} implies that $0,1$ have finite algebraic multiplicity, i.e., $\mathbf{P}_{0,0},\mathbf{P}_{1,0}$ have finite rank.

We first claim that $\rg(\mathbf{P}_{\lambda , 0}) \subseteq \D(\mathbf{L}_0)$ for $\lambda \in \{0,1\}$. To see this, let $\mathbf{f} \in \rg(\mathbf{P}_{\lambda,0})$. By density of $\D(\mathbf{L}_0)$ in $\Hil^k$ there exists a sequence $(\mathbf{g}_n)_{n \geq 1} \subseteq \D(\mathbf{L}_0)$ with $\mathbf{g}_n \to \mathbf{f}$. Then \cite[p.~178, Theorem 6.17]{Kat95} implies $\mathbf{P}_{\lambda, 0} \D(\mathbf{L}_0) \subseteq \D(\mathbf{L}_0)$. So by setting $\mathbf{f}_n := \mathbf{P}_{\lambda,0} \mathbf{g}_n$ we have $(\mathbf{f}_n)_{n \geq 1} \subseteq \D(\mathbf{L}_0) \cap \rg(\mathbf{P}_{\lambda,0})$ and since $\mathbf{P}_{\lambda,0}$ is bounded, we have $\mathbf{f}_n \to \mathbf{P}_{\lambda,0} \mathbf{f} = \mathbf{f}$. Since $\mathbf{L}_0 |_{\D(\mathbf{L}_0) \cap \rg(\mathbf{P}_{\lambda,0})}$ is bounded, $\mathbf{L}_0 \mathbf{f}_n \to \Tilde{\mathbf{f}}$ for some $\Tilde{\mathbf{f}} \in \rg(\mathbf{P}_{\lambda,0})$. Hence closedness of $\mathbf{L}_0$ implies $\mathbf{f} \in \D(\mathbf{L}_0)$ and hence $\rg(\mathbf{P}_{\lambda,0}) \subseteq \D(\mathbf{L}_0)$.

Since one always has $\ker(\lambda \mathbf{I} - \mathbf{L}_0) \subseteq \rg(\mathbf{P}_{\lambda,0})$, we aim to prove the other inclusion. Since the spectrum of $ (\mathbf{L}_0)_{\rg(\mathbf{P}_{\lambda,0})}=\mathbf{L}_0|_{\rg(\mathbf{P}_{\lambda,0})}$ is exactly $\{\lambda\}$, the operator $(\lambda \mathbf{I} - \mathbf{L}_0)|_{\rg(\mathbf{P}_{\lambda,0})}$ must be nilpotent. If $(\lambda \mathbf{I} - \mathbf{L}_0)|_{\rg (\mathbf{P}_{\lambda,0})} = 0$, then we have $\rg(\mathbf{P}_{\lambda,0}) \subseteq \ker(\lambda \mathbf{I} - \mathbf{L}_0)$ and we are done.

So assume $(\lambda \mathbf{I} - \mathbf{L}_0)|_{\rg (\mathbf{P}_{\lambda,0})} \not= 0$. From nilpotency we conclude $\rg((\lambda \mathbf{I} - \mathbf{L}_0)|_{\rg (\mathbf{P}_{\lambda,0})}) \cap \ker((\lambda \mathbf{I} - \mathbf{L}_0)|_{\rg (\mathbf{P}_{\lambda,0})}) \not=0$ and hence there must exist $0 \not= \mathbf{g} \in \ker((\lambda \mathbf{I} - \mathbf{L}_0)|_{\rg (\mathbf{P}_{\lambda,0})}) \subseteq \ker(\lambda\mathbf{I} - \mathbf{L}_0 ) $ and some $\mathbf{f} \in \rg(\mathbf{P}_{\lambda , 0})$ such that
\begin{align*}
    (\lambda \mathbf{I} - \mathbf{L}_0)\mathbf{f} = \mathbf{g}.
\end{align*}
As in the proof of \autoref{eigenvalues of L0} the equations
\begin{align*}
    \lambda f_1(\xi) + \Lambda f_1(\xi) - f_2(\xi) &= g_1(\xi)\\
    -\Delta f_1(\xi) + \Lambda f_2(\xi) + (\lambda + 1)f_2(\xi) - \frac{2(d-2 - |\xi|^2)}{d-2 + |\xi|^2} f_1(\xi) &\\
    - \frac{4}{d-2 + |\xi|^2}(|\xi|^2 f_2(\xi) - \Lambda f_1(\xi) + K f_1(\xi) ) &= g_2(\xi) 
\end{align*}
hold for $\xi \in \B^d$ in the sense of distributions. Inserting the first equation in the second one yields
\begin{align*}
    (\delta^{i j} - \xi^i \xi^j) \pd_i \pd_j f_1(\xi) + \left[ - 2(\lambda + 1) - \frac{4(1 - |\xi|^2)}{d-2 + |\xi|^2} \right]\Lambda f_1(\xi)\\
    +\left[ -\lambda(\lambda + 1) + \frac{2(d-2 - |\xi|^2)}{d-2 + |\xi|^2} + \frac{4 \lambda |\xi|^2}{d-2 + |\xi|^2} \right]f_1(\xi) + \frac{4}{d-2+  |\xi|^2} Kf_1(\xi) &= G(\xi),
\end{align*}
where $G(\xi) = \frac{4 |\xi|^2}{d-2 + |\xi|^2} g_1(\xi) - (\lambda + 1)g_1(\xi) - \Lambda g_1(\xi) - g_2(\xi)$. Since $\mathbf{g} \in \ker(\lambda \mathbf{I} - \mathbf{L}_0)$, it is a linear combination of functions in $C^\infty(\overline{\B^d},\C^d)^2$ and hence also belongs to $C^\infty(\overline{\B^d},\C^d)^2$. Hence $G \in C^\infty(\overline{\B^d},\C^d)$ and with elliptic regularity we conclude that $f_1 \in C^\infty(\B^d,\C^d) \cap H^k(\B^d,\C^d)$ and the above equation is satisfied classically on $\B^d$.

Now we again employ a spherical harmonics decomposition as in \eqref{sph harm exp} for both $f_1$ and $G$ to obtain the equations
\begin{align*}
    (1 - \rho^2)f_0''(\rho) + \left( \frac{d-1}{\rho} - 2 (\lambda + 1)\rho - \frac{4 \rho(1 - \rho^2)}{d-2 + \rho^2} \right) f_0'(\rho)&\\
    +\left[-\lambda(\lambda + 1) + \frac{2(d-2 - \rho^2)}{d-2 + \rho^2} +  \frac{4 \lambda \rho^2}{d-2 + \rho^2}  \right]f_0(\rho) &= G_0(\rho)
\end{align*}
and
\begin{align}\label{inhomog lm eq}
    (1 -\rho^2)f_{\ell,m}''(\rho) + \left( \frac{d-1 }{\rho} - 2(\lambda + 1)\rho - \frac{4\rho(1 - \rho^2)}{d-2 + \rho^2} \right) f_{\ell,m}'(\rho)&\\
    +\left[ -\frac{\ell(\ell+d-2)}{\rho^2} - \lambda(\lambda +1) + \frac{2(d-2 - \rho^2)}{d-2 + \rho^2} + \frac{4 \lambda \rho^2}{d-2 + \rho^2} + \frac{4 m}{d-2 + \rho^2} \right]f_{\ell,m}(\rho) &= G_{\ell,m}(\rho),\notag
\end{align}
where the additional index $a$ has been omitted. One checks $G_{\ell,m} = 0$ except for $\lambda = 0,1$, $ \ell=0$ or $(\lambda,\ell,m) \in \{(0,1,1),(1,1,d-1),(0,2,d)\}$. Except for these special cases we conclude $f_{\ell,m} = 0$ similar as in the proof of \autoref{eigenvalues of L0}.

We know that $f_0,f_{\ell,m} \in C^\infty([0,1))\cap H^k((0,1))$ and that $f_0, f_{\ell,m}$ satisfy the equations classically. We claim that in all of these special cases whenever $G_{\ell,m}\not=0$ and $f_{\ell,m} \not=0$ then $f_{\ell,m} \notin C^\infty([0,1))\cap H^k((0,1))$ follows, a contradiction, similarly for $f_0$. Hence also in these cases we must have $f_{\ell,m}=0$ which in total yields $f_1 = 0$ and hence also $\mathbf{f} = 0$, a contradiction to the fact that $(\lambda \mathbf{I} - \mathbf{L}_0)\mathbf{f}=\mathbf{g} \not=0$.

Since the equations decouple as above, it suffices to go through each of the special cases separately. As in \cite{ChaDonGlo17,Glo25}, the idea is to use mode stability here again. We will carry out the argument for $(\lambda,\ell,m) = (1,1,d-1) $ and for $\ell = 0$, the two remaining cases are handled similarly as the case $(\lambda,\ell,m) = (1,1,d-1)$.

\begin{itemize}
    \item $(\lambda,\ell,m) = (1,1,d-1)$: We define
    \begin{align*}
        f(\rho) := \frac{1}{d-2 + \rho^2}f_{1,d-1}(\rho)
    \end{align*}
    which solves 
    \begin{align}\label{inhom corot eq}
        f''(\rho) + \frac{d-1 - 4 \rho^2}{\rho(1 - \rho^2)} f'(\rho) +\left(-\frac{2}{1 - \rho^2} + V(\rho)\right)f(\rho) = \frac{g(\rho)}{1-\rho^2},
    \end{align}
    where (cf.~Eq.~\eqref{mode ode l m})
    \begin{align*}
        V(\rho) &:= (1-\rho^2)^{-1}\left( -\frac{d-1}{\rho^2} + \frac{4(d-1)(d-2-\rho^2)}{(d-2 + \rho^2)^2} + \frac{4(d-1)}{d-2 + \rho^2} \right)\\
        &= \frac{-(d-1)((d-2)^2 - 6(d-2)\rho^2 + \rho^4)}{\rho^2(1 - \rho^2)(d-2 + \rho^2)^2}
    \end{align*}
    and
    \begin{align*}
        g(\rho) := \frac{G_{1,d-1}(\rho)}{d-2 + \rho^2} = -\frac{\rho(\rho^2 + 5(d-2))}{(d- 2 + \rho^2)^2}.
    \end{align*}
    The homogeneous version of Eq.~\eqref{inhom corot eq} has the solution
    \begin{align*}
        h_0(\rho) := \frac{1}{d-2 + \rho^2} f_{1,d-1}^1(\rho) = \frac{\rho}{d-2 + \rho^2}.
    \end{align*}
    One computes that the Wronskian of Eq.~\eqref{inhom corot eq} is
    \begin{align*}
        W(\rho) = \rho^{-(d-1)}(1 - \rho^2)^\frac{d-5}{2}.
    \end{align*}
    Hence one obtains a second solution of the homogeneous version of Eq.~\eqref{inhom corot eq} by defining
    \begin{align}\label{h1 def corot}
        h_1(\rho) := h_0(\rho) \int_{\rho_1}^\rho W(s) h_0(s)^{-2} ds = h_0(\rho) \int_{\rho_1}^\rho s^{-(d+1)} (1-s^2)^\frac{d-5}{2} (d-2+s^2)^2 ds
    \end{align}
    for some $\rho_1 \in (0,1]$.

    We first consider the case $d = 3$. Here we set $\rho_1 = \frac{1}{2}$. In this case one just explicitly computes
    \begin{align*}
        h_1(\rho) &= h_0(\rho) \left( -\frac{1}{3\rho^3} -\frac{3}{\rho} - 2 \log(1 -\rho) + 2 \log(1+ \rho) + C \right)\\
        &= (1+\rho^2)^{-1}\left( -\frac{1}{3\rho^2} - 3 - 2 \rho \log(1 - \rho) + 2\rho \log(1 + \rho) + C\rho \right)
    \end{align*}
    for some constant $C$. Now we can employ the variation of constants formula to see that we have
    \begin{align*}
        f(\rho) = c_0 h_0(\rho) + c_1 h_1(\rho) - h_0(\rho) \int_0^\rho \frac{h_1(s)}{W(s)} \frac{g(s)}{1-s^2} ds + h_1(\rho) \int_0^\rho \frac{h_0(s)}{W(s)} \frac{g(s)}{1-s^2} ds
    \end{align*}
    for some constants $c_0,c_1$. Since $\frac{h_1(s)}{W(s)} \frac{g(s)}{1-s^2}$ is integrable close to $s = 0$ and since $\frac{h_0(s)}{W(s)} \frac{g(s)}{1-s^2}$ behaves as $s^4$ as $s \searrow 0$, we see that the last two terms in the above formula are bounded close to $\rho = 0$. Clearly the first term also stays bounded, hence the only unbounded term would be $c_1 h_1(\rho)$ if $c_1 \not=0$. But since we know that $f \in H^k((0,1)) \subseteq L^\infty((0,1))$ is bounded, we must have $c_1 = 0$. Hence we have
    \begin{align*}
        f(\rho) = c_0 h_0(\rho) - h_0(\rho) \int_0^\rho \frac{h_1(s)}{W(s)} \frac{g(s)}{1-s^2} ds + h_1(\rho) \int_0^\rho \frac{h_0(s)}{W(s)} \frac{g(s)}{1-s^2} ds.
    \end{align*}
    We observe that $\frac{h_1(s)}{W(s)} \frac{g(s)}{1-s^2}$ is integrable close to $s = 1$ as well and hence the first integral term above is bounded also close to $\rho = 1$. Hence the only potentially unbounded term is the second integral term. Since $h_1(\rho)$ is unbounded close to $\rho =1$, the last term can only be bounded, if $\int_0^\rho \frac{h_0(s)}{W(s)} \frac{g(s)}{1-s^2} ds$ vanishes at $\rho = 1$, i.e.,
    \begin{align*}
        \int_0^1 \frac{h_0(s)}{W(s)} \frac{g(s)}{1-s^2} ds = 0.
    \end{align*}
    But this is not the case, as one confirms by direct computation or by observing that the integrand is strictly negative on $(0,1)$. Hence $f$ is unbounded and in particular $f \notin H^k((0,1))$.

    For the general case $d \geq 4$ we set $\rho_1 = 1$ and we have the expansions
    \begin{align*}
        h_1(\rho) = \frac{A_1(\rho)}{\rho^{d-1}} + C\rho \log \rho A_2(\rho) = (1 - \rho^2)^\frac{d-3}{2} B(\rho),
    \end{align*}
    where $A_j$ are analytic at $\rho = 0$ with $A_j(0) \not=0$, $B$ is analytic at $\rho = 1$ with $B(1) \not=0$ and $C$ is a constant. Note that $C = 0$ for odd $d$, since the integrand in \eqref{h1 def corot} is even and hence there cannot be a logarithmic term. For even $d$ however, we actually have $C \not=0$ and hence encounter a logarithmic term. This was overlooked in \cite{Glo25}, but a small adaptation of the argument goes through nonetheless, as we show below.

    We again have from variation of constants
    \begin{align*}
        f(\rho) = c_0 h_0(\rho) + c_1 h_1(\rho) - h_0(\rho) \int_0^\rho \frac{h_1(s)}{W(s)} \frac{g(s)}{1-s^2} ds + h_1(\rho) \int_0^\rho \frac{h_0(s)}{W(s)} \frac{g(s)}{1-s^2} ds.
    \end{align*}
    Boundedness of $f$ at $\rho = 0$ again yields $c_1 = 0$. Now observe that
    \begin{align*}
        \frac{h_1(s)}{W(s)} \frac{g(s)}{1-s^2} = B(s) s^{d-1} g(s)
    \end{align*}
    is evidently analytic at $s = 1$. Hence $h_0(\rho)\int_0^\rho \frac{h_1(s)}{W(s)} \frac{g(s)}{1-s^2} ds$ is analytic at $\rho = 1$. One computes
    \begin{align*}
        \int_0^\rho \frac{h_0(s)}{W(s)} \frac{g(s)}{1-s^2} ds = \begin{cases}
            (1-\rho^2)^{-\frac{d-5}{2}} B_0(\rho) + c \log(1 - \rho^2), &d\text{ is odd}\\
            (1 - \rho^2)^{-\frac{d-5}{2}} B_0(\rho) + c, &d \text{ is even}
        \end{cases},
    \end{align*}
    where $B_0$ is analytic at $\rho = 1$ with $B_0(1) \not=0$ and $c$ is a constant. We hence have
    \begin{align*}
        h_1(\rho)\int_0^\rho \frac{h_0(s)}{W(s)} \frac{g(s)}{1-s^2} ds = \begin{cases}
            (1-\rho^2) B_0(\rho)B(\rho) + c (1-\rho^2)^\frac{d-3}{2} \log(1 - \rho^2)B(\rho), &d\text{ is odd}\\
            (1 - \rho^2) B_0(\rho)B(\rho) + c (1-\rho^2)^\frac{d-3}{2} B(\rho) , &d \text{ is even}
        \end{cases}.
    \end{align*}
    In both cases this yields an analytic function at $\rho = 1$ if and only if $c = 0$. We claim that this function is in fact not analytic at $\rho = 1$ and hence $c \not=0$. Since this term dictates the behavior of $f$ at $\rho = 1$, from this we will see that the $\left\lceil \frac{d-2}{2}\right\rceil$-th derivative of $f$ does not belong to $L^2((\frac{1}{2},1))$ and hence $f \notin H^k((0,1))$.

    To see this, we first note the relation $g(s) = -(3 h_0(s) - 2 s h_0'(s))$ which allows us to perform the following integration by parts
    \begin{align*}
        \int_0^\rho \frac{h_0(s)}{W(s)} \frac{g(s)}{1-s^2} ds &= -\int_0^\rho s^{d-1}(1-s^2)^{-\frac{d-3}{2}} (3 h_0(s)^2 + 2 s h_0(s)h_0'(s)) ds\\
        &= -\int_0^\rho s^{d-3}(1-s^2)^{-\frac{d-3}{2}} (3s^2 h_0(s)^2 + s^3  2 h_0(s)h_0'(s)) ds\\
        &= -\int_0^\rho s^{d-3}(1-s^2)^{-\frac{d-3}{2}} \pd_s[s^3 h_0(s)^2] ds\\
        &= - \rho^d (1-\rho^2)^{-\frac{d-3}{2}} h_0(\rho)^2 + (d-3)\int_0^\rho s^{d-1} (1-s^2)^{-\frac{d-1}{2}} h_0(s)^2 ds.
    \end{align*}
    From this we have
    \begin{align*}
        h_1(\rho) \int_0^\rho \frac{h_0(s)}{W(s)} \frac{g(s)}{1-s^2} ds &= - \rho^d h_0(\rho)^2 B(\rho)\\
        &+ (d-3) B(\rho) (1-\rho^2)^\frac{d-3}{2}\int_0^\rho s^{d-1} (1-s^2)^{-\frac{d-1}{2}} h_0(s)^2 ds,
    \end{align*}
    where the first term is analytic at $\rho = 1$. Hence we focus on the last term. We now define the function
    \begin{align*}
        \Tilde{h}_0(\rho) = \rho^{-(d-1)}(1-\rho^2)^\frac{d-3}{2} h_0(\rho)^{-1}\int_0^\rho s^{d-1} (1-s^2)^{-\frac{d-1}{2}} h_0(s)^2 ds.
    \end{align*}
    This function satisfies the equation
    \begin{align*}
        \Tilde{h}_0''(\rho) + \frac{d-1 - 4 \rho^2}{\rho(1 - \rho^2)}\Tilde{h}_0'(\rho) + \left(-\frac{2}{1-\rho^2} - \frac{2(d-2)(d-\rho^2)}{\rho^2( 1 - \rho^2)(d-2 + \rho^2)}\right)\Tilde{h}_0(\rho) = 0,
    \end{align*}
    see \cite{ChaDonGlo17} for a derivation of this fact. But this equation is exactly the supersymmetric analogue of the homogeneous version of Eq.~\eqref{inhom corot eq} (see Eq.~(2.10) in \cite{DonMos26}). Since $\Tilde{h}_0$ is evidently analytic at $\rho = 0$, by mode stability it cannot be also analytic at $\rho = 1$. Finally we can write
    \begin{align*}
        (d-3) B(\rho) (1-\rho^2)^\frac{d-3}{2}\int_0^\rho s^{d-1} (1-s^2)^{-\frac{d-1}{2}} h_0(s)^2 ds = (d-3)B(\rho)\rho^{d-1}h_0(\rho) \Tilde{h}_0(\rho)
    \end{align*}
    and since $(d-3) B(\rho)\rho^{d-1}h_0(\rho)  $ is analytic at $\rho = 1$ and does not vanish there, the function $(d-3)B(\rho)\rho^{d-1}h_0(\rho) \Tilde{h}_0(\rho)$ is not analytic at $\rho = 1$, which is what we wanted to prove.
    \item $(\lambda,\ell) = (0,0)$: We follow the same strategy as in the previous case. We set
    \begin{align*}
        f(\rho) := \frac{1}{d-2 + \rho^2} f_0(\rho)
    \end{align*}
    which solves
    \begin{align}\label{inhom lam0 l0 eq}
        f''(\rho) + \frac{d-1- 2 \rho^2}{\rho(1 - \rho^2)}f'(\rho) + V(\rho) f(\rho) = \frac{g(\rho)}{1 - \rho^2},
    \end{align}
    where (cf.~Eq.~\eqref{mode ode zero})
    \begin{align*}
        V(\rho) := \frac{4(d-1)(d-2 - \rho^2)}{(1 - \rho^2)(d-2 + \rho^2)^2}
    \end{align*}
    and 
    \begin{align*}
        g(\rho) := \frac{G_0(\rho)}{d-2+  \rho^2} = \frac{-d(d-2) + (8d - 10)\rho^2 + \rho^4}{(d-2 + \rho^2)^2}.
    \end{align*}
    The homogeneous version of Eq.~\eqref{inhom lam0 l0 eq} has the solution 
    \begin{align*}
        h_0(\rho) := \frac{1}{d-2 + \rho^2} f_0^0(\rho) = \frac{d- \rho^2}{d-2 + \rho^2}.
    \end{align*}
    The Wronskian of Eq.~\eqref{inhom lam0 l0 eq} is
    \begin{align*}
        W(\rho) = \rho^{-(d-1)} (1 - \rho^2)^{\frac{d-3}{2}}
    \end{align*}
    and hence we find the second solution
    \begin{align*}
        h_1(\rho) = h_0(\rho)\int_1^\rho W(s) h_0(s)^{-2} ds = h_0(\rho) \int_1^\rho s^{-(d-1)}(1 -s^2)^\frac{d-3}{2} (d-s^2)^{-2} (d-2 + s^2)^2 ds,
    \end{align*}
    which we integrate in this case from $1$ for all $d \geq 3$. We can again write down the expansions
    \begin{align*}
        h_1(\rho) = \frac{A_1(\rho)}{\rho^{d-2}} + C \log \rho A_2(\rho) = (1-\rho^2)^\frac{d-1}{2} B(\rho).
    \end{align*}
    Variation of constants yields
    \begin{align*}
        f(\rho) = c_0 h_0(\rho) + c_1 h_1(\rho) - h_0(\rho) \int_0^\rho \frac{h_1(s)}{W(s)} \frac{g(s)}{1-s^2} ds + h_1(\rho) \int_0^\rho \frac{h_0(s)}{W(s)} \frac{g(s)}{1-s^2} ds 
    \end{align*}
    for constants $c_0,c_1$. Boundedness of $f$ implies that $c_1 = 0$. Again the first integral term is analytic at $\rho = 1$ and we can write
    \begin{align*}
        h_1(\rho) \int_0^\rho \frac{h_0(s)}{W(s)} \frac{g(s)}{1-s^2} ds = \begin{cases}
            (1-\rho^2) B_1(\rho) B(\rho) + c (1  - \rho^2)^\frac{d-1}{2}\log(1 - \rho^2)B(\rho),&d\not=9\text{ is odd}\\
            (1-\rho^2) B_1(\rho) B(\rho) + c (1  - \rho^2)^\frac{d-1}{2}B(\rho),&d\text{ is even}\\
            (1-\rho^2)^2 B_1(\rho) B(\rho) + c (1-\rho^2)^4 \log(1 - \rho^2)B(\rho), &d=9
        \end{cases},
    \end{align*}
    where $B$ is analytic and non-vanishing at $\rho = 1$ and $c$ is a constant. For $d = 9$ one has $g(1)=0$, which slightly changes the expansion, but crucially the logarithmic term is still potentially present. We will prove that this function is not analytic at $\rho = 1$, which implies $c \not= 0$ and from this we conclude again $f \notin H^k((0,1))$. 

    We write $g(s) = -(h_0(s) + 2s h_0'(s))$ which again allows an integration by parts to yield
    \begin{align*}
        h_1(\rho)\int_0^\rho \frac{h_0(s)}{W(s)} \frac{g(s)}{1-s^2} ds &= - \rho^d B(\rho) h_0(\rho)^2 \\
        &+ (d-1) B(\rho)(1 - \rho^2)^\frac{d-1}{2} \int_0^\rho s^{d-1} (1-s^2)^{-\frac{d+1}{2}} h_0(s)^2ds.
    \end{align*}
    Similar to before it suffices to prove that the function defined by 
    \begin{align*}
        \Tilde{h}_0(\rho) = \rho^{-(d-1) } (1-\rho^2)^\frac{d-1}{2} h_0(\rho)^{-1}\int_0^\rho s^{d-1}(1-s^2)^{-\frac{d+1}{2}} h_0(s)^2 ds
    \end{align*}
    is not analytic at $\rho = 1$. This function satisfies the equation
    \begin{align*}
        \Tilde{h}_0''(\rho) + \frac{d-1 - 2 \rho^2}{\rho(1 - \rho^2)}\Tilde{h}_0'(\rho) -\frac{(d-1)(d^2 - 6d\rho^2 - 3 \rho^4)}{\rho^2(1 - \rho^2)(d- \rho^2)^2}\Tilde{h}_0(\rho) = 0.
    \end{align*}
    This is again the supersymmetric analogue of the homogeneous version of Eq.~\eqref{inhom lam0 l0 eq}. However, we cannot use mode stability just yet, since in the case $\ell = 0$ there also exists a mode solution for $\lambda = 1$ that has to be removed as well. To get to the final equation, we have to set
    \begin{align}\label{h hat from h tilde}
        \hat{h}_0(\rho) := \rho^{-\frac{d-1}{2}} (1 - \rho^2)^\frac{d+1}{4} (\pd_\rho - w(\rho)) \rho^\frac{d-1}{2} (1 - \rho^2)^{-\frac{d-3}{4}} \Tilde{h}_0(\rho),
    \end{align}
    where
    \begin{align*}
        w(\rho) = \frac{d(d+1) + (3-7d)\rho^2 + 2 \rho^4}{2\rho(1-\rho^2)(d - \rho^2)}.
    \end{align*}
    This function will now finally satisfy the ``right'' equation, namely
    \begin{align*}
        \hat{h}_0''(\rho) + \frac{d-1 - 2 \rho^2}{\rho(1 - \rho^2)}\hat{h}_0'(\rho) - \frac{2d}{\rho^2(1 - \rho^2)}\hat{h}_0(\rho) = 0,
    \end{align*}
    see Eq.~(2.10) in \cite{DonMos26}. Mode stability implies that this equation does not have a nontrivial solution that is analytic at both $\rho = 0$ and $\rho = 1$. One checks that indeed $\hat{h}_0$ is nontrivial and analytic at $\rho = 0$. Furthermore, one can compute that the transformation \eqref{h hat from h tilde} preserves analyticity at $\rho = 1$. Hence if $\Tilde{h}_0$ were analytic at $\rho = 1$, then also $\hat{h}_0$ would be analytic at $\rho = 1$ which in the end would contradict mode stability.
    \item $(\lambda,\ell) = (1,0)$: In this case, one has to treat $d = 3$ as a special case analogous to $(\lambda,\ell,m) = (1,1,d-1)$.
    
    For $d \geq 4$ one would like to reproduce the steps from before. This requires to again reduce the problem to the analyticity of some integral term, connect this term to a solution of a supersymmetric problem and then connect that solution in turn to an equation covered by mode stability. For this to work nicely, one has to apply the supersymmetric removal in reverse order, first removing the $ \lambda = 1$ solution, then the $\lambda = 0$ one. Since this was not done in \cite{DonMos26}, we provide the transformation here explicitly. The transformation
    \begin{align*}
        \Tilde{f}(\rho) = \rho^{-\frac{d-1}{2}} (1 - \rho^2)^{\frac{d+1-2\lambda}{4}} \left(\pd_\rho -\frac{(\Tilde{g}_0^0)'(\rho)}{\Tilde{g}_0^0(\rho)} \right)(1 - \rho^2)\left( \pd_\rho - \frac{(g_0^1)'(\rho)}{g_0^1(\rho)} \right) \rho^\frac{d-1}{2} (1 - \rho^2)^\frac{2\lambda - (d-3)}{4}f(\rho)
    \end{align*}
    transforms the equation
    \begin{align*}
        f''(\rho) + \frac{d-1 - 2(\lambda + 1)\rho^2}{\rho(1- \rho^2)} f'(\rho) + \left( -\frac{\lambda(\lambda + 1)}{1-\rho^2} + \frac{4(d-1)(d-2 - \rho^2)}{(1 - \rho^2)(d-2 + \rho^2)^2} \right) f(\rho) = 0
    \end{align*}
    into
    \begin{align}\label{double susy}
        \Tilde{f}''(\rho) + \frac{d-1 - 2(\lambda + 1)\rho^2}{\rho(1 - \rho^2)} \Tilde{f}'(\rho)+ \left(-\frac{\lambda(\lambda + 1)}{1-\rho^2} - \frac{2d}{\rho^2(1 - \rho^2)} \right)\Tilde{f}(\rho) = 0,
    \end{align}
    where
    \begin{align*}
        g_0^1(\rho) &:= \rho^\frac{d-1}{2} (1 - \rho^2)^{-\frac{d-5}{4}}(d-2 + \rho^2)^{-1}\\
        \Tilde{g}_0^0(\rho) &:= \rho^\frac{d+1}{2} (1 - \rho^2)^{-\frac{d-3}{4}}.
    \end{align*}
    Conveniently, the final equation Eq.~\eqref{double susy} is the exact same as when doing the removal in the reverse order. In particular, we still know for $\lambda = 1$ that Eq.~\eqref{double susy} has no nontrivial solutions that are analytic at both $\rho = 0$ and $\rho = 1$. With this, one does the same steps as before, which finishes the proof.
\end{itemize}

\end{proof}
\subsection{Growth bound for the unperturbed generator}
We aim to use the spectral properties of $\mathbf{L}_0$ in order to get growth bounds for $\mathbf{S}_0(\tau)$. For this we additionally need the following resolvent bound.
\begin{prop}
Let $d \geq 3$ and $k \geq k_0$. There exists some $R > 1$ such that
\begin{align}\label{unpert resol bd for large lambda}
    \sup_{\substack{\lambda \in \overline{\Half}\\|\lambda| \geq R}}\|\mathbf{R}_{\mathbf{L}_0}(\lambda)\|_{\Hil^k \to \Hil^k} < \infty.
\end{align}
\end{prop}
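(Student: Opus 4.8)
The plan is to deduce \eqref{unpert resol bd for large lambda} from \autoref{full gen is diss up to per} rather than to analyze the resolvent of $\mathbf{L}_0$ by hand. Fix $k\ge k_0$, let $C_k,w_k>0$ and the $\Hil^k$-orthogonal finite-rank projection $\mathbf{P}$ be as in that theorem, and set $\mathbf{A}_k:=\mathbf{L}_0-C_k\mathbf{P}$. Since $C_k\mathbf{P}$ is bounded, $\mathbf{A}_k$ generates a strongly continuous semigroup on $\Hil^k$, $\D(\mathbf{A}_k)=\D(\mathbf{L}_0)$, and $C^\infty(\overline{\B^d},\C^d)^2$ remains a core. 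The estimate $\Real(\mathbf{A}_k\mathbf{f}|\mathbf{f})_{\Hil^k}\le-w_k\|\mathbf{f}\|_{\Hil^k}^2$ of \autoref{full gen is diss up to per}, originally stated on this core, extends to all of $\D(\mathbf{A}_k)$ by graph-norm approximation, so $\mathbf{A}_k+w_k\mathbf{I}$ is dissipative and hence generates a contraction semigroup. Consequently $\overline{\Half}\subseteq\rho(\mathbf{A}_k)$ and, by the standard resolvent bound for contraction semigroups,
\begin{align*}
    \|\mathbf{R}_{\mathbf{A}_k}(\lambda)\|_{\Hil^k\to\Hil^k}\le\frac{1}{\Real\lambda+w_k}\le\frac{1}{w_k},\qquad\lambda\in\overline{\Half}.
\end{align*}
The decisive feature is that this bound is uniform on the entire closed right half-plane --- exactly what the naive perturbative treatment of $\mathbf{L}_0'$ cannot deliver --- and it is what makes \eqref{unpert resol bd for large lambda} accessible.

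I would then write $\mathbf{L}_0$ as a finite-rank perturbation of $\mathbf{A}_k$ and factor the resolvent pencil: for $\lambda\in\overline{\Half}$,
\begin{align*}
    \lambda\mathbf{I}-\mathbf{L}_0=(\lambda\mathbf{I}-\mathbf{A}_k)-C_k\mathbf{P}=(\lambda\mathbf{I}-\mathbf{A}_k)\bigl(\mathbf{I}-\mathbf{R}_{\mathbf{A}_k}(\lambda)\,C_k\mathbf{P}\bigr),
\end{align*}
the second factor being bounded on $\Hil^k$ and mapping $\D(\mathbf{L}_0)$ into itself. Hence $\lambda\in\rho(\mathbf{L}_0)$ whenever $\mathbf{I}-\mathbf{R}_{\mathbf{A}_k}(\lambda)\,C_k\mathbf{P}$ is invertible on $\Hil^k$, in which case $\mathbf{R}_{\mathbf{L}_0}(\lambda)=\bigl(\mathbf{I}-\mathbf{R}_{\mathbf{A}_k}(\lambda)\,C_k\mathbf{P}\bigr)^{-1}\mathbf{R}_{\mathbf{A}_k}(\lambda)$. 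In view of $\|\mathbf{R}_{\mathbf{A}_k}(\lambda)\|\le 1/w_k$ it therefore suffices to produce $R>1$ with $\|\mathbf{R}_{\mathbf{A}_k}(\lambda)\,C_k\mathbf{P}\|_{\Hil^k\to\Hil^k}\le\tfrac12$ for all $\lambda\in\overline{\Half}$ with $|\lambda|\ge R$; then a Neumann series gives $\|(\mathbf{I}-\mathbf{R}_{\mathbf{A}_k}(\lambda)\,C_k\mathbf{P})^{-1}\|\le 2$, and \eqref{unpert resol bd for large lambda} holds with constant $2/w_k$.

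To obtain the smallness of that finite-rank term I would use that $\mathbf{P}$ has finite rank: writing $\mathbf{P}\mathbf{f}=\sum_{j=1}^m(\mathbf{f}|\mathbf{e}_j)_{\Hil^k}\mathbf{e}_j$ for an orthonormal basis $\{\mathbf{e}_j\}_{j=1}^m$ of $\rg(\mathbf{P})$ gives the crude bound $\|\mathbf{R}_{\mathbf{A}_k}(\lambda)\,C_k\mathbf{P}\|_{\Hil^k\to\Hil^k}\le C_k\sum_{j=1}^m\|\mathbf{R}_{\mathbf{A}_k}(\lambda)\mathbf{e}_j\|_{\Hil^k}$, so it is enough to show $\mathbf{R}_{\mathbf{A}_k}(\lambda)\mathbf{v}\to 0$ as $|\lambda|\to\infty$ with $\Real\lambda\ge 0$, for each fixed $\mathbf{v}\in\Hil^k$. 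For $\mathbf{v}\in\D(\mathbf{A}_k)$ this is immediate from $\mathbf{R}_{\mathbf{A}_k}(\lambda)\mathbf{v}=\lambda^{-1}\mathbf{v}+\lambda^{-1}\mathbf{R}_{\mathbf{A}_k}(\lambda)\mathbf{A}_k\mathbf{v}$ together with the uniform bound $\|\mathbf{R}_{\mathbf{A}_k}(\lambda)\|\le 1/w_k$; the case of general $\mathbf{v}$ follows by density of $\D(\mathbf{A}_k)$ in $\Hil^k$ and the same uniform bound. This completes the argument.

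I do not expect a genuine obstruction in this proposition. The real difficulty --- that $\mathbf{L}_0'$ is a non-compact, first-order perturbation of the free generator, so that decay of the free semigroup cannot simply be transported to $\mathbf{S}_0$ --- has already been absorbed into \autoref{full gen is diss up to per}, and the proposition merely harvests it through a Birman--Schwinger-type factorization together with the soft fact that the resolvent of a bounded-semigroup generator vanishes strongly at infinity in $\overline{\Half}$. The only mild care needed is the passage of the dissipativity estimate from the smooth core to $\D(\mathbf{L}_0)$. (Alternatively one could argue by direct energy estimates in the spirit of \cite{GhoLiuMas25}: pairing $(\lambda\mathbf{I}-\mathbf{L}_0)\mathbf{f}=\mathbf{g}$ with $\mathbf{f}$ in $\Dot{\Hil}^k$ and invoking \eqref{free homog diss} and \eqref{Lprime highest derivative} with $k\ge k_0$ gives $\|\mathbf{f}\|_{\Dot{\Hil}^k}\lesssim\|\mathbf{f}\|_{\Hil^{k-1}}+\|\mathbf{g}\|_{\Hil^k}$, which, iterated down to regularity $k_0$, reduces matters to a low-order resolvent bound; the semigroup route above is shorter given what is already available.)
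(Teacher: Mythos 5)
Your proof is correct, but it takes a genuinely different route from the paper's. You harvest \autoref{full gen is diss up to per} wholesale: set $\mathbf{A}_k:=\mathbf{L}_0-C_k\mathbf{P}$, read off the uniform bound $\|\mathbf{R}_{\mathbf{A}_k}(\lambda)\|\leq 1/w_k$ on $\overline{\Half}$ from Lumer--Phillips and Hille--Yosida, factor $\lambda\mathbf{I}-\mathbf{L}_0=(\lambda\mathbf{I}-\mathbf{A}_k)(\mathbf{I}-\mathbf{R}_{\mathbf{A}_k}(\lambda)C_k\mathbf{P})$, and then beat the finite-rank term by the soft observation that $\mathbf{R}_{\mathbf{A}_k}(\lambda)\mathbf{v}\to 0$ strongly as $|\lambda|\to\infty$ in $\overline{\Half}$ for each fixed $\mathbf{v}$ (resolvent identity on $\D(\mathbf{A}_k)$ plus density). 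The paper instead argues directly by energy estimates in the spirit of \cite{GhoLiuMas25}: it bounds $|((\lambda\mathbf{I}-\mathbf{L}_0)\mathbf{f}|\mathbf{f})_{\Dot\Hil^k}|+|((\lambda\mathbf{I}-\mathbf{L}_0)\mathbf{f}|\mathbf{f})_{\Hil^1}|$ from below by $\|\mathbf{f}\|_{\Hil^k}^2$ using \eqref{free homog diss}, \eqref{Lprime highest derivative}, and the Ehrling-type inequality \eqref{strong Ehrling ineq}, then combines the resulting a priori bound $\|(\lambda\mathbf{I}-\mathbf{L}_0)\mathbf{f}\|_{\Hil^k}\gtrsim\|\mathbf{f}\|_{\Hil^k}$ with \autoref{eigenvalues of L0} to conclude that the large-$|\lambda|$ region lies in the resolvent set. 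Both arguments ultimately rest on the same dissipativity inputs. What your approach buys is independence from \autoref{eigenvalues of L0}: the Birman--Schwinger factorization establishes membership in $\rho(\mathbf{L}_0)$ on its own. What the paper's direct estimate buys is a self-contained quantitative bound that does not route through \autoref{full gen is diss up to per} and its nonconstructive finite-rank projection $\mathbf{P}$, which makes the mechanism by which $k\ge k_0$ enters more transparent. One small point worth flagging in your writeup: when you invert $\mathbf{I}-\mathbf{R}_{\mathbf{A}_k}(\lambda)C_k\mathbf{P}$ by Neumann series, you should note that the identity $(\mathbf{I}-\mathbf{R}_{\mathbf{A}_k}(\lambda)C_k\mathbf{P})^{-1}\mathbf{h}=\mathbf{h}+\mathbf{R}_{\mathbf{A}_k}(\lambda)C_k\mathbf{P}(\mathbf{I}-\mathbf{R}_{\mathbf{A}_k}(\lambda)C_k\mathbf{P})^{-1}\mathbf{h}$ shows the inverse maps $\D(\mathbf{A}_k)$ into itself, so the factorization genuinely produces $\mathbf{R}_{\mathbf{L}_0}(\lambda)=(\mathbf{I}-\mathbf{R}_{\mathbf{A}_k}(\lambda)C_k\mathbf{P})^{-1}\mathbf{R}_{\mathbf{A}_k}(\lambda)$; you assert this but the reason deserves one line.
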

\begin{proof}
We follow the idea from \cite{GhoLiuMas25} of bounding inner products from below by analyzing real and imaginary parts separately.
It suffices to prove the existence of $R > 1$ such that
\begin{align}\label{inner prod bd from below}
    |((\lambda \mathbf{I} - \mathbf{L}_0)\mathbf{f}|\mathbf{f})_{\Dot{\Hil}^k}| + |((\lambda \mathbf{I} - \mathbf{L}_0)\mathbf{f}|\mathbf{f})_{\Hil^1} |   \gtrsim \|\mathbf{f}\|_{\Hil^k}^2
\end{align}
holds for all $\mathbf{f} \in C^\infty(\overline{\B^d},\C^d)^2$ and $\lambda \in \overline{\Half}$ with $|\lambda| \geq R$. Indeed, if Eq.~\eqref{inner prod bd from below} holds, then we have by Cauchy-Schwarz
\begin{align*}
    \|\mathbf{f}\|_{\Hil^k}^2 &\lesssim |((\lambda \mathbf{I} - \mathbf{L}_0)\mathbf{f}|\mathbf{f})_{\Dot{\Hil}^k}| + |((\lambda \mathbf{I} - \mathbf{L}_0)\mathbf{f}|\mathbf{f})_{\Hil^1} |  \\
    &\leq \|(\lambda \mathbf{I} - \mathbf{L}_0)\mathbf{f}\|_{\Dot{\Hil}^k} \|\mathbf{f}\|_{\Dot{\Hil}^k} + \|(\lambda \mathbf{I} - \mathbf{L}_0)\mathbf{f}\|_{\Hil^1} \|\mathbf{f}\|_{\Hil^1}\\
    &\lesssim \|(\lambda \mathbf{I} - \mathbf{L}_0)\mathbf{f}\|_{\Hil^k} \|\mathbf{f}\|_{\Hil^k}
\end{align*}
and hence
\begin{align*}
    \|(\lambda \mathbf{I} - \mathbf{L}_0)\mathbf{f} \|_{\Hil^k} \gtrsim \|\mathbf{f}\|_{\Hil^k}.
\end{align*}
This bound together with the fact that for $R > 1$
\begin{align*}
    \{ \lambda \in \overline{\Half} : |\lambda| \geq R \} \subseteq \rho(\mathbf{L}_0)
\end{align*}
holds, by \autoref{eigenvalues of L0}, then yields \eqref{unpert resol bd for large lambda}.

In order to establish \eqref{inner prod bd from below}, we recall from \eqref{free homog diss} and \eqref{Lprime highest derivative} 
\begin{align*}
    \Real (\mathbf{L} \mathbf{f}|\mathbf{f})_{\Dot{\Hil}^k} &\leq \left(\frac{d}{2} - k  \right)\|\mathbf{f}\|_{\Dot{\Hil}^k}^2\\
    |(\mathbf{L}_0' \mathbf{f}|\mathbf{f})_{\Dot{\Hil}^k}| &\leq C \|\mathbf{f}\|_{\Dot{\Hil}^k}^2 + c_k \|\mathbf{f}\|_{\Hil^{k-1}}^2
\end{align*}
for all $\mathbf{f} \in C^\infty(\overline{\B^d},\C^d)^2$. Hence we have
\begin{align*}
    |((\lambda \mathbf{I} - (\mathbf{L} + \mathbf{L}_0'))\mathbf{f}|\mathbf{f})_{\Dot{\Hil}^k}| &\geq \Real ((\lambda \mathbf{I} - \mathbf{L})\mathbf{f}|\mathbf{f})_{\Dot{\Hil}^k} - |(\mathbf{L}_0' \mathbf{f}|\mathbf{f})_{\Dot{\Hil}^k}|\\
    &= \underbrace{\Real (\lambda \mathbf{f}|\mathbf{f})_{\Dot{\Hil}^k}}_{= \Real (\lambda) \|\mathbf{f}\|_{\Dot{\Hil}^k}^2 } - \Real (\mathbf{L} \mathbf{f}|\mathbf{f})_{\Dot{\Hil}^k} - |(\mathbf{L}_0' \mathbf{f}|\mathbf{f})_{\Dot{\Hil}^k}|\\
    &\geq \Real (\lambda) \|\mathbf{f}\|_{\Dot{\Hil}^k}^2 - \left(\frac{d}{2} - k\right)\|\mathbf{f}\|_{\Dot{\Hil}^k}^2 - (C \|\mathbf{f}\|_{\Dot{\Hil}^k}^2 + c_k \|\mathbf{f}\|_{\Hil^{k-1}}^2)\\
    &= \left( \Real \lambda + k - \frac{d}{2} - C  \right)\|\mathbf{f}\|_{\Dot{\Hil}^k}^2 - c_k \|\mathbf{f}\|_{\Hil^{k-1}}^2
\end{align*}
for all $\lambda \in \C$ and $\mathbf{f} \in C^\infty(\overline{\B^d},\C^d)^2$. On the other hand, since $k \geq k_0 \geq 3$, there exists $c_k' >0$ such that
\begin{align*}
    |(\mathbf{L}_0 \mathbf{f} | \mathbf{f})_{\Hil^1}| \leq c_k' \|\mathbf{f}\|_{\Hil^{k-1}}^2
\end{align*}
for all $\mathbf{f} \in C^\infty(\overline{\B^d},\C^d)^2$. Hence we have 
\begin{align*}
    |((\lambda \mathbf{I} - \mathbf{L}_0)\mathbf{f}|\mathbf{f})_{\Hil^1}| \geq |\lambda| \|\mathbf{f}\|_{\Hil^1}^2 - c_k' \|\mathbf{f}\|_{\Hil^{k-1}}^2
\end{align*}
for all $\mathbf{f} \in C^\infty(\overline{\B^d},\C^d)^2$. Together this yields
\begin{align*}
    &|((\lambda \mathbf{I} - \mathbf{L}_0)\mathbf{f}|\mathbf{f})_{\Dot{\Hil}^k}| + |((\lambda \mathbf{I} - \mathbf{L}_0)\mathbf{f}|\mathbf{f})_{\Hil^1} |\\
    &\geq \left( \Real \lambda + k - \frac{d}{2} - C  \right)\|\mathbf{f}\|_{\Dot{\Hil}^k}^2 + |\lambda| \|\mathbf{f}\|_{\Hil^1}^2  - (c_k + c_k')\|\mathbf{f}\|_{\Hil^{k-1}}^2.
\end{align*}
Using \eqref{strong Ehrling ineq}, we find some $c_k'' > 0$ such that
\begin{align*}
    \|\mathbf{f}\|_{\Hil^{k-1}}^2 \leq \frac{1}{2 (c_k+  c_k')}\|\mathbf{f}\|_{\Dot{\Hil}^k}^2 + c_k'' \|\mathbf{f}\|_{\Hil^1}^2
\end{align*}
for all $\mathbf{f} \in C^\infty(\overline{\B^d},\C^d)^2$. Inserting in the previous inequality yields
\begin{align*}
    &|((\lambda \mathbf{I} - \mathbf{L}_0)\mathbf{f}|\mathbf{f})_{\Dot{\Hil}^k}| + |((\lambda \mathbf{I} - \mathbf{L}_0)\mathbf{f}|\mathbf{f})_{\Hil^1} |\\
    &\geq \left( \Real \lambda + k - \frac{d}{2} - C  - \frac{1}{2} \right)\|\mathbf{f}\|_{\Dot{\Hil}^k}^2 + (|\lambda| - c_k''(c_k+  c_k') ) \|\mathbf{f}\|_{\Hil^1}^2\\
    &\overset{k \geq k_0}{\geq} \left( \Real \lambda + \frac{1}{2}\right) \|\mathbf{f}\|_{\Dot{\Hil}^k}^2 + (|\lambda| - c_k''(c_k+  c_k') ) \|\mathbf{f}\|_{\Hil^1}^2
\end{align*}
for all $\mathbf{f} \in C^\infty(\overline{\B^d},\C^d)^2$. For $\lambda \in \overline{\Half}$ with $|\lambda| \geq R := c_k''(c_k+  c_k') + \frac{1}{2}$ we conclude
\begin{align*}
    |((\lambda \mathbf{I} - \mathbf{L}_0)\mathbf{f}|\mathbf{f})_{\Dot{\Hil}^k}| + |((\lambda \mathbf{I} - \mathbf{L}_0)\mathbf{f}|\mathbf{f})_{\Hil^1}| \geq \frac{1}{2} (\|\mathbf{f}\|_{\Dot{\Hil}^k}^2  + \|\mathbf{f}\|_{\Hil^1}^2 ) = \frac{1}{2} \|\mathbf{f}\|_{\Hil^k}^2
\end{align*}
for all $\mathbf{f}\in C^\infty(\overline{\B^d},\C^d)^2$, which finishes the proof.
\end{proof}
Using this resolvent bound, we can prove exponential decay of $(\mathbf{S}_0(\tau))_{\tau\geq0}$ on its stable subspace.
\begin{thm}\label{stable unperturbed evolution}
Let $d \geq 3$ and $k \geq k_0$. There exist $\eps_0> 0$ and $C_0 > 0$ such that 
\begin{align*}
    \|\mathbf{S}_0(\tau) (\mathbf{I} - \mathbf{P}_0)\mathbf{f}\|_{\Hil^k} &\leq C_0 e^{-\eps_0 \tau} \|\mathbf{f}\|_{\Hil^k}\\
    \mathbf{S}_0(\tau) \mathbf{P}_{0,0}=\mathbf{P}_{0,0} \mathbf{S}_0(\tau) &=\mathbf{P}_{0,0} \\
    \mathbf{S}_0(\tau) \mathbf{P}_{1,0}=\mathbf{P}_{1,0} \mathbf{S}_0(\tau) &=e^\tau\mathbf{P}_{1,0}
\end{align*}
for all $\tau \geq 0$ and $\mathbf{f} \in \Hil^k$, where $\mathbf{P}_0 := \mathbf{P}_{0,0} + \mathbf{P}_{1,0}$.
\end{thm}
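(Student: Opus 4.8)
The plan is to combine the spectral description in \autoref{eigenvalues of L0} and the subsequent lemma on the Riesz projections with the resolvent bound \eqref{unpert resol bd for large lambda}, and to feed this into the Gearhart--Pr\"uss--Greiner theorem. For the two commutation identities, observe that $\mathbf{P}_{0,0}$ and $\mathbf{P}_{1,0}$ are contour integrals of $z\mapsto\mathbf{R}_{\mathbf{L}_0}(z)$, and that $\mathbf{S}_0(\tau)$ commutes with $\mathbf{R}_{\mathbf{L}_0}(z)$ for every $z\in\rho(\mathbf{L}_0)$; hence $\mathbf{S}_0(\tau)\mathbf{P}_{\lambda,0}=\mathbf{P}_{\lambda,0}\mathbf{S}_0(\tau)$ for $\lambda\in\{0,1\}$, and $\rg(\mathbf{P}_{\lambda,0})$ is a finite-dimensional, $\mathbf{S}_0(\tau)$-invariant subspace of $\D(\mathbf{L}_0)$. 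By the subsequent lemma $\rg(\mathbf{P}_{\lambda,0})=\ker(\lambda\mathbf{I}-\mathbf{L}_0)$, so $\mathbf{L}_0$ acts as $\lambda\mathbf{I}$ there and therefore $\mathbf{S}_0(\tau)\mathbf{P}_{\lambda,0}=e^{\lambda\tau}\mathbf{P}_{\lambda,0}$; taking $\lambda=0$ and $\lambda=1$ yields the last two displays. Since $0$ and $1$ are distinct isolated points of $\sigma(\mathbf{L}_0)$, their contours can be chosen disjoint, so $\mathbf{P}_{0,0}\mathbf{P}_{1,0}=\mathbf{P}_{1,0}\mathbf{P}_{0,0}=0$ and $\mathbf{P}_0=\mathbf{P}_{0,0}+\mathbf{P}_{1,0}$ is a projection.

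For the decay estimate, note that $\mathbf{P}_0$ reduces $\mathbf{L}_0$, so $(\mathbf{S}_0(\tau)(\mathbf{I}-\mathbf{P}_0))_{\tau\ge0}$ is a strongly continuous semigroup on the Hilbert space $\rg(\mathbf{I}-\mathbf{P}_0)$ whose generator is the part $(\mathbf{L}_0)_{\rg(\mathbf{I}-\mathbf{P}_0)}$ of $\mathbf{L}_0$ in $\rg(\mathbf{I}-\mathbf{P}_0)$, with spectrum $\sigma(\mathbf{L}_0)\setminus\{0,1\}$. By \autoref{eigenvalues of L0} the set $\overline{\Half_{-w_k/2}}\cap\sigma(\mathbf{L}_0)$ is finite and meets $\overline{\Half}$ only in $\{0,1\}$, so there is $\eps_0\in(0,\tfrac12)$ with $\sigma(\mathbf{L}_0)\setminus\{0,1\}\subseteq\{\Real\lambda<-2\eps_0\}$; in particular $\{\Real\lambda\ge-\eps_0\}\subseteq\rho\big((\mathbf{L}_0)_{\rg(\mathbf{I}-\mathbf{P}_0)}\big)$. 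On that half-plane the reduced resolvent is uniformly bounded: on a large compact set it is analytic, hence bounded, and for $|\lambda|$ large its norm is dominated by $\|\mathbf{R}_{\mathbf{L}_0}(\lambda)\|_{\Hil^k\to\Hil^k}$ (since $\mathbf{P}_0$ reduces $\mathbf{L}_0$), which is bounded by \eqref{unpert resol bd for large lambda} when $\Real\lambda\ge0$ and, for $-\eps_0\le\Real\lambda<0$, by the Neumann-series expansion $\mathbf{R}_{\mathbf{L}_0}(\lambda)=\mathbf{R}_{\mathbf{L}_0}(\lambda')\sum_{n\ge0}((\lambda'-\lambda)\mathbf{R}_{\mathbf{L}_0}(\lambda'))^n$ around $\lambda':=i\,\Im\lambda$, which converges and stays uniformly bounded once $\eps_0$ is shrunk below the reciprocal of twice the supremum in \eqref{unpert resol bd for large lambda} (note $|\lambda'|=|\Im\lambda|$ is large whenever $|\lambda|$ is, since $\Real\lambda$ stays bounded). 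With $\{\Real\lambda\ge-\eps_0\}$ in the resolvent set of $(\mathbf{L}_0)_{\rg(\mathbf{I}-\mathbf{P}_0)}$ and the reduced resolvent uniformly bounded there, the Gearhart--Pr\"uss--Greiner theorem gives $\|\mathbf{S}_0(\tau)(\mathbf{I}-\mathbf{P}_0)\|_{\Hil^k\to\Hil^k}\lesssim e^{-\eps_0'\tau}$ for any $0<\eps_0'<\eps_0$; renaming $\eps_0'$ to $\eps_0$ and absorbing $\|\mathbf{I}-\mathbf{P}_0\|$ into $C_0$ gives the first estimate for all $\mathbf{f}\in\Hil^k$.

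The main obstacle is precisely the resolvent bound slightly to the left of the imaginary axis, on $\{\Real\lambda\ge-\eps_0\}$ with $\eps_0>0$: \eqref{unpert resol bd for large lambda} is available only on $\overline{\Half}$, so one either re-runs its proof---which goes through verbatim on $\overline{\Half_{-\eps_0}}$ for any $\eps_0<\tfrac12$, since $k\ge k_0$ keeps the coefficient $\Real\lambda+k-\tfrac{d}{2}-C-\tfrac12$ of $\|\mathbf{f}\|_{\Dot{\Hil}^k}^2$ strictly positive there---or patches it by the Neumann-series perturbation above after shrinking $\eps_0$. Everything else is routine: the commutation of $\mathbf{S}_0(\tau)$ with Riesz projections, the reduction of $\mathbf{L}_0$ by $\mathbf{P}_0$, the continuity of the reduced resolvent on compacta, and the spectral gap furnished by the finiteness of $\overline{\Half_{-w_k/2}}\cap\sigma(\mathbf{L}_0)$.
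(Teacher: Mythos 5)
Your proof is correct, but you have taken a longer route than necessary for the decay estimate, and in doing so you have misidentified the crux. The commutation identities $\mathbf{S}_0(\tau)\mathbf{P}_{\lambda,0}=\mathbf{P}_{\lambda,0}\mathbf{S}_0(\tau)=e^{\lambda\tau}\mathbf{P}_{\lambda,0}$ are handled the same way as in the paper (commutation of the semigroup with resolvents, hence with the Riesz contour integrals, plus the identification $\rg(\mathbf{P}_{\lambda,0})=\ker(\lambda\mathbf{I}-\mathbf{L}_0)$ from the preceding lemma), so nothing to say there. For the decay, however, the Gearhart--Pr\"uss--Greiner theorem in the form cited (\cite[p.~302, Theorem 1.11]{EngNag00}) already states that a uniform resolvent bound on the \emph{open right half-plane} $\Half$ (hence certainly on $\overline{\Half}$) for the reduced generator $(\mathbf{L}_0)_{\rg(\mathbf{I}-\mathbf{P}_0)}$ is equivalent to a strictly negative growth bound $\omega_0<0$; one then simply sets $\eps_0:=-\tfrac12\omega_0$ (say). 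There is no need to first establish the resolvent bound on $\overline{\Half_{-\eps_0}}$ with $\eps_0>0$, and that step is not ``the main obstacle'' as you claim --- the theorem produces the positive decay rate for you. This is exactly what the paper does: it bounds the reduced resolvent on $\{\lambda\in\overline{\Half}:|\lambda|\le R\}$ by compactness and analyticity, on $\{\lambda\in\overline{\Half}:|\lambda|\ge R\}$ by \eqref{unpert resol bd for large lambda} combined with the fact that the reduced resolvent is a restriction of $\mathbf{R}_{\mathbf{L}_0}(\lambda)$, and then invokes Gearhart--Pr\"uss--Greiner directly. Your extra work (re-running the dissipative estimate on $\overline{\Half_{-\eps_0}}$, or alternatively the Neumann-series patch centered at $\lambda'=i\Im\lambda$) is correct --- the coefficient $\Real\lambda+k-\tfrac d2-C-\tfrac12$ indeed stays positive for $\Real\lambda\ge-\eps_0$ with $\eps_0<\tfrac12$, and the spectral gap provided by the finiteness of $\overline{\Half_{-w_k/2}}\cap\sigma(\mathbf{L}_0)$ keeps the compact region in the resolvent set --- but it buys you nothing beyond what the theorem already gives.
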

\begin{proof}
Since the Riesz projections $\mathbf{P}_{0,0},\mathbf{P}_{1,0}$ commute with the resolvents of $\mathbf{L}_0$, one sees, using for example the Post-Widder Inversion Formula \cite[p.~223, Corollary 5.5]{EngNag00}, that they also commute with $\mathbf{S}_0(\tau)$ for all $\tau \geq 0$. Since $\rg(\mathbf{P}_{\lambda,0}) = \ker(\lambda \mathbf{I}  - \mathbf{L}_0)$ this immediately yields $\mathbf{S}_0(\tau) \mathbf{P}_{\lambda, 0} = e^{\lambda \tau} \mathbf{P}_{\lambda, 0}$.

Next, we choose $R > 1$ large enough such that \eqref{unpert resol bd for large lambda} holds. Since
$\sigma((\mathbf{L}_0)_{\rg(\mathbf{I} - \mathbf{P}_0)}) = \sigma(\mathbf{L}_0)\setminus\{0,1\}$, see \cite[p.~178, Theorem 6.17]{Kat95}, it is by \autoref{eigenvalues of L0} contained in $\{\lambda \in \C: \Real \lambda < 0 \}$ and further
\begin{align*}
    \left\{\lambda \in \C: -\frac{w_k}{2}\leq\Real \lambda < 0 \right\} \cap \sigma(\mathbf{L}_0)
\end{align*}
is finite. Hence $\overline{\Half}$ is contained in $\rho((\mathbf{L}_0)_{\rg(\mathbf{I - \mathbf{P}})})$ and thus the resolvent is bounded on the compact half-disk $\{\lambda \in \overline{\Half} : |\lambda| \leq R\}$, i.e.,
\begin{align}\label{unpert resol bounded on compact set}
    \sup_{\substack{\lambda \in \overline{\Half}\\|\lambda|\leq R}} \|\mathbf{R}_{(\mathbf{L}_0)_{\rg(\mathbf{I - \mathbf{P}})}}(\lambda)\|_{\rg(\mathbf{I} - \mathbf{P}_0) \to \rg(\mathbf{I} - \mathbf{P}_0)} < \infty.
\end{align}
Now observe that whenever $\lambda \in \rho(\mathbf{L}_0)$, then the resolvent of the part of $\mathbf{L}_0$ in $\rg(\mathbf{I} - \mathbf{P}_0)$ coincides with the restriction of $\mathbf{R}_{\mathbf{L}_0}(\lambda)$ to $\rg(\mathbf{I} - \mathbf{P}_0)$, i.e.,
\begin{align*}
     \mathbf{R}_{(\mathbf{L}_0)_{\rg(\mathbf{I} - \mathbf{P}_0)}}(\lambda)= \mathbf{R}_{\mathbf{L}_0}(\lambda) \Big|_{\rg(\mathbf{I} - \mathbf{P}_0)}.
\end{align*}
From this, we immediately deduce a bound on the operator norms
\begin{align*}
    \|\mathbf{R}_{(\mathbf{L}_0)_{\rg(\mathbf{I} - \mathbf{P}_0)}}(\lambda)\|_{\rg(\mathbf{I} - \mathbf{P}_0) \to \rg(\mathbf{I} - \mathbf{P}_0)} \leq \|\mathbf{R}_{\mathbf{L}_0}(\lambda)\|_{\Hil^k \to \Hil^k}.
\end{align*}
Since $R > 1$, we can apply \eqref{unpert resol bd for large lambda} to infer
\begin{align*}
    \sup_{\substack{\lambda \in \overline{\Half}\\|\lambda|\geq R }} \|\mathbf{R}_{(\mathbf{L}_0)_{\rg(\mathbf{I} - \mathbf{P}_0)}}(\lambda)\|_{\rg(\mathbf{I} - \mathbf{P}_0) \to \rg(\mathbf{I} - \mathbf{P}_0)}\leq \sup_{\substack{\lambda \in \overline{\Half}\\|\lambda|\geq R }}\|\mathbf{R}_{\mathbf{L}_0}(\lambda)\|_{\Hil^k \to \Hil^k} < \infty.
\end{align*}
Together with \eqref{unpert resol bounded on compact set} this yields
\begin{align*}
    \sup_{\lambda \in \overline{\Half}} \|\mathbf{R}_{(\mathbf{L}_0)_{\rg(\mathbf{I} - \mathbf{P}_0)}}(\lambda)\|_{\rg(\mathbf{I} - \mathbf{P}_0) \to \rg(\mathbf{I} - \mathbf{P}_0)} < \infty.
\end{align*}
Then the Gearthart-Prüss-Greiner Theorem \cite[p.~302, Theorem 1.11]{EngNag00} implies that there exist $\eps_0 > 0$ and $C_0 > 0$ such that 
\begin{align*}
    \|\mathbf{S}_0(\tau)_{\rg(\mathbf{I}  - \mathbf{P}_0)} \mathbf{f}\|_{\Hil^k} \leq C_0 e^{-\eps_0 \tau}\|\mathbf{f}\|_{\Hil^k}
\end{align*} 
for all $\mathbf{f} \in \rg(\mathbf{I}  - \mathbf{P}_0)$, which readily implies the claim.
\end{proof}
\subsection{Spectrum of the perturbed generator}
We can now also fully understand the (unstable) spectrum of $\mathbf{L}_\Theta$, at least for sufficiently small $\Theta$.
\begin{prop}\label{perturbed spectrum}
Let $d \geq 3$ and $k \geq k_0$ and $0 < \eps_0 \leq 1$ as in \autoref{stable unperturbed evolution}. There exists some $M_0 \leq M$ such that the following holds for all $\Theta \in \overline{\B_{M_0}^{p(d)}}$.

The spectrum satisfies $\sigma(\mathbf{L}_\Theta) \cap \overline{\Half_{-\frac{\eps_0}{2}}} = \{0,1\}$ and the points $0,1$ are eigenvalues of $\mathbf{L}_\Theta$. The eigenspaces $\ker(\mathbf{L}_\Theta)$ and $\ker(\mathbf{I} - \mathbf{L}_\Theta) $ are $p(d)$- and $(d+1)$-dimensional, respectively. The algebraic multiplicity of both $0,1$ is finite and equal to the respective geometric multiplicity. Further, there exists some $\Tilde{C} > 0$ such that
\begin{align}\label{pert resol bounds away from spectrum}
    \|\mathbf{R}_{\mathbf{L}_\Theta}(\lambda)\|_{\Hil^k \to \Hil^k} \leq \Tilde{C}
\end{align}
for all $\lambda \in \overline{\Half_{-\frac{\eps_0}{2}}} \setminus (\disk_{\frac{\eps_0}{4}}(0) \cup \disk_{\frac{\eps_0}{4}}(1)  )$, where $\Tilde{C}$ is independent of $\Theta \in \overline{\B_{M_0}^{p(d)}}$.
\end{prop}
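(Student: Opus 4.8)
The plan is a spectral perturbation argument in the spirit of \cite{DonSch16}. Throughout, write $\Omega := \overline{\Half_{-\frac{\eps_0}{2}}} \setminus \bigl(\disk_{\frac{\eps_0}{4}}(0) \cup \disk_{\frac{\eps_0}{4}}(1)\bigr)$ and $\mathbf{B}_\Theta := \mathbf{L}_\Theta' - \mathbf{L}_0'$, so that $\mathbf{L}_\Theta = \mathbf{L}_0 + \mathbf{B}_\Theta$ and $\|\mathbf{B}_\Theta\|_{\Hil^k \to \Hil^k} \leq L_k |\Theta|$ by \autoref{perturb is Lipschitz}. The first step is a uniform resolvent bound $\sup_{\lambda \in \Omega}\|\mathbf{R}_{\mathbf{L}_0}(\lambda)\|_{\Hil^k \to \Hil^k} \leq C_\Omega$. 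On the finite-dimensional subspace $\rg(\mathbf{P}_0) = \ker(\mathbf{L}_0) \oplus \ker(\mathbf{I} - \mathbf{L}_0)$ — here using that for $\mathbf{L}_0$ the algebraic and geometric multiplicities of $0,1$ agree, by the preceding lemma — the operator acts as $0 \oplus \mathbf{I}$, so its resolvent restricted there equals $\lambda^{-1}\mathbf{P}_{0,0} + (\lambda-1)^{-1}\mathbf{P}_{1,0}$, which is bounded uniformly on $\Omega$ since $\Omega$ avoids the discs of radius $\tfrac{\eps_0}{4}$ around $0$ and $1$. On the complementary stable subspace, the semigroup $\mathbf{S}_0(\tau)|_{\rg(\mathbf{I}-\mathbf{P}_0)}$ has growth bound $\le -\eps_0$ by \autoref{stable unperturbed evolution}; hence its generator's resolvent is analytic on $\overline{\Half_{-\eps_0}}$ and, via the Laplace representation, satisfies $\|\mathbf{R}_{(\mathbf{L}_0)_{\rg(\mathbf{I}-\mathbf{P}_0)}}(\lambda)\| \leq \tfrac{2 C_0}{\eps_0}$ on $\overline{\Half_{-\frac{\eps_0}{2}}}$. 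Combining the two pieces yields $C_\Omega$, in particular $\Omega \subseteq \rho(\mathbf{L}_0)$.

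Second, for $\lambda \in \Omega$ factor $\lambda \mathbf{I} - \mathbf{L}_\Theta = (\lambda \mathbf{I} - \mathbf{L}_0)\bigl(\mathbf{I} - \mathbf{R}_{\mathbf{L}_0}(\lambda)\mathbf{B}_\Theta\bigr)$. Setting $M_0 := \min\{M, (2 C_\Omega L_k)^{-1}\}$, for $\Theta \in \overline{\B_{M_0}^{p(d)}}$ one has $\|\mathbf{R}_{\mathbf{L}_0}(\lambda)\mathbf{B}_\Theta\| \leq \tfrac{1}{2}$, so the second factor is invertible by Neumann series, $\Omega \subseteq \rho(\mathbf{L}_\Theta)$, and $\|\mathbf{R}_{\mathbf{L}_\Theta}(\lambda)\|_{\Hil^k \to \Hil^k} \leq 2 C_\Omega =: \Tilde{C}$ with $\Tilde{C}$ independent of $\lambda \in \Omega$ and of $\Theta$. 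This is exactly \eqref{pert resol bounds away from spectrum}, and it already forces $\sigma(\mathbf{L}_\Theta) \cap \overline{\Half_{-\frac{\eps_0}{2}}} \subseteq \disk_{\frac{\eps_0}{4}}(0) \cup \disk_{\frac{\eps_0}{4}}(1)$.

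Third, for $\lambda \in \{0,1\}$ define the Riesz projections $\mathbf{P}_{\lambda,\Theta} := \tfrac{1}{2\pi i}\int_{\pd \disk_{\eps_0/4}(\lambda)} \mathbf{R}_{\mathbf{L}_\Theta}(z)\,dz$, which make sense since $\pd \disk_{\eps_0/4}(\lambda) \subseteq \Omega \subseteq \rho(\mathbf{L}_\Theta)$. The Neumann series also gives $\|\mathbf{R}_{\mathbf{L}_\Theta}(z) - \mathbf{R}_{\mathbf{L}_0}(z)\| \lesssim |\Theta|$ uniformly for $z$ on these contours, hence $\mathbf{P}_{\lambda,\Theta} \to \mathbf{P}_{\lambda,0}$ in operator norm as $|\Theta| \to 0$; since two projections at operator-norm distance $< 1$ have the same rank (cf.~\cite{Kat95}), after shrinking $M_0$ we obtain $\dim \rg(\mathbf{P}_{0,\Theta}) = p(d)$ and $\dim \rg(\mathbf{P}_{1,\Theta}) = d+1$ by \autoref{eigenvalues of L0} and the preceding lemma. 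On the other hand, exactly as at $\Theta = 0$, differentiating the continuous symmetries of Eq.~\eqref{int wm eq selfsim} acting on the static solution $\mathbf{v}_\Theta$ with respect to their parameters produces $p(d)$ elements of $\ker(\mathbf{L}_\Theta)$ and $d+1$ elements of $\ker(\mathbf{I} - \mathbf{L}_\Theta)$; these lie in $\D(\mathbf{L}_\Theta) = \D(\mathbf{L})$ by joint smoothness of $(\xi,\Theta) \mapsto v_\Theta(\xi)$, depend continuously on $\Theta$, and at $\Theta = 0$ coincide with the linearly independent families of \autoref{eigenvalues of L0}, hence remain linearly independent for small $|\Theta|$. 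Since always $\ker(\lambda \mathbf{I} - \mathbf{L}_\Theta) \subseteq \rg(\mathbf{P}_{\lambda,\Theta})$, the rank count forces $\ker(\mathbf{L}_\Theta) = \rg(\mathbf{P}_{0,\Theta})$ and $\ker(\mathbf{I} - \mathbf{L}_\Theta) = \rg(\mathbf{P}_{1,\Theta})$. In particular $0,1$ are eigenvalues with geometric multiplicities $p(d)$ and $d+1$; and since $\mathbf{L}_\Theta$ then acts as $\lambda\mathbf{I}$ on $\rg(\mathbf{P}_{\lambda,\Theta})$, the only point of $\sigma(\mathbf{L}_\Theta)$ in $\disk_{\eps_0/4}(\lambda)$ is $\lambda$, so $\sigma(\mathbf{L}_\Theta) \cap \overline{\Half_{-\frac{\eps_0}{2}}} = \{0,1\}$ and the algebraic multiplicities (equal to $\dim \rg(\mathbf{P}_{\lambda,\Theta})$) coincide with the geometric ones.

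Steps one and two are routine, merely repackaging the dissipative and resolvent estimates already in hand. The main obstacle is the last step: because $\mathbf{L}_\Theta$ is not self-adjoint, the eigenvalues $0$ and $1$ could a priori split into clusters of nearby eigenvalues or shed multiplicity under the perturbation, and the resolvent bound of Step two alone does not preclude this. Ruling it out requires exhibiting, directly from the symmetry group, at least $p(d)$ independent null vectors and $d+1$ independent eigenvectors at eigenvalue $1$ of $\mathbf{L}_\Theta$, and then matching this count against the rank of the Riesz projection from Step three; the care lies in verifying that these symmetry-generated vectors genuinely lie in the kernels (i.e.\ that differentiation in the symmetry parameters produces solutions of the linearized equation around $\mathbf{v}_\Theta$ with the expected $\tau$-behaviour, inside $\D(\mathbf{L})$) and that their linear independence persists for all $|\Theta| \le M_0$.
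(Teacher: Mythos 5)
Your argument is correct and follows essentially the same route as the paper's proof: the Birman--Schwinger (Neumann series) factorization around $\mathbf{L}_0$ with the uniform resolvent bound obtained by splitting on $\rg(\mathbf{P}_0)$ and $\rg(\mathbf{I}-\mathbf{P}_0)$, then continuity of the Riesz projections plus the rank-invariance lemma from Kato, and finally matching the rank against the $p(d)$, respectively $d+1$, symmetry-generated eigenfunctions to pin down the kernels and hence the spectrum and the algebraic/geometric multiplicities. The only superficial differences are your choice of contour radius $\eps_0/4$ instead of the paper's $\eps_0/2$ and writing the Birman--Schwinger factor on the right rather than the left, neither of which affects the argument.
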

\begin{proof}
Let $\lambda \in \overline{\Half_{-\frac{\eps_0}{2}}} \setminus \{0,1\}$. We have $\lambda \in \rho(\mathbf{L}_0)$ and the Birman-Schwinger principle yields
\begin{align*}
    \lambda \mathbf{I} - \mathbf{L}_\Theta &= (\mathbf{I} - (\mathbf{L}_\Theta - \mathbf{L}_0) \mathbf{R}_{\mathbf{L}_0}(\lambda))(\lambda \mathbf{I} - \mathbf{L}_0) \\
    &= (\mathbf{I} - (\mathbf{L}_\Theta' - \mathbf{L}_0') \mathbf{R}_{\mathbf{L}_0}(\lambda))(\lambda \mathbf{I} - \mathbf{L}_0).
\end{align*}
Hence a Neumann series argument shows that also $\lambda \in \rho(\mathbf{L}_\Theta)$ whenever $\|(\mathbf{L}_\Theta' - \mathbf{L}_0')\mathbf{R}_{\mathbf{L}_0}(\lambda)\|_{\Hil^k \to \Hil^k}$ is small enough. To this end, we decompose
\begin{align*}
    \mathbf{R}_{\mathbf{L}_0}(\lambda ) &= \mathbf{R}_{\mathbf{L}_0}(\lambda )(\mathbf{I} - \mathbf{P}_0) + \mathbf{R}_{\mathbf{L}_0}(\lambda )\mathbf{P}_0.
\end{align*}
For the second term we compute
\begin{align*}
    \mathbf{R}_{\mathbf{L}_0}(\lambda )\mathbf{P}_0 = \mathbf{R}_{\mathbf{L}_0}(\lambda )\mathbf{P}_{0,0} + \mathbf{R}_{\mathbf{L}_0}(\lambda )\mathbf{P}_{1,0} = \lambda^{-1} \mathbf{P}_{0,0} + (\lambda - 1)^{-1}\mathbf{P}_{1,0}.
\end{align*}
Hence
\begin{align*}
    \|\mathbf{R}_{\mathbf{L}_0}(\lambda )\mathbf{P}_0\|_{\Hil^k \to \Hil^k} \lesssim 1
\end{align*}
for all $ \lambda \in \overline{\Half_{-\frac{\eps_0}{2}}} \setminus (\disk_{\frac{\eps_0}{4}}(0) \cup \disk_{\frac{\eps_0}{4}}(1)  )$. On the other hand, \autoref{stable unperturbed evolution} together with standard semigroup theory yields the bound
\begin{align*}
    \|\mathbf{R}_{\mathbf{L}_0}(\lambda)(\mathbf{I} - \mathbf{P}_0)\|_{\Hil^k \to \Hil^k} \leq \frac{C_0}{\Real \lambda + \eps_0} \lesssim 1
\end{align*}
for all $ \lambda \in \overline{\Half_{-\frac{\eps_0}{2}}} \setminus (\disk_{\frac{\eps_0}{4}}(0) \cup \disk_{\frac{\eps_0}{4}}(1)  )$. Hence there exists some $c > 0$ such that
\begin{align*}
     \|\mathbf{R}_{\mathbf{L}_0}(\lambda)\|_{\Hil^k \to \Hil^k} \leq c.
\end{align*}
for all $ \lambda \in \overline{\Half_{-\frac{\eps_0}{2}}} \setminus (\disk_{\frac{\eps_0}{4}}(0) \cup \disk_{\frac{\eps_0}{4}}(1)  )$. By \autoref{perturb is Lipschitz} there exists $L_k > 0$ such that
\begin{align*}
    \|\mathbf{L}_\Theta' - \mathbf{L}_0\|_{\Hil^k \to \Hil^k} \leq L_k |\Theta|
\end{align*}
for all $\Theta \in \overline{\B_{M}^{p(d)}}$. We set $M_0:= \min \{M,\frac{1}{2 L_k c}\}$ and see that
\begin{align*}
    \|(\mathbf{L}_\Theta' - \mathbf{L}_0)\mathbf{R}_{\mathbf{L}_0}(\lambda)\|_{\Hil^k \to \Hil^k} \leq \frac{1}{2}
\end{align*}
for all $ \lambda \in \overline{\Half_{-\frac{\eps_0}{2}}} \setminus (\disk_{\frac{\eps_0}{4}}(0) \cup \disk_{\frac{\eps_0}{4}}(1)  )$ and $\Theta \in \overline{\B_{M_0}^{p(d)}}$. From this we deduce that 
\begin{align}\label{pert resol set away from 01}
    \overline{\Half_{-\frac{\eps_0}{2}}} \setminus (\disk_{\frac{\eps_0}{4}}(0) \cup \disk_{\frac{\eps_0}{4}}(1)  ) \subseteq \rho(\mathbf{L}_\Theta)
\end{align}
and the resolvent map is bounded uniformly in $\Theta$ on this set, i.e., there exists $\Tilde{C} > 0$ such that
\begin{align*}
    \|\mathbf{R}_{\mathbf{L}_\Theta}(\lambda)\|_{\Hil^k \to \Hil^k} \leq \Tilde{C}
\end{align*}
for all $\Theta \in \overline{\B_{M_0}^{p(d)}}$, which is exactly \eqref{pert resol bounds away from spectrum}.

We now consider the Riesz projections 
\begin{align*}
    \mathbf{P}_{\lambda,\Theta} := \frac{1}{2 \pi i} \int_{\pd \disk_{\frac{\eps_0}{2}}(\lambda)} \mathbf{R}_{\mathbf{L}_{\Theta}}(z) dz.
\end{align*}
The projections $\mathbf{P}_{\lambda,\Theta}$ depend Lipschitz-continuously on $ \Theta$, which is checked using \autoref{perturb is Lipschitz} together with the second resolvent formula. We apply \cite[p.~34, Lemma 4.10]{Kat95} to see that $\dim(\rg \mathbf{P}_{\lambda,\Theta})$ remains constant for all $\Theta \in \overline{\B_{M_0}^{p(d)}}$, i.e.,
\begin{align*}
    \dim(\rg \mathbf{P}_{0,\Theta}) &= \dim(\rg \mathbf{P}_{0,0}) = p(d)\\
    \dim(\rg \mathbf{P}_{1,\Theta}) &= \dim(\rg \mathbf{P}_{1,0}) = d+1.
\end{align*}
Now we observe that, analogous to the case $\Theta=0$ in \autoref{subsect spec of L0}, applying each of the $p(d)$ and $d+1$ continuous symmetries of Eq.~\eqref{int wm eq selfsim} to the solution $v_\Theta$ will give rise to eigenfunctions of $\mathbf{L}_\Theta$ corresponding to the eigenvalues $0$ and 1, respectively. At this point it is not clear whether the family of functions obtained this way will again be linearly independent. However, since this family of functions still depends continuously on $\Theta$ and we know from the explicit case $\Theta =0 $ that they form a linearly independent set, we can assume, up to potentially shrinking $M_0$, that they are indeed linearly independent again for all $\Theta \in \overline{\B_{M_0}^{p(d)}}$. 

This argument shows that 
\begin{align*}
    \dim(\ker(\mathbf{L}_\Theta)) &\geq p(d)\\
    \dim(\ker(\mathbf{I} - \mathbf{L}_\Theta)) &\geq d+1.
\end{align*}
Since we always have the inclusions
\begin{align*}
    \ker(\mathbf{L}_\Theta) &\subseteq \rg(\mathbf{P}_{0,\Theta})\\
    \ker(\mathbf{I} - \mathbf{L}_\Theta) &\subseteq \rg(\mathbf{P}_{1,\Theta})
\end{align*}
we must have equality in both cases, which exactly means that the algebraic multiplicity is equal to the geometric multiplicity (and in particular finite).

These considerations yield
\begin{align*}
    \sigma(\mathbf{L}_\Theta) \cap (\disk_{\frac{\eps_0}{2}}(0) \cup \disk_{\frac{\eps_0}{2}}(1)) = \{0,1\}
\end{align*}
and together with \eqref{pert resol set away from 01} that
\begin{align*}
    \sigma(\mathbf{L}_\Theta) \cap \overline{\Half_{-\frac{\eps_0}{2}}} = \{0,1\}
\end{align*}
for all $\Theta \in \overline{\B_{M_0}^{p(d)}}$, which finishes the proof.
\end{proof}
\subsection{Growth bound for the perturbed generator}
The following theorem concludes this section and yields a sufficiently good characterization of the linear dynamics.
\begin{thm}\label{stable perturbed evolution}
Let $d \geq 3$ and $k \geq k_0$. There exist $\eps_\ast >0 $ and $C_\ast > 0$ such that
\begin{align*}
    \|\mathbf{S}_\Theta(\tau)  (\mathbf{I} - \mathbf{P}_\Theta)\mathbf{f}\|_{\Hil^k} &\leq C_\ast e^{-\eps_\ast \tau} \|(\mathbf{I} - \mathbf{P}_\Theta)\mathbf{f}\|_{\Hil^k}\\
    \mathbf{S}_\Theta(\tau) \mathbf{P}_{0,\Theta} = \mathbf{P}_{0,\Theta} \mathbf{S}_{\Theta}(\tau) &= \mathbf{P}_{0,\Theta}\\
    \mathbf{S}_\Theta(\tau) \mathbf{P}_{1,\Theta} = \mathbf{P}_{1,\Theta} \mathbf{S}_\Theta(\tau) &= e^\tau \mathbf{P}_{1, \Theta}
\end{align*}
for all $ \tau \geq 0 $, $\mathbf{f} \in \Hil^k$ and $\Theta \in \overline{\B_{M_0}^{p(d)}}$, where $\mathbf{P}_\Theta := \mathbf{P}_{0,\Theta} + \mathbf{P}_{1,\Theta}$ and $\mathbf{P}_{0,\Theta}$ and $\mathbf{P}_{1,\Theta}$ are the Riesz projections of $\mathbf{L}_\Theta$ associated to the eigenvalue $0$ and $1$, respectively.
\end{thm}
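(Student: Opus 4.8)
\emph{The plan} is to run the argument of \autoref{stable unperturbed evolution} almost verbatim, with \autoref{perturbed spectrum} (which packages both the location of the spectrum and the uniform resolvent bound \eqref{pert resol bounds away from spectrum}) playing the role that \autoref{eigenvalues of L0} and the large-$\lambda$ resolvent estimate played in the unperturbed case, while keeping careful track that every constant can be chosen independently of $\Theta \in \overline{\B_{M_0}^{p(d)}}$. Three things have to be established: the behaviour of $\mathbf{S}_\Theta(\tau)$ on the two finite-dimensional eigenspaces, a resolvent bound for the part of $\mathbf{L}_\Theta$ in the stable subspace on a half-plane $\overline{\Half_{-\eps_\ast}}$ with $\eps_\ast>0$ uniform in $\Theta$, and the application of the Gearhart--Pr\"uss--Greiner theorem.

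First I would deal with the spectral projections. Since $\mathbf{P}_{0,\Theta}$ and $\mathbf{P}_{1,\Theta}$ are resolvent contour integrals, they commute with every $\mathbf{R}_{\mathbf{L}_\Theta}(\mu)$, $\mu\in\rho(\mathbf{L}_\Theta)$, hence, via the Post--Widder inversion formula \cite[p.~223, Corollary 5.5]{EngNag00}, with $\mathbf{S}_\Theta(\tau)$ for all $\tau\geq0$. By \autoref{perturbed spectrum} the algebraic and geometric multiplicities of $0$ and $1$ coincide, so $\rg(\mathbf{P}_{\lambda,\Theta})=\ker(\lambda\mathbf{I}-\mathbf{L}_\Theta)$ for $\lambda\in\{0,1\}$, and on this subspace $\mathbf{L}_\Theta$ acts as $\lambda\mathbf{I}$; therefore $\mathbf{S}_\Theta(\tau)\mathbf{P}_{\lambda,\Theta}=e^{\lambda\tau}\mathbf{P}_{\lambda,\Theta}$, which yields the last two displayed identities. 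Moreover $\mathbf{S}_\Theta(\tau)$ commutes with $\mathbf{P}_\Theta$, so $\rg(\mathbf{I}-\mathbf{P}_\Theta)$ is invariant and the restriction is the $C_0$-semigroup generated by the part $(\mathbf{L}_\Theta)_{\rg(\mathbf{I}-\mathbf{P}_\Theta)}$, whose spectrum equals $\sigma(\mathbf{L}_\Theta)\setminus\{0,1\}$ by \cite[p.~178, Theorem 6.17]{Kat95} and hence, again by \autoref{perturbed spectrum}, lies in $\{\Real\lambda<-\eps_0/2\}$.

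Next I would produce the uniform resolvent estimate for $(\mathbf{L}_\Theta)_{\rg(\mathbf{I}-\mathbf{P}_\Theta)}$ on $\overline{\Half_{-\eps_\ast}}$ with, say, $\eps_\ast:=\eps_0/4$. Outside the two discs $\disk_{\eps_0/4}(0)$, $\disk_{\eps_0/4}(1)$ one has $\mathbf{R}_{(\mathbf{L}_\Theta)_{\rg(\mathbf{I}-\mathbf{P}_\Theta)}}(\lambda)=\mathbf{R}_{\mathbf{L}_\Theta}(\lambda)\big|_{\rg(\mathbf{I}-\mathbf{P}_\Theta)}$, so \eqref{pert resol bounds away from spectrum} bounds it by $\Tilde{C}$ uniformly in $\Theta$; inside the discs, since $0,1\notin\sigma((\mathbf{L}_\Theta)_{\rg(\mathbf{I}-\mathbf{P}_\Theta)})$, the reduced resolvent is holomorphic on $\disk_{\eps_0/2}(0)$ and $\disk_{\eps_0/2}(1)$, so by the maximum modulus principle it is bounded on $\disk_{\eps_0/4}(0)$, $\disk_{\eps_0/4}(1)$ by its values on the circles $\pd\disk_{3\eps_0/8}(0)$, $\pd\disk_{3\eps_0/8}(1)$, which (for $\eps_0\leq1$) lie in the region covered by \eqref{pert resol bounds away from spectrum}, hence again uniformly in $\Theta$. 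Combining the two regions gives $\sup_{\lambda\in\overline{\Half_{-\eps_\ast}}}\|\mathbf{R}_{(\mathbf{L}_\Theta)_{\rg(\mathbf{I}-\mathbf{P}_\Theta)}}(\lambda)\|_{\rg(\mathbf{I}-\mathbf{P}_\Theta)\to\rg(\mathbf{I}-\mathbf{P}_\Theta)}\leq C$ with $C$ independent of $\Theta$. The Gearhart--Pr\"uss--Greiner theorem \cite[p.~302, Theorem 1.11]{EngNag00}, applied on the closed invariant subspace $\rg(\mathbf{I}-\mathbf{P}_\Theta)$, then yields exponential decay of the restricted semigroup; to get the constant $C_\ast$ uniform in $\Theta$ one feeds the uniform resolvent bound above, together with the uniform a priori growth bound $\|\mathbf{S}_\Theta(\tau)\|_{\Hil^k\to\Hil^k}\leq e^{(\frac{d}{2}-1+\sup_\Theta\|\mathbf{L}_\Theta'\|_{\Hil^k\to\Hil^k})\tau}$ (finite supremum by \autoref{perturb is Lipschitz}), into the quantitative form of that theorem, possibly shrinking $\eps_\ast$.

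\emph{The main obstacle} is exactly this last uniformity: the bare Gearhart--Pr\"uss--Greiner theorem only produces some constant for each fixed generator. Besides invoking its quantitative refinement, an alternative is a genuinely perturbative argument in the spirit of \cite{DonSch16}: compare $\mathbf{S}_\Theta$ with $\mathbf{S}_0$ through the Duhamel identity $\mathbf{S}_\Theta(\tau)=\mathbf{S}_0(\tau)+\int_0^\tau\mathbf{S}_0(\tau-s)(\mathbf{L}_\Theta'-\mathbf{L}_0')\mathbf{S}_\Theta(s)\,ds$, use $\|\mathbf{L}_\Theta'-\mathbf{L}_0'\|_{\Hil^k\to\Hil^k}\leq L_k|\Theta|$ from \autoref{perturb is Lipschitz}, and exploit that for $\mathbf{f}\in\rg(\mathbf{I}-\mathbf{P}_\Theta)$ one has $\mathbf{P}_0\mathbf{f}=\mathbf{P}_0(\mathbf{P}_0-\mathbf{P}_\Theta)\mathbf{f}$, of size $\lesssim|\Theta|\,\|\mathbf{f}\|$ by the Lipschitz continuity of the Riesz projections, so that the $e^\tau$-growing $\lambda=1$ component of $\mathbf{S}_0$ contributes only a controllably small term; a Gr\"onwall argument then closes the estimate for $|\Theta|$ small and re-derives the bound on the correct stable subspace $\rg(\mathbf{I}-\mathbf{P}_\Theta)$. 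Either way the remaining steps are routine adaptations of the proof of \autoref{stable unperturbed evolution}.
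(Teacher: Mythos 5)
Your proposal matches the paper's argument essentially step for step: commutation of the Riesz projections with $\mathbf{S}_\Theta(\tau)$ via Post--Widder and the identification $\rg(\mathbf{P}_{\lambda,\Theta})=\ker(\lambda\mathbf{I}-\mathbf{L}_\Theta)$ to get $\mathbf{S}_\Theta(\tau)\mathbf{P}_{\lambda,\Theta}=e^{\lambda\tau}\mathbf{P}_{\lambda,\Theta}$; the uniform reduced-resolvent bound on a left-shifted half-plane obtained by restricting $\mathbf{R}_{\mathbf{L}_\Theta}(\lambda)$ (giving \eqref{pert resol bounds away from spectrum}) away from $0,1$ and filling in the discs with the maximum modulus principle; and then Gearhart--Pr\"uss--Greiner on $\rg(\mathbf{I}-\mathbf{P}_\Theta)$. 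You also correctly flag that uniformity of $(C_\ast,\eps_\ast)$ in $\Theta$ is the only nontrivial point and that a quantitative form of Gearhart--Pr\"uss--Greiner (the paper cites \cite[p.~38, Theorem A.1]{Ost24}), fed with the $\Theta$-uniform resolvent bound and the $\Theta$-uniform a priori growth of $\mathbf{S}_\Theta$, is what closes it; the alternative Duhamel/Gr\"onwall route you sketch is plausible but not the path the paper takes.
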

\begin{proof}
We see that $\mathbf{P}_{\lambda,\Theta}$ commutes with $\mathbf{S}_\Theta(\tau)$ and $\mathbf{S}_\Theta(\tau) \mathbf{P}_{\lambda, \Theta} = e^{\lambda \tau} \mathbf{P}_{\lambda, \Theta}$ for all $\tau \geq 0$ and $\lambda \in \{0,1\}$ as in the proof of \autoref{stable unperturbed evolution}.

From \eqref{pert resol set away from 01} we know that there exists some $\Tilde{C } > 0$ such that
\begin{align*}
    \|\mathbf{R}_{\mathbf{L}_\Theta}(\lambda)\|_{\Hil^k \to \Hil^k} \leq \Tilde{C}
\end{align*}
for all $\lambda \in \overline{\Half_{-\frac{\eps_0}{2}}} \setminus (\disk_{\frac{\eps_0}{4}}(0) \cup \disk_{\frac{\eps_0}{4}}(1) )$ and $\Theta \in \overline{\B_{M_0}^{p(d)}}$. On the other hand, since $\overline{\Half_{-\frac{\eps_0}{2}}} \subseteq \rho((\mathbf{L}_\Theta)_{\rg(\mathbf{I} - \mathbf{P}_\Theta)})$ by \autoref{perturbed spectrum}, we know that $\lambda \mapsto \mathbf{R}_{(\mathbf{L}_\Theta)_{\rg(\mathbf{I} - \mathbf{P}_\Theta)}}(\lambda)$ is analytic on $\Half_{-\frac{\eps_0}{2}}$. Hence the maximum principle for analytic functions implies that 
\begin{align*}
    &\sup_{\lambda \in \disk_{\frac{\eps_0}{4}}(0) \cup \disk_{\frac{\eps_0}{4}}(1)}\|\mathbf{R}_{(\mathbf{L}_\Theta)_{\rg(\mathbf{I} - \mathbf{P}_\Theta)}}(\lambda)\|_{\rg(\mathbf{I} - \mathbf{P}_\Theta) \to \rg(\mathbf{I} - \mathbf{P}_\Theta)} \\
    &\leq \sup_{\lambda \in \pd(\disk_{\frac{\eps_0}{4}}(0) \cup \disk_{\frac{\eps_0}{4}}(1))}\|\mathbf{R}_{(\mathbf{L}_\Theta)_{\rg(\mathbf{I} - \mathbf{P}_\Theta)}}(\lambda)\|_{\rg(\mathbf{I} - \mathbf{P}_\Theta) \to \rg(\mathbf{I} - \mathbf{P}_\Theta)}.
\end{align*}
Since $\pd(\disk_{\frac{\eps_0}{4}}(0) \cup \disk_{\frac{\eps_0}{4}}(1)) \subseteq \overline{\Half_{-\frac{\eps_0}{2}}} \setminus (\disk_{\frac{\eps_0}{4}}(0) \cup \disk_{\frac{\eps_0}{4}}(1) )$ we have 
\begin{align*}
    \|\mathbf{R}_{(\mathbf{L}_\Theta)_{\rg(\mathbf{I} - \mathbf{P}_\Theta)}}(\lambda)\|_{\rg(\mathbf{I} - \mathbf{P}_\Theta) \to \rg(\mathbf{I} - \mathbf{P}_\Theta)} \leq \|\mathbf{R}_{\mathbf{L}_\Theta}(\lambda)\|_{\Hil^k \to \Hil^k} \leq \Tilde{C}
\end{align*}
for all $\lambda \in \pd(\disk_{\frac{\eps_0}{4}}(0) \cup \disk_{\frac{\eps_0}{4}}(1))$. Analogously we get 
\begin{align*}
    \|\mathbf{R}_{(\mathbf{L}_\Theta)_{\rg(\mathbf{I} - \mathbf{P}_\Theta)}}(\lambda)\|_{\rg(\mathbf{I} - \mathbf{P}_\Theta) \to \rg(\mathbf{I} - \mathbf{P}_\Theta)} \leq \Tilde{C}
\end{align*}
for all $\lambda \in \overline{\Half_{-\frac{\eps_0}{2}}} \setminus (\disk_{\frac{\eps_0}{4}}(0) \cup \disk_{\frac{\eps_0}{4}}(1) )$. Hence we conclude 
\begin{align}\label{reduced resol of pert unif bd}
    \sup_{\lambda \in \overline{\Half_{-\frac{\eps_0}{2}}}} \|\mathbf{R}_{(\mathbf{L}_\Theta)_{\rg(\mathbf{I} - \mathbf{P}_\Theta)}}(\lambda)\|_{\rg(\mathbf{I} - \mathbf{P}_\Theta) \to \rg(\mathbf{I} - \mathbf{P}_\Theta)} \leq \Tilde{C}
\end{align}
for all $\Theta \in \overline{\B_{M_0}^{p(d)}}$. Hence the Gearhart-Prüss-Greiner Theorem \cite[p.~302, Theorem 1.11]{EngNag00} implies the existence of $C_\ast > 0$ and $\eps_\ast > \frac{\eps_0}{2} > 0$ such that
\begin{align}\label{reduced pert growth bd}
    \|\mathbf{S}_{\Theta}(\tau)_{\rg(\mathbf{I} - \mathbf{P}_\Theta)}  \|_{\rg(\mathbf{I} - \mathbf{P}_\Theta) \to \rg(\mathbf{I} - \mathbf{P}_\Theta)} \leq C_\ast e^{-\eps_\ast \tau}
\end{align}
for all $\tau \geq 0$. In fact, since the constant $\Tilde{C}$ in \eqref{reduced resol of pert unif bd} does not depend on $\Theta \in \overline{\B_{M_0}^{p(d)}}$, one can choose $C_\ast,\eps_\ast$ independently of $\Theta \in \overline{\B_{M_0}^{p(d)}}$ such that \eqref{reduced pert growth bd} holds for all $\Theta \in \overline{\B_{M_0}^{p(d)}}$, see \cite[p.~38, Theorem A.1]{Ost24}.

From \eqref{reduced pert growth bd} one directly deduces the last statement of the theorem.
\end{proof}

\section{Nonlinear stability}
Now we turn to the nonlinear problem
\begin{align}\label{nonlin first order w eq}
    \pd_\tau \mathbf{u}(\tau) = \mathbf{L}_\Theta \mathbf{u}(\tau) + \mathbf{N}_\Theta(\mathbf{u}(\tau)).
\end{align}
The associated Cauchy problem is 
\begin{align}\label{u Cauchy}
    \begin{cases}
        \pd_\tau \mathbf{u}(\tau) = \mathbf{L}_\Theta \mathbf{u}(\tau) + \mathbf{N}_\Theta(\mathbf{u}(\tau))\\
        \mathbf{u}(0) = \mathbf{\mathbf{f}}^{T,X} + \mathbf{v}_0^{T,X} - \mathbf{v}_\Theta
    \end{cases},
\end{align}
where we define for given $\mathbf{f} = (f_1,f_2)$ the function $\mathbf{f}^{T,X}$ via
\begin{align*}
    (\mathbf{f}^{T,X})_1(\xi) &= f_1(X + T \xi)\\
    (\mathbf{f}^{T,X})_2(\xi) &= T f_2(X + T \xi) 
\end{align*}
and accordingly
\begin{align*}
    \mathbf{v}_0^{T,X}(\xi) &= \Vector{\frac{1}{\sqrt{d-2}}(X + T \xi)}{\frac{T}{\sqrt{d-2}}(X + T \xi)}
\end{align*}
and
\begin{align*}
    \mathbf{v}_\Theta &= \Vector{v_\Theta}{\Lambda v_\Theta}.
\end{align*}
Eq.~\eqref{u Cauchy} in Duhamel form is given by
\begin{align}\label{w Duhamel}
    \mathbf{u}(\tau) = \mathbf{S}_\Theta \mathbf{u}(0) + \int_{0}^\tau \mathbf{S}_\Theta(\tau - \tau')\mathbf{N}_\Theta(\mathbf{u}(\tau')) d\tau'. 
\end{align}
\subsection{Properties of the nonlinearity}
In order to efficiently deal with the nonlinearity $\mathbf{N}_\Theta$, we collect two useful formulae in the next lemma.
\begin{lem}\label{formula nonlinearity general}
Let $m \geq 1$ and $F \in C^\infty(\C^m)$. For $y_0 \in \C^m$ consider the function $N \in C^\infty(\C^m)$ defined by 
\begin{align*}
    N(y) = F(y_0 + y) - F(y_0) -  D F(y_0)y.
\end{align*}
Then 
\begin{align}\label{N quadratic}
    N(y)  = y^a y^b \int_0^1 (1 - t) \pd_a \pd_b F(y_0 + t y) dt
\end{align}
and
\begin{align}\label{N difference identity}
    N(y_1) - N(y_2) = (y_1^a - y_2^a) \int_0^1 (t y_1^b + (1-t) y_2^b)\int_0^1 \pd_a \pd_b F(y_0 + s(y_2 + t(y_1 - y_2))) ds dt
\end{align}
hold for all $y,y_1,y_2 \in \C^m$.
\end{lem}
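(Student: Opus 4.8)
The plan is to obtain both formulae from the one-dimensional Taylor expansion with integral remainder, applied along the straight segments emanating from $y_0$. The definition of $N$ as ``$F(y_0+\cdot)$ minus its first-order Taylor polynomial at $0$'' is precisely what makes this go through cleanly, and the summation convention over the repeated indices $a,b \in \{1,\ldots,m\}$ is in force throughout.

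For \eqref{N quadratic} I would fix $y,y_0 \in \C^m$ and set $g(t) := F(y_0 + t y)$ for $t \in [0,1]$. Since $F$ is smooth, $g \in C^\infty([0,1])$ and the chain rule gives $g'(t) = y^a (\pd_a F)(y_0 + t y)$ and $g''(t) = y^a y^b (\pd_a \pd_b F)(y_0 + t y)$. The Taylor formula with integral remainder, $g(1) = g(0) + g'(0) + \int_0^1 (1-t) g''(t)\,dt$, then reads $F(y_0 + y) = F(y_0) + DF(y_0)y + \int_0^1 (1-t) y^a y^b (\pd_a \pd_b F)(y_0 + t y)\,dt$, and the left-hand side minus the first two terms on the right is exactly $N(y)$. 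Pulling the constant factor $y^a y^b$ out of the integral yields \eqref{N quadratic}.

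For \eqref{N difference identity} I would first note that $N$ is itself smooth and, straight from its definition, $(\pd_a N)(y) = (\pd_a F)(y_0 + y) - (\pd_a F)(y_0)$. Writing this difference as an integral along the segment from $y_0$ to $y_0 + y$ and using $\pd_a \pd_b F = \pd_b \pd_a F$ gives $(\pd_a N)(y) = y^b \int_0^1 (\pd_a \pd_b F)(y_0 + s y)\,ds$. Now fix $y_1,y_2 \in \C^m$ and apply the fundamental theorem of calculus to $t \mapsto N\bigl(y_2 + t(y_1 - y_2)\bigr)$ on $[0,1]$, whose derivative is $(y_1^a - y_2^a)(\pd_a N)\bigl(y_2 + t(y_1 - y_2)\bigr)$, to obtain
\[ N(y_1) - N(y_2) = (y_1^a - y_2^a)\int_0^1 (\pd_a N)\bigl(y_2 + t(y_1 - y_2)\bigr)\,dt. \]
Substituting the formula for $\pd_a N$ with $y = y_2 + t(y_1 - y_2)$ and rewriting $y_2^b + t(y_1^b - y_2^b) = t y_1^b + (1-t) y_2^b$ produces exactly \eqref{N difference identity}.

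The argument is entirely routine; the only points deserving a word of care are the chain rule on $\C^m$ (so that $\tfrac{d}{dt}F(y_0 + t y) = y^a (\pd_a F)(y_0 + t y)$, with $\pd_a$ understood accordingly) and the differentiation/Fubini manipulations under the integral sign, both justified by smoothness of $F$ together with dominated convergence on the compact parameter domains. I do not expect a genuine obstacle here: the lemma is just a repackaging of Taylor's theorem arranged so that each term of $N(y)$, and each term of a difference $N(y_1) - N(y_2)$, carries two explicit factors of the small quantities, which is precisely what the later contraction estimates for $\mathbf{N}_\Theta$ will exploit.
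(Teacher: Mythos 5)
Your proof is correct and takes essentially the same route as the paper: both identities follow from two applications of the fundamental theorem of calculus combined with the chain rule, and your final display matches \eqref{N difference identity} exactly. The only cosmetic difference is that you first isolate the gradient identity $\partial_a N(y) = y^b \int_0^1 \partial_a\partial_b F(y_0 + sy)\,ds$ and then integrate $N$ along the segment from $y_2$ to $y_1$, whereas the paper applies FTC first in $t$ to $F$ and then in $s$ to $\partial_a F$; the two orderings produce the identical nested integral.
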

\begin{proof}
Eq.~\eqref{N quadratic} follows from a second order Taylor expansion around $y_0$. For Eq.~\eqref{N difference identity} we compute
\begin{align*}
    N(y_1) - N(y_2) &= F(y_0 + y_1) - F(y_0 + y_2) - D F(y_0)(y_1 - y_2)\\
    &= \int_0^1 \pd_t F(y_0  + y_2  + t(y_1 - y_2)) dt - \pd_a F(y_0)(y_1^a - y_2^a)\\
    &= \int_0^1 \pd_a F(y_0 + y_2 + t(y_1 - y_2))(y_1^a - y_2^a) dt - \pd_a F(y_0)(y_1^a - y_2^a)\\
    &= (y_1^a - y_2^a)\int_0^1 [\pd_a F(y_0 + y_2 + t(y_1 - y_2)) - \pd_a F(y_0)] dt\\
    &= (y_1^a - y_2^a)\int_0^1 \int_0^1 \pd_s \pd_a F(y_0 + s(y_2 + t(y_1 - y_2)))ds dt\\
    &= (y_1^a - y_2^a) \int_0^1 \int_0^1 \pd_a \pd_b F(y_0 + s (y_2 + t(y_1 - y_2)))(y_2^b + t(y_1^b - y_2^b)  ) ds dt\\
    &= (y_1^a - y_2^a) \int_0^1 (t y_1^b + (1-t)y_2^b) \int_0^1 \pd_a \pd_b F(y_0 + s (y_2 + t(y_1 - y_2))) ds dt
\end{align*}
where we applied the fundamental theorem of calculus twice.
\end{proof}
This allows us to extract the relevant mapping properties of $\mathbf{N}_\Theta$.
\begin{prop}
Let $d \geq 3$ and $ k \geq k_0$. For each $\Theta \in \overline{\B_{M_0}^{p(d)}}$ the map $\mathbf{N}_\Theta$ can be uniquely extended from $C^\infty(\overline{\B^d},\C^d)^2$ to 
\begin{align*}
    \mathbf{N}_\Theta:\Hil^k \to \Hil^k.
\end{align*}
Further, we have the estimates
\begin{align}\label{Ntheta quadratic}
    \|\mathbf{N}_\Theta(\mathbf{f})\|_{\Hil^k} \lesssim \|\mathbf{f}\|_{\Hil^k}^2\\
\end{align}
and 
\begin{align}\label{Ntheta lipschitz}
    \|\mathbf{N}_\Theta(\mathbf{f}) - \mathbf{N}_\Theta(\mathbf{g})\|_{\Hil^k} \lesssim \|\mathbf{f} - \mathbf{g}\|_{\Hil^k}(\|\mathbf{f}\|_{\Hil^k} + \|\mathbf{g}\|_{\Hil^k} )
\end{align}
for all $\Theta \in \overline{\B_{M_0}^{p(d)}}$, $0 < \delta \leq 1$ and $\mathbf{f},\mathbf{g} \in \Hil^k_\delta$.
\end{prop}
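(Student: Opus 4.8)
The plan is to exploit the explicit quadratic structure of $\mathbf{N}_\Theta$ provided by \autoref{formula nonlinearity general}, combined with the fact that $H^{k-1}(\B^d)$ is a Banach algebra that embeds into $L^\infty(\B^d)$; both are available since $k-1>\tfrac d2$ (the bound $k\geq k_0$ forces $k-1\geq \tfrac d2 + C>\tfrac d2$, with $C>0$ from \eqref{commutation constant}). First I would make the structure of $\mathbf{N}_\Theta$ fully explicit. Writing $\mathbf{f}=(f_1,f_2)$ and $\mathbf{v}_\Theta=(v_\Theta,\Lambda v_\Theta)$, the definitions of $\mathbf{F}$ and of $\mathbf{L}_\Theta'=\mathbf{F}'(\mathbf{v}_\Theta)$ give $\mathbf{N}_\Theta(\mathbf{f})=(0,-\tilde{\mathbf{N}}_\Theta(\mathbf{f}))$, where, pointwise in $\xi$,
\[
  \tilde{\mathbf{N}}_\Theta(\mathbf{f})^n(\xi)=\hat F^n\big(y_0(\xi)+y(\xi)\big)-\hat F^n(y_0(\xi))-D\hat F^n(y_0(\xi))\,y(\xi),
\]
with $\hat F^n$ the smooth function on $\C^d\times\C^{d\times d}\times\C^d$ given by $\hat F^n(z,p,q)=\Gamma_{ij}^n(z)\big(q^iq^j-\sum_m p^i_m p^j_m\big)$, and $y_0(\xi)=(v_\Theta(\xi),Dv_\Theta(\xi),\Lambda v_\Theta(\xi))$, $y(\xi)=(f_1(\xi),Df_1(\xi),f_2(\xi))$. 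Here $y_0$ takes values in a fixed compact set and $\|v_\Theta\|_{H^k(\B^d,\C^d)}\lesssim 1$ uniformly in $\Theta\in\overline{\B_M^{p(d)}}$, since $(\xi,\Theta)\mapsto v_\Theta(\xi)$ is smooth there. Applying \autoref{formula nonlinearity general}, Eq.~\eqref{N quadratic}, pointwise with this multivariable $\hat F^n$ and base point $y_0(\xi)$ yields
\[
  \tilde{\mathbf{N}}_\Theta(\mathbf{f})^n=y^ay^b\int_0^1(1-t)\,\pd_a\pd_b\hat F^n(y_0+ty)\,dt,
\]
and \eqref{N difference identity} gives the analogous representation of $\tilde{\mathbf{N}}_\Theta(\mathbf{f})-\tilde{\mathbf{N}}_\Theta(\mathbf{g})$.

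From here the estimates follow by multilinear Moser-type bounds. For $\mathbf{f}\in\Hil^k_\delta$ with $\delta\leq 1$, Sobolev embedding gives $\|f_1\|_{L^\infty(\B^d)}\lesssim\|f_1\|_{H^{k-1}}\leq\|\mathbf{f}\|_{\Hil^k}\leq 1$, so $v_\Theta+tf_1$ stays, for all $t\in[0,1]$, in a fixed compact set on which $\Gamma_{ij}^n$ and all its derivatives are bounded. Each $\pd_a\pd_b\hat F^n(z,p,q)$ is a polynomial of degree $\leq 2$ in $(p,q)$ whose coefficients are built from $\Gamma_{ij}^n$ and its first two derivatives; hence $\xi\mapsto\pd_a\pd_b\hat F^n(y_0(\xi)+ty(\xi))$ is, for each $t$, a finite sum of products of (i) a term $(\pd^\gamma\Gamma_{ij}^n)(v_\Theta+tf_1)$, bounded in $H^{k-1}$ by a composition estimate using $\|v_\Theta+tf_1\|_{H^{k-1}}\lesssim 1$ and $\|f_1\|_{L^\infty}\lesssim 1$, and (ii) at most two factors among $Dv_\Theta+tDf_1$ and $\Lambda v_\Theta+tf_2$, each bounded in $H^{k-1}$ by $\lesssim 1$. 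By the algebra property the whole integrand, hence the integral, is bounded in $H^{k-1}(\B^d,\C^d)$ by an absolute constant, uniformly in $\Theta$ and in $\mathbf{f}\in\Hil^k_\delta$. The prefactor $y^ay^b$ is a product of two entries of $f_1$, $Df_1$ or $f_2$, so $\|y^ay^b\|_{H^{k-1}}\lesssim\|\mathbf{f}\|_{\Hil^k}^2$; multiplying, and using that the first component of $\mathbf{N}_\Theta(\mathbf{f})$ vanishes identically, gives \eqref{Ntheta quadratic}. The Lipschitz bound \eqref{Ntheta lipschitz} is obtained the same way from \eqref{N difference identity}: the factor $y_1^a-y_2^a$ contributes $\lesssim\|\mathbf{f}-\mathbf{g}\|_{\Hil^k}$, the factor $ty_1^b+(1-t)y_2^b$ contributes $\lesssim\|\mathbf{f}\|_{\Hil^k}+\|\mathbf{g}\|_{\Hil^k}$, and the double integral of $\pd_a\pd_b\hat F^n$ is $\lesssim 1$ on $\Hil^k_\delta\times\Hil^k_\delta$, where one additionally invokes the Lipschitz version of the composition estimate in the $z$-slot.

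For the extension statement, I would note that the displayed formula for $\tilde{\mathbf{N}}_\Theta(\mathbf{f})$ makes sense and produces an element of $H^{k-1}(\B^d,\C^d)$ for every $\mathbf{f}\in\Hil^k$ (interpreting $\Gamma_{ij}^n$, hence $\hat F^n$, as a smooth function on all of $\C^d\times\C^{d\times d}\times\C^d$), so it defines a map $\mathbf{N}_\Theta:\Hil^k\to\Hil^k$; it coincides with the original $\mathbf{N}_\Theta$ on the dense subspace $C^\infty(\overline{\B^d},\C^d)^2$, and it is continuous by \eqref{Ntheta lipschitz} (valid on every ball $\Hil^k_\delta$), so the extension is unique.

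I do not expect a genuine obstacle; the content is a bookkeeping exercise in multilinear Moser estimates. The only point that needs care is the multivariable application of \autoref{formula nonlinearity general}: one must treat the gradient $Df_1$ as an independent argument of $\hat F^n$ entering polynomially, while $f_1$ enters the Christoffel symbols through a composition, and one must track that this composition is controlled by the low-regularity norm $\|f_1\|_{L^\infty}$ — made uniformly small by restricting to $\Hil^k_\delta$ — rather than by the full $\|f_1\|_{H^k}$.
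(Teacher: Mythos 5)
Your proof is correct and follows essentially the same route as the paper: the representations \eqref{N quadratic} and \eqref{N difference identity}, the Sobolev algebra property of $H^{k-1}(\B^d)$ (valid since $k-1 > \tfrac{d}{2}$), a Moser/Schauder composition bound to control $\pd_a\pd_b F$ uniformly in $\Theta$ and $t$, and density for the unique extension. Your explicit unpacking of $\pd_a\pd_b \hat F^n$ as a polynomial in the derivative slots with $\Gamma$-built coefficients in the function slot is a concrete instantiation of the paper's one-line appeal to a Schauder estimate; and the ``Lipschitz version of the composition estimate'' you mention is not actually needed, since \eqref{N difference identity} has already factored out the difference $y_1 - y_2$, so the plain composition bound on the remaining integrand suffices.
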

\begin{proof}
Since $k \geq k_0 > \frac{d}{2} + 1$, we have that $H^{k-1}(\B^d)$ is a Sobolev algebra. Recall that for $\mathbf{f} \in C^\infty(\overline{\B^d},\C^d)^2$
\begin{align*}
    \mathbf{N}_\Theta(\mathbf{f}) = \Vector{0}{ - N_\Theta(f_1,Df_1,f_2)} \in C^\infty(\overline{\B^d},\C^d)^2
\end{align*}
and $N_\Theta$ arises from $F$ exactly as in \autoref{formula nonlinearity general} (where $((v_{\Theta})_1,D (v_{\Theta})_1,(v_{\Theta})_2)$ corresponds to $y_0$). Note that all components of $f_1,Df_1,f_2$ belong to $C^\infty(\overline{\B^d})$. Hence Eq.~\eqref{N quadratic} together with the fact that $H^{k-1}(\B^d)$ is a Sobolev algebra yields
\begin{align*}
    \|\mathbf{N}_\Theta(\mathbf{f})\|_{\Hil^k} \lesssim \|\mathbf{f}\|_{\Hil^k}^2 \sup_{a,b}\int_0^1 \|\pd_a \pd_b F(\mathbf{v}_\Theta + t \mathbf{f})\|_{H^{k-1}(\B^d,\C^d)} dt.
\end{align*}
Recall that 
\begin{align*}
    F^n(\mathbf{f})  &= \Gamma_{i j}^n(f_1)(f_2^i f_2^j - \pd^m f_1^i \pd_m f_1^j)
\end{align*}
with
\begin{align*}
    \Gamma_{i j}^n(z) = - \frac{2}{1 + |z|^2}(z_i \delta_j^n + z_j \delta_i^n - z^n \delta_{i j}).
\end{align*}
$F^n$ is cubic in the sense that $\pd_a \pd_b F^n(\mathbf{0}) = 0$ for all $a,b$. Using a Schauder estimate argument, see for example \cite[p.~339 ff.]{Tao06}, we infer the existence of a continuous non-decreasing function $\gamma_{a,b}^n:[0,\infty) \to [0,\infty)$ such that
\begin{align*}
    \|\pd_a \pd_b F^n(\mathbf{f})\|_{H^{k-1}(\B^d)} \leq \gamma_{a,b}^n(\|\mathbf{f}\|_{\Hil^k})
\end{align*}
for all $\mathbf{f} \in C^\infty(\overline{\B^d},\C^d)^2$. Since $\|\mathbf{v}_\Theta\|_{\Hil^k} \lesssim 1$ for all $\Theta \in \overline{\B_{M_0}^{p(d)}}$ we thus conclude 
\begin{align*}
    \sup_{a, b} \int_0^1 \|\pd_a \pd_b F(\mathbf{v}_\Theta + t \mathbf{f})\|_{H^{k-1}(\B^d,\C^d)} dt \lesssim 1 
\end{align*}
for all $\Theta \in \overline{\B_{M_0}^{p(d)}}$ and $\mathbf{f} \in C^\infty(\overline{\B^d},\C^d)^2$ with $\|\mathbf{f}\|_{\Hil^k} \leq \delta \leq 1$. Hence \eqref{Ntheta quadratic} holds for $\mathbf{f} \in C^\infty(\overline{\B^d},\C^d)^2 \cap \Hil^k_\delta$. Analogously one checks with Eq.~\eqref{N difference identity} that \eqref{Ntheta lipschitz} holds for $\mathbf{f},\mathbf{g} \in C^\infty(\overline{\B^d},\C^d)^2 \cap \Hil^k_\delta$.

Since $C^\infty(\overline{\B^d},\C^d)^2 \cap \Hil^k_\delta$ is dense in $\Hil_\delta^k$, the inequality \eqref{Ntheta lipschitz} shows that $\mathbf{N}_\Theta$ can be extended uniquely to a map from $\Hil^k_\delta$ to $\Hil^k$ and the inequalities \eqref{Ntheta quadratic} and \eqref{Ntheta lipschitz} extend to $\Hil^k_\delta$.

Finally, observe that for any $\delta > 0$ the same argument shows that \eqref{Ntheta quadratic} and \eqref{Ntheta lipschitz} hold for all $\mathbf{f}, \mathbf{g} \in C^\infty(\overline{\B^d},\C^d)^2 \cap \Hil^k_\delta$, where the implicit constants now depend on $\delta $. Hence we can again uniquely extend $\mathbf{N}_\Theta$ to $\Hil^k_\delta$. Since this holds for any $\delta > 0$, in total this shows that $\mathbf{N}_\Theta$ can be extended to all of $\Hil^k$.
\end{proof}
\subsection{Stabilizing the nonlinear evolution}
Now we study Eq.~\eqref{w Duhamel} in more detail. The first difficulty will be to obtain global-in-time solutions. This is achieved by introducing a suitable correction term in Eq.~\eqref{w Duhamel}. First we introduce the function space we will work in.
\begin{defin}
Let $d \geq 3$ and $k \geq k_0$. Let $\eps_\ast > 0$ be as in \autoref{stable perturbed evolution}. We define the space
\begin{align*}
    \X := \{ \mathbf{u} \in C([0,\infty),\Hil^k): \sup_{\tau \geq 0} e^{\eps_\ast \tau}\|\mathbf{u}(\tau)\|_{\Hil^k} < \infty  \} 
\end{align*}
with associated norm 
\begin{align*}
    \|\mathbf{u}\|_{\X} = \sup_{\tau \geq 0} e^{\eps_\ast \tau}\|\mathbf{u}(\tau)\|_{\Hil^k}.
\end{align*}
\end{defin}
Observe that $(\X,\|\cdot\|_\X)$ is a Banach space. This space is the ideal setting to study solutions of Eq.~\eqref{w Duhamel}, since it has a decay rate encoded, i.e., a function $\mathbf{u} \in C([0,\infty),\Hil^k)$ belongs to $\X$ if and only if 
\begin{align*}
    \|\mathbf{u}(\tau)\|_{\Hil^k} \lesssim e^{- \eps_\ast \tau}
\end{align*}
for all $\tau \geq 0$.

Now we can define the correction term.
\begin{defin}
Let $d \geq 3$, $k \geq k_0$, $\Theta \in \overline{\B_{M_0}^{p(d)}}$ and $\mathbf{P}_{0,\Theta},\mathbf{P}_{1,\Theta},\mathbf{P}_\Theta$ be the spectral projections on $\Hil^k$ from \autoref{stable perturbed evolution}. We define for $0 < \delta \leq 1$ the map
\begin{align*}
    \mathbf{C}_\Theta:\Hil^k \times \X_\delta \to \Hil^k
\end{align*}
via
\begin{align*}
    \mathbf{C}_\Theta(\mathbf{f},\mathbf{u}) &= \mathbf{P}_\Theta\mathbf{f} + \mathbf{P}_{0,\Theta}\int_0^\infty \mathbf{N}_\Theta(\mathbf{u}(\tau')) d\tau' + \mathbf{P}_{1,\Theta}\int_0^\infty e^{-\tau'} \mathbf{N}_\Theta(\mathbf{u}(\tau')) d\tau'.
\end{align*}
\end{defin}
This is well-defined since the appearing integrals converge absolutely by \eqref{Ntheta quadratic}. With this, we can consider for $\mathbf{f} \in \Hil^k$ the map $\mathbf{K}_{\Theta,\mathbf{f}}:\X_\delta \to\X$ defined by
\begin{align*}
    \mathbf{K}_{\Theta,\mathbf{f}}(\mathbf{u})(\tau) := \mathbf{S}_\Theta(\tau)(\mathbf{f} - \mathbf{C}_\Theta(\mathbf{f},\mathbf{u})) + \int_0^\tau \mathbf{S}_\Theta(\tau- \tau') \mathbf{N}_\Theta(\mathbf{u}(\tau')) d\tau'.
\end{align*}
Now we study fixed points of $\mathbf{K}_{\Theta,\mathbf{f}}$.
\begin{prop}\label{stabilized by correction}
Let $d \geq 3$, $k \geq k_0$ and $\Theta \in \overline{\B_{M_0}^{p(d)}}$. There exist $0 < c_0,\delta_0 \leq 1$ such that for all $0 < \delta \leq \delta_0$ and $\mathbf{f} \in \Hil^k$ with $\|\mathbf{f}\|_{\Hil^k} \leq c_0 \delta $ there exists a unique $\mathbf{u}_\Theta \in\X_\delta$ such that 
\begin{align*}
    \mathbf{u}_\Theta = \mathbf{K}_{\Theta,\mathbf{f}}(\mathbf{u}_\Theta).
\end{align*}
Furthermore, the data-to-solution map $\mathbf{f} \mapsto \mathbf{u}_\Theta$ is Lipschitz-continuous as a map from $\Hil^k_\delta$ to $\X$.
\end{prop}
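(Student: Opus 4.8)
The plan is to obtain $\mathbf{u}_\Theta$ as the unique fixed point of $\mathbf{K}_{\Theta,\mathbf{f}}$ in the closed ball $\X_\delta$ via the Banach fixed point theorem, using that the correction $\mathbf{C}_\Theta(\mathbf{f},\mathbf{u})$ has been tailored to kill the growing modes. The starting point is an algebraic computation. Applying the spectral projections to $\mathbf{K}_{\Theta,\mathbf{f}}(\mathbf{u})(\tau)$ and invoking the semigroup identities of \autoref{stable perturbed evolution} ($\mathbf{S}_\Theta(\tau)\mathbf{P}_{0,\Theta}=\mathbf{P}_{0,\Theta}$, $\mathbf{S}_\Theta(\tau)\mathbf{P}_{1,\Theta}=e^\tau\mathbf{P}_{1,\Theta}$) together with $\mathbf{P}_{j,\Theta}\mathbf{P}_\Theta=\mathbf{P}_{j,\Theta}$ and $\mathbf{P}_{0,\Theta}\mathbf{P}_{1,\Theta}=0$, and noting $(\mathbf{I}-\mathbf{P}_\Theta)\mathbf{C}_\Theta(\mathbf{f},\mathbf{u})=0$ while $\mathbf{P}_{j,\Theta}(\mathbf{f}-\mathbf{C}_\Theta(\mathbf{f},\mathbf{u}))$ equals minus the corresponding integral over all of $[0,\infty)$, one finds
\begin{align*}
    \mathbf{P}_{0,\Theta}\mathbf{K}_{\Theta,\mathbf{f}}(\mathbf{u})(\tau) &= -\int_\tau^\infty \mathbf{P}_{0,\Theta}\mathbf{N}_\Theta(\mathbf{u}(\tau'))\,d\tau',\\
    \mathbf{P}_{1,\Theta}\mathbf{K}_{\Theta,\mathbf{f}}(\mathbf{u})(\tau) &= -e^\tau\int_\tau^\infty e^{-\tau'}\mathbf{P}_{1,\Theta}\mathbf{N}_\Theta(\mathbf{u}(\tau'))\,d\tau',\\
    (\mathbf{I}-\mathbf{P}_\Theta)\mathbf{K}_{\Theta,\mathbf{f}}(\mathbf{u})(\tau) &= \mathbf{S}_\Theta(\tau)(\mathbf{I}-\mathbf{P}_\Theta)\mathbf{f} + \int_0^\tau \mathbf{S}_\Theta(\tau-\tau')(\mathbf{I}-\mathbf{P}_\Theta)\mathbf{N}_\Theta(\mathbf{u}(\tau'))\,d\tau'.
\end{align*}

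Second, I would show that $\mathbf{K}_{\Theta,\mathbf{f}}$ maps $\X_\delta$ into itself. For $\mathbf{u}\in\X_\delta$ the quadratic bound \eqref{Ntheta quadratic} gives $\|\mathbf{N}_\Theta(\mathbf{u}(\tau'))\|_{\Hil^k}\lesssim\|\mathbf{u}\|_\X^2 e^{-2\eps_\ast\tau'}$. Feeding this into the three formulae above: the two unstable parts are tail integrals, hence bounded by $\lesssim e^{-2\eps_\ast\tau}\|\mathbf{u}\|_\X^2$; the stable part is controlled via the decay estimate of \autoref{stable perturbed evolution} and a Duhamel estimate by $\lesssim e^{-\eps_\ast\tau}(\|\mathbf{f}\|_{\Hil^k}+\|\mathbf{u}\|_\X^2)$. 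Altogether $\|\mathbf{K}_{\Theta,\mathbf{f}}(\mathbf{u})\|_\X\le C(\|\mathbf{f}\|_{\Hil^k}+\delta^2)$, with $C$ independent of $\Theta\in\overline{\B_{M_0}^{p(d)}}$ thanks to the $\Theta$-uniform constants in \autoref{stable perturbed evolution} and in the bounds for $\mathbf{N}_\Theta$. Choosing first $\delta_0$ so small that $C\delta_0\le\tfrac12$ and then $c_0\le\tfrac{1}{2C}$ yields $\|\mathbf{K}_{\Theta,\mathbf{f}}(\mathbf{u})\|_\X\le\delta$ whenever $\|\mathbf{f}\|_{\Hil^k}\le c_0\delta$ and $0<\delta\le\delta_0$.

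Third, the contraction estimate and the Lipschitz dependence. For $\mathbf{u},\mathbf{v}\in\X_\delta$ the Lipschitz bound \eqref{Ntheta lipschitz} gives $\|\mathbf{N}_\Theta(\mathbf{u}(\tau'))-\mathbf{N}_\Theta(\mathbf{v}(\tau'))\|_{\Hil^k}\lesssim\delta\,\|\mathbf{u}-\mathbf{v}\|_\X e^{-2\eps_\ast\tau'}$, and rerunning the three estimates above (the $\mathbf{u}$-dependence of $\mathbf{C}_\Theta$ and of the Duhamel integral being absorbed by the same bound) produces $\|\mathbf{K}_{\Theta,\mathbf{f}}(\mathbf{u})-\mathbf{K}_{\Theta,\mathbf{f}}(\mathbf{v})\|_\X\le C'\delta\,\|\mathbf{u}-\mathbf{v}\|_\X$; shrinking $\delta_0$ further so that $C'\delta_0\le\tfrac12$ makes $\mathbf{K}_{\Theta,\mathbf{f}}$ a contraction on the complete metric space $\X_\delta$, hence the Banach fixed point theorem delivers the unique $\mathbf{u}_\Theta$. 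For Lipschitz dependence on the data, the key observation is that $\mathbf{C}_\Theta(\mathbf{f},\mathbf{u})$ depends on $\mathbf{f}$ only through $\mathbf{P}_\Theta\mathbf{f}$, so for data $\mathbf{f}_1,\mathbf{f}_2$ with fixed points $\mathbf{u}_1,\mathbf{u}_2$ one has $\mathbf{K}_{\Theta,\mathbf{f}_1}(\mathbf{u}_2)-\mathbf{K}_{\Theta,\mathbf{f}_2}(\mathbf{u}_2)=\mathbf{S}_\Theta(\cdot)(\mathbf{I}-\mathbf{P}_\Theta)(\mathbf{f}_1-\mathbf{f}_2)$, whose $\X$-norm is $\lesssim\|\mathbf{f}_1-\mathbf{f}_2\|_{\Hil^k}$; combined with the contraction bound in the $\mathbf{u}$-variable this yields $\|\mathbf{u}_1-\mathbf{u}_2\|_\X\le\tfrac12\|\mathbf{u}_1-\mathbf{u}_2\|_\X+C''\|\mathbf{f}_1-\mathbf{f}_2\|_{\Hil^k}$, giving the claimed Lipschitz estimate.

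The argument is essentially routine once the structure of $\mathbf{C}_\Theta$ is laid bare; the only points demanding care are the bookkeeping of the spectral-projection identities in the first step (checking that the correction cancels the growing contributions \emph{exactly}) and the order in which $\delta_0$ and $c_0$ are fixed, so that the self-mapping radius and the contraction constant are simultaneously secured and uniform in $\Theta$.
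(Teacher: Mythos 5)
Your proposal is correct and follows essentially the same route as the paper: rewrite $\mathbf{K}_{\Theta,\mathbf{f}}$ using the semigroup action on $\rg(\mathbf{P}_\Theta)$ so the correction turns the growing pieces into decaying tail integrals, estimate via \eqref{Ntheta quadratic}--\eqref{Ntheta lipschitz} and \autoref{stable perturbed evolution} to get a contraction on $\X_\delta$, then deduce Lipschitz data dependence from the identity $\mathbf{K}_{\Theta,\mathbf{f}_1}(\cdot)-\mathbf{K}_{\Theta,\mathbf{f}_2}(\cdot)=\mathbf{S}_\Theta(\cdot)(\mathbf{I}-\mathbf{P}_\Theta)(\mathbf{f}_1-\mathbf{f}_2)$. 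The only cosmetic difference is that you split $\mathbf{K}_{\Theta,\mathbf{f}}$ into its $\mathbf{P}_{0,\Theta}$, $\mathbf{P}_{1,\Theta}$, and $(\mathbf{I}-\mathbf{P}_\Theta)$ components whereas the paper writes the same rewriting as a single four-term formula; the estimates and the ordering of the choice of $\delta_0$, $c_0$ are identical.
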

\begin{proof}
For $ 0 < \delta \leq 1 $ let $\mathbf{f} \in \Hil^k$ and $\mathbf{u} \in\X_\delta$.
Using the explicit formulae available of how $\mathbf{S}_\Theta(\tau)$ acts on $\rg(\mathbf{P}_\Theta)$ from \autoref{stable perturbed evolution}, one computes
\begin{align*}
    \mathbf{K}_{\Theta,\mathbf{f}}(\mathbf{u})(\tau) &=\mathbf{S}_\Theta( \tau)(\mathbf{I} - \mathbf{P}_\Theta)\mathbf{f} + \int_0^\tau \mathbf{S}_\Theta(\tau - \tau') (\mathbf{I} - \mathbf{P}_\Theta) \mathbf{N}_\Theta(\mathbf{u}(\tau'))d \tau'\\
    &- \mathbf{P}_{0,\Theta} \int_{\tau}^\infty \mathbf{N}_\Theta(\mathbf{u}(\tau')) d \tau' - \mathbf{P}_{1,\Theta} \int_\tau^\infty e^{\tau-\tau'} \mathbf{N}_\Theta(\mathbf{u}(\tau')) d \tau'.
\end{align*}
Hence from \autoref{stable perturbed evolution} we conclude
\begin{align*}
    \|\mathbf{K}_{\Theta,\mathbf{f}}(\mathbf{u})(\tau)\|_{\Hil^k} &\lesssim e^{-\eps_\ast \tau} \|\mathbf{f}\|_{\Hil^k} + \int_0^\tau e^{-\eps_\ast(\tau - \tau')} \|\mathbf{N}_\Theta(\mathbf{u}(\tau'))\|_{\Hil^k} d\tau'\\
    &+ \int_\tau^\infty \|\mathbf{N}_\Theta(\mathbf{u}(\tau'))\|_{\Hil^k} d \tau' + \int_\tau^\infty e^{\tau - \tau'}\|\mathbf{N}_\Theta(\mathbf{u}(\tau'))\|_{\Hil^k} d \tau' \\
    &\overset{\eqref{Ntheta quadratic}}{\lesssim} e^{-\eps_\ast \tau} \|\mathbf{f}\|_{\Hil^k} + \int_0^\tau e^{-\eps_\ast(\tau- \tau')} \|\mathbf{u}(\tau')\|_{\Hil^k}^2 d \tau'+ \int_\tau^\infty \|\mathbf{u}(\tau')\|_{\Hil^k}^2 d \tau'\\
    &\leq e^{-\eps_\ast \tau} \|\mathbf{f}\|_{\Hil^k} + \delta^2 \int_0^\tau e^{-\eps_\ast(\tau- \tau')} e^{-2 \eps_\ast \tau'} d \tau'+ \delta^2 \int_\tau^\infty e^{-2 \eps_\ast \tau'} d \tau'\\
    &= e^{-\eps_\ast \tau} \|\mathbf{f}\|_{\Hil^k} + \delta^2 e^{-\eps_\ast \tau} \int_0^\tau e^{-\eps_\ast \tau'} d \tau' + \delta^2 \frac{e^{-2 \eps_\ast  \tau}}{2 \eps_\ast}\\
    &\lesssim e^{-\eps_\ast \tau} \|\mathbf{f}\|_{\Hil^k} + \delta^2 e^{-\eps_\ast \tau}
\end{align*}
and hence
\begin{align*}
    \|\mathbf{K}_{\Theta, \mathbf{f}}(\mathbf{u})\|_{\X} \lesssim \|\mathbf{f}\|_{\Hil^k}  + \delta^2.
\end{align*}
Next for $\mathbf{u}_1,\mathbf{u}_2 \in\X_\delta$ we have
\begin{align*}
    &\|\mathbf{K}_{\Theta,\mathbf{f}}(\mathbf{u}_1)(\tau) -  \mathbf{K}_{\Theta,\mathbf{f}}(\mathbf{u}_2)(\tau) \|_{\Hil^k} \\
    & \leq \int_0^\tau \|\mathbf{S}_\Theta(\tau-\tau') (\mathbf{I} - \mathbf{P}_\Theta) [\mathbf{N}_\Theta(\mathbf{u}_1(\tau'))  - \mathbf{N}_\Theta(\mathbf{u}_2(\tau'))  ]\|_{\Hil^k} d \tau'\\
    &+ \int_\tau^\infty \|\mathbf{N}_\Theta(\mathbf{u}_1(\tau')) -  \mathbf{N}_\Theta(\mathbf{u}_2(\tau'))   \|_{\Hil^k} d \tau' \\
    &+ \int_\tau^\infty e^{\tau- \tau'}\|\mathbf{N}_\Theta(\mathbf{u}_1(\tau')) -  \mathbf{N}_\Theta(\mathbf{u}_2(\tau'))   \|_{\Hil^k} d \tau'\\
    &\lesssim \int_0^\tau e^{-\eps_\ast (\tau - \tau')} \|\mathbf{N}_\Theta(\mathbf{u}_1(\tau'))  - \mathbf{N}_\Theta(\mathbf{u}_2(\tau'))  \|_{\Hil^k} d \tau'\\
    &+ \int_\tau^\infty \|\mathbf{N}_\Theta(\mathbf{u}_1(\tau')) -  \mathbf{N}_\Theta(\mathbf{u}_2(\tau'))   \|_{\Hil^k} d \tau'\\
    &\overset{\eqref{Ntheta lipschitz}}{\lesssim} \int_0^\tau e^{-\eps_\ast(\tau- \tau')}\|\mathbf{u}_1(\tau') - \mathbf{u}_2(\tau')\|_{\Hil^k}(\|\mathbf{u}_1(\tau')\|_{\Hil^k} + \| \mathbf{u}_2(\tau')\|_{\Hil^k})d \tau'\\
    &+\int_\tau^\infty \|\mathbf{u}_1(\tau') - \mathbf{u}_2(\tau')\|_{\Hil^k}(\|\mathbf{u}_1(\tau')\|_{\Hil^k} + \| \mathbf{u}_2(\tau')\|_{\Hil^k})d \tau'\\
    &\leq \int_0^\tau e^{-\eps_\ast(\tau - \tau')} e^{-2 \eps_\ast \tau'} \|\mathbf{u}_1  - \mathbf{u}_2\|_{\X}(\|\mathbf{u}_1\|_{\X} + \|\mathbf{u}_2\|_{\X}  ) d \tau'\\
    &+ \int_\tau^\infty e^{-2 \eps_\ast \tau'} \|\mathbf{u}_1  - \mathbf{u}_2\|_{\X}(\|\mathbf{u}_1\|_{\X} + \|\mathbf{u}_2\|_{\X}  ) d\tau'\\
    &= e^{-\eps_\ast \tau} \|\mathbf{u}_1  - \mathbf{u}_2\|_{\X}(\|\mathbf{u}_1\|_{\X} + \|\mathbf{u}_2\|_{\X}  ) \int_0^\tau e^{-\eps_\ast \tau'} d \tau'\\
    &+ \|\mathbf{u}_1  - \mathbf{u}_2\|_{\X}(\|\mathbf{u}_1\|_{\X} + \|\mathbf{u}_2\|_{\X}  )\int_\tau^\infty e^{-\eps_\ast \tau'} d \tau'\\
    &\lesssim  e^{-\eps_\ast \tau}\|\mathbf{u}_1  - \mathbf{u}_2\|_{\X}(\|\mathbf{u}_1\|_{\X} + \|\mathbf{u}_2\|_{\X}  )
\end{align*}
and hence
\begin{align*}
    \|\mathbf{K}_{\Theta,\mathbf{f}}(\mathbf{u}_1) -  \mathbf{K}_{\Theta,\mathbf{f}}(\mathbf{u}_2)\|_{\X} &\lesssim \|\mathbf{u}_1  - \mathbf{u}_2\|_{\X}(\|\mathbf{u}_1\|_{\X} + \|\mathbf{u}_2\|_{\X}  )\\
    &\lesssim \delta \|\mathbf{u}_1  - \mathbf{u}_2\|_{\X}.
\end{align*}
Hence we can choose $0 < \delta_0,c_0 \leq 1$ sufficiently small such that 
\begin{align*}
    \|\mathbf{K}_{\Theta,\mathbf{f}}(\mathbf{u})\|_{\X} &\leq \delta\\
    \|\mathbf{K}_{\Theta,\mathbf{f}}(\mathbf{u}_1 ) -\mathbf{K}_{\Theta,\mathbf{f}}(\mathbf{u}_2 ) \|_{\X} &\leq \frac{1}{2} \|\mathbf{u}_1 - \mathbf{u}_2\|_{\X}
\end{align*}
holds for all $0 < \delta \leq \delta_0$, $\mathbf{f}\in\Hil^k$ with $\|\mathbf{f}\|_{\Hil^k} \leq c_0 \delta $ and $\mathbf{u},\mathbf{u}_1,\mathbf{u}_2 \in\X_\delta$. Hence $\mathbf{K}_{\Theta,\mathbf{f}}$ is a contraction on $\X_\delta$ and by Banach's fixed point theorem there exists a unique $\mathbf{u}_\Theta \in\X_\delta$ such that $\mathbf{u}_\Theta = \mathbf{K}_{\Theta,\mathbf{f}}(\mathbf{u}_\Theta)$.

Next, let $\mathbf{f},\mathbf{g} \in \Hil^k$ with $\|\mathbf{f}\|_{\Hil^k},\|\mathbf{g}\|_{\Hil^k} \leq c_0 \delta $ and let $\mathbf{u}_\Theta,\Tilde{\mathbf{u}}_\Theta \in\X_\delta$ such that 
\begin{align*}
    \mathbf{u}_\Theta &= \mathbf{K}_{\Theta,\mathbf{f}}(\mathbf{u}_\Theta)\\
    \Tilde{\mathbf{u}}_\Theta &= \mathbf{K}_{\Theta,\mathbf{g}}(\Tilde{\mathbf{u}}_\Theta).
\end{align*}
We compute
\begin{align*}
    \|\mathbf{u}_\Theta(\tau) - \Tilde{\mathbf{u}}_\Theta(\tau)\|_{\Hil^k} &= \| \mathbf{K}_{\Theta,\mathbf{f}}(\mathbf{u}_\Theta)(\tau) - \mathbf{K}_{\Theta,\mathbf{g}}(\Tilde{\mathbf{u}}_\Theta)(\tau)\|_{\Hil^k}\\
    &\leq \| \mathbf{K}_{\Theta,\mathbf{f}}(\mathbf{u}_\Theta)(\tau) - \mathbf{K}_{\Theta,\mathbf{f}}(\Tilde{\mathbf{u}}_\Theta)(\tau)\|_{\Hil^k} + \| \mathbf{K}_{\Theta,\mathbf{f}}(\Tilde{\mathbf{u}}_\Theta)(\tau) - \mathbf{K}_{\Theta,\mathbf{g}}(\Tilde{\mathbf{u}}_\Theta)(\tau)\|_{\Hil^k}\\
    &\leq \frac{1}{2} e^{-\eps_\ast \tau}\|\mathbf{u}_\Theta - \Tilde{\mathbf{u}}_\Theta\|_{\X} + \| \mathbf{K}_{\Theta,\mathbf{f}}(\Tilde{\mathbf{u}}_\Theta)(\tau) - \mathbf{K}_{\Theta,\mathbf{g}}(\Tilde{\mathbf{u}}_\Theta)(\tau)\|_{\Hil^k},
\end{align*}
since $\mathbf{K}_{\Theta,\mathbf{f}}$ is a contraction on $\X_\delta$. Next, one computes
\begin{align*}
    \mathbf{K}_{\Theta,\mathbf{f}}(\Tilde{\mathbf{u}}_\Theta)(\tau) - \mathbf{K}_{\Theta,\mathbf{g}}(\Tilde{\mathbf{u}}_\Theta)(\tau) = \mathbf{S}_\Theta(\tau)(\mathbf{I} - \mathbf{P}_\Theta) (\mathbf{f} - \mathbf{g}),
\end{align*}
which yields
\begin{align*}
    \|\mathbf{K}_{\Theta,\mathbf{f}}(\Tilde{\mathbf{u}}_\Theta)(\tau) - \mathbf{K}_{\Theta,\mathbf{g}}(\Tilde{\mathbf{u}}_\Theta)(\tau)\|_{\Hil^k} \lesssim e^{-\eps_\ast \tau}\|\mathbf{f} - \mathbf{g}\|_{\Hil^k}.
\end{align*}
Together with the previous we conclude
\begin{align*}
    \|\mathbf{u}_\Theta - \Tilde{\mathbf{u}}_\Theta\|_{\X} \leq 2 \|\mathbf{K}_{\Theta,\mathbf{f}}(\Tilde{\mathbf{u}}_\Theta) -\mathbf{K}_{\Theta,\mathbf{g}}(\Tilde{\mathbf{u}}_\Theta) \|_{\X} \lesssim \|\mathbf{f} - \mathbf{g}\|_{\Hil^k},
\end{align*}
which is the claimed Lipschitz-continuity and this finishes the proof.
\end{proof}
\subsection{Evolution near the blowup solution}
In order to ensure the smallness of the initial data, we analyze the initial data operator. 
\begin{defin}
Let $d \geq 3$, $k \geq k_0$ and $(T,X,\Theta) \in [\frac{1}{2},\frac{3}{2}]\times \overline{\B_\frac{1}{2}^d}\times \overline{\B_{M_0}^{p(d)}} $. We define the map $\mathbf{U}_{T,X,\Theta}:C^\infty(\overline{\B^d_{T}(X)},\C^{d})^2 \to \Hil^k$ via
\begin{align*}
    \mathbf{U}_{T,X,\Theta}(\mathbf{f}) = \mathbf{f}^{T,X} + \mathbf{v}_0^{T,X} - \mathbf{v}_\Theta.
\end{align*}
Now we introduce some more notation. We define 
\begin{align*}
    \mathbf{f}_{0,\Theta}^1 &:= \pd_T\Big|_{T= 1} \left( \mathbf{v}_0^{T,0} - \mathbf{v}_\Theta  \right)\\
    \mathbf{f}_{j,\Theta}^1 &:= \pd_{X^j}\Big|_{X = 0} \left( \mathbf{v}_0^{1,X} - \mathbf{v}_\Theta  \right)\\
    \mathbf{f}_{a,\Theta}^0 &:= \pd_{\Theta^a}\left( \mathbf{v}_0^{1,0} - \mathbf{v}_\Theta  \right)
\end{align*}
for $\Theta \in \overline{\B_{M_0}^{p(d)}}$, $j = 1,\ldots,d$ and $a =1,\ldots,p(d)$. Even though $\mathbf{f}_{0,\Theta}^1,\mathbf{f}_{j,\Theta}^1$ do not depend on $\Theta$, we keep this notation because of the following. The action of the symmetries were defined precisely to ensure
\begin{align*}
    \mathbf{f}_{0,\Theta}^1,\mathbf{f}_{j,\Theta}^1 &\in \rg(\mathbf{P}_{1,\Theta})\\
    \mathbf{f}_{a,\Theta}^0 &\in \rg(\mathbf{P}_{0,\Theta}).
\end{align*}

\end{defin}
\begin{lem}\label{initial data map small}
Let $d \geq 3$, $k \geq k_0$ and $(T,X,\Theta) \in [\frac{1}{2},\frac{3}{2}]\times \overline{\B_\frac{1}{2}^d}\times \overline{\B_{M_0}^{p(d)}}$. We have
\begin{align*}
    \mathbf{U}_{T,X,\Theta}(\mathbf{f}) &= \mathbf{f}^{T,X} + (T-1)\mathbf{f}_{0,\Theta}^1 + X^j \mathbf{f}_{j,\Theta}^1 + \Theta^a \mathbf{f}^0_{a,\Theta} + \mathbf{r}_{T,X,\Theta},
\end{align*}
for $\mathbf{f} \in C^\infty(\overline{\B_T^d(X)},\C^{d})^2$ with
\begin{align*}
    \|\mathbf{r}_{T,X,\Theta}\|_{\Hil^k} \lesssim (T -1)^2 + |X|^2 + |\Theta|^2
\end{align*}
for all $(T,X,\Theta) \in [\frac{1}{2},\frac{3}{2}] \times \overline{\B_\frac{1}{2}^d}\times \overline{\B_{M_0}^{p(d)}}$.
\end{lem}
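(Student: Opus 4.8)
The plan is to recognize $\mathbf U_{T,X,\Theta}(\mathbf f)-\mathbf f^{T,X}=\mathbf v_0^{T,X}-\mathbf v_\Theta=:\mathbf G(T,X,\Theta)$ as an $\mathbf f$-independent map from a neighborhood of $(1,0,0)$ into $\Hil^k$ and to Taylor-expand $\mathbf G$ to first order around $(1,0,0)$; the term $\mathbf f^{T,X}$ is simply carried along. First I would record that $\mathbf G$ is a smooth $\Hil^k$-valued map on an open neighborhood of the compact set $[\frac{1}{2},\frac{3}{2}]\times\overline{\B_{1/2}^d}\times\overline{\B_{M_0}^{p(d)}}$, with all derivatives of order $\le 2$ uniformly bounded there. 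For the term $(T,X)\mapsto\mathbf v_0^{T,X}$ this is elementary, since componentwise in $\xi$ it is an affine function of $\xi$ whose coefficients are polynomials in $(T,X)$, hence polynomial, in particular smooth, as a map into $H^k\times H^{k-1}(\B^d,\C^d)$. For $\Theta\mapsto\mathbf v_\Theta=(v_\Theta,\Lambda v_\Theta)$ I would use that, by the discussion preceding the first-order formulation, $(\xi,\Theta)\mapsto v_\Theta(\xi)$ is jointly smooth on $\overline{\B^d}\times\overline{\B_M^{p(d)}}$; every $\xi$-derivative of order $\le k$ of $v_\Theta(\xi)$ together with its $\Theta$-derivatives of order $\le 2$ is then jointly continuous, hence uniformly bounded on $\overline{\B^d}\times\overline{\B_{M_0}^{p(d)}}$, and a difference-quotient/dominated-convergence argument, exactly as in the proof of \autoref{perturb is Lipschitz}, upgrades this to $C^2$ (indeed $C^\infty$) dependence of $\mathbf v_\Theta$ on $\Theta$ in $\Hil^k$, with uniform bounds.

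Next I would evaluate $\mathbf G$ and its first-order derivatives at $(1,0,0)$. From the identity $V_\ast(\xi)=\Psi^{-1}(\xi/\sqrt{d-2})$ one gets $\Psi(V_\ast(\xi))=\xi/\sqrt{d-2}$, hence $v_0(\xi)=\xi/\sqrt{d-2}$ and $\Lambda v_0(\xi)=\xi/\sqrt{d-2}$, so that $\mathbf v_0=\mathbf v_0^{1,0}$ and therefore $\mathbf G(1,0,0)=0$. Since $\mathbf v_\Theta$ does not depend on $(T,X)$ and $\mathbf v_0^{T,X}$ does not depend on $\Theta$, one reads off $\pd_T\mathbf G(1,0,0)=\pd_T|_{T=1}\mathbf v_0^{T,0}=\mathbf f_{0,\Theta}^1$ and $\pd_{X^j}\mathbf G(1,0,0)=\pd_{X^j}|_{X=0}\mathbf v_0^{1,X}=\mathbf f_{j,\Theta}^1$ (both $\Theta$-independent by construction), while $\pd_{\Theta^a}\mathbf G(1,0,0)=-\pd_{\Theta^a}\mathbf v_\Theta\big|_{\Theta=0}=\mathbf f_{a,0}^0$. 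Taylor's theorem with integral remainder in the Banach space $\Hil^k$, applied on the straight segment from $(1,0,0)$ to $(T,X,\Theta)$, which stays in the convex region, then gives
\begin{align*}
\mathbf G(T,X,\Theta)=(T-1)\,\mathbf f_{0,\Theta}^1+X^j\mathbf f_{j,\Theta}^1+\Theta^a\mathbf f_{a,0}^0+\mathbf R_{T,X,\Theta},\qquad \|\mathbf R_{T,X,\Theta}\|_{\Hil^k}\lesssim (T-1)^2+|X|^2+|\Theta|^2,
\end{align*}
the bound following from the uniform bound on $\|D^2\mathbf G\|$ over the compact region.

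Finally I would convert $\mathbf f_{a,0}^0$ into the running-$\Theta$ version $\mathbf f_{a,\Theta}^0$ appearing in the statement. Since $\Theta\mapsto\mathbf f_{a,\Theta}^0=-\pd_{\Theta^a}\mathbf v_\Theta$ is smooth, in particular Lipschitz, into $\Hil^k$ on $\overline{\B_{M_0}^{p(d)}}$, we have $\|\mathbf f_{a,0}^0-\mathbf f_{a,\Theta}^0\|_{\Hil^k}\lesssim|\Theta|$, whence $\|\Theta^a(\mathbf f_{a,0}^0-\mathbf f_{a,\Theta}^0)\|_{\Hil^k}\lesssim|\Theta|^2$. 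Setting $\mathbf r_{T,X,\Theta}:=\mathbf R_{T,X,\Theta}+\Theta^a(\mathbf f_{a,0}^0-\mathbf f_{a,\Theta}^0)$ then yields the claimed expansion together with the estimate $\|\mathbf r_{T,X,\Theta}\|_{\Hil^k}\lesssim (T-1)^2+|X|^2+|\Theta|^2$. The only step that is not completely routine is the passage from the joint smoothness of $(\xi,\Theta)\mapsto v_\Theta(\xi)$ to $C^2$ dependence, with uniform second-derivative bounds, of the $\Hil^k$-valued map $\Theta\mapsto\mathbf v_\Theta$ on the compact parameter region; everything else is the standard finite-dimensional Taylor expansion transplanted to a Banach space.
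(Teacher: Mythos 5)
Your proposal is correct and follows essentially the same route as the paper: Taylor-expand $\mathbf{v}_0^{T,X}-\mathbf{v}_\Theta$ around $(1,0,0)$, observe it vanishes there, identify the first-order derivatives with the $\mathbf{f}$-functions (noting $\mathbf{f}_{0,\Theta}^1,\mathbf{f}_{j,\Theta}^1$ are in fact $\Theta$-independent), bound the quadratic remainder by uniform smoothness on the compact parameter region, and finally absorb the $O(|\Theta|^2)$ discrepancy from replacing $\mathbf{f}_{a,0}^0$ by $\mathbf{f}_{a,\Theta}^0$ into $\mathbf{r}_{T,X,\Theta}$. The only cosmetic difference is that the paper performs Taylor's theorem pointwise in $\xi$ and takes $\Hil^k$-norms of the explicit integral remainder afterward, thereby bypassing the step you flag as ``not completely routine'' (establishing $C^2$-dependence of the Banach-space-valued map $\Theta\mapsto\mathbf{v}_\Theta$ directly), though your route also closes without difficulty.
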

\begin{proof}
We fix $\xi \in \overline{\B^d}$ and consider the map $[\frac{1}{2},\frac{3}{2}] \times \overline{\B_\frac{1}{2}^d}\times \overline{\B_{M_0}^{p(d)}} \to (\C^{d})^2$ defined by $(T,X,\Theta) \mapsto \mathbf{v}_0^{T,X}(\xi) - \mathbf{v}_\Theta(\xi)$. Since this map vanishes in $(1,0,0)$, we have by Taylor's theorem
\begin{align*}
    \mathbf{v}_0^{T,X}(\xi) - \mathbf{v}_\Theta(\xi) &= (T- 1) \mathbf{f}_{0,0}^1(\xi) + X^j \mathbf{f}_{j,0}^1(\xi) + \Theta^a \mathbf{f}_{a,0}^0(\xi) + \Tilde{\mathbf{r}}_{T,X,\Theta}(\xi),
\end{align*}
with remainder
\begin{align*}
    \Tilde{\mathbf{r}}_{T,X,\Theta}(\xi) &= (T-1)^2\int_0^1 (1-s)  \pd_{T'}^2\Big|_{T' = 1 + s(T-1)} \mathbf{v}_0^{T',s X}(\xi) ds\\
    &+ 2(T-1)X^j \int_0^1 (1-s) \pd_{T'}\pd_{(X')^j} \Big|_{T'=1 + s(T-1),X' = s X} \mathbf{v}_0^{T',X'}(\xi) d s\\
    &+ X^iX^j \int_0^1 (1-s) \pd_{(X')^i} \pd_{(X')^j} \Big|_{X' = s X} \mathbf{v}_0^{1 + s(T-1),X'}(\xi) d s\\
    &- \Theta^a \Theta^{a'} \int_0^1 (1-s) \pd_{(\Theta')^a}\pd_{(\Theta')^{a'}} \Big|_{\Theta' = s \Theta}\mathbf{v}_{\Theta'}(\xi) ds.
\end{align*}
Hence we can write
\begin{align*}
    \mathbf{v}_0^{T,X} - \mathbf{v}_\Theta &= (T - 1)\mathbf{f}_{0,\Theta}^1 + X^j \mathbf{f}_{j,\Theta}^1 + \Theta^a \mathbf{f}_{a,\Theta}^0 + \mathbf{r}_{T,X,\Theta},
\end{align*}
where
\begin{align*}
    \mathbf{r}_{T,X,\Theta} = \Tilde{\mathbf{r}}_{T,X,\Theta} - (T-1) (\mathbf{f}_{0,\Theta}^1 - \mathbf{f}_{0,0}^1) - X^j (\mathbf{f}_{j,\Theta}^1 - \mathbf{f}_{j,0}^1) - \Theta^a (\mathbf{f}_{a,\Theta}^0 - \mathbf{f}_{a,0}^0).
\end{align*}
Since $\mathbf{v}_0^{T,X}(\xi) - \mathbf{v}_\Theta(\xi)$ is jointly smooth in $(T,X,\Theta,\xi)$, we see that all of its derivatives remain bounded. Hence the explicit form of $ \Tilde{\mathbf{r}}_{T,X,\Theta}$ together with the fundamental theorem of calculus applied to the last three terms yields 
\begin{align*}
    \|\mathbf{r}_{T,X,\Theta}\|_{\Hil^k} \lesssim (T-1)^2 + |X|^2 + |\Theta|^2
\end{align*}
for all $(T,X,\Theta) \in [\frac{1}{2},\frac{3}{2}] \times \overline{\B_\frac{1}{2}^d}\times \overline{\B_{M_0}^{p(d)}}$.
\end{proof}
Next, we show that the correction term can be removed for the solutions obtained in \autoref{stabilized by correction} for small data.
\begin{prop}\label{removing corr}
Let $d \geq 3$ and $k \geq k_0$. There exist $0 < c_\ast,\delta_\ast \leq 1$ such that for all $\delta \in (0 , \delta_\ast]$ and real-valued $\mathbf{f} \in C^\infty(\overline{\B_2^d},\R^{d})^2$ with $\|\mathbf{f}\|_{H^k \times H^{k-1}(\B_2^d,\R^{d})} \leq c_\ast^2\delta $ there are parameters $T_\ast \in [1-c_\ast \delta,1+c_\ast \delta]$, $X_\ast \in \overline{\B_{c_\ast \delta}^d}$, $\Theta_\ast \in \overline{\B^{p(d)}_{c_\ast \delta}}$ and a unique $\mathbf{u}_{T_\ast,X_\ast,\Theta_\ast} \in C([0,\infty),\Hil^k)$ such that $\|\mathbf{u}_{T_\ast,X_\ast,\Theta_\ast}\|_{\X} \leq \delta $ and 
\begin{align*}
    \mathbf{u}_{T_\ast,X_\ast,\Theta_\ast}(\tau) = \mathbf{S}_{\Theta_\ast}(\tau) \mathbf{U}_{T_\ast,X_\ast,\Theta_\ast}(\mathbf{f}) + \int_0^\tau \mathbf{S}_{\Theta_\ast}(\tau-\tau') \mathbf{N}_{\Theta_\ast}(\mathbf{u}_{T_\ast,X_\ast,\Theta_\ast}(\tau')) d \tau'
\end{align*}
holds for all $\tau \geq 0$.
\end{prop}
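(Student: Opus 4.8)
The plan is to carry out a Lyapunov--Perron reduction: I would use \autoref{stabilized by correction} to produce, for each choice of parameters $(T,X,\Theta)$, the unique corrected solution $\mathbf{u}_{T,X,\Theta}$, and then choose the parameters so that the correction term $\mathbf{C}_{\Theta}$ vanishes. This is feasible because $\rg(\mathbf{P}_\Theta)=\rg(\mathbf{P}_{0,\Theta})\dot+\rg(\mathbf{P}_{1,\Theta})$ has dimension $p(d)+(d+1)$, which is exactly the number of free parameters $(T-1,X,\Theta)\in\R\times\R^d\times\R^{p(d)}$, and because the symmetry-generated functions $\{\mathbf{f}^0_{a,\Theta}\}_{a=1}^{p(d)}$ and $\{\mathbf{f}^1_{0,\Theta}\}\cup\{\mathbf{f}^1_{j,\Theta}\}_{j=1}^{d}$ form bases of $\rg(\mathbf{P}_{0,\Theta})$ and $\rg(\mathbf{P}_{1,\Theta})$ respectively, uniformly for small $\Theta$ by \autoref{perturbed spectrum} and continuity.

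Concretely, I would fix small constants $c_\ast,\delta_\ast\in(0,1]$ (to be pinned down at the end), take $\delta\in(0,\delta_\ast]$ and $\mathbf{f}$ as in the statement, and write $\mathbf{p}=(T-1,X,\Theta)$ ranging over the compact convex set $P:=[1-c_\ast\delta,1+c_\ast\delta]\times\overline{\B_{c_\ast\delta}^d}\times\overline{\B_{c_\ast\delta}^{p(d)}}$. A change of variables gives $\|\mathbf{f}^{T,X}\|_{\Hil^k}\lesssim\|\mathbf{f}\|_{H^k\times H^{k-1}(\B_2^d,\R^d)}\le c_\ast^2\delta$, so by \autoref{initial data map small} we get $\|\mathbf{U}_{T,X,\Theta}(\mathbf{f})\|_{\Hil^k}\lesssim c_\ast^2\delta+|\mathbf{p}|\lesssim c_\ast\delta$, which is $\le c_0\delta$ (with $c_0$ from \autoref{stabilized by correction}) once $c_\ast$ is small. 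Thus \autoref{stabilized by correction} yields, for each $\mathbf{p}\in P$, a unique $\mathbf{u}_{\mathbf{p}}\in\X_\delta$ with $\mathbf{u}_{\mathbf{p}}=\mathbf{K}_{\Theta,\mathbf{U}_{T,X,\Theta}(\mathbf{f})}(\mathbf{u}_{\mathbf{p}})$; moreover $\mathbf{p}\mapsto\mathbf{u}_{\mathbf{p}}$ is continuous, since the data-to-solution map is Lipschitz (\autoref{stabilized by correction}), $\mathbf{p}\mapsto\mathbf{U}_{T,X,\Theta}(\mathbf{f})$ is continuous (translation is continuous on $\Hil^k$), and $\mathbf{K}_{\Theta,\cdot}$ depends continuously on $\Theta$ (\autoref{perturb is Lipschitz} together with standard semigroup perturbation theory for $\mathbf{S}_\Theta$). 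Comparing $\mathbf{K}_{\Theta,\mathbf{U}_{T,X,\Theta}(\mathbf{f})}(\mathbf{u}_{\mathbf{p}})$ with the correction-free Duhamel formula and using that $\mathbf{S}_\Theta(0)=\mathbf{I}$, one sees that $\mathbf{u}_{\mathbf{p}}$ solves the equation in the statement if and only if $\mathbf{C}_{\Theta}(\mathbf{U}_{T,X,\Theta}(\mathbf{f}),\mathbf{u}_{\mathbf{p}})=0$.

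Next I would expand, using $\mathbf{f}^0_{a,\Theta}\in\rg(\mathbf{P}_{0,\Theta})$, $\mathbf{f}^1_{0,\Theta},\mathbf{f}^1_{j,\Theta}\in\rg(\mathbf{P}_{1,\Theta})$, and \autoref{initial data map small},
\begin{align*}
  \mathbf{C}_{\Theta}(\mathbf{U}_{T,X,\Theta}(\mathbf{f}),\mathbf{u}_{\mathbf{p}})
  &=(T-1)\mathbf{f}^1_{0,\Theta}+X^j\mathbf{f}^1_{j,\Theta}+\Theta^a\mathbf{f}^0_{a,\Theta}
   +\mathbf{P}_\Theta\mathbf{f}^{T,X}+\mathbf{P}_\Theta\mathbf{r}_{T,X,\Theta}\\
  &\quad+\mathbf{P}_{0,\Theta}\int_0^\infty\mathbf{N}_\Theta(\mathbf{u}_{\mathbf{p}}(\tau'))\,d\tau'
   +\mathbf{P}_{1,\Theta}\int_0^\infty e^{-\tau'}\mathbf{N}_\Theta(\mathbf{u}_{\mathbf{p}}(\tau'))\,d\tau'.
\end{align*}
Writing this element of $\rg(\mathbf{P}_\Theta)$ in the basis $\{\mathbf{f}^0_{a,\Theta}\}\cup\{\mathbf{f}^1_{0,\Theta},\mathbf{f}^1_{j,\Theta}\}$, the first line has coordinate vector exactly $\mathbf{p}$, so the equation $\mathbf{C}_{\Theta}=0$ becomes a fixed-point problem $\mathbf{p}=\mathbf{G}(\mathbf{p})$, where $\mathbf{G}(\mathbf{p})$ is the negative of the coordinate vector of the remaining four terms. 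The coordinate functionals are bounded uniformly in $\Theta\in\overline{\B_{c_\ast\delta}^{p(d)}}$ (the Gram matrix of the basis is uniformly invertible, by continuity in $\Theta$ and the explicit case $\Theta=0$), while $\|\mathbf{r}_{T,X,\Theta}\|_{\Hil^k}\lesssim|\mathbf{p}|^2$ by \autoref{initial data map small} and $\|\int_0^\infty\mathbf{N}_\Theta(\mathbf{u}_{\mathbf{p}}(\tau'))\,d\tau'\|_{\Hil^k}\lesssim\int_0^\infty\|\mathbf{u}_{\mathbf{p}}(\tau')\|_{\Hil^k}^2\,d\tau'\lesssim\delta^2$ by \eqref{Ntheta quadratic} and $\mathbf{u}_{\mathbf{p}}\in\X_\delta$. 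Hence $|\mathbf{G}(\mathbf{p})|\lesssim c_\ast^2\delta+(c_\ast\delta)^2+\delta^2$ on $P$, and choosing first $c_\ast$ and then $\delta_\ast$ small enough makes the right-hand side $\le c_\ast\delta$, so $\mathbf{G}(P)\subseteq P$. Since $\mathbf{G}:P\to P$ is continuous and $P$ is compact and convex, Brouwer's fixed-point theorem provides $\mathbf{p}_\ast=(T_\ast-1,X_\ast,\Theta_\ast)\in P$ with $\mathbf{G}(\mathbf{p}_\ast)=\mathbf{p}_\ast$, and $\mathbf{u}_{T_\ast,X_\ast,\Theta_\ast}:=\mathbf{u}_{\mathbf{p}_\ast}$ is the claimed solution. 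For uniqueness within $\X_\delta$ I would note that any $\mathbf{u}\in\X_\delta$ solving the correction-free equation must, after projecting with $\mathbf{P}_{0,\Theta_\ast}$ and $\mathbf{P}_{1,\Theta_\ast}$ and using $\mathbf{S}_{\Theta_\ast}(\tau)\mathbf{P}_{1,\Theta_\ast}=e^\tau\mathbf{P}_{1,\Theta_\ast}$, $\mathbf{S}_{\Theta_\ast}(\tau)\mathbf{P}_{0,\Theta_\ast}=\mathbf{P}_{0,\Theta_\ast}$ together with the decay $\|\mathbf{u}(\tau)\|_{\Hil^k}\lesssim e^{-\eps_\ast\tau}$, satisfy $\mathbf{C}_{\Theta_\ast}(\mathbf{U}_{T_\ast,X_\ast,\Theta_\ast}(\mathbf{f}),\mathbf{u})=0$, hence $\mathbf{u}=\mathbf{K}_{\Theta_\ast,\mathbf{U}_{T_\ast,X_\ast,\Theta_\ast}(\mathbf{f})}(\mathbf{u})$, whence $\mathbf{u}=\mathbf{u}_{\mathbf{p}_\ast}$ by \autoref{stabilized by correction}.

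The main obstacle will be the bookkeeping of the three scales---data of size $O(c_\ast^2\delta)$, parameters of size $O(c_\ast\delta)$, solution of size $O(\delta)$---needed to conclude $\mathbf{G}(P)\subseteq P$; this hinges on the leading-order contribution to $\mathbf{P}_\Theta\mathbf{U}_{T,X,\Theta}(\mathbf{f})$ being the \emph{invertible} linear map $\mathbf{p}\mapsto(T-1)\mathbf{f}^1_{0,\Theta}+X^j\mathbf{f}^1_{j,\Theta}+\Theta^a\mathbf{f}^0_{a,\Theta}$, which is precisely why the symmetry actions were defined the way they were and why \autoref{perturbed spectrum} was established. A secondary subtlety is that one controls only $\|\mathbf{f}\|_{H^k\times H^{k-1}}$ and not higher norms, so $\mathbf{p}\mapsto\mathbf{f}^{T,X}$ is merely continuous (not Lipschitz with small constant) in $(T,X)$; this is why the reduction is closed off with Brouwer's theorem rather than a contraction directly on the parameter space.
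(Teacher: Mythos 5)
Your proposal is correct and follows essentially the same approach as the paper: use \autoref{stabilized by correction} to get a family of corrected solutions $\mathbf{u}_{T,X,\Theta}$, expand the correction term with \autoref{initial data map small} into a leading linear piece whose coordinate vector is exactly $(T-1,X,\Theta)$ plus small remainder, and close with Brouwer's fixed point theorem on the parameter space. The paper implements the coordinate extraction by constructing an explicit dual basis $\{\mathbf{g}_\Theta^{1,0},\mathbf{g}_\Theta^{1,j},\mathbf{g}_\Theta^{0,a}\}$ via the Gram matrix and testing against it, while you phrase the same linear algebra as reading off coordinates in the basis $\{\mathbf{f}_{0,\Theta}^1,\mathbf{f}_{j,\Theta}^1,\mathbf{f}_{a,\Theta}^0\}$; these are equivalent. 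Two small remarks: you omit the check (made in the paper) that the fixed-point map $\mathbf{G}$ is genuinely $\R^{1+d+p(d)}$-valued — a priori the inner products are complex, and one must verify that $\mathbf{U}_{T,X,\Theta}(\mathbf{f})$ real implies $\mathbf{u}_{T,X,\Theta}$, $\mathbf{P}_{\lambda,\Theta}$, $\mathbf{N}_\Theta$ all preserve reality, which is what makes Brouwer applicable — but this is routine. On the other hand, you supply a uniqueness argument (project the Duhamel identity with $\mathbf{P}_{0,\Theta_\ast}$, $\mathbf{P}_{1,\Theta_\ast}$, use the explicit semigroup action and the $e^{-\eps_\ast\tau}$ decay to recover $\mathbf{C}_{\Theta_\ast}=0$, then invoke uniqueness of the corrected fixed point) that the paper's proof leaves implicit, which is a welcome addition.
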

\begin{proof}
Let $0 < c_0,\delta_0 \leq 1$ be as in \autoref{stabilized by correction}. We let $0 < \delta_\ast \leq \delta_0$ and $ 0 < c_\ast \leq c_0$ and fix their values later. Let $\delta \in (0,\delta_\ast]$ and $\mathbf{f} \in C^\infty(\overline{\B_2^d},\R^d)^2$ with $\|\mathbf{f}\|_{H^k \times H^{k-1}(\B_2^d,\R^d)} \leq c_\ast^2 \delta$. For $c_\ast$ sufficiently small (in particular such that $ c_\ast \delta_0 \leq M_0$) we have by \autoref{initial data map small} 
\begin{align*}
    \|\mathbf{U}_{T,X,\Theta}(\mathbf{f})\|_{\Hil^k} &\lesssim \|\mathbf{f}^{T,X}\|_{\Hil^k} + |T - 1| \|\mathbf{f}_{0,\Theta}^1\|_{\Hil^k} + |X|\sum_j \|\mathbf{f}_{j,\Theta}^1\|_{\Hil^k} + |\Theta| \sum_a \|\mathbf{f}_{a,\Theta}^0\|_{\Hil^k} + \|\mathbf{r}_{T,X,\Theta}\|_{\Hil^k}\\
    &\lesssim \|\mathbf{f}\|_{H^k \times H^{k-1}(\B_2^d,\R^d)} + c_\ast \delta \leq  c_\ast^2 \delta+c_\ast \delta
\end{align*}
for all $T \in [1 - c_\ast \delta,1 + c_\ast \delta]$, $X \in \overline{\B^d_{c_\ast \delta}}$ and $\Theta \in \overline{\B^{p(d)}_{c_\ast \delta}}$. Hence we can now choose $c_\ast$ small enough such that $\|\mathbf{U}_{T,X,\Theta}(\mathbf{f})\|_{\Hil^k} \leq  c_0 \delta $ and hence $\mathbf{U}_{T,X,\Theta}(\mathbf{f}) \in \Hil^k$ satisfies the assumptions of the initial data in \autoref{stabilized by correction}. Hence there exists $\mathbf{u}_{T,X,\Theta} \in \X$ with $\|\mathbf{u}_{T,X,\Theta}\|_{\X} \leq \delta$ such that 
\begin{align*}
    \mathbf{u}_{T,X,\Theta}(\tau) = \mathbf{S}_\Theta(\tau)(\mathbf{U}_{T,X,\Theta}(\mathbf{f})  - \mathbf{C}_{\Theta}(\mathbf{U}_{T,X,\Theta}(\mathbf{f}) ,\mathbf{u}_{T,X,\Theta}(\tau))) + \int_0^\tau \mathbf{S}_\Theta(\tau-\tau') \mathbf{N}_\Theta(\mathbf{u}_{T,X,\Theta}(\tau')) d \tau'.
\end{align*}
If we are able to find parameters $(T,X,\Theta)$ such that $\mathbf{C}_{\Theta}(\mathbf{U}_{T,X,\Theta}(\mathbf{f}) ,\mathbf{u}_{T,X,\Theta}(\tau)) = 0$, then we are done. Since the correction term lies in the unstable subspace $\rg(\mathbf{P}_\Theta)$, which is finite-dimensional, it suffices to prove that for some parameters the functional $\ell_{T,X,\Theta} : \rg(\mathbf{P}_\Theta) \to \C$ defined by
\begin{align*}
    \ell_{T,X,\Theta}(\mathbf{g}) = \Big(\mathbf{C}_{\Theta}(\mathbf{U}_{T,X,\Theta}(\mathbf{f}) ,\mathbf{u}_{T,X,\Theta}(\tau)) \Big| \mathbf{g}  \Big)_{\Hil^k}
\end{align*}
vanishes on a basis of $\rg(\mathbf{P}_\Theta)$. By \autoref{initial data map small} we can write
\begin{align*}
    \ell_{T,X,\Theta}(\mathbf{g}) &= (\mathbf{P}_\Theta \mathbf{f}^{T,X} | \mathbf{g} )_{\Hil^k} + (T - 1)(\mathbf{f}_{0,\Theta}^1|\mathbf{g})_{\Hil^k} + X^j (\mathbf{f}_{j,\Theta}^1|\mathbf{g})_{\Hil^k} +\Theta^a (\mathbf{f}_{a,\Theta}^0|\mathbf{g})_{\Hil^k} + (\mathbf{P}_\Theta \mathbf{r}_{T,X,\Theta}|\mathbf{g})_{\Hil^k}\\
    &+ \Big( \mathbf{P}_{0,\Theta} \int_0^\infty \mathbf{N}_\Theta(\mathbf{u}_{T,X,\Theta}(\tau')) d\tau' \Big| \mathbf{g} \Big)_{\Hil^k} + \Big( \mathbf{P}_{1,\Theta} \int_0^\infty e^{-\tau'} \mathbf{N}_\Theta(\mathbf{u}_{T,X,\Theta}(\tau')) d\tau' \Big| \mathbf{g} \Big)_{\Hil^k}.
\end{align*}
Now we consider the Gram matrix $\Gamma(\Theta)$ associated to $\{\mathbf{f}_{0,\Theta}^1\}, \{\mathbf{f}_{j,\Theta}^1\}, \{\mathbf{f}_{a,\Theta}^0\}$ which is given by
\begin{align*}
    \Gamma(\Theta)_{0 0} &= (\mathbf{f}_{0,\Theta}^1 |\mathbf{f}_{0,\Theta}^1 )_{\Hil^k}\\
    \Gamma(\Theta)_{i 0} &= (\mathbf{f}_{i,\Theta}^1 |\mathbf{f}_{0,\Theta}^1 )_{\Hil^k}\\
    \Gamma(\Theta)_{0 j} &= (\mathbf{f}_{0,\Theta}^1 |\mathbf{f}_{j,\Theta}^1 )_{\Hil^k}\\
    \Gamma(\Theta)_{a0} &= (\mathbf{f}_{a,\Theta}^0 |\mathbf{f}_{0,\Theta}^1 )_{\Hil^k}\\
    \Gamma(\Theta)_{0b} &= (\mathbf{f}_{0,\Theta}^1 |\mathbf{f}_{b,\Theta}^0 )_{\Hil^k}\\
    \Gamma(\Theta)_{i j} &= (\mathbf{f}_{i,\Theta}^1 |\mathbf{f}_{j,\Theta}^1 )_{\Hil^k}\\
    \Gamma(\Theta)_{a j} &= (\mathbf{f}_{a,\Theta}^0 |\mathbf{f}_{j,\Theta}^1 )_{\Hil^k}\\
    \Gamma(\Theta)_{i  b} &= (\mathbf{f}_{i,\Theta}^1 |\mathbf{f}_{b,\Theta}^0 )_{\Hil^k}\\
    \Gamma(\Theta)_{a  b} &= (\mathbf{f}_{a,\Theta}^0 |\mathbf{f}_{b,\Theta}^0 )_{\Hil^k}.
\end{align*}
Denote the entries of the inverse matrix by $\Gamma(\Theta)^{a b}$. Then we define 
\begin{align*}
    \mathbf{g}_\Theta^{1,0} &= \Gamma(\Theta)^{0 0} \mathbf{f}_{0,\Theta}^1 + \Gamma(\Theta)^{0 j} \mathbf{f}_{j,\Theta}^1 + \Gamma(\Theta)^{0 a} \mathbf{f}_{a,\Theta}^0\\
    \mathbf{g}_\Theta^{1,i} &= \Gamma(\Theta)^{i 0} \mathbf{f}_{0,\Theta}^1 + \Gamma(\Theta)^{i j} \mathbf{f}_{j,\Theta}^1 + \Gamma(\Theta)^{i a} \mathbf{f}_{a,\Theta}^0\\
    \mathbf{g}_\Theta^{1,b} &= \Gamma(\Theta)^{b 0} \mathbf{f}_{0,\Theta}^1 + \Gamma(\Theta)^{b j} \mathbf{f}_{j,\Theta}^1 + \Gamma(\Theta)^{b a} \mathbf{f}_{a,\Theta}^0.
\end{align*}
Then one checks that this defines a dual basis to $\{\mathbf{f}_{0,\Theta}^1\}, \{\mathbf{f}_{j,\Theta}^1\}, \{\mathbf{f}_{a,\Theta}^0\}$, i.e.,
\begin{align*}
    (\mathbf{f}_{0,\Theta}^1|\mathbf{g}_{\Theta}^{1,0})_{\Hil^k} &= 1\\
    (\mathbf{f}_{i,\Theta}^1|\mathbf{g}_{\Theta}^{1,j})_{\Hil^k} &= \delta_{i }^j\\
    (\mathbf{f}_{a,\Theta}^0|\mathbf{g}_{\Theta}^{0,b})_{\Hil^k} &= \delta_{a}^b\\
    (\mathbf{f}_{0,\Theta}^1|\mathbf{g}_{\Theta}^{1,j})_{\Hil^k} &= 0\\
    (\mathbf{f}_{0,\Theta}^1|\mathbf{g}_{\Theta}^{0,b})_{\Hil^k} &= 0\\
    (\mathbf{f}_{i,\Theta}^1|\mathbf{g}_{\Theta}^{1,0})_{\Hil^k} &= 0\\
    (\mathbf{f}_{i,\Theta}^1|\mathbf{g}_{\Theta}^{0,b})_{\Hil^k} &= 0\\
    (\mathbf{f}_{a,\Theta}^0|\mathbf{g}_{\Theta}^{1,0})_{\Hil^k} &= 0\\
    (\mathbf{f}_{a,\Theta}^0|\mathbf{g}_{\Theta}^{1,j})_{\Hil^k} &= 0.
\end{align*}
Since $\{\mathbf{f}_{0,\Theta}^1\},\{\mathbf{f}_{j,\Theta}^1\},\{\mathbf{f}_{a,\Theta}^0\}$ depend smoothly on $\Theta$, so do $\{\mathbf{g}_\Theta^{1,0}\},\{\mathbf{g}_\Theta^{1,j}\},\{\mathbf{g}_{\Theta}^{0,a}\}$ by Cramer's rule. Now we consider the function $F:[1 - c_\ast \delta,1 + c_\ast \delta] \times \overline{\B_{c_\ast \delta}^d} \times \overline{\B_{c_\ast \delta}^{p(d)}} \to \R^{1 + d + p(d)}$
with components
\begin{align*}
    F^0(T,X,\Theta) &=1  - (\mathbf{P}_\Theta \mathbf{f}^{T,X}| \mathbf{g}_\Theta^{1,0})_{\Hil^k} - (\mathbf{P}_\Theta \mathbf{r}_{T,X,\Theta}|\mathbf{g}_\Theta^{1,0})_{\Hil^k}\\
    &-\Big( \mathbf{P}_{0,\Theta} \int_0^\infty \mathbf{N}_\Theta(\mathbf{u}_{T,X,\Theta}(\tau')) d\tau' \Big| \mathbf{g}_\Theta^{1,0} \Big)_{\Hil^k} \\
    &- \Big( \mathbf{P}_{1,\Theta} \int_0^\infty e^{-\tau'} \mathbf{N}_\Theta(\mathbf{u}_{T,X,\Theta}(\tau')) d\tau' \Big| \mathbf{g}_\Theta^{1,0} \Big)_{\Hil^k}\\
    F^j(T,X,\Theta) &=  - (\mathbf{P}_\Theta \mathbf{f}^{T,X}| \mathbf{g}_\Theta^{1,j})_{\Hil^k} - (\mathbf{P}_\Theta \mathbf{r}_{T,X,\Theta}|\mathbf{g}_\Theta^{1,j})_{\Hil^k}\\
    &-\Big( \mathbf{P}_{0,\Theta} \int_0^\infty \mathbf{N}_\Theta(\mathbf{u}_{T,X,\Theta}(\tau')) d\tau' \Big| \mathbf{g}_\Theta^{1,j} \Big)_{\Hil^k} \\
    &- \Big( \mathbf{P}_{1,\Theta} \int_0^\infty e^{-\tau'} \mathbf{N}_\Theta(\mathbf{u}_{T,X,\Theta}(\tau')) d\tau' \Big| \mathbf{g}_\Theta^{1,j} \Big)_{\Hil^k}\\
    F^a(T,X,\Theta) &=  - (\mathbf{P}_\Theta \mathbf{f}^{T,X}| \mathbf{g}_\Theta^{0,a})_{\Hil^k} - (\mathbf{P}_\Theta \mathbf{r}_{T,X,\Theta}|\mathbf{g}_\Theta^{0,a})_{\Hil^k}\\
    &-\Big( \mathbf{P}_{0,\Theta} \int_0^\infty \mathbf{N}_\Theta(\mathbf{u}_{T,X,\Theta}(\tau')) d\tau' \Big| \mathbf{g}_\Theta^{0,a} \Big)_{\Hil^k} \\
    &- \Big( \mathbf{P}_{1,\Theta} \int_0^\infty e^{-\tau'} \mathbf{N}_\Theta(\mathbf{u}_{T,X,\Theta}(\tau')) d\tau' \Big| \mathbf{g}_\Theta^{0,a} \Big)_{\Hil^k}.
\end{align*}
Note that the components of $F$ indeed take values in $\R$. This can be verified by first checking that $\mathbf{U}_{T,X,\Theta}(\mathbf{f})$ being real-valued implies that $\mathbf{u}_{T,X,\Theta}(\tau)$ is real-valued for all $\tau \geq 0$ and secondly that all of $\mathbf{P}_{0,\Theta},\mathbf{P}_{1,\Theta}, \mathbf{N}_\Theta$ preserve the property of being real-valued.

Next we claim that $F$ is continuous. We have seen already that $\{\mathbf{g}_{\Theta}^{1,0}\},\{\mathbf{g}_{\Theta}^{1,j}\},\{\mathbf{g}_{\Theta}^{0,a}\}$ depend smoothly and in particular continuously on $\Theta$. In the proof of \autoref{perturbed spectrum} we have seen that $\mathbf{P}_{0,\Theta},\mathbf{P}_{1,\Theta}$ also depend continuously on $\Theta$. Next one checks, similarly to \eqref{Ntheta lipschitz}, that 
\begin{align*}
    \|\mathbf{N}_{\Theta_1}(\mathbf{g}_1) - \mathbf{N}_{\Theta_2}(\mathbf{g}_2)\|_{\Hil^k} \lesssim |\Theta_1 - \Theta_2| + \|\mathbf{g}_1 - \mathbf{g}_2\|_{\Hil^k}
\end{align*}
for all $\Theta_1,\Theta_2 \in \overline{\B_{c_\ast \delta}^{p(d)}}$ and $\mathbf{g}_1,\mathbf{g}_2 \in \Hil_\delta^k$. For fixed $\mathbf{f} \in C^\infty(\overline{\B_2^d},\R^d)^2$ we have that $(T,X,\Theta) \mapsto \mathbf{U}_{T,X,\Theta}(\mathbf{f})$ is continuous as a map from $[1 - c_\ast \delta,1 + c_\ast \delta] \times \overline{\B_{c_\ast \delta}^d} \times \overline{\B_{c_\ast \delta}^{p(d)}}$ to $\Hil^k$. By \autoref{stabilized by correction} we know that $\mathbf{u}_{T,X,\Theta}$ depends Lipschitz-continuously on its initial data $\mathbf{U}_{T,X,\Theta}(\mathbf{f})$ and hence also $\mathbf{u}_{T,X,\Theta}$ depends continuously on $(T,X,\Theta)$. With this one concludes that $F$ is continuous.

Now we can use the estimates from \ref{Ntheta quadratic} and \autoref{initial data map small} to conclude
\begin{align*}
    |F^0(T,X,\Theta)  -1  | &\lesssim \|\mathbf{f}^{T,X}\|_{\Hil^k} + \|\mathbf{r}_{T,X,\Theta}\|_{\Hil^k} + \|\mathbf{u}_{T,X,\Theta}\|_{\X}^2\\
    &\lesssim c_\ast^2 \delta  + c_\ast^2 \delta^2  + \delta^2, 
\end{align*}
and similarly 
\begin{align*}
    |F^j(T,X,\Theta)| &\lesssim c_\ast^2 \delta  +  c_\ast^2 \delta^2 + \delta^2\\
    |F^a(T,X,\Theta)| &\lesssim c_\ast^2 \delta  +  c_\ast^2 \delta^2 + \delta^2.
\end{align*}
Choosing $c_\ast $ and $\delta_\ast$ sufficiently small, we conclude that $F$ is a continuous self-map on $[1 - c_\ast \delta,1 + c_\ast \delta] \times \overline{\B^d_{c_\ast \delta}} \times \overline{\B^{p(d)}_{c_\ast \delta}}$ for all $\delta \in (0,\delta_\ast)$.
Brouwer's fixed point theorem thus yields the existence of a fixed point, i.e., $(T_\ast,X_\ast,\Theta_\ast) \in [1 - c_\ast \delta,1 + c_\ast \delta] \times \overline{\B^d_{c_\ast \delta}} \times \overline{\B^{p(d)}_{c_\ast \delta}}$ such that $F(T_\ast,X_\ast,\Theta_\ast) = (T_\ast,X_\ast,\Theta_\ast)$. With this we just compute
\begin{align*}
    \ell_{T_\ast,X_\ast,\Theta_\ast}(\mathbf{g}_{\Theta_\ast}^{1,0}) &= T_\ast - F_{0}(T_\ast,X_\ast,\Theta_\ast) = T_\ast - T_\ast = 0\\
    \ell_{T_\ast,X_\ast,\Theta_\ast}(\mathbf{g}_{\Theta_\ast}^{1,j}) &= X_\ast^j - F^j(T_\ast,X_\ast,\Theta_\ast) = X_\ast^j - X_\ast^j = 0\\
    \ell_{T_\ast,X_\ast,\Theta_\ast}(\mathbf{g}_{\Theta_\ast}^{0,a}) &= \Theta_\ast^a - F^a(T_\ast,X_\ast,\Theta_\ast) = \Theta_\ast^a - \Theta_\ast^a = 0,
\end{align*}
as claimed.

This shows that $\ell_{T_\ast,X_\ast,\Theta_\ast} = 0$ and thus $\mathbf{C}_{\Theta_\ast}(\mathbf{U}_{T_\ast,X_\ast,\Theta_\ast}(\mathbf{f}),\mathbf{u}_{T_\ast,X_\ast,\Theta_\ast}) = 0$, which finishes the proof.
\end{proof}
\subsection{Proof of the main result}
\begin{proof}[Proof of \autoref{main result}]
First, combining the fact that $U_\ast(0,\cdot)$ maps $\overline{\B_2^d}$ into a subset of $\S^d$ that has a positive distance to the south pole and that we have the Sobolev embedding $H^k(\B_2^d,\R^{d+1}) \hookrightarrow C(\overline{\B_2^d},\R^{d+1})$, we find some $\delta_\ast > 0$ such that $\|F -U_\ast(0,\cdot) \|_{H^k(\B_2^d,\R^{d+1})} \leq \delta_\ast $ implies that $F(\overline{\B_2^d})$ has a positive distance to the south pole for any $F \in C^\infty(\overline{\B_2^d},\R^{d+1})$. Now choose $0 < c_\ast \leq 1 $ as in \autoref{removing corr}. We observe that the stereographic projection $\Psi : \S^{d}\setminus \{S\} \to \R^d$ has a smooth extension to the half-space $\{(\Tilde{y},y^{d+1}) \in \R^{d+1}: y^{d+1} > -1\}$ given by $(\Tilde{y}, y^{d+1})\mapsto \frac{1}{1 + y^{d+1}}\Tilde{y}$. By using smooth cut-offs, we can hence construct $\Tilde{\Psi} \in C^\infty(\R^{d+1},\R^d)$ that coincides with $\Psi$ away from a neighborhood of $S$ on $\S^d$. Hence by applying Schauder estimates to $\Tilde{\Psi}$ (and $D \Tilde{\Psi} : \R^{d+1}\times \R^{d+1} \to \R^d$), we conclude, by shrinking $c_\ast$ further if necessary, that
\begin{align}\label{FG close to U}
    \|F - U_\ast(0,\cdot)\|_{H^k(\B_2^d,\R^{d+1})} + \|G - \pd_0 U_\ast(0,\cdot)\|_{H^{k-1}(\B_2^d,\R^{d+1})} \leq c_\ast^3 \delta_\ast
\end{align}
implies
\begin{align}\label{Psi FG close to Psi U}
    \|(\Psi(F),D \Psi(F)G) - (\Psi(U_\ast(0,\cdot)),D \Psi(U_\ast(0,\cdot))\pd_0 U_\ast(0,\cdot))\|_{H^k \times H^{k-1}(\B_2^d,\R^d)} \leq c_\ast^2 \delta_\ast
\end{align}
for all $F,G \in C^\infty(\overline{\B_2^d},\R^{d+1})$.

Now we leave $c_\ast,\delta_\ast$ fixed and assume that $F,G \in C^\infty(\overline{\B_2^d},\R^{d+1})$ with $F \cdot G = 0$ and such that \eqref{FG close to U} holds. We define
\begin{align*}
    \mathbf{f} := \Vector{\Psi(F) - \Psi(U_\ast(0,\cdot))}{D \Psi(F)G -D \Psi(U_\ast(0,\cdot))\pd_0 U_\ast(0,\cdot) }.
\end{align*}
Then $\mathbf{f} \in C^\infty(\overline{\B_2^d},\R^d)$ and $\|\mathbf{f}\|_{H^k \times H^{k-1}(\B_2^d,\R^d)} \leq c_\ast^2 \delta_\ast$ by \eqref{Psi FG close to Psi U}. \autoref{removing corr} hence yields the existence of $T_\ast \in [1-c_\ast \delta_\ast,1 + c_\ast \delta_\ast]$, $X_\ast \in \overline{\B_{c_\ast \delta_\ast}^d}$, $\Theta_\ast \in \overline{\B^{p(d)}_{c_\ast \delta_\ast}}$ and $\mathbf{u} \in C([0,\infty),\Hil^k)$ such that
\begin{align}\label{u decays}
    \|\mathbf{u}(\tau)\|_{\Hil^k} \lesssim e^{-\eps_\ast \tau}
\end{align}
and
\begin{align*}
    \mathbf{u}(\tau) = \mathbf{S}_{\Theta_\ast}(\tau) \mathbf{U}_{T_\ast,X_\ast,\Theta_\ast}(\mathbf{f}) + \int_0^\tau \mathbf{S}_{\Theta_\ast}(\tau - \tau') \mathbf{N}_{\Theta_\ast}(\mathbf{u}(\tau')) d \tau'
\end{align*}
for all $\tau \geq 0$. We have
\begin{align*}
    \mathbf{U}_{T_\ast,X_\ast,\Theta_\ast}(\mathbf{f}) = \mathbf{f}^{T_\ast,X_\ast} + \mathbf{v}_0^{T_\ast,X_\ast} - \mathbf{v}_{\Theta_\ast} \in C^\infty(\overline{\B^d},\R^d)^2 \subseteq \D(\mathbf{L}_{\Theta_\ast}).
\end{align*}
Also, since $\mathbf{N}_{\Theta_\ast}$ is locally Lipschitz one can conclude from a Gronwall argument that also $\tau \mapsto \mathbf{N}_{\Theta_\ast}(\mathbf{u}(\tau))$ is Lipschitz as a map $[0,\infty) \to \Hil^k$. Now we apply \cite[Corollary 4.2.11, p.~109]{Paz83} to conclude that we have $\mathbf{u} \in C^1((0,\infty),\Hil^k)$, $\mathbf{u}(\tau) \in \D(\mathbf{L}_{\Theta_\ast})$ for all $\tau \geq 0$ and $\mathbf{u}$ satisfies
\begin{align}\label{first order u solves Cauchy}
    \begin{cases}
        \pd_\tau \mathbf{u}(\tau) = \mathbf{L}_{\Theta_\ast} \mathbf{u}(\tau) + \mathbf{N}_{\Theta_\ast}(\mathbf{u}(\tau)), \quad \tau > 0\\
        \mathbf{u}(0) = \mathbf{U}_{T_\ast,X_\ast,\Theta_\ast}(\mathbf{f})
    \end{cases}.
\end{align}
Now note that since $k$ is sufficiently large, Eq.~\eqref{first order u solves Cauchy} holds pointwise also in the spatial variable $\xi$ as an equation involving functions that are continuous in $\xi$.
We now set $\mathbf{v}(\tau) := \mathbf{v}_{\Theta_\ast} + \mathbf{u}(\tau)$. Since $\mathbf{v}_{\Theta_\ast} \in C^\infty(\overline{\B^d},\R^d)^2$, we still have $\mathbf{v} \in C([0,\infty),\Hil^k) \cap C^1((0,\infty),\Hil^k)$ and $\mathbf{v}(\tau) \in \D(\mathbf{L}_{\Theta_\ast}) = \D(\mathbf{L})$. By definition of $\mathbf{L}_{\Theta_\ast}$ and $\mathbf{N}_{\Theta_\ast}$, we conclude that Eq.~\eqref{first order u solves Cauchy} implies $\pd_\tau \mathbf{v}(\tau) = \mathbf{L} \mathbf{v}(\tau) + \mathbf{F}(\mathbf{v}(\tau))$ for $\tau > 0$. Writing now $\mathbf{v}(\tau)(\xi) = \Vector{v_1(\tau,\xi)}{v_2(\tau,\xi)}$, we conclude that 
\begin{align*}
    \pd_\tau v_1(\tau,\xi) &= \xi^j \pd_{j} v_1(\tau,\xi) + v_2(\tau,\xi)\\
    \pd_\tau v_2(\tau,\xi) &= \Delta v_1(\tau,\xi) - \xi^j \pd_j v_2(\tau,\xi) - v_2(\tau,\xi) - F(v_1(\tau,\xi),D_\xi v_1(\tau, \xi),v_2(\tau,\xi))
\end{align*}
holds for $\tau > 0$ and $\xi \in \overline{\B^d}$. Using these equations and the fact that the initial data $\mathbf{U}_{T_\ast,X_\ast,\Theta_\ast}(\mathbf{f}) + \mathbf{v}_{\Theta_\ast}$ is smooth, one concludes that in fact $v_1 \in C^2([0,\infty) \times \overline{\B^d})$ and $v_1$ satisfies Eq.~\eqref{int wm eq selfsim} for all $\tau \geq 0$ and $\xi \in \overline{\B^d}$.

Now we just have to change variables and we get our desired solution. Set 
\begin{align*}
    u(t,x) := v_1\left(-\log(T_\ast - t) + \log T_\ast,\frac{x - X_\ast}{T_\ast- t}   \right)
\end{align*}
and 
\begin{align*}
    U(t,x) := \Psi^{-1}(u(t,x)).
\end{align*}
By construction of the equations we have
\begin{align*}
    \pd^\mu \pd_\mu U(t,x) + (\pd^\mu U(t,x) \cdot \pd_\mu U(t,x)) U(t,x) = 0
\end{align*}
for all $(t,x ) \in \mathcal{C}_{T_\ast,X_\ast}$. By construction and the fact that $F \cdot G = 0$, the initial data is given by 
\begin{align*}
    U(0,\cdot) &= F\\
    \pd_0 U(0,\cdot) &= G.
\end{align*}
For the stability estimate, we compute for integers $0 \leq s \leq k $
\begin{align*}
    &(T_\ast - t)^{s - \frac{d}{2}}\|U(t,\cdot) - U_{\Theta_\ast}^{T_\ast,X_\ast}(t,\cdot)\|_{\Dot{H}^s(\B_{T_\ast - t}^d(X_\ast),\R^{d+1})} \\
    &=  \|U(t,X_\ast + (T_\ast - t)\cdot) - U_{\Theta_\ast}^{T_\ast,X_\ast}(t,X_\ast + (T_\ast - t)\cdot) \|_{\Dot{H}^s(\B^d,\R^{d+1})}\\
    &\overset{\text{Schauder}}{\lesssim} \|\Psi(U(t,X_\ast + (T_\ast - t)\cdot)) - \Psi(U_{\Theta_\ast}^{T_\ast,X_\ast}(t,X_\ast + (T_\ast - t)\cdot)) \|_{H^s(\B^d,\R^{d})}\\
    &=  \|v_1(-\log(T_\ast - t) + \log T_\ast,\cdot) - v_{\Theta_\ast}\|_{H^s(\B^d,\R^d)}\\
    &=  \|\mathbf{u}_1(-\log(T_\ast - t) + \log T_\ast)\|_{H^s(\B^d,\R^d)}\\
    &\lesssim  \|\mathbf{u}(-\log(T_\ast - t) + \log T_\ast)\|_{\Hil^k}\\
    &\overset{\eqref{u decays}}{\lesssim}  e^{- \eps_\ast(-\log(T_\ast - t) + \log T_\ast)} =  \left(\frac{T_\ast - t}{T_\ast} \right)^{\eps_\ast} \simeq (T_\ast - t)^{\eps_\ast}
\end{align*}
for all $t \in [0,T_\ast)$. From this homogeneous estimate, we get
\begin{align*}
    &(T_\ast - t)^{s - \frac{d}{2}} \|U(t,\cdot) - U_{\Theta_\ast}^{T_\ast,X_\ast}(t,\cdot)\|_{H^s(\B_{T_\ast - t}^d(X_\ast),\R^{d+1})}\\
    &\simeq \sum_{r  = 0}^s \underbrace{(T_\ast - t)^{s - \frac{d}{2}}}_{\lesssim (T_\ast - t)^{r - \frac{d}{2}}}\|U(t,\cdot) - U_{\Theta_\ast}^{T_\ast,X_\ast}(t,\cdot)\|_{\Dot{H}^r(\B_{T_\ast - t}^d(X_\ast),\R^{d+1})}\\
    &\lesssim (T_\ast - t)^{\eps_\ast}
\end{align*}
for all $t \in [0,T_\ast)$, which is exactly \eqref{U close to blowup}. Analogously one checks \eqref{pd0U close to blowup}, where the additional factor of $(T_\ast - t)$ appears due to taking time derivatives after changing to similarity coordinates.

Finally, uniqueness of $C^2$ solutions follows from a standard Gronwall argument.

Redefining parameters via
\begin{align*}
    \delta_\ast &\mapsto c_\ast \delta_\ast\\
    c_\ast & \mapsto c_\ast^2
\end{align*}
then yields the parameters as claimed in \autoref{main result} and this finishes the proof.
\end{proof}

\appendix
\section{Some results on Sobolev spaces}
We require some specific inequalities relating various Sobolev norms.
\subsection{Standard results}
\begin{prop}\label{standard results prop}
Let $d,m,k\geq 1$ be integers. Then the following statements hold.
\begin{enumerate}[(i)]
    \item The embedding
    \begin{align*}
        H^k(\B^d,\C^m) \hookrightarrow H^{k-1}(\B^d,\C^m)
    \end{align*}
    is compact.
    \item For any $\eps > 0$, there exists a constant $C > 0$ such that 
    \begin{align}\label{weak Ehrling ineq}
        \|f\|_{H^{k-1}(\B^d,\C^m)} \leq \eps \|f\|_{H^k(\B^d,\C^m)} + C \|f\|_{L^2(\B^d,\C^m)},\quad \forall f \in H^k(\B^d,\C^m).
    \end{align}
    \item The equivalence
    \begin{align}\label{inhomog is sum of homog and L2}
        \|f\|_{H^k(\B^d,\C^m)} \simeq \|f\|_{\Dot{H}^k(\B^d,\C^m)} + \|f\|_{L^2(\B^d,\C^m)},\quad \forall f \in H^k(\B^d,\C^m)
    \end{align}
    holds.
\end{enumerate}
\end{prop}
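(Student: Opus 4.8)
All three assertions are classical; the plan is to prove them in the chain (i) $\Rightarrow$ (ii) $\Rightarrow$ (iii), each one deduced from its predecessor.

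For (i) I would appeal to the Rellich--Kondrachov theorem. Since $\B^d$ is a bounded domain with smooth (in particular Lipschitz) boundary, it is an $H^k$-extension domain, so the scalar embedding $H^k(\B^d)\hookrightarrow H^{k-1}(\B^d)$ is compact; this is standard and can either be cited or reproved via a partition of unity near the boundary together with the usual reflection extension and mollification. The $\C^m$-valued statement then follows componentwise: given a bounded sequence in $H^k(\B^d,\C^m)$, each of its $m$ components is bounded in $H^k(\B^d)$, so a finite diagonal extraction produces a subsequence along which every component converges in $H^{k-1}(\B^d)$, and hence the full vector converges in $H^{k-1}(\B^d,\C^m)$.

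For (ii) I would run the standard Ehrling argument by contradiction. If the inequality \eqref{weak Ehrling ineq} failed for some $\eps_0>0$, there would be a sequence $(f_n)\subseteq H^k(\B^d,\C^m)$, which we may normalize so that $\|f_n\|_{H^{k-1}(\B^d,\C^m)}=1$, satisfying $1>\eps_0\|f_n\|_{H^k(\B^d,\C^m)}+n\|f_n\|_{L^2(\B^d,\C^m)}$. The first consequence is that $(f_n)$ is bounded in $H^k(\B^d,\C^m)$, the second that $f_n\to 0$ in $L^2(\B^d,\C^m)$. By part (i) a subsequence converges in $H^{k-1}(\B^d,\C^m)$ to some $f$, which must equal $0$ because $H^{k-1}\hookrightarrow L^2$ continuously and $f_n\to 0$ in $L^2$; but then $1=\|f_n\|_{H^{k-1}}\to\|f\|_{H^{k-1}}=0$, a contradiction.

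For (iii) the direction $\|f\|_{\Dot{H}^k(\B^d,\C^m)}+\|f\|_{L^2(\B^d,\C^m)}\lesssim\|f\|_{H^k(\B^d,\C^m)}$ is immediate from the definitions. For the reverse direction I would use the identity $\|f\|_{H^k}^2=\|f\|_{\Dot{H}^k}^2+\|f\|_{H^{k-1}}^2$, so that it suffices to control $\|f\|_{H^{k-1}}$ by $\|f\|_{\Dot{H}^k}+\|f\|_{L^2}$. Applying (ii) with $\eps=\tfrac12$ and estimating $\|f\|_{H^k}\le\|f\|_{\Dot{H}^k}+\|f\|_{H^{k-1}}$ gives $\|f\|_{H^{k-1}}\le\tfrac12\|f\|_{\Dot{H}^k}+\tfrac12\|f\|_{H^{k-1}}+C\|f\|_{L^2}$, and absorbing the middle term on the left yields $\|f\|_{H^{k-1}}\le\|f\|_{\Dot{H}^k}+2C\|f\|_{L^2}$, hence $\|f\|_{H^k}\lesssim\|f\|_{\Dot{H}^k}+\|f\|_{L^2}$, which is \eqref{inhomog is sum of homog and L2}. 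I do not anticipate any genuine obstacle here; the only slightly delicate ingredient is the $H^k$-extension property of $\B^d$ invoked in (i), which is where the boundary regularity is used and which, for the purposes of this paper, may simply be cited.
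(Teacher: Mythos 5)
Your proposal is correct, and (i) and (iii) proceed exactly as in the paper: Rellich--Kondrachov componentwise for (i), and for (iii) the splitting $\|f\|_{H^k}\le\|f\|_{\Dot H^k}+\|f\|_{H^{k-1}}$ combined with (ii) at $\eps=\tfrac12$ and an absorption. The only place you diverge is (ii). The paper proves Ehrling's inequality via a covering argument: the open sets
\[
U_n:=\{f\in H^{k-1}(\B^d,\C^m):\|f\|_{H^{k-1}}<\eps+n\|f\|_{L^2}\}
\]
cover $H^{k-1}$, the $H^{k-1}$-closure of the unit sphere of $H^k$ is compact, so it sits in a single $U_N$, and then one scales. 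You instead run the sequential-compactness/contradiction version: normalize $\|f_n\|_{H^{k-1}}=1$, extract a subsequence converging in $H^{k-1}$, identify the limit as $0$ via the $L^2$-decay, and derive the contradiction $1=0$. Both are standard and equivalent ways of using the compact embedding from (i); neither is shorter or more general than the other, so this is a packaging difference rather than a structural one. One small thing to keep in mind if you write this up: after normalizing, the strict inequality $1>\eps_0\|f_n\|_{H^k}+n\|f_n\|_{L^2}$ forces $f_n\neq0$, which you tacitly use but should state when dividing through.
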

\begin{proof}
\underline{$(i)$:} This follows from applying compactness of $H^k(\B^d)\hookrightarrow H^{k-1}(\B^d)$ (which is a consequence of the Rellich-Kondrachov theorem, see for example \cite[p.~285, Theorem 9.16]{Bre11}) to each component.

\underline{$(ii)$:} This is a special case of \emph{Lions' Lemma} (sometimes called \emph{Ehrling's Lemma}), see \cite[p.~173]{Bre11}. For the convenience of the reader we reproduce its proof in our setting.

Fix $\eps > 0$ and consider for $n \geq 1$ the sets
\begin{align*}
    U_n := \{f \in H^{k-1}(\B^d,\C^m): \|f\|_{H^{k-1}(\B^d,\C^m)} < \eps + n\|f\|_{L^2(\B^d,\C^m)} \}.
\end{align*}
The collection $\{U_n\}_{n \geq 1}$ forms an open covering of $H^{k-1}(\B^d,\C^m)$. Since $H^k(\B^d,\C^m)$ embeds compactly into $H^{k-1}(\B^d,\C^m)$, we conclude that the closure of
\begin{align*}
    \{f \in H^k(\B^d,\C^m): \|f\|_{H^k(\B^d,\C^m)} = 1\}
\end{align*}
with respect to $\|\cdot\|_{H^{k-1}(\B^d,\C^m)}$ is compact in $H^{k-1}(\B^d,\C^m)$. Since this set is covered by $\cup_{n \geq 1}U_n$, by compactness there must exist some $N \geq 1$ such that in particular
\begin{align*}
    \{f \in H^k(\B^d,\C^m): \|f\|_{H^k(\B^d,\C^m)} = 1\} \subseteq U_N.
\end{align*}
Then we compute for $f \in H^k(\B^d,\C^m)\setminus \{0\}$
\begin{align*}
    \|f\|_{H^{k-1}(\B^d,\C^m)} &= \|f\|_{H^{k}(\B^d,\C^m)} \left\| \frac{f}{ \|f\|_{H^{k}(\B^d,\C^m)}} \right\|_{H^{k-1}(\B^d,\C^m)}\\
    &\leq \|f\|_{H^{k}(\B^d,\C^m)}\left(\eps + N \left\| \frac{f}{ \|f\|_{H^{k}(\B^d,\C^m)}} \right\|_{L^2(\B^d,\C^m)}\right)\\
    &= \eps \|f\|_{H^{k}(\B^d,\C^m)} + N \|f\|_{L^2(\B^d,\C^m)},
\end{align*}
since $\frac{f}{ \|f\|_{H^{k}(\B^d,\C^m)}} \in U_N$. Hence \eqref{weak Ehrling ineq} holds with $C = N$.

\underline{$(iii)$:} Clearly 
\begin{align*}
    \|f\|_{H^k(\B^d,\C^m)} \gtrsim \|f\|_{\Dot{H}^k(\B^d,\C^m)} + \|f\|_{L^2(\B^d,\C^m)},\quad \forall f \in H^k(\B^d,\C^m).
\end{align*}
For the other direction, we compute
\begin{align*}
    \|f\|_{H^k(\B^d,\C^m)} &= \left( \sum_{|\alpha| \leq k} \|\pd^\alpha f\|_{L^2(\B^d,\C^m)}^2 \right)^\frac{1}{2} = \left( \sum_{|\alpha| \leq k-1} \|\pd^\alpha f\|_{L^2(\B^d,\C^m)}^2 + \sum_{|\alpha| = k} \|\pd^\alpha f\|_{L^2(\B^d,\C^m)}^2 \right)^\frac{1}{2}\\
    &= (\|f\|_{H^{k-1}(\B^d,\C^m)}^2 + \|f\|_{\Dot{H}^k(\B^d,\C^m)}^2 )^\frac{1}{2} \leq \|f\|_{H^{k-1}(\B^d,\C^m)} + \|f\|_{\Dot{H}^k(\B^d,\C^m)}.
\end{align*}
Applying \eqref{weak Ehrling ineq} for $\eps = \frac{1}{2}$ yields
\begin{align*}
    \|f\|_{H^{k-1}(\B^d,\C^m)} \leq \frac{1}{2 } \|f\|_{H^k(\B^d,\C^m)} + C \|f\|_{L^2(\B^d,\C^m)}
\end{align*}
for some $C > 0$. Inserting this into the above inequality yields
\begin{align*}
    \|f\|_{H^k(\B^d,\C^m)} \leq \frac{1}{2} \|f\|_{H^k(\B^d,\C^m)} + C \|f\|_{L^2(\B^d,\C^m)} + \|f\|_{\Dot{H}^k(\B^d,\C^m)}
\end{align*}
and hence by subtracting $\frac{1}{2} \|f\|_{H^k(\B^d,\C^m)}$ and multiplying by $2$ we obtain
\begin{align*}
    \|f\|_{H^k(\B^d,\C^m)} \leq 2\|f\|_{\Dot{H}^k(\B^d,\C^m)} + 2C \|f\|_{L^2(\B^d,\C^m)} 
\end{align*}
for all $f \in H^k(\B^d,\C^m)$, which is the other bound of \eqref{inhomog is sum of homog and L2} and this finishes the proof.
\end{proof}
Combining \eqref{weak Ehrling ineq} and \eqref{inhomog is sum of homog and L2} immediately implies the following corollary.
\begin{cor}
Let $d,m,k \geq 1$ be integers. For each $\eps > 0$ there exists some $C > 0$ such that
\begin{align}\label{strong Ehrling ineq}
    \|f\|_{H^{k-1}(\B^d,\C^m)} \leq \eps \|f\|_{\Dot{H}^k(\B^d,\C^m)} + C \|f\|_{L^2(\B^d,\C^m)}, \quad \forall f \in H^k(\B^d,\C^m).
\end{align}
\end{cor}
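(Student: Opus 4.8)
The plan is to chain the two estimates already established in \autoref{standard results prop}, namely the weak Ehrling/Lions inequality \eqref{weak Ehrling ineq} and the norm equivalence \eqref{inhomog is sum of homog and L2}; no new ideas are needed. Fix $\eps > 0$. First I would invoke \eqref{inhomog is sum of homog and L2} to produce a constant $C_1 = C_1(d,m,k) > 0$ with
\[
\|f\|_{H^k(\B^d,\C^m)} \leq C_1\left(\|f\|_{\Dot{H}^k(\B^d,\C^m)} + \|f\|_{L^2(\B^d,\C^m)}\right)
\]
for all $f \in H^k(\B^d,\C^m)$.

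Next I would apply \eqref{weak Ehrling ineq} with the parameter chosen to be $\eps/C_1$ in place of its $\eps$: this gives a constant $C' > 0$ with
\[
\|f\|_{H^{k-1}(\B^d,\C^m)} \leq \frac{\eps}{C_1}\|f\|_{H^k(\B^d,\C^m)} + C'\|f\|_{L^2(\B^d,\C^m)}.
\]
Substituting the previous display into the right-hand side yields
\[
\|f\|_{H^{k-1}(\B^d,\C^m)} \leq \eps\,\|f\|_{\Dot{H}^k(\B^d,\C^m)} + (\eps + C')\,\|f\|_{L^2(\B^d,\C^m)},
\]
so \eqref{strong Ehrling ineq} holds with $C := \eps + C'$.

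I do not anticipate any genuine obstacle: the two inputs are already proved and the argument is a one-line composition of them. The only point requiring a moment's attention is the degenerate case $k = 1$, where $H^{k-1}(\B^d,\C^m) = L^2(\B^d,\C^m)$ and the claim reduces to the trivial bound $\|f\|_{L^2} \leq \eps\|f\|_{\Dot{H}^1} + C\|f\|_{L^2}$ with $C = 1$; the general argument above still produces a valid constant there, so it need not be singled out.
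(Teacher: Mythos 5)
Your proof is correct and follows exactly the route the paper intends: the paper simply states that combining \eqref{weak Ehrling ineq} with \eqref{inhomog is sum of homog and L2} gives the corollary, and your argument spells out that combination, including the harmless rescaling of $\eps$ by the equivalence constant. Nothing to add.
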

\subsection{Controlling inhomogeneous norms with homogeneous norms}
At times it is more convenient to prove results for homogeneous norms instead of the inhomogeneous counterparts. The following result shows on which subspaces of $H^k(\B^d,\C^m)$ only controlling homogeneous norms is sufficient.
\begin{prop}\label{prop control inhomog by homog}
Let $d,m,k \geq 1$ be integers and define
\begin{align*}
    \mathcal{P}_{k,d} := \{p \in \C[\xi_1,\ldots,\xi_d]: \deg(p) \leq k-1\}
\end{align*}
and
\begin{align*}
    \mathcal{P}_{k,d,m} := \{(p_1,\ldots,p_m): p_j \in \mathcal{P}_{k,d} \text{ for }j=1,\ldots,m\}.
\end{align*}
Let $X$ be a closed subspace of $H^k(\B^d,\C^m)$. Then the following are equivalent.
\begin{enumerate}[(i)]
    \item $X \cap \mathcal{P}_{k,d,m} = \{0\}$.
    \item The inequality 
    \begin{align*}
        \|f\|_{L^2(\B^d,\C^m)} \lesssim \|f\|_{\Dot{H}^k(\B^d,\C^m)}
    \end{align*}
    holds for all $ f \in X$.
    \item The inequality 
    \begin{align}\label{control inhomog by homog}
        \|f\|_{H^k(\B^d,\C^m)} \lesssim \|f\|_{\Dot{H}^k(\B^d,\C^m)}
    \end{align}
    holds for all $ f \in X$.
\end{enumerate}
\end{prop}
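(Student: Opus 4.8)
The plan is to prove that (ii) and (iii) are equivalent directly, that (iii) $\Rightarrow$ (i) by a trivial argument, and that the substantive implication (i) $\Rightarrow$ (ii) holds by a weak-compactness argument; this closes the cycle and gives the full equivalence.

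First, (ii) and (iii) are equivalent by \eqref{inhomog is sum of homog and L2}: since $\|f\|_{H^k(\B^d,\C^m)} \simeq \|f\|_{\Dot{H}^k(\B^d,\C^m)} + \|f\|_{L^2(\B^d,\C^m)}$, the bound $\|f\|_{H^k} \lesssim \|f\|_{\Dot{H}^k}$ on $X$ holds if and only if $\|f\|_{L^2} \lesssim \|f\|_{\Dot{H}^k}$ on $X$. Next, for (iii) $\Rightarrow$ (i): if $p \in X \cap \mathcal{P}_{k,d,m}$, then every component of $p$ is a polynomial of degree at most $k-1$, so $\pd^\alpha p = 0$ for all $|\alpha| = k$ and hence $\|p\|_{\Dot{H}^k(\B^d,\C^m)} = 0$; by (iii) this forces $\|p\|_{H^k(\B^d,\C^m)} = 0$, i.e.\ $p = 0$.

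For the key implication (i) $\Rightarrow$ (ii) I would argue by contradiction. If the estimate fails, there is a sequence $(f_n)_{n\ge1} \subseteq X$ with $\|f_n\|_{L^2(\B^d,\C^m)} = 1$ for all $n$ and $\|f_n\|_{\Dot{H}^k(\B^d,\C^m)} \to 0$. By \eqref{inhomog is sum of homog and L2} the sequence is bounded in $H^k(\B^d,\C^m)$, so after passing to a subsequence $f_n \rightharpoonup f$ weakly in $H^k(\B^d,\C^m)$; since $X$ is a closed subspace it is weakly closed, whence $f \in X$. By the compact embedding $H^k(\B^d,\C^m) \hookrightarrow H^{k-1}(\B^d,\C^m)$ of \autoref{standard results prop}(i), a further subsequence converges strongly in $H^{k-1}(\B^d,\C^m)$, in particular in $L^2(\B^d,\C^m)$, so $\|f\|_{L^2(\B^d,\C^m)} = 1$ and $f \ne 0$. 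Finally, for each $\alpha$ with $|\alpha| = k$ one has $\pd^\alpha f_n \to 0$ in $L^2(\B^d,\C^m)$ (because $\|f_n\|_{\Dot{H}^k} \to 0$), while $\pd^\alpha f_n \rightharpoonup \pd^\alpha f$ weakly in $L^2(\B^d,\C^m)$ (because $f_n \rightharpoonup f$ in $H^k$), so uniqueness of weak limits gives $\pd^\alpha f = 0$ for all $|\alpha| = k$. Since $\B^d$ is connected, any distribution whose derivatives of order $k$ all vanish is a polynomial of degree at most $k-1$ (a short induction on $k$ from the classical case $k=1$), so component-wise $f \in \mathcal{P}_{k,d,m}$; thus $0 \ne f \in X \cap \mathcal{P}_{k,d,m}$, contradicting (i).

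The only slightly delicate point is the simultaneous use of weak $H^k$-convergence (to identify the weak $L^2$-limit of the top-order derivatives as $0$) and strong $H^{k-1}$-convergence via Rellich--Kondrachov (to see that the $L^2$-limit is nonzero), together with the standard characterization of polynomials of degree $\le k-1$ as exactly the distributions on a connected domain with vanishing $k$-th order derivatives; everything else is routine.
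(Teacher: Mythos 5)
Your proof is correct. The preliminary implications (ii) $\Leftrightarrow$ (iii) via \eqref{inhomog is sum of homog and L2} and (iii) $\Rightarrow$ (i) by plugging in a polynomial match the paper's argument exactly, and the contradiction setup for (i) $\Rightarrow$ (ii) is also the same. Where you diverge is in how you extract the limiting object $f$: you run the weak-compactness route, passing to a weakly convergent subsequence in $H^k$, using that a norm-closed subspace of a Hilbert space is weakly closed to get $f \in X$, and then identifying $\pd^\alpha f = 0$ for $|\alpha|=k$ by combining the strong $L^2$-limit $\pd^\alpha f_n \to 0$ with the weak $L^2$-limit $\pd^\alpha f_n \rightharpoonup \pd^\alpha f$. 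The paper instead observes that the Rellich $L^2$-limit already makes the normalized sequence \emph{Cauchy in $H^k$} (via the equivalence $\|\cdot\|_{H^k} \simeq \|\cdot\|_{\Dot H^k} + \|\cdot\|_{L^2}$ together with $\|g_n\|_{\Dot H^k}\to 0$), so the sequence converges strongly in $H^k$; this yields $g \in X$ and $\|g\|_{\Dot H^k}=0$ immediately by continuity of the norms, with no appeal to weak topology or weak closedness at all. Your version is perfectly sound and is the more generic functional-analytic reflex; the paper's version is a bit more elementary and self-contained (it stays entirely in the metric setting and avoids Mazur-type facts), and it delivers strong $H^k$-convergence as a small bonus. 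Either route is fine here.
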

\begin{proof}
\underline{$(ii) \Rightarrow (iii)$:} This follows from \eqref{inhomog is sum of homog and L2}.

\underline{$(iii) \Rightarrow (i)$:} Let $f \in X \cap \mathcal{P}_{k,d,m}$. Then $\pd^\alpha f = 0$ for all $\alpha \in \N_0^d$ with $|\alpha| = k$. Hence
\begin{align*}
    \|f\|_{H^k(\B^d,\C^m)} \overset{\eqref{control inhomog by homog}}{\lesssim} \|f\|_{\Dot{H}^k(\B^d,\C^m)} = \left( \sum_{|\alpha| = k} \|\pd^\alpha f\|_{L^2(\B^d,\C^m)}^2 \right)^\frac{1}{2} = 0,
\end{align*}
which implies $f = 0$.

\underline{$(i) \Rightarrow (ii)$:} We assume by contradiction that $(ii)$ does not hold. Then there exists a sequence $(f_n)_{n \geq 1} \subseteq X$ such that 
\begin{align*}
    \|f_n\|_{L^2(\B^d,\C^m)} > n \|f_n\|_{\Dot{H}^k(\B^d,\C^m)}
\end{align*}
for all $ n \geq 1$. In particular $f_n \not=0$ and we can define the sequence
\begin{align*}
    g_n := \frac{f_n}{\|f_n\|_{L^2(\B^d,\C^m)}}.
\end{align*}
Then by construction $\|g_n\|_{L^2(\B^d,\C^m)} = 1$ and 
\begin{align*}
    \|g_n\|_{\Dot{H}^k(\B^d,\C^m)} = \frac{\|f_n\|_{\Dot{H}^k(\B^d,\C^m)}}{\|f_n\|_{L^2(\B^d,\C^m)}} \leq \frac{1}{n}.
\end{align*}
Hence $(g_n)_{n \geq 1}$ is a bounded sequence in $H^k(\B^d,\C^m)$ and from the compactness of the embedding $H^k(\B^d,\C^m) \hookrightarrow L^2(\B^d,\C^m)$ there exists a subsequence, which is without loss of generality $(g_n)_{n \geq 1}$ itself, that converges in $L^2(\B^d,\C^m)$ to some $g \in L^2(\B^d,\C^m)$. We have
\begin{align*}
    \|g_n - g_\ell\|_{H^k(\B^d,\C^m)} &\overset{\eqref{inhomog is sum of homog and L2}}{\simeq } \|g_n - g_\ell\|_{L^2(\B^d,\C^m)} + \|g_n - g_\ell\|_{\Dot{H}^k(\B^d,\C^m)}\\
    &\leq \|g_n - g_\ell\|_{L^2(\B^d,\C^m)} +\frac{1}{n} + \frac{1}{\ell},
\end{align*}
which shows that $(g_n)_{n \geq 1}$ is Cauchy in $H^k(\B^d,\C^m)$. Thus $g \in H^k(\B^d,\C^m)$ and
\begin{align*}
    \lim_{n \to \infty} \|g_n - g\|_{H^k(\B^d,\C^m)} = 0.
\end{align*}
By continuity of the (semi-)norms we have
\begin{align*}
    \|g\|_{L^2(\B^d,\C^m)} &= \lim_{n \to \infty} \|g_n\|_{L^2(\B^d,\C^m)} = 1\\
    \|g\|_{\Dot{H}^k(\B^d,\C^m) }&= \lim_{n \to \infty} \|g_n\|_{\Dot{H}^k(\B^d,\C^m) } = 0.
\end{align*}
The latter implies that $\pd^\alpha g = 0$ for all $\alpha \in \N_0^d$ with $|\alpha| = k$. This is only possible for $g \in \mathcal{P}_{k,d,m}$. Now we note that since $(g_n)_{n \geq 1} \subseteq X$ and $X$ is closed in $H^k(\B^d,\C^m)$, we also have $g \in X$. Hence by assumption $g = 0$. This is a contradiction to $\|g\|_{L^2(\B^d,\C^m)} = 1$.
\end{proof}
An analogous statement holds for the spaces $\Hil^k$.
\subsection{Smallness from additional regularity}
For the proof of \autoref{full gen is diss up to per} we need to be able to trade one derivative for smallness. One possibility is \eqref{strong Ehrling ineq}. But here one has a potentially big term with a full $L^2(\B^d,\C^m)$-norm. The next result, which is an adaptation of \cite[Lemma 3.9]{MerRapRodSze22}, shows that one can alternatively pay with a finite-rank projection term instead of a full $L^2(\B^d,\C^m)$-term.

\begin{prop}\label{subcoer prop}
Let $d,m,k \geq 1$ be integers. Let $(\cdot|\cdot)$ be an inner product on $H^k(\B^d,\C^m)$ such that its induced norm $\|\cdot\|$ is equivalent to $\|\cdot\|_{H^k(\B^d,\C^m)}$.

For any $\eps > 0$ there exist a constant $C > 0$ and a finite set $\{f_n\}_{n=1}^N \subseteq H^k(\B^d,\C^m)$ that is orthonormal with respect to $(\cdot|\cdot)$ such that 
\begin{align}\label{subcoer ineq}
    \|f\|_{H^{k-1}(\B^d,\C^m)}^2 \leq \eps \|f\|_{\Dot{H}^k(\B^d,\C^m)}^2 + C \sum_{n=1}^N |(f|f_n)|^2
\end{align}
holds for all $f \in H^k(\B^d,\C^m)$.
\end{prop}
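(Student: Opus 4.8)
The plan is to realize the inhomogeneous norm $\|\cdot\|_{H^{k-1}(\B^d,\C^m)}^2$ as a quadratic form on the Hilbert space $(H^k(\B^d,\C^m),(\cdot|\cdot))$, diagonalize it, and then split off the contribution of the ``large'' eigenvalues. Concretely, I would consider the bounded sesquilinear form $a(f,g):=(f|g)_{H^{k-1}(\B^d,\C^m)}$ on $H^k(\B^d,\C^m)$; boundedness on $(H^k,(\cdot|\cdot))$ is immediate from $\|f\|_{H^{k-1}(\B^d,\C^m)}\le\|f\|_{H^k(\B^d,\C^m)}\lesssim\|f\|$. By Riesz representation there is a bounded operator $A$ on $H^k(\B^d,\C^m)$ with $(Af|g)=a(f,g)$, and $A$ is self-adjoint and positive since $a(f,g)=\overline{a(g,f)}$ and $a(f,f)=\|f\|_{H^{k-1}(\B^d,\C^m)}^2\ge 0$. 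The key point is that $A$ is \emph{compact}: by \autoref{standard results prop}$(i)$ the embedding $H^k(\B^d,\C^m)\hookrightarrow H^{k-1}(\B^d,\C^m)$ is compact, and from $\|Af\|=\sup_{\|g\|\le 1}|a(f,g)|\lesssim\|f\|_{H^{k-1}(\B^d,\C^m)}$ one sees that $A$ maps weakly convergent sequences in $H^k$ to norm-convergent ones.

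Hence the spectral theorem for compact self-adjoint operators furnishes an orthonormal (with respect to $(\cdot|\cdot)$) system $(f_n)_{n\ge1}$ of eigenvectors of $A$, with eigenvalues $\lambda_1\ge\lambda_2\ge\cdots\ge 0$ tending to $0$, such that
\begin{align*}
    \|f\|_{H^{k-1}(\B^d,\C^m)}^2=(Af|f)=\sum_{n\ge1}\lambda_n\,|(f|f_n)|^2,\qquad f\in H^k(\B^d,\C^m).
\end{align*}
Given $\eps>0$, fix a threshold $\eta>0$ (to be chosen) and set $N:=\#\{n:\lambda_n\ge\eta\}<\infty$. Splitting the sum at $N$ and using Bessel's inequality gives
\begin{align*}
    \|f\|_{H^{k-1}(\B^d,\C^m)}^2\le\lambda_1\sum_{n=1}^N|(f|f_n)|^2+\eta\sum_{n>N}|(f|f_n)|^2\le\lambda_1\sum_{n=1}^N|(f|f_n)|^2+\eta\,\|f\|^2.
\end{align*}
Now I would use the norm equivalence together with \eqref{inhomog is sum of homog and L2} and $\|f\|_{L^2(\B^d,\C^m)}\le\|f\|_{H^{k-1}(\B^d,\C^m)}$ to obtain a constant $C_0>0$, independent of $f$, with $\|f\|^2\le C_0\bigl(\|f\|_{\Dot{H}^k(\B^d,\C^m)}^2+\|f\|_{H^{k-1}(\B^d,\C^m)}^2\bigr)$. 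Substituting this and absorbing the term $C_0\eta\,\|f\|_{H^{k-1}(\B^d,\C^m)}^2$ into the left-hand side — which is legitimate as soon as $C_0\eta\le\tfrac12$ — yields
\begin{align*}
    \|f\|_{H^{k-1}(\B^d,\C^m)}^2\le\frac{\lambda_1}{1-C_0\eta}\sum_{n=1}^N|(f|f_n)|^2+\frac{C_0\eta}{1-C_0\eta}\,\|f\|_{\Dot{H}^k(\B^d,\C^m)}^2.
\end{align*}
Choosing $\eta$ additionally small enough that $\tfrac{C_0\eta}{1-C_0\eta}\le\eps$ gives \eqref{subcoer ineq} with $C=\tfrac{\lambda_1}{1-C_0\eta}$ and the finite orthonormal family $\{f_n\}_{n=1}^N$ (if $N=0$ the sum is empty and the estimate reduces to $\|f\|_{H^{k-1}(\B^d,\C^m)}^2\le\eps\|f\|_{\Dot{H}^k(\B^d,\C^m)}^2$).

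The only point requiring care — rather than a genuine obstacle — is the absorption step: one must feed the full $\|f\|$ back through \eqref{inhomog is sum of homog and L2} so that only a $\|f\|_{\Dot{H}^k(\B^d,\C^m)}^2$ term plus a controllable multiple of $\|f\|_{H^{k-1}(\B^d,\C^m)}^2$ survives, and this forces the two-stage smallness choice of the threshold $\eta$ (first to absorb, then to achieve the prescribed $\eps$). Everything else is the standard compact self-adjoint operator machinery. An equivalent route is a compactness–contradiction argument in the spirit of \cite[Lemma 3.9]{MerRapRodSze22}: if \eqref{subcoer ineq} failed for some $\eps_0$, one extracts $g_n$ with $\|g_n\|_{H^{k-1}(\B^d,\C^m)}=1$, $\|g_n\|_{\Dot{H}^k(\B^d,\C^m)}\to 0$, and $(g_n|f)\to 0$ for every $f$ in a suitable exhausting sequence; \autoref{standard results prop}$(i)$ then forces a subsequential limit that is a nonzero element of $\mathcal{P}_{k,d,m}$ orthogonal to all of $H^k(\B^d,\C^m)$, a contradiction. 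I would present the spectral argument above, as it is constructive and cleaner.
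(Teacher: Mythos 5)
Your proof is correct, and it takes a genuinely different and more economical route than the paper. You diagonalize, in the Hilbert space $(H^k(\B^d,\C^m),(\cdot|\cdot))$, the compact positive self-adjoint operator $A$ that represents the $H^{k-1}$-inner product, obtaining an orthonormal eigenbasis with respect to $(\cdot|\cdot)$ directly, together with the exact identity $\|f\|_{H^{k-1}(\B^d,\C^m)}^2=\sum_n\lambda_n|(f|f_n)|^2$; splitting at the eigenvalue threshold $\eta$, applying Bessel, and absorbing the tail via \eqref{inhomog is sum of homog and L2} then finishes the argument in a few lines. The paper instead diagonalizes the compact operator $\iota\circ\ell$ on $L^2(\B^d,\C^m)$, where $\ell$ is the Riesz map for the $L^2$-pairing with respect to $(\cdot|\cdot)$ and $\iota$ is the embedding. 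Because the resulting eigenvectors are $L^2$-orthonormal rather than $(\cdot|\cdot)$-orthonormal, the paper must run a Lagrange multiplier argument to estimate the infimum of $\|\cdot\|^2$ on the constrained sets $\mathcal{A}_n$, invoke \eqref{strong Ehrling ineq} as an intermediary to pass from $\|\cdot\|_{L^2}$ back to $\|\cdot\|_{H^{k-1}}$, handle the general $f$ via a finite-rank $L^2$-projection $P$, and finally rescale the eigenvectors by $\sqrt{\lambda_j}$ to manufacture the required $(\cdot|\cdot)$-orthonormal family. By representing $\|\cdot\|_{H^{k-1}}^2$ itself rather than $\|\cdot\|_{L^2}^2$, your proof eliminates all three of these steps; the price — a two-stage choice of $\eta$, first for the absorption and then to hit the prescribed $\eps$ — is trivial. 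Your route is arguably the more natural organization; the paper's follows the structure of \cite[Lemma 3.9]{MerRapRodSze22}, which works in an $L^2$-based setting, more literally. One cosmetic remark: the $N=0$ case you mention never actually occurs once $C_0\eta\le\frac12$, since plugging a nonzero constant $f$ into your final chain of estimates (with an empty sum) would force $\|f\|_{H^{k-1}(\B^d,\C^m)}^2\le\frac{1}{2}\|f\|_{H^{k-1}(\B^d,\C^m)}^2$ and hence $f=0$; equivalently, $\lambda_1\ge\eta$ is automatic under your smallness constraint on $\eta$, so the parenthetical could simply be dropped.
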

\begin{proof}
The Riesz representation theorem yields that for any $f \in L^2(\B^d,\C^m)$ there exists a unique $\ell(f) \in H^k(\B^d,\C^m)$ such that 
\begin{align*}
    (f | g)_{L^2(\B^d,\C^m)} = (\ell(f) | g)
\end{align*}
for all $g \in H^k(\B^d,\C^m)$. One checks that $\ell : L^2(\B^d,\C^m) \to H^k(\B^d,\C^m)$ is linear and since 
\begin{align*}
    \|\ell(f)\|^2 = (\ell(f) | \ell(f)) &= (f | \ell(f))_{L^2(\B^d,\C^m)} \leq \|f\|_{L^2(\B^d,\C^m)} \|\ell(f)\|_{L^2(\B^d,\C^m)}\\
    &\lesssim  \|f\|_{L^2(\B^d,\C^m)} \|\ell(f)\|,
\end{align*}
we have $\|\ell(f)\| \lesssim \|f\|_{L^2(\B^d,\C^m)} $, i.e., $\ell$ is bounded.

Denote by $\iota : H^k(\B^d,\C^m) \to L^2(\B^d,\C^m)$ the natural embedding and consider the map $\iota \circ \ell : L^2(\B^d,\C^m) \to L^2(\B^d,\C^m)$. We compute
\begin{align*}
    ((\iota \circ \ell)(f) | g)_{L^2(\B^d,\C^m)} &= (\ell(f) | g)_{L^2(\B^d,\C^m)} = \overline{(g | \ell(f))_{L^2(\B^d,\C^m)}} = \overline{(\ell(g) | \ell(f))}\\
    &= (\ell(f) | \ell(g)) = (f | (\iota \circ \ell)(g))_{L^2(\B^d,\C^m)}
\end{align*}
for all $f,g \in L^2(\B^d,\C^m)$, which shows that $\iota \circ \ell$ is self-adjoint. This calculation also shows 
\begin{align*}
    ((\iota \circ \ell)(f) | f)_{L^2(\B^d,\C^m)} = (\ell(f) | \ell(f)) \geq 0
\end{align*}
and hence $\iota \circ \ell$ is non-negative. Since $\ell$ is injective, we conclude that $\iota \circ \ell$ is positive. Since $\iota$ is compact, we also have that $\iota \circ \ell$ is compact. The spectral theorem of positive compact operators yields the existence of an orthonormal basis of $L^2(\B^d,\C^m)$ consisting of eigenvectors of $\iota \circ \ell$. More precisely, denote by $(\lambda_j)_{j \geq 1}$ the sequence of eigenvalues with
\begin{align*}
    \lambda_1 > \lambda_2 > \ldots \lambda_j> \ldots > 0
\end{align*}
and $\lim_{j \to \infty} \lambda_j = 0$. For $j \geq 1$ let $\{f_{j,i}\}_{i=1}^{I(j)}$ with $I(j)\in \N $ be an orthonormal basis of the eigenspace of $\iota \circ \ell$ corresponding to the eigenvalue $\lambda_j$. Since all $\lambda_j \not=0$ we have
\begin{align*}
    f_{j,i} = \frac{1}{\lambda_j} (\iota\circ \ell)(f_{j,i}) = \frac{1}{\lambda_j}\ell(f_{j,i}) \in H^k(\B^d,\C^m)
\end{align*}
for all $j \geq 1$ and $1 \leq i \leq I(j)$. Consider for $n \geq 1$ the set
\begin{align*}
    \A_n := \{f \in H^k(\B^d,\C^m) : \|f\|_{L^2(\B^d,\C^m)} = 1, (f | f_{j,i})_{L^2(\B^d,\C^m)} = 0 \text{ for all }1 \leq j \leq n, 1 \leq i \leq I(j)\}.
\end{align*}
We set 
\begin{align*}
    I_n := \inf_{f \in \A_n} \|f\|^2.
\end{align*}
Lower semi-continuity of $(\cdot|\cdot)$ and compactness of $\iota$ yield that this infimum is attained for some $g \in \A_n$, i.e., $\|g\|^2 = I_n$.

A Lagrange multiplier argument yields the existence of $\mu,\mu_{j,i} \in \C$ such that
\begin{align}\label{Lagrange multiplier identity}
    (g | f) &= \sum_{j=1}^n \sum_{i=1}^{I(j)} \mu_{j,i} (f_{j,i}|f)_{L^2(\B^d,\C^m)} + \mu (g|f)_{L^2(\B^d,\C^m)}
\end{align}
for all $f \in H^k(\B^d,\C^m)$. Since $f_{j,i}$ is an eigenvector of $\iota \circ \ell$, we have
\begin{align*}
    (g | f_{j,i} ) = \frac{1}{\lambda_j} (g | \ell(f_{j,i})) = \frac{1}{\lambda_j} (g | f_{j,i})_{L^2(\B^d,\C^m)} = 0
\end{align*}
for all $ 1 \leq j \leq n,1 \leq i \leq I(j)$, since $g \in \A_n$. Hence inserting $f = f_{j,i}$ in Eq.~\eqref{Lagrange multiplier identity} yields
\begin{align*}
    0 = (g | f_{j,i}) = \mu_{j,i}(f_{j,i}|f_{j,i})_{L^2(\B^d,\C^m)} + \mu (g|f_{j,i})_{L^2(\B^d,\C^m)}  = \mu_{j,i}
\end{align*}
and hence \eqref{Lagrange multiplier identity} reduces to 
\begin{align*}
    (g | f) = \mu (g | f)_{L^2(\B^d,\C^m)}
\end{align*}
for all $f \in H^k(\B^d,\C^m)$. By definition of $\ell$ this shows that $g$ is an eigenvector of $\iota \circ \ell$ with eigenvalue $\mu^{-1}$. Since $g \in \A_n$ this means that $\mu^{-1} \leq \lambda_{n+1}$. Also we have
\begin{align*}
    I_n = (g|g) =  \mu (g | g)_{L^2(\B^d,\C^m)} = \mu \geq \frac{1}{\lambda_{n+1}}.
\end{align*}
Now let $\eps > 0$ be given. Using \eqref{weak Ehrling ineq} we find a constant $C' > 0$ such that
\begin{align*}
    \|f\|_{H^{k-1}(\B^d,\C^m)}^2 \leq \frac{\eps}{16} \|f\|_{\Dot{H}^k(\B^d,\C^m)}^2 + C' \|f\|_{L^2(\B^d,\C^m)}^2 
\end{align*}
holds for all $ f \in H^k(\B^d,\C^m)$. For $f \in \A_n$ we compute
\begin{align*}
    \|f\|_{H^{k-1}(\B^d,\C^m)}^2 &\leq \frac{\eps}{16} \|f\|_{\Dot{H}^k(\B^d,\C^m)}^2 + C' \|f\|_{L^2(\B^d,\C^m)}^2 = \frac{\eps}{16} \|f\|_{\Dot{H}^k(\B^d,\C^m)}^2 + C'\\
    &= \frac{\eps}{16} \|f\|_{\Dot{H}^k(\B^d,\C^m)}^2 + C' \underbrace{\frac{1}{I_n}}_{\leq \lambda_{n+1}} \underbrace{I_n}_{\leq \|f\|^2}\\
    &\leq \frac{\eps}{16} \|f\|_{\Dot{H}^k(\B^d,\C^m)}^2 + C' \lambda_{n+1} \|f\|^2.
\end{align*}
Let $C'' > 0$ such that $\|h\|^2 \leq C'' \|h\|_{H^k(\B^d,\C^m)}^2 $ holds for all $h\in H^k(\B^d,\C^m) $. We choose $n$ large enough such that $\lambda_{n+1} \leq \max\left\{ \frac{\eps}{16 C' C''},\frac{1}{2 C' C''} \right\}$. Inserting into the above yields
\begin{align*}
    \|f\|_{H^{k-1}(\B^d,\C^m)}^2 &\leq \frac{\eps}{16} \|f\|_{\Dot{H}^k(\B^d,\C^m)}^2 + C' C'' \lambda_{n+1} \|f\|_{H^{k}(\B^d,\C^m)}^2\\
    &= \frac{\eps}{16} \|f\|_{\Dot{H}^k(\B^d,\C^m)}^2 + C' C'' \lambda_{n+1} \|f\|_{\Dot{H}^{k}(\B^d,\C^m)}^2+ C' C'' \lambda_{n+1} \|f\|_{H^{k-1}(\B^d,\C^m)}^2\\
    &\leq \frac{\eps}{8}\|f\|_{\Dot{H}^k(\B^d,\C^m)}^2 + \frac{1}{2} \|f\|_{H^{k-1}(\B^d,\C^m)}^2,
\end{align*}
which we can rewrite as
\begin{align*}
    \|f\|_{H^{k-1}(\B^d,\C^m)}^2 \leq \frac{\eps}{4} \|f\|_{\Dot{H}^k(\B^d,\C^m)}^2.
\end{align*}
By scaling we conclude that the above inequality holds for all $f \in H^k(\B^d,\C^m)$ such that 
\begin{align*}
    (f | f_{j,i})_{L^2(\B^d,\C^m)} = 0
\end{align*}
for all $1 \leq j \leq n, 1 \leq i \leq I(j) $. Now consider the operator $P : H^k(\B^d,\C^m) \to H^k(\B^d,\C^m)$
\begin{align*}
    P f := \sum_{j=1}^n \sum_{i=1}^{I(j)} (f | f_{j,i})_{L^2(\B^d,\C^m)} f_{j,i}.
\end{align*}
Since our previous inequality can, by construction, be applied to $f - Pf$, we have
\begin{align*}
    \|f\|_{H^{k-1}(\B^d,\C^m)}^2  &\leq  \left( \|f- Pf\|_{H^{k-1}(\B^d,\C^m)} + \|P f\|_{H^{k-1}(\B^d,\C^m)}\right)^2\\
    &\leq 2 \|f- Pf\|_{H^{k-1}(\B^d,\C^m)}^2 + 2 \|P f\|_{H^{k-1}(\B^d,\C^m)}^2\\
    &\leq \frac{\eps}{2} \|f- Pf\|_{\Dot{H}^{k}(\B^d,\C^m)}^2 + 2 \|P f\|_{H^{k-1}(\B^d,\C^m)}^2\\
    &\leq \eps \|f\|_{\Dot{H}^{k}(\B^d,\C^m)}^2 + \eps\| Pf\|_{\Dot{H}^{k}(\B^d,\C^m)}^2 + 2 \|P f\|_{H^{k-1}(\B^d,\C^m)}^2\\
    &\leq \eps \|f\|_{\Dot{H}^{k}(\B^d,\C^m)}^2 + \max\{\eps,2\} \|P f\|_{H^{k}(\B^d,\C^m)}^2
\end{align*}
for all $f \in H^k(\B^d,\C^m)$. Let $C''' > 0$ be such that $\|h\|_{H^k(\B^d,\C^m)}^2 \leq C'''\|h\|^2$ for all $h \in H^k(\B^d,\C^m)$. Then the above yields in total
\begin{align*}
    \|f\|_{H^{k-1}(\B^d,\C^m)}^2  &\leq \eps \|f\|_{\Dot{H}^{k}(\B^d,\C^m)}^2 + C''' \max\{\eps,2\} \|P f\|^2.
\end{align*}
Using that the $\{f_{j,i}\}$ are orthonormal in $L^2(\B^d,\C^m)$ together with the fact that they are eigenvectors of $\iota \circ \ell$ allows to compute
\begin{align*}
    \|P f\|^2 = \sum_{j=1}^{n} \sum_{i=1}^{I(j)} \lambda_j |(f | f_{j,i})|^2.
\end{align*}
Hence if we now write $\{f_\ell\}_{\ell=1}^N = \left\{ \sqrt{\lambda_j}f_{j,i}: 1 \leq j \leq n, 1 \leq i \leq I(j) \right\}$ with $N = \sum_{j=1}^n I(j)  $ one checks that $\{ f_l \}_{\ell=1}^N$ is orthonormal with respect to $(\cdot | \cdot)$ and the above inequality becomes
\begin{align*}
    \|f\|_{H^{k-1}(\B^d,\C^m)}^2  &\leq \eps \|f\|_{\Dot{H}^k(\B^d,\C^m)}^2 + C \sum_{\ell=1}^N |(f | f_\ell)|^2,
\end{align*}
where $C = \max\{\eps,2\} C'''$, for all $f \in H^k(\B^d,\C^m)$, which is exactly the claim.
\end{proof}

\printbibliography

\end{document}